\documentclass[12pt]{amsart}

\newtheorem{theorem}{Theorem}[section]
\newtheorem{prop}[theorem]{Proposition}
\newtheorem{lemma}[theorem]{Lemma}
\newtheorem{definition}[theorem]{Definition}

\newtheorem{cor}[theorem]{Corollary}
\newtheorem{conjecture}[theorem]{Conjecture} 
\newtheorem{remark}[theorem]{Remark}

\newcommand{\kerr}{\mbox{ker} }

\newcommand{\s}{\mathfrak{s}}
\newcommand{\tk}{\mathfrak{t}}
\newcommand{\rr}{\mathfrak{r}}
\renewcommand{\Dot}{\bullet}
\renewcommand{\P}{{\mathcal P}}
\newcommand{\imm}{\mbox{im} }
\newcommand{\spinc}{{\mbox{spin$^c$} }}
\newcommand{\zee}{\mathbb{Z}}
\newcommand{\x}{{\mathbf{x}}}
\newcommand{\A}{\mathbb{A}}
\newcommand{\B}{\mathcal{B}}

\newcommand{\T}{{\mathcal{T}}}

\newcommand{\ul}{\underline}
\newcommand{\ol}{\overline}
\newcommand{\balpha}{{\mbox{\boldmath $\alpha$}}}
\newcommand{\bbeta}{\mbox{\boldmath $\beta$}}
\newcommand{\bgamma}{\mbox{\boldmath $\gamma$}}
\newcommand{\arr}{\mathbb{R}}
\newcommand{\cee}{\mathbb{C}}

\newcommand{\y}{{\mathbf{y}}}
\newcommand{\z}{{\mathbf{z}}}
\newcommand{\w}{{\mathbf{w}}}
\newcommand{\longto}{\longrightarrow}
\newcommand{\V}{{\mathbf{v}}}
\newcommand{\coker}{{\mbox{coker}}}
\newcommand{\gr}{{\mbox{\rm gr}}}
\newcommand{\D}{\mathfrak{d}}

\newcommand{\Hom}{\mbox{Hom}}
\newcommand{\tX}{\widetilde{X}}

\newcommand{\R}{{\mathcal R}}
\newcommand{\tor}{{\mbox{\rm Tor}}}
\newcommand{\Mhat}{{\widehat{\mathcal M}}}
\newcommand{\OS}{{Ozsv\'ath-Szab\'o}}

\newcommand{\ugr}{\underline{\mbox{\rm gr}}}

\renewcommand{\O}{{\mbox{\normalfont\gothfamily O}}}
\newcommand{\supp}{{\mbox{\rm supp}}}
\newcommand{\Z}{{\mathcal Z}}
\newcommand{\rk}{{\mbox{\rm rk}}}
\newcommand{\K}{{\mathcal K}}
\newcommand{\M}{{\mathcal M}}
\newcommand{\N}{{\mathcal N}}

\newcommand{\Mbar}{\overline{M}}
\renewcommand{\L}{{\mathcal L}}
\newcommand{\tO}{\widetilde{\O}}
\newcommand{\tbeta}{\widetilde{\beta}}
\newcommand{\tgamma}{\widetilde{\gamma}}
\newcommand{\ds}{\displaystyle}

\newcommand{\rs}{\mathfrak{r}}
\newcommand{\uf}{\underline{F}}
\newcommand{\ug}{\underline{G}}
\newcommand{\uh}{\underline{H}}

\input xy
\xyoption{all}

\usepackage{amsfonts,yfonts}
\usepackage{epsf}
\usepackage{graphicx}
\usepackage{amssymb}
\usepackage{amsmath}
\usepackage{amscd}
\usepackage{diagrams}
\newarrow{Dashline} {}{dash}{}{dash}{}

\hoffset=-19mm
\voffset=-8mm
\textwidth 165mm
\textheight 220mm

\begin{document}

\title{Product Formulae for Ozsv\'ath-Szab\'o 4-manifold Invariants}

\author{Stanislav Jabuka}

\author{Thomas E. Mark}

\maketitle


\begin{abstract}We give formulae for the {\OS} invariants of 4-manifolds $X$ obtained by fiber sum of two manifolds $M_1$, $M_2$ along surfaces $\Sigma_1$, $\Sigma_2$ having trivial normal bundle and genus $g\geq 1$. The formulae follow from a general theorem on the {\OS} invariants of the result of gluing two 4-manifolds along a common boundary, which is phrased in terms of relative invariants of the pieces. These relative invariants take values in a version of Heegaard Floer homology with coefficients in modules over certain Novikov rings; the fiber sum formula follows from the theorem that this ``perturbed'' version of Heegaard Floer theory recovers the usual {\OS} invariants, when the 4-manifold in question has $b^+\geq 2$. The construction allows an extension of the definition of {\OS} invariants to 4-manifolds having $b^+ = 1$ depending on certain choices, in close analogy with Seiberg-Witten theory.  The product formulae lead quickly to calculations of the {\OS} invariants of various 4-manifolds; in all cases the results are in accord with the conjectured equivalence between {\OS} and Seiberg-Witten invariants.
\end{abstract}

\section{Introduction}

At the time of writing, there is no example of a smoothable topological 4-manifold whose smooth structures have been classified. Indeed, no smooth 4-manifold is known to support only finitely many smooth structures, and in virtually every case a 4-manifold that admits more than one smooth structure is known to admit infinitely many such structures. A substantial amount of ingenuity by a large number of authors---see \cite{stern} for a brief survey---has been required to produce these exotic 4-manifolds, though ultimately the list of topological tools used in the constructions is perhaps surprisingly short. The standard approach to distinguishing smooth structures on 4-manifolds has been to make use of gauge-theoretic invariants, which requires an understanding of how these invariants behave under the cut-and-paste operations used in constructing examples. In the case of the Seiberg-Witten invariants, this understanding was provided by Morgan-Mrowka-Szab\'o \cite{MMS}, Morgan-Szab\'o-Taubes \cite{MST}, D. Park \cite{park}, Li-Liu \cite{LL}, and many others, and the Seiberg-Witten invariants have become the tool of choice for studying smooth manifolds. Beginning in 2000, Ozsv\'ath and Szab\'o \cite{OS1,OS2,OS3} introduced invariants of 3- and 4-dimensional manifolds meant to mimic the Seiberg-Witten invariants but also avoid the technical issues that for many years prevented the expected Seiberg-Witten-Floer theory from taking shape. Their theory has been remarkably successful, and has had a number of important consequences in the study of 3-manifolds and knot theory. The 4-dimensional side of the story has been developed to a  somewhat lesser extent, however, and the existing gauge-theoretic technology means it is still the case that Seiberg-Witten invariants are often the easiest to use in the study of smooth 4-manifolds. Our aim here is to develop reasonably general cut-and-paste principles for {\OS} invariants, that will be useful in a variety of situations. A central tool in many constructions of exotic 4-manifolds is the normal connected sum or ``fiber sum,'' in which neighborhoods of diffeomorphic surfaces are excised from closed 4-manifolds and the resulting complements glued together along their boundary. As an application of the formalism we introduce here, and as a motivating test case, we give formulae that essentially determine the behavior of the {\OS} 4-manifold invariants under fiber sum along surfaces of trivial normal bundle.

To realize this goal we are obliged to introduce a substantial amount of machinery, including the development of Heegaard Floer homology with coefficients in certain power series (Novikov) rings. This can be viewed in analogy with Seiberg-Witten Floer homology perturbed by a 2-dimensional cohomology class, and in many ways exhibits parallel behavior. It is our hope that this ``perturbed'' Heegaard Floer theory will be of interest in other applications as well.

For the sake of exposition, we state our results in this introduction in order of increasing technicality. In particular, Ozsv\'ath and Szab\'o defined their invariants initially for 4-manifolds $M$ with $b^+(M)\geq 2$, and since the theory is simplest in that case we begin there.

\subsection{Constructions and statements of results when $b^+ \geq 2$}
The {\OS} invariants \cite{OS3, OS4} are defined using a ``TQFT'' construction, meaning that they are built from invariants of 3-dimensional manifolds (the Heegaard Floer homology groups) and cobordisms between such manifolds. To a closed oriented 4-manifold $M$ with $b^+(M)\geq 2$, with a \spinc structure $\s$, Ozsv\'ath and Szab\'o associate a linear function
\[
\Phi_{M,\s}: \A(M) \to \zee/\pm 1,
\]
where $\A(M)$ is the algebra $\Lambda^*(H_1(M;\zee)/torsion)\otimes \zee[U]$, graded such that elements of $H_1(M)$ have degree 1 and $U$ has degree 2. This invariant has the property that $\Phi_{M,\s}$ is nonzero for at most finitely many \spinc structures $\s$, and furthermore can be nonzero only on homogeneous elements of $\A(M)$ having degree
\begin{equation}\label{ddef}
d(\s) = {\textstyle \frac{1}{4}}(c_1^2(\s) - 3\sigma(M) - 2e(M)),
\end{equation}
where $\sigma$ denotes the signature of the intersection form on $M$ and $e$ is the Euler characteristic. Ozsv\'ath and Szab\'o conjecture \cite{OS3} that $\Phi_{M,\s}$ is identical with the Seiberg-Witten invariant.

We remark that there is a sign ambiguity in the definition of $\Phi_{M,\s}$, so that the results to follow are true up to an overall sign. 

The fiber sum of two smooth 4-manifolds is defined as follows. Let $M_1$ and $M_2$ be closed oriented 4-manifolds, and suppose $\Sigma_i\hookrightarrow M_i$, $i = 1,2$, are smoothly embedded closed oriented surfaces of the same genus $g$. We assume throughout this paper that $g$ is at least 1 and that the $\Sigma_i$ have trivial normal bundles. In this case, $\Sigma_i$ has a neighborhood $N(\Sigma_i)$ diffeomorphic to $\Sigma_i\times D^2$. Choose an orientation-preserving diffeomorphism $f: \Sigma_1\to \Sigma_2$, and lift it to an orientation-reversing diffeomorphism $\phi: \partial N(\Sigma_1)\to \partial N(\Sigma_2)$ via conjugation in the normal fiber. We define the fiber sum $X = M_1\#_\Sigma M_2$ by 
\[
X = (M_1\setminus N(\Sigma_1)) \cup_{\phi} (M_2\setminus N(\Sigma_2)).
\]
In general, the manifold $X$ can depend on the choice of $\phi$. We assume henceforth that the homology classes $[\Sigma_1]$ and $[\Sigma_2]$ are non-torsion elements of $H_2(M_i;\zee)$ (though the results of this paper can in principle be adapted to other situations).

To state the results, it is convenient to express the {\OS} invariant in terms of the group ring $\zee[H^2(M;\zee)]$. That is to say, we write
\[
OS_{M} = \sum_{\s\in Spin^c(M)} \Phi_{M,\s} \,e^{c_1(\s)},
\]
where $e^{c_1(\s)}$ is the formal variable in the group ring corresponding to the first Chern class of the \spinc structure $\s$ (note that $c_1(\s) = c_1(\s')$ for distinct \spinc structures $\s$ and $\s'$ iff $\s - \s'$ is of order 2 in $H^2(M;\zee)$, so the above formulation may lose some information if 2-torsion is present). The coefficients of the above expression are functions on $\A(M)$, so that $OS_{M}$ is an element of $\zee[H^2(M;\zee)]\otimes \A(M)^*$. The value of the invariant on $\alpha\in \A(M)$ is denoted $OS_M(\alpha) \in \zee[H^2(M;\zee)]$.

The behavior of $\Phi_{M,\s}$ under fiber sum depends on the value of $\langle c_1(\s), [\Sigma]\rangle$ (since $c_1(\s)$ is a characteristic class, this value is always even when $[\Sigma]^2 = 0$). Thus, we partition $OS_{M}$ accordingly: for an embedded surface $\Sigma\hookrightarrow M$ with trivial normal bundle,  let
\[
OS_M^k = \sum_{\langle c_1(\s),[\Sigma]\rangle = 2k} \Phi_{M,\s}\,e^{c_1(\s)}.
\]
The adjunction inequality for {\OS} invariants implies that $OS_M^k \equiv 0$ if $|k|>g-1$.

The topology of fiber sums is complicated in general by the presence of {\it rim tori}. A rim torus is a submanifold of the form $\gamma\times S^1\subset \Sigma\times S^1$, where $\gamma$ is an embedded circle on $\Sigma$. Such tori are homologically trivial in the fiber summands $M_i$, but typically essential in $X = M_1\#_\Sigma M_2$. Let $\R$ denote the subspace of $H^2(X;\zee)$ spanned by the Poincar\'e duals of rim tori, and let $\rho: H^2(X;\zee)\to H^2(X;\zee)/\R$ denote the natural projection. If $b_i \in H^2(M_i; \zee)$, $i = 1,2$, are cohomology classes with the property that $b_1|_{\partial N(\Sigma_1)}$ agrees with $b_2|_{\partial N(\Sigma_2)}$ under $\phi$, then Mayer-Vietoris arguments show that there exists a class $b\in H^2(X; \zee)$ whose restrictions to $M_i\setminus N(\Sigma_i)$ agrees with the corresponding restrictions of $b_i$, and furthermore that $b$ is determined uniquely up to elements of $\R$ and multiples of the Poincar\'e dual of $\Sigma$. If $b$, $b_1$ and $b_2$ satisfy these conditions on their respective restrictions, we say that the three classes are {\it compatible} with the fiber sum. We can eliminate part of the ambiguity in $b$ given $(b_1,b_2)$ by requiring that 
\begin{equation}\label{squarerestr}
b^2 = b_1^2 + b_2^2 + 4|m|,
\end{equation}
where $m = \langle b_1,[\Sigma_1]\rangle = \langle b_2,[\Sigma_2]\rangle$. With this convention, the pair $(b_1,b_2)$ gives rise to a well-defined element of $H^2(X;\zee)/\R$ (see section \ref{proofsec} for details). 

\begin{theorem}\label{gengthm1} Let $X = M_1\#_\Sigma M_2$ be obtained by fiber sum along a surface $\Sigma$ of genus $g>1$ from manifolds $M_1$, $M_2$ satisfying $b^+(M_i)\geq 2$, $i = 1,2$. If $|k| > g-1$ then $OS^k_X = OS^k_{M_1} = OS^k_{M_2} = 0$. In general, we have 
\begin{equation}\label{gengformula}
\rho\left( OS^k_X(\alpha)\right) = \sum_{\beta\in \B_k} OS_{M_1}^k(\alpha_1\otimes \beta)\cdot OS_{M_2}^k(f_*(\beta^\circ)\otimes \alpha_2)\cdot u_{\beta,k}
\end{equation}
where $\alpha_i\in \A(M_i\setminus N(\Sigma_i))$ are any elements such that $\alpha_1\otimes \alpha_2$ maps to $\alpha$ under the inclusion-induced homomorphism.
\end{theorem}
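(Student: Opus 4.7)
The plan is to cut the fiber sum along its natural separating 3-manifold $Y = \Sigma \times S^1$ (the common boundary of the two complements), invoke the gluing theorem for the perturbed Heegaard Floer theory developed in earlier sections of the paper, and then identify the resulting pairing with the right-hand side of \eqref{gengformula} by an explicit calculation in $\HF$ of $Y$.

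First, I would view each complement $M_i^\circ := M_i \setminus N(\Sigma_i)$ as a cobordism (from $S^3$, say) to $\pm Y$ with appropriate orientation conventions, and for each \spinc structure $\s_i$ on $M_i^\circ$ associate a relative invariant
\[
\Psi_{M_i^\circ, \s_i}: \A(M_i^\circ) \longto \HF(Y, \s_i|_Y)
\]
taking values in the perturbed Heegaard Floer homology of $Y$. The general gluing theorem asserts that for every \spinc structure $\s$ on $X$ whose restrictions $\s_i := \s|_{M_i^\circ}$ are compatible under $\phi$,
\[
\Phi_{X,\s}(\alpha_1 \otimes \alpha_2) \;=\; \bigl\langle \Psi_{M_1^\circ, \s_1}(\alpha_1),\; \Psi_{M_2^\circ, \s_2}(\alpha_2) \bigr\rangle,
\]
where the bracket is the natural duality pairing between $\HF(Y,\cdot)$ and $\HF(-Y,\cdot)$. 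Since $b^+(M_i)\geq 2$, the comparison theorem identifying perturbed with unperturbed invariants (advertised in the abstract) ensures that these relative invariants recover the classical $\Phi_{M_i,\s_i}$.

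Next, I would carry out the pairing computation explicitly in $\HF(Y, \ts_k)$, where $\ts_k$ is the \spinc structure on $Y$ with $\langle c_1(\ts_k),[\Sigma]\rangle = 2k$; this summand is zero for $|k| > g-1$, which yields the adjunction-type vanishing. For $|k| \leq g-1$ the relevant module carries an action of $\Lambda^* H_1(\Sigma) \subset \A(Y)$ and admits a distinguished basis $\B_k$, with $\{\beta^\circ\}$ the corresponding dual basis under the duality pairing and $u_{\beta,k}$ the normalization factors (encoding the $U$-action, any grading-shift coming from $4|m|$ in \eqref{squarerestr}, and the inner products of the basis with itself). Evaluating the pairing on $\alpha_1 \otimes \alpha_2$ after inserting a complete set of such basis vectors produces exactly the sum over $\B_k$ in \eqref{gengformula}; the action of $f_*$ enters when identifying $Y \subset M_1$ with $Y \subset M_2$ through the gluing $\phi$. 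Summing finally over compatible pairs $(\s_1,\s_2)$ and collecting coefficients in $\zee[H^2(X;\zee)]$ gives the stated identity, the projection $\rho$ appearing because \spinc structures on $X$ differing by Poincar\'e duals of rim tori all have identical restrictions to the two pieces.

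The principal obstacle is the explicit computation in step two: identifying the basis $\B_k$ inside $\HF(\Sigma \times S^1, \ts_k)$ with its perturbed coefficient ring, determining the factors $u_{\beta,k}$, and reconciling the $\Lambda^* H_1(\Sigma)$-module structure with the inclusion-induced maps $\A(Y) \to \A(M_i^\circ) \to \A(M_i)$. Closely tied to this is the bookkeeping of \spinc structures and cohomology classes across the decomposition, in particular showing that a compatible pair $(b_1,b_2)$ together with the constraint \eqref{squarerestr} determines a well-defined element of $H^2(X;\zee)/\R$ and relating the first Chern classes of $\s$ and the $\s_i$ in a manner consistent with the grading conventions used on both sides of \eqref{gengformula}.
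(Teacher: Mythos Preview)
Your overall architecture is correct and matches the paper's: cut $X$ along $Y=\Sigma\times S^1$, use the perturbed gluing theorem (Theorem~\ref{intropertproductthm}) to write $\O_{X,Y,\eta,\s}(\alpha)=\langle\tau^{-1}\Psi_{Z_1,\eta,\s_1}(\alpha_1),\,f_*\Psi_{Z_2,\eta,\s_2}(\alpha_2)\rangle$, then identify this pairing explicitly using the computation of $HF^-_\Dot(\Sigma\times S^1,\s_k;\L(t))\cong X(g,d)\otimes\L(t)$ from Section~\ref{calcsec}. The vanishing for $|k|>g-1$ is exactly as you say.

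There is, however, a real gap in your second paragraph. You write that the comparison theorem ``ensures that these relative invariants recover the classical $\Phi_{M_i,\s_i}$,'' and then that ``inserting a complete set of basis vectors'' gives the sum. But the comparison theorem concerns \emph{closed} invariants; it does not by itself tell you what the relative invariants $\Psi_{Z_i,\eta,\s_i}$ are. The point you are missing is that to identify the coefficients of $\rho(\Psi_{Z_i,\eta,\s_i}(\alpha_i))$ in the basis $\{\beta\}$ with closed invariants of $M_i$, you must apply the gluing theorem a \emph{second} time, to the decomposition $M_i=Z_i\cup_Y(\Sigma\times D^2)$. Concretely (this is Proposition~\ref{gengrelinvtprop} in the paper): one first computes the perturbed relative invariant $\Psi_{\Sigma\times D^2,\eta,\s_k}(1)$ and shows it equals the top-height generator $\Xi$ of $HF^-_\Dot(-\Sigma\times S^1,\s_k;\L(t))$; then for the dual elements $\tilde\beta\in\A(\Sigma)$ one has $\Psi_{\Sigma\times D^2,\eta,\s_k}(\tilde\beta)=\tilde\beta.\Xi$, and pairing gives
\[
\O_{M_i,\Sigma\times S^1,\eta,\s_i}(\alpha_i\otimes\tilde\beta)=\langle\tau^{-1}\Psi_{Z_i,\eta,\s_i}(\alpha_i),\,\tilde\beta.\Xi\rangle,
\]
which is precisely the $\beta$-coefficient of $\rho(\Psi_{Z_i,\eta,\s_i}(\alpha_i))$ up to the unit $u_\beta$. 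Only after this step does ``inserting the basis'' into the $X$-pairing yield the formula, with $OS_{M_i}$ appearing as these coefficients. Your acknowledged obstacle (computing in $HF$ of $Y$) is genuine, but the deeper structural step---using $\Sigma\times D^2$ to invert the passage from $\Psi_{Z_i}$ to $OS_{M_i}$---is what you have not articulated.
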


The notation of the theorem requires some explanation. First, the product of group ring elements appearing on the right makes use of the construction outlined above, producing elements of $H^2(X;\zee)/\R$ from compatible pairs $(b_1,b_2)$. The set $\B_k$ denotes a basis over $\zee$ for the group $H_*(\mbox{Sym}^d(\Sigma);\zee) \cong \bigoplus_{i = 0}^d \Lambda^iH_1(\Sigma)\otimes \zee[U]/U^{d-i+1}$, thought of as a subgroup of $\A(\Sigma)$, where $d = g-1-|k|$. Likewise, $\{\beta^\circ\}$ denotes the dual basis to $\B_k$ under a certain nondegenerate pairing (see section \ref{proofsec}). The terms $\alpha_1\otimes  \beta$ and $f_*(\beta^\circ)\otimes \alpha_2$ are understood to mean the images of those elements in $\A(M_1)$ and $\A(M_2)$, using the inclusion-induced maps. Finally, $u_{\beta,k}$ is a polynomial in the variable $e^{PD[\Sigma]}$ whose constant coefficient is 1, and which is equal to 1 except possibly in the case $k = 0$.

The left hand side of \eqref{gengformula} lies in the group ring of $H^2(X;\zee)/\R$, and its coefficients are ``rim torus averaged'' {\OS} invariants. That is to say, each coefficient of $\rho(OS_X^k)$ is a term of the form
\[
\Phi^{Rim}_{X,\s} = \sum_{\s'\in Spin^c(X)\atop \s'-\s\in \R} \Phi_{X,\s'}.
\]

A 4-manifold $X$ is said to have (Ozsv\'ath-Szab\'o) {\it simple type} if any \spinc structure $\s$ for which $\Phi_{X,\s}\neq 0$ has $d(\s) = 0$. We have:

\begin{cor}\label{simptypecor} If $M_1$ and $M_2$ have simple type, then the fiber sum $X = M_1\#_\Sigma M_2$ has the property that if $\Phi_{X,\s}^{Rim} = 0$ whenever $d(\s) \neq 0$. Furthermore,
\begin{equation}\label{intermedvanish}
\rho\left(OS_X^k\right) = 0 \qquad \mbox{if $|k| < g-1$,}
\end{equation}
while
\[
\rho\left(OS_X^{g-1}(\alpha)\right) = \left\{\begin{array}{ll} OS^{g-1}_{M_1}(1)\cdot OS^{g-1}_{M_2}(1) & \mbox{if $\alpha = 1$} \\ 0 & \mbox{if $\deg(\alpha) > 1$}.\end{array}\right.
\]
\label{STcor}
\end{cor}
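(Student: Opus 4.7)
The plan is to feed the simple type hypothesis into Theorem~\ref{gengthm1} and exploit the fact that it sharply restricts the $\A$-degrees on which $\Phi_{M_i,\s}$ can be nonzero.

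First I would translate simple type into a degree statement: because $\Phi_{M_i,\s}$ is concentrated in $\A$-degree $d(\s)=0$, we have $OS^k_{M_i}(\gamma)=0$ whenever $\deg(\gamma)>0$. Since the grading on $\A$ takes non-negative values and is additive under tensor product, a summand $OS^k_{M_1}(\alpha_1\otimes\beta)\cdot OS^k_{M_2}(f_*(\beta^\circ)\otimes\alpha_2)$ on the right of \eqref{gengformula} can contribute only when $\deg\alpha_1=\deg\alpha_2=\deg\beta=\deg\beta^\circ=0$.

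Next I would analyze the basis $\B_k$ of $H_*(\mathrm{Sym}^d\Sigma)$, $d=g-1-|k|$, noting that the pairing defining $\beta\mapsto\beta^\circ$ is a Poincar\'e-type duality on the $2d$-manifold $\mathrm{Sym}^d\Sigma$; hence it swaps $\A$-degree $j$ with $\A$-degree $2d-j$. The unique degree-zero basis element $1\in H_0$ has dual $U^d\in H_{2d}$ of $\A$-degree $2d$. Therefore when $|k|<g-1$ we have $d\geq 1$ and no $\beta$ allows both $\deg\beta=0$ and $\deg\beta^\circ=0$; every term of \eqref{gengformula} vanishes, yielding \eqref{intermedvanish}. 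When $|k|=g-1$ one has $d=0$, so $\mathrm{Sym}^0\Sigma$ is a point, $\B_k=\{1\}$ with $\beta^\circ=1$, and $u_{1,g-1}=1$ since $g-1\neq 0$. The formula reduces to a single product $OS^{g-1}_{M_1}(\alpha_1)\cdot OS^{g-1}_{M_2}(\alpha_2)$; for $\alpha=1$ I would take $\alpha_1=\alpha_2=1$ to obtain the stated formula, while for $\deg\alpha\geq 1$ any lift forces $\deg\alpha_1>0$ or $\deg\alpha_2>0$, killing a factor by simple type.

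To close the loop on simple type for $X$, combine these vanishings with the adjunction-inequality statement $OS^k_X=0$ for $|k|>g-1$ to conclude that $\rho(OS^k_X(\alpha))=0$ for every $k$ whenever $\deg\alpha>0$. Interpreting the coefficients of $\rho(OS_X^k)$ as $\Phi^{Rim}_{X,\s}$ as in the discussion following Theorem~\ref{gengthm1}, and noting that each $\Phi^{Rim}_{X,\s}$ is concentrated in $\A$-degree $d(\s)$, vanishing on positive-degree elements forces $d(\s)=0$ whenever $\Phi^{Rim}_{X,\s}$ is nonzero. The main technical point I expect to have to nail down is step two, namely that the duality between $\B_k$ and its dual basis really is of Poincar\'e type in the required graded sense; this should follow from the explicit construction in section~\ref{proofsec}, after which the rest of the argument is bookkeeping with the grading.
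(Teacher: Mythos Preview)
Your proposal is correct and follows essentially the same route as the paper's proof in section~\ref{simptypesec}: use simple type to force every factor $\alpha_1,\alpha_2,\beta,\beta^\circ$ in \eqref{gengformula} to have degree zero, then use the complementary-degree property $\deg\beta+\deg\beta^\circ=2d$ to kill the intermediate cases $|k|<g-1$ and leave only the single $\beta=1$ term when $|k|=g-1$. The technical point you flag---that the duality $\beta\mapsto\beta^\circ$ pairs complementary degrees---is exactly what the paper uses (stated there as ``$\beta$ and $\beta^\circ$ have complementary degree in $\tX(g,d)$''), and is verified by the explicit construction $\beta_i^\circ=\tbeta_i\cap(1\otimes U^{-d})$ in section~\ref{productformulasec}.
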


In other words, the fiber sum of manifolds of simple type has simple type after sum over rim tori.
We note that equation \eqref{intermedvanish} holds if $M_1$ and $M_2$ are assumed only to have $\A(\Sigma)$-simple type: that is, if $\Phi_{M,\s}(\alpha) = 0$ whenever $\alpha$ lies in the ideal of $\A(M)$ generated by $U$ and the image of $H_1(\Sigma)$.

We should remark that Taubes \cite{taubesST} has shown that symplectic 4-manifolds with $b^+\geq 2$ have Seiberg-Witten simple type. It seems safe, therefore, to make the following:

\begin{conjecture} If $X$ is a symplectic 4-manifold with $b^+(X)\geq 2$ then $X$ has {\OS} simple type.
\end{conjecture}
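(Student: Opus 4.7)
The strategy would be to mirror Taubes' proof of Seiberg--Witten simple type for symplectic 4-manifolds within the Heegaard Floer framework. As a starting point I would invoke Ozsv\'ath--Szab\'o's theorem that $\Phi_{X,\mathfrak{k}}=\pm 1$ for the canonical \spinc structure $\mathfrak{k}$, which automatically satisfies $d(\mathfrak{k})=0$ via the symplectic identity $c_1(\mathfrak{k})^2 = 2\chi(X)+3\sigma(X)$. The remaining content of the conjecture is to rule out basic classes $\s$ with $d(\s)>0$.

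The next step is to put the geometry of $X$ in a form amenable to the cut-and-paste formulas of this paper. I would apply Donaldson's theorem to produce, after sufficiently many blowups, a Lefschetz fibration $f:X\# N\overline{\cee P^2}\to S^2$ with fiber $F$ of genus $g\geq 2$ and trivial normal bundle. The standard blow-up formula for \OS\ invariants shows simple type is inherited from the blowup, so it suffices to treat Lefschetz fibrations. I would then iterate the fiber sum formula of Theorem \ref{gengthm1} along $F$, decomposing $X\# N\overline{\cee P^2}$ into neighborhoods of singular fibers glued to pieces of the form $F\times D^2$. Since $\langle c_1(\s),[F]\rangle$ is constrained by the adjunction inequality, only finitely many values of $k$ need be considered, and for each the formula \eqref{gengformula} gives an explicit product decomposition of the \OS\ invariant in terms of data on the singular-fiber neighborhoods. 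The hope is that the resulting system of equations, combined with the sharp degree formula (\ref{ddef}), forces $c_1(\s)^2 = 3\sigma + 2e$ for every basic class.

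The principal obstacle is that Taubes' original argument is fundamentally geometric: basic classes of a symplectic manifold are identified, via $SW=Gr$, with pseudo-holomorphic currents representing $(K-c_1(\s))/2$, and the dimension of the relevant moduli space is directly controlled by pseudo-holomorphic geometry. No analog of this Gromov--Taubes correspondence is presently available for Heegaard Floer theory, so one of two substantial projects seems unavoidable: either establish the \OS $=$ SW conjecture in enough generality to borrow Taubes' argument outright, or develop from scratch an ``\OS $=$ Gromov'' correspondence for 4-manifolds. Failing these, the formalism introduced in this paper may at least permit a verification of the conjecture in specific families (elliptic surfaces, branched covers, Horikawa-type examples), but a general proof appears to lie beyond the techniques assembled here, and I would view the establishment of such a correspondence as the real content of the conjecture.
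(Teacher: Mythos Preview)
The statement you are addressing is labeled a \emph{conjecture} in the paper, not a theorem; the paper offers no proof whatsoever. The authors simply remark that Taubes has shown the analogous result in Seiberg--Witten theory and that it therefore ``seems safe'' to conjecture the same for the Ozsv\'ath--Szab\'o invariants. There is nothing to compare your proposal against.

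Your proposal is really a survey of possible strategies rather than a proof, and you yourself correctly identify the essential gap: the argument you outline ultimately requires either the conjectured equivalence $\Phi_{X,\s}=SW_{X,\s}$ or an independent ``$\O=\mathrm{Gr}$'' correspondence, neither of which is available. The Lefschetz-fibration/fiber-sum approach you sketch does not circumvent this: Theorem~\ref{gengthm1} expresses invariants of a fiber sum in terms of invariants of the pieces, but the pieces here (neighborhoods of singular fibers) are not themselves closed manifolds of simple type to which Corollary~\ref{STcor} applies, and in any case the formula only controls the rim-torus-averaged invariants $\Phi^{Rim}$, not the individual $\Phi_{X,\s}$. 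So the decomposition does not by itself yield the constraint $d(\s)=0$ for all basic classes. Your concluding assessment---that the conjecture lies beyond the techniques of this paper and that its resolution likely requires establishing one of the two correspondences---is accurate and is precisely why the authors state it as a conjecture.
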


Leaving this issue for now, we turn to the case of a fiber sum along a torus, where the product formula is slightly different.

\begin{theorem}\label{g1thm} Let $X = M_1\#_\Sigma M_2$ be obtained by fiber sum along a surface $\Sigma$ of genus $g = 1$, such that $M_1$, $M_2$, and $X$ each have $b^+ \geq 2$. Let $\widetilde{T}$ denote the Poincar\'e dual of the class in $H_2(X;\zee)$ induced by $[\Sigma_i]$, and write $T$ for the image of $\widetilde{T}$ in $H^2(X;\zee)/\R$. Then for any $\alpha\in \A(X)$ we have
\[
\rho(OS_X(\alpha)) = (T-T^{-1})^2\,OS_{M_1}(\alpha_1)\cdot OS_{M_2}(\alpha_2)
\]
where $\alpha_1\otimes \alpha_2\in\A(M_1)\otimes \A(M_2)$ maps to $\alpha$ as before.
\end{theorem}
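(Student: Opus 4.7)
The strategy is to apply the general gluing theorem for perturbed relative {\OS} invariants to two different decompositions and compare the results. Writing $M_i^{\circ} = M_i\setminus N(\Sigma_i)$, each $M_i^\circ$ has boundary $T^3 \cong \Sigma\times S^1$, and $X = M_1^\circ\cup_{T^3} M_2^\circ$. The gluing theorem expresses $\rho(OS_X(\alpha))$ as a pairing
\[
\rho(OS_X(\alpha)) = \bigl\langle \psi_1(\alpha_1),\, \psi_2(\alpha_2)\bigr\rangle
\]
of the relative invariants $\psi_i(\alpha_i)$ of the two pieces, taking values in a Novikov-perturbed Heegaard Floer homology $\HF^+(T^3)$ whose coefficient system is determined by the compatible cohomology class on $X$ built from $\widetilde{T}$ and the rim tori.

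The first geometric input is an explicit description of $\HF^+(T^3)$ with this Novikov coefficient system. Since the adjunction inequality applied to a genus-$1$ surface of self-intersection $0$ forces $\langle c_1(\mathfrak{s}_X), [\Sigma]\rangle = 0$, only the torsion \spinc structure on $T^3$ can contribute; once the cohomology perturbation is turned on, the resulting module becomes free of small rank over the Novikov ring and the pairing used in the gluing formula can be written down in closed form.

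To relate the relative invariants $\psi_i(\alpha_i)$ to the closed invariants $OS_{M_i}(\alpha_i)$, I apply the same gluing theorem to the decompositions $M_i = M_i^{\circ}\cup_{T^3}(\Sigma\times D^2)$, which gives
\[
OS_{M_i}(\alpha_i) = \bigl\langle \psi_i(\alpha_i),\, \psi_{\Sigma\times D^2}\bigr\rangle,
\]
where $\psi_{\Sigma\times D^2}$ is the relative invariant of the standard regular neighborhood. Identifying $\psi_{\Sigma\times D^2}$ as an explicit element of $\HF^+(T^3)$ reduces the proof to a short calculation in the small free module described above, during which the factor $(T - T^{-1})^2$ emerges as the discrepancy between the $T^3$ pairing used for the fiber sum and the product of the two cap pairings used to recover $OS_{M_1}$ and $OS_{M_2}$.

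The main obstacle is the pair of Heegaard Floer computations just outlined: the precise structure of $\HF^+(T^3)$ with Novikov coefficients, including the action of the variable $e^{\widetilde{T}}$, and the explicit form of the cap invariant $\psi_{\Sigma\times D^2}$ within it. Once these are established, the identity follows by linear algebra in a small free module. This mirrors the Morgan-Mrowka-Szab\'o approach to the torus fiber-sum formula in Seiberg-Witten theory, where the analogous role is played by the perturbed Seiberg-Witten Floer homology of $T^3$.
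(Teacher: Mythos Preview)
Your proposal is correct and follows essentially the same route as the paper: apply the perturbed gluing theorem (Theorem~\ref{intropertproductthm}) first to $X = M_1^\circ\cup_{T^3} M_2^\circ$ and then to each $M_i = M_i^\circ\cup_{T^3}(\Sigma\times D^2)$, using the Novikov-perturbed Floer homology of $T^3$ to make sense of the relative invariant of the cap. The paper's version of your ``main obstacle'' is Proposition~\ref{g1relinvtprop}, which identifies $\Psi_{T^2\times D^2,\eta}(1) = \frac{1}{t-1}$ in $HF^-_\Dot(T^3,\s_0;\L(t))\cong\L(t)$; from this, $\rho(\Psi_{M_i^\circ,\eta,\s}) = (t-1)\,\O_{M_i,T^3,\eta,\s}$, and pairing the two relative invariants produces the factor $(t^{1/2}-t^{-1/2})^2$ (Theorem~\ref{g1fibsumthm}), which becomes $(T-T^{-1})^2$ after passing to Chern-class variables.
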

Here the product between $OS_{M_1}$ and $OS_{M_2}$ uses the construction from previously, while multiplication with $T$ takes place in the group ring of $H^2(X;\zee)/\R$.

We will show (Proposition \ref{torusSTprop}) that any 4-manifold $M$ containing an essential torus $T$ of self-intersection 0 has $\A(T)$-simple type, in analogy with a result of Morgan, Mrowka, and Szab\'o in Seiberg-Witten theory \cite{MMS}.

It is interesting to compare these results with those in Seiberg-Witten theory. Taubes proved an analogue of Theorem \ref{g1thm} in \cite{taubes}, generalizing work of Morgan-Mrowka-Szab\'o \cite{MMS}, and D. Park \cite{park} gave an independent proof of that result. The higher-genus case was considered by Morgan, Szab\'o and Taubes \cite{MST}, but only under the condition that $|k| = g-1$. In this case the sum appearing in Theorem \ref{gengthm1} is trivial since $\B_{g-1} = \{1\}$, and the result here gives a product formula directly analogous to that of \cite{MST}. To our knowledge, no product formulae at the level of generality of Theorem \ref{gengthm1} have yet appeared in the literature on Seiberg-Witten theory.

\subsection{Relative invariants and a general gluing result}
The theorems above are proved as particular cases of a general result on the {\OS} invariants of 4-manifolds obtained by gluing two manifolds along their boundary. In its most general form, the form that is useful in the context of fiber sums (Theorem \ref{intropertproductthm} below), the statement involves perturbed Heegaard Floer invariants. If one is interested in gluing two manifolds-with-boundary that both have $b^+ \geq 1$, however, the perturbed theory is unnecessary and there is a slightly simpler ``intermediate'' result. To state it, recall that the construction of the 4-manifold invariant $\Phi_{M,\s}$ is based on the Heegaard Floer homology groups associated to closed \spinc 3-manifolds $(Y,\s)$. These groups have various incarnations; the relevant one for our immediate purpose is denoted $HF^-_{red}(Y,\s)$. Below, we recall the construction of Heegaard Floer homology with ``twisted'' coefficients, whereby homology groups are obtained whose coefficients are modules $M$ over the group ring $R_Y = \zee[H^1(Y)]$ (here and below, ordinary (co)homology is considered with integer coefficients). If $Y = \partial Z$ is the boundary of an oriented 4-manifold $Z$, then such a module is provided by
\[
M_Z = \zee[\ker(H^2(Z,\partial Z)\to H^2(Z))],
\]
where $H^1(Y)$ acts by the coboundary homomorphism $H^1(Y )\to H^2(Z,\partial Z )$. The intermediate product formula alluded to above can be formulated as follows.

\begin{theorem}\label{genproduct} If $(Z,\s)$ is a \spinc 4-manifold with connected \spinc boundary $(Y, \s_Y)$ and if $b^+(Z)\geq 1$, then there exists a relative {\OS} invariant $\Psi_{Z,\s}$ which is a linear function
\[
\Psi_{Z,\s}: \A(Z)\to HF^-_{red}(Y,\s_{Y}; M_Z),
\]
well-defined up to multiplication by a unit in $\zee[H^1(Y )]$. 

Furthermore, if $(Z_1,\s_1)$ and $(Z_2,\s_2)$ are two such \spinc 4-manifolds with \spinc boundary $\partial Z_1 = (Y,\s) = -\partial Z_2$, write $X = Z_1\cup_Y Z_2$. Then there exists an $R_Y$-sesquilinear pairing
\[
(\cdot\,,\,\cdot) : HF^-_{red}(Y,\s;M_{Z_1})\otimes_{R_Y} HF^-_{red}(-Y, \s;M_{Z_2})\to M_{X,Y},
\]
where $M_{X,Y} = \zee[K(X,Y)]$ and $K(X,Y) =\ker(H^2(X )\to H^2(Z_1 )\oplus H^2(Z_2 ))$. The pairing has
the property that for any \spinc structure $\s$ on $X$ restricting to $\s_i$ on $Z_i$, we have an equality of group ring elements: 
\[
\sum_{h\in K(X,Y)} \Phi_{X,\s+h}(\alpha)\, e^h = ( \Psi_{Z_1,\s_1}(\alpha_1),\,\Psi_{Z_2,\s_2}(\alpha_2)),
\]
up to multiplication by a unit in $\zee[K(X,Y)]$. Here $\alpha\in \A(X)$, $\alpha_1\in \A(Z_1)$ and $\alpha_2\in\A(Z_2)$ are related by inclusion-induced multiplication as before.
\end{theorem}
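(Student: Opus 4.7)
The plan for the relative invariant is to puncture $Z$ and apply the twisted \OS\ cobordism map. Remove a small open 4-ball from the interior of $Z$ to view $Z^\circ=Z\setminus B^4$ as a spin$^c$ cobordism from $(S^3,\s_0)$ to $(Y,\s_Y)$. The \OS\ cobordism-map construction, which expresses $Z^\circ$ through a handle decomposition and counts pseudoholomorphic triangles and polygons, extends to the twisted setting: on each intermediate 3-manifold one obtains an induced module over its own group ring, and the triangle maps respect these twistings to yield a map
\[
F^-_{Z^\circ,\s;M_Z}\colon HF^-(S^3)\otimes_\zee\A(Z)\longrightarrow HF^-(Y,\s_Y;M_Z).
\]
I define $\Psi_{Z,\s}(\alpha)$ to be the image of $1\otimes\alpha$, where $1\in HF^-(S^3)\cong\zee[U]$ is the top generator. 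The hypothesis $b^+(Z)\geq 1$ enters exactly as in the closed case: there is an admissible Heegaard data for $Y$ in which a periodic domain represents a positive-square class in $Z^\circ$, forcing the induced map on $HF^\infty$ to vanish, so the image of $F^-_{Z^\circ,\s;M_Z}$ lies in $HF^-_{red}(Y,\s_Y;M_Z)$. The standard invariance proofs for cobordism maps (under Heegaard moves, changes of cut systems, and choice of the puncturing ball) carry over to the twisted setting modulo the usual unit ambiguity in $R_Y=\zee[H^1(Y)]$.

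\medskip

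To build the pairing, I would combine two ingredients. The first is the standard \OS\ duality: composing the orientation-reversal isomorphism with the cup/cap product with the canonical element yields, in twisted coefficients, a perfect $R_Y$-sesquilinear pairing
\[
HF^-_{red}(Y,\s;M_{Z_1})\otimes_{R_Y}HF^-_{red}(-Y,\s;M_{Z_2})\longrightarrow M_{Z_1}\otimes_{R_Y}M_{Z_2}.
\]
The sesquilinearity reflects the fact that the $H^1(Y)$-action on the $-Y$ side differs from that on the $Y$ side by the involution $h\mapsto -h$ induced by orientation reversal. The second ingredient is a Mayer-Vietoris identification of the target: the sequence
\[
H^1(Y)\longrightarrow H^2(Z_1,Y)\oplus H^2(Z_2,Y)\longrightarrow H^2(X)
\]
identifies $K(X,Y)=\ker(H^2(X)\to H^2(Z_1)\oplus H^2(Z_2))$ with the pushout of the two coboundary maps that define $M_{Z_1}$ and $M_{Z_2}$, giving a natural isomorphism $M_{Z_1}\otimes_{R_Y}M_{Z_2}\cong M_{X,Y}$. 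Composing these two maps yields the required pairing.

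\medskip

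For the gluing formula I apply the \OS\ composition law to the cobordism $X\setminus(B_1\sqcup B_2)=Z_1^\circ\cup_Y Z_2^\circ\colon S^3\to S^3$, where each $Z_i^\circ$ is $Z_i$ punctured. For every extension $\mathfrak{u}$ of $(\s_1,\s_2)$ to a spin$^c$ structure on $X\setminus(B_1\sqcup B_2)$ the composition law gives
\[
F^-_{X\setminus(B_1\sqcup B_2),\mathfrak{u}}=F^-_{Z_2^\circ,\mathfrak{u}|_{Z_2}}\circ F^-_{Z_1^\circ,\mathfrak{u}|_{Z_1}}.
\]
Such extensions are parametrized by $K(X,Y)$, and summing over them while pairing with the top generator of $HF^-(S^3)$ at each end turns the left side into $\sum_{h\in K(X,Y)}\Phi_{X,\s+h}(\alpha)\,e^h$ (using $b^+(X)\geq 2$ to kill the $HF^\infty$ contribution, thereby recovering the defining formula for $\Phi_{X,\s+h}$), while the right side, transcribed via the pairing of the previous paragraph and the Mayer-Vietoris identification of modules, becomes precisely $(\Psi_{Z_1,\s_1}(\alpha_1),\Psi_{Z_2,\s_2}(\alpha_2))$, as required.

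\medskip

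The principal obstacle is foundational rather than conceptual: one must extend the \OS\ cobordism machinery to the twisted setting with modules that depend genuinely on the cobordism, proving invariance of the twisted triangle maps under stabilization and handle slides, establishing the twisted composition law in the generality used above, and verifying that the Mayer-Vietoris identification $M_{Z_1}\otimes_{R_Y}M_{Z_2}\cong M_{X,Y}$ is the one induced by the cobordism-map composition (and not merely an abstract isomorphism of abelian groups). Once these ingredients are in place, the gluing formula reduces to the standard composition theorem together with careful bookkeeping of the spin$^c$ extensions and of the unit ambiguities in $R_Y$ and $\zee[K(X,Y)]$.
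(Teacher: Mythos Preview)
Your construction of $\Psi_{Z,\s}$ is correct and agrees with the paper's. But there are two gaps in the rest of the argument.

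First, your description of the pairing omits the map $\tau^{-1}\colon HF^-_{red}\to HF^+_{red}$. In the paper the pairing is explicitly $(\xi,\eta)=\langle\tau^{-1}\xi,\eta\rangle$, where $\langle\cdot,\cdot\rangle$ is the $HF^+\otimes HF^-\to M_{Z_1}\otimes_{R_Y}\overline{M_{Z_2}}$ pairing from Section~\ref{pairingsec}. The appearance of $\tau^{-1}$ is not cosmetic: it is only defined on the reduced groups, which is precisely why the hypothesis $b^+(Z_i)\geq 1$ is needed, and it is what connects the $HF^-$-valued relative invariants to the $HF^+$-based definition of $\Phi_X$.

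Second, and more seriously, your gluing step assumes what must be proved. The closed invariant $\Phi_{X,\s}$ is \emph{defined} using an admissible cut $N$, meaning one with $b^+\geq 1$ on both sides \emph{and} $K(X,N)=0$. The cut $Y$ is generally not admissible (that is the whole point: $K(X,Y)\neq 0$), so ``pairing with the top generator at each end'' of $F^-_{Z_2^\circ}\circ F^-_{Z_1^\circ}$ does not recover $\Phi_{X,\s+h}$ by definition. The paper's proof (Theorem~\ref{productthm}) handles this by choosing an admissible cut $N\subset Z_2$, writing $X=Z_1\cup_Y W\cup_N V_2$, and then moving the computation from $N$ to $Y$. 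The key technical steps are: (i) the twisted composition law (Theorem~\ref{complaw}) to split $V_1=Z_1\cup W$, which introduces a projection $\Pi_{V_1}$ on coefficient modules; (ii) the lemma that $\tau^{-1}\circ\underline{F}^-_W\circ\underline{F}^-_{Z_1}=\underline{F}^+_W\circ\tau^{-1}\circ\underline{F}^-_{Z_1}$ (Lemma~\ref{lemma3}), which slides $\tau^{-1}$ from $N$ back to $Y$; (iii) twisted duality (Theorem~\ref{dualitythm}) to move $\underline{F}^+_W$ to the other slot; and (iv) careful bookkeeping showing that the various $\Pi$-projections combine to the identity on $\zee[K(X,Y)]$. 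The variation $\s\mapsto\s+\delta h$ is then read off from the refined composition law. Your outline collapses all of this into a single appeal to the composition law at $Y$, which does not by itself justify the identification with $\Phi_{X,\s+h}$.
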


To understand the term ``$R_Y$-sesquilinear,'' observe that $R_Y = \zee[H^1(Y)]$ is equipped with an involution $r\mapsto \bar{r}$ induced by $h\mapsto -h$ in $H^1(Y)$. To say that the pairing in the theorem is sequilinear means that 
\[
( g\xi, \,\eta ) = g(\xi,\,\eta) = ( \xi,\,\bar{g}\eta)
\]
for $g\in R_Y$, $\xi\in HF^-_{red}(Y,\s;M_{Z_1})$ and $\eta\in HF^-_{red}(-Y,\s;M_{Z_2})$.

We note that the reason for the assumption $b^+(Z)\geq 1$ in the theorem above is that this condition guarantees that the homomorphism in $HF^-$ induced by $Z\setminus B^4$ (which gives rise to the relative invariant $\Psi_{Z,\s}$ above) takes values in the reduced Floer homology $HF^-_{red}(Y,\s; M_Z)\subset HF^-(Y,\s;M_Z)$. That fact in turn is necessary to make sense of the pairing $(\cdot,\cdot)$. In the notation of later sections, $(\cdot, \cdot) = \langle \tau^{-1}(\cdot),\cdot\rangle$ where $\tau: HF^+\to HF^-$ is the natural map; $\tau$ is invertible only on the reduced groups.

The utility of Theorem \ref{genproduct} is limited somewhat by the difficulty of determining the relative invariants $\Psi_{Z,\s}$ in general. Furthermore, in the case of a fiber sum it is natural to hope to relate the relative invariants of the complement of the neighborhood $\Sigma\times D^2$ of the summing surface in $M$ to the absolute invariants of $M$; however the manifold $\Sigma\times D^2$ has $b^+ = 0$ and it is not clear that the relative invariant is well-defined. This issue is addressed by the introduction of a ``perturbation.''

\subsection{Perturbed Heegaard Floer theory and results when $b^+ \geq 1$} Let $Y$ be a closed oriented 3-manifold and $\eta\in H^2(Y;\arr)$ a given cohomology class. The {\it Novikov ring} associated to $\eta$ is the set of formal series
\[
\R_{Y,\eta} = \{ \sum_{g\in H^1(Y;\zee)} a_g \cdot g \,|\, a_g\in \zee\}\subset \zee[[H^1(Y;\zee)]]
\]
subject to the condition that for each $N\in \zee$, the set of $g\in H^1(Y;\zee)$ with $a_g$ nonzero and $\langle g\cup \eta, [Y]\rangle < N$ is finite. This means $\R_{Y,\eta}$ consists of ``semi-infinite'' series with variables in $H^1(Y;\zee)$, with the usual convolution product. 

In section \ref{perturbsec} below, we develop the theory of Heegaard Floer homology for 3-manifolds $Y$ and 4-dimensional cobordisms $W$ equipped with 2-dimensional cohomology classes $\eta$, having coefficients in a module $\M_\eta$ over $\R_{Y,\eta}$. We refer to this theory as Heegaard Floer homology perturbed by $\eta$. Many features of the unperturbed theory carry over to this setting with minimal modification, but one key simplification is that if $\eta$ is chosen ``generically'' in a suitable sense (in particular $\eta\neq 0$), then $HF^\infty(Y,\s;\M_\eta) = 0$ for any $\R_{Y,\eta}$-module $\M_\eta$. In fact, one can arrange this latter fact to hold for any nonzero perturbation $\eta$ by a further extension of coefficients: Heegaard Floer homology is naturally a module over a polynomial ring $\zee[U]$, and we form a ``$U$-completion'' by extension to the power series ring $\zee[[U]]$. The $U$-completed Floer homology is written $HF_\Dot(Y,\s;\M_\eta)$ by notational analogy with a similar construction in monopole Floer homology \cite{KMbook}. The vanishing of $HF^\infty_\Dot(Y,\s;\M_\eta)$ means that $HF^-_\Dot(Y,\s; \M_\eta) = HF^-_{red}(Y,\s;\M_\eta)$ for all such $\M_\eta$, and allows us to define a relative invariant
\[
\Psi_{Z,\s,\eta}\in HF^-_{red}(Y,\s; \M_{Z,\eta})
\]
that has the desired properties so long as $\eta|_Y \neq 0$. Note, however, that $\Psi_{Z,\s,\eta}$ is defined only up to sign and multiplication by an element of $H^1(Y)$. We remark that if $\eta|_Y = 0$ then $\R_{Y,\eta} = R_Y$, and we recover the unperturbed theory.

Now suppose that $X$ is a closed 4-manifold, $Y\subset X$ a separating submanifold, and $\eta\in H^2(X;\arr)$ a cohomology class such that either $\eta|_Y \neq 0$, or in the decomposition $X = Z_1\cup_Y Z_2$ we have $b^+(Z_i)\geq 1$. (Such a submanifold $Y$ is said to be an {\it allowable cut for $\eta$}.) Then we can define the {\it perturbed {\OS} invariant} associated to $X$, $Y$, $\eta$, and a \spinc structure $\s$ to be
\begin{equation}\label{Odef}
\O_{X,Y,\eta,\s}(\alpha) = \langle \tau^{-1}(\Psi_{Z_1,\eta,\s}(\alpha_1)), \Psi_{Z_2,\eta,\s}(\alpha_2)\rangle.
\end{equation}
This invariant takes values in a module $\M_{X,Y,\eta}$, which is a suitable Novikov completion of $M_{X,Y}$ introduced previously. In section \ref{perturbsec} we show (Theorem \ref{pertproductthm}) that if $b^+(X)\geq 2$ then $\O_{X,Y,\eta,\s}$ is in fact a polynomial lying in $M_{X,Y}$, whose coefficients are the {\OS} invariants of $X$ in the various \spinc structures having restrictions to $Z_1$ and $Z_2$ that agree with the restrictions of $\s$. The precise statement is the following:

\begin{theorem}\label{intropertproductthm} Let $X$ be a closed oriented 4-manifold with $b^+(X)\geq 2$, and $Y\subset X$ a submanifold determining a decomposition $X = Z_1\cup_Y Z_2$, where $Z_i$ are connected 4-manifolds with boundary. Fix a class $\eta\in H^2(X;\arr)$, and assume that $Y$ is an allowable cut for $\eta$. If $b^+(Z_1)$ and $b^+(Z_2)$ are not both 0, then for any \spinc structure $\s$ on $X$ and element $\alpha\in \A(X)$,
\begin{equation}\label{intropertproductform}
\sum_{t\in K(X,Y)}\Phi_{X,\s + t}(\alpha) e^{t} = \O_{Y,\eta,\s}(\alpha) = \langle \tau^{-1}\Psi_{Z_1,\eta,\s}(\alpha_1),\,\Psi_{Z_2,\eta,\s}(\alpha_2)\rangle
\end{equation}
up to sign and multiplication by an element of $K(X,Y)$, where $\alpha_1\otimes \alpha_2\mapsto \alpha$ as before. If $b^+(Z_1) = b^+(Z_2) = 0$ then the same is true after possibly replacing $\eta$ by another class $\tilde{\eta}$, where $\tilde{\eta}|_{Z_i} = \eta|_{Z_i}$ for $i = 1,2$.
\end{theorem}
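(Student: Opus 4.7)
The strategy is to identify both sides of \eqref{intropertproductform} with the perturbed Ozsv\'ath--Szab\'o cobordism map $F^-_{X\setminus B^4,\s,\eta,\Dot}$ of the once-punctured $X$, evaluated on a canonical generator of $HF^-_\Dot(S^3)$ with $\alpha\in\A(X)$-insertion, and then to decompose that cobordism along $Y$.

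First, I would upgrade the Ozsv\'ath--Szab\'o composition law for cobordism maps to the perturbed, $U$-completed setting developed in Section \ref{perturbsec}. This identity is proved at the chain level via triangle counts in Heegaard quadruples, and those arguments pass through unchanged after replacing ordinary coefficients by $\R_{Y,\eta}$-modules, since the convergence of the resulting infinite sums is precisely what the Novikov condition built into $\R_{Y,\eta}$ is designed to ensure. Writing $W_i=Z_i\setminus B^4$ so that $X\setminus B^4=W_1\cup_Y W_2$, and noting that spin$^c$ structures on $X$ whose restrictions to the $Z_i$ match $\s|_{Z_i}$ form a torsor over $K(X,Y)$, the composition law produces
\begin{equation}\label{planeq1}
F^-_{X\setminus B^4,\s,\eta,\Dot}(\alpha) \;=\; \sum_{t\in K(X,Y)} F^-_{W_2,(\s+t)|_{Z_2},\eta,\Dot}\circ F^-_{W_1,(\s+t)|_{Z_1},\eta,\Dot}(\alpha),
\end{equation}
with the $e^t$-weighting recorded in the Novikov module $\M_{X,Y,\eta}$. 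Evaluated against the generator of $HF^-_\Dot(S^3)$, the left hand side reproduces $\sum_t\Phi_{X,\s+t}(\alpha)\,e^t$, which is the left side of \eqref{intropertproductform}.

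Next, the allowable-cut hypothesis together with $b^+(Z_i)\geq 1$ for at least one $i$ forces $HF^\infty_\Dot(Y,\s|_Y;\M_{Y,\eta})=0$, so that $HF^-_\Dot(Y,\s|_Y;\M_{Y,\eta})=HF^-_{red}(Y,\s|_Y;\M_{Y,\eta})$ and $\tau\colon HF^+_\Dot\to HF^-_\Dot$ is an isomorphism on these reduced groups. By construction, $F^-_{W_1,(\s+t)|_{Z_1},\eta,\Dot}$ applied to the canonical generator of $HF^-_\Dot(S^3)$ with $\alpha_1$-insertion is $\Psi_{Z_1,\eta,\s+t}(\alpha_1)$, which differs from $\Psi_{Z_1,\eta,\s}(\alpha_1)$ by the group-ring element recording the shift by $t$; the symmetric statement holds for $F^-_{W_2}$ and $\Psi_{Z_2,\eta,\s}$ after reflecting the cobordism. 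Composing these maps (the first landing in $HF^-_\Dot(Y)$, the second running from $HF^+_\Dot(-Y)$ back to $HF^-_\Dot(S^3)$) and summing over $t$ identifies the right side of \eqref{planeq1} with $\langle\tau^{-1}\Psi_{Z_1,\eta,\s}(\alpha_1),\,\Psi_{Z_2,\eta,\s}(\alpha_2)\rangle$; the $\tau^{-1}$ records the duality needed to swap $HF^-_\Dot(Y)$ with $HF^+_\Dot(-Y)$, exactly as in the setup preceding Theorem \ref{genproduct}. When $b^+(Z_1)=b^+(Z_2)=0$, the individual relative invariants $\Psi_{Z_i,\eta,\s}$ are still defined (since the allowable-cut hypothesis forces $\eta|_Y\ne 0$), but matching the pairing with the closed-manifold invariant may require additional genericity of $\eta$ on the interior of each $Z_i$. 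The remedy is to replace $\eta$ by a nearby class $\tilde\eta$ with $\tilde\eta|_{Z_i}=\eta|_{Z_i}$, whose existence follows from a dimension count using $b^+(X)\geq 2$ and Mayer--Vietoris to produce the needed slack in $H^2(X;\arr)$; Steps 1--2 then apply to $\tilde\eta$ and give the claimed identity.

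The main obstacle is the second step above: matching the geometric composition of perturbed cobordism maps with the algebraically defined sesquilinear pairing $\langle\tau^{-1}(\cdot),\cdot\rangle$ on $HF^-_{red}$. This requires a careful check that the cobordism maps interact correctly with the long exact sequence $HF^+_\Dot\to HF^\infty_\Dot\to HF^-_\Dot$ in the $U$-completed $\R_{Y,\eta}$-coefficient setting, and that the sign and $K(X,Y)$-unit ambiguities in $\Psi_{Z_i,\eta,\s}$ can be absorbed consistently on the closed side; the doubly-definite case adds the secondary difficulty of producing the perturbation $\tilde\eta$ and verifying that the resulting identity is independent of this choice within the allowed compatibility class.
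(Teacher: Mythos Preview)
Your approach has a genuine gap at its very first step. You propose to identify $\sum_t\Phi_{X,\s+t}(\alpha)\,e^t$ with the perturbed cobordism map $F^-_{X\setminus B^4,\s,\eta,\Dot}$ of the punctured $X$, but recall that $\Phi_{X,\s}$ is \emph{not} defined as a single cobordism map: it is defined as $\langle F^+_{V_2}\circ\tau^{-1}\circ F^-_{V_1}(\Theta^-),\Theta^-\rangle$ for an admissible cut $N$ splitting $X=V_1\cup_N V_2$. The insertion of $\tau^{-1}$ at $N$ is essential, since $b^+(X)\geq 2$ forces the composite $F^-_{W_2}\circ F^-_{W_1}$ to vanish (Lemma~\ref{b+lemma} kills $F^\infty$, and the perturbation does nothing to help at the $S^3$ ends). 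So your equation \eqref{planeq1}, read literally, is the trivial identity $0=0$, and the claim that its left side ``reproduces $\sum_t\Phi_{X,\s+t}(\alpha)\,e^t$'' is exactly what the theorem is asserting, not something you can take as input. The content of the theorem is precisely that $\tau^{-1}$ can be moved from the admissible cut $N$ to the given cut $Y$ once one passes to perturbed coefficients. (Relatedly, your claim that the hypotheses force $HF^\infty_\Dot(Y;\M_{Y,\eta})=0$ is false in the case where $\eta|_Y=0$ but both $b^+(Z_i)\geq 1$; allowability does not give vanishing of $HF^\infty_\Dot$ in that branch.)

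The paper's argument is structurally quite different from yours. It first proves the unperturbed statement (Theorem~\ref{productthm}) under the stronger hypothesis that both $b^+(Z_i)\geq 1$, by carefully shuttling $\tau^{-1}$ from the admissible cut $N$ to $Y$ using duality and the composition law. Lemma~\ref{relationlemma} then shows that in this regime the perturbed and unperturbed pairings agree via the natural map $\zee[K(X,Y)]\hookrightarrow\K(X,Y,\eta)$. The remaining case (one of the $b^+(Z_i)$ equal to $0$, hence $\eta|_Y\neq 0$) is handled by a concrete geometric construction: one finds a surface $\Sigma\subset Y$ with $\int_\Sigma\eta\neq 0$, a dual surface $S\subset X$ meeting it once, and uses the boundary of a neighborhood of $S$ to produce an auxiliary cut $Y'$ disjoint from $Y$ with $b^+$ positive on both sides. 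Lemma~\ref{cutinvlemma} shows the perturbed invariants from $Y$ and $Y'$ encode the same data, reducing to the case already handled. The possible replacement of $\eta$ by $\tilde\eta=\eta+\epsilon\,PD_X[\Sigma]$ arises only to make $Y'$ allowable in the last subcase; it is not the vague ``genericity on the interior of $Z_i$'' you suggest, but a specific shift by a class supported on $Y$.
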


The above definition \eqref{Odef} of $\O_{X,Y,\eta,\s}$ makes sense for any allowable pair $(Y,\eta)$ and \spinc structure $\s$, but its dependence on the choice of $(Y,\eta)$ is not clear. When $b^+(X)\geq 2$ it follows from Theorem \ref{intropertproductthm} that since $\Phi_{X,\s}$ is independent of $Y$ and $\eta$, so is $\O_{X,Y,\eta,\s}$. However when $b^+(X) = 1$ the situation is not so simple; indeed, different choices of $(Y,\eta)$ for a given $(X,\s)$ can lead to different results. This situation is analogous to the chamber structure of Seiberg-Witten invariants for 4-manifolds with $b^+ = 1$; partial results in this direction are given in section \ref{perturbsec}.

Note that the existence of a separating 3-manifold $Y\subset X$ and a class $\eta\in H^2(X,\arr)$ restricting nontrivially to $Y$ implies that $X$ is indefinite, in particular $b^+(X)\geq 1$.

We also point out a minor difference between Theorems \ref{gengthm1} and \ref{g1thm} from the first section, and Theorem \ref{intropertproductthm} above and Theorems \ref{introg1fibsumthm} and \ref{introgengfibsumthm} below. In the former results, the various \spinc structures are labeled by their Chern classes, while in the latter they are identified in an affine way with two-dimensional cohomology classes. Thus the results in the present situation do not lose information corresponding to classes whose difference is of order 2, and to translate from results in this subsection to those in the first one we must square the variables.

An immediate consequence of Theorem \ref{intropertproductthm} is the following result on the {\OS} invariants of a manifold obtained by gluing two 4-manifolds along a boundary 3-torus. To state it, note first that if $Z$ is a 4-manifold with boundary diffeomorphic to $T^3$ and $\eta\in H^2(Z;\arr)$ is a class whose restriction to $T^3$ is nontrivial then the relative invariant $\Psi_{Z,\s,\eta}$ is well-defined, and takes values in the ring $\K(Z,\eta) \subset \zee[[K(Z)]]$, where $K(Z) = \ker(H^2(Z,\partial Z) \to H^2(Z))$ and $\K(Z,\eta)$ is a Novikov completion of the group ring $\zee[K(Z)]$. (If $b^+(Z)\geq 1$ then $\Psi_{Z,\s,\eta}$ lies in $\zee[K(Z)]$.) Indeed, $\K(Z,\eta)$ is precisely the perturbed Floer homology of $T^3$ in the appropriate coefficient system. Note that $\K(Z,\eta)$ can be identified with a multivariable Laurent series ring, which is polynomial in variables that pair trivially with $\eta$ (and some variables may have finite order, if there is torsion in the cokernel of $H^1(Z)\to H^1(\partial Z)$).

If $X = Z_1\cup Z_2$ is obtained by gluing two 4-manifolds $Z_1$ and $Z_2$ with boundary $T^3$, and $\eta\in H^2(X;\arr)$ restricts nontrivially to the splitting 3-torus, then the pairing appearing in \eqref{intropertproductform} is naturally identified with a multiplication map
\[
\K(Z_1,\eta)\otimes \K(Z_2,\eta)\rTo^\sim \M_{X,T^3,\eta}\subset \zee[[H^2(X;\zee)]]
\]
induced by the maps $j^*_i: H^2(Z_i , \partial Z_i)\to H^2(X)$ Poincar\'e dual to the inclusion homomorphisms. Thus Theorem \ref{intropertproductthm} gives:

\begin{cor} Let $X = Z_1\cup_\partial Z_2$ be a 4-manifold obtained as the union of two manifolds $Z_1$ and $Z_2$ whose boundary is diffeomorphic to the 3-torus $T^3$, $\eta\in H^2(X;\arr)$ a class restricting nontrivially to $T^3$, and $\s$ a \spinc structure on $X$. Then
\[
\O_{X,T^3,\s,\eta} = j_1^*(\Psi_{Z_1,\eta,\s})\,j_2^*(\Psi_{Z_2,\eta,\s}).
\]
In particular if $b^+(X)\geq 2$ then
\[
\sum_{k\in \delta H^1(T^3)} \Phi_{X,\s + k} \,e^k = j_1^*(\Psi_{Z_1,\eta,\s})\,j_2^*(\Psi_{Z_2,\eta,\s})
\]
up to sign and translation by an element of $\delta H^1(T^3)$, where $\delta:H^1(T^3)\to H^2(X)$ is the Mayer-Vietoris coboundary.
\end{cor}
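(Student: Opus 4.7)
The plan is to derive the corollary as a direct specialization of Theorem \ref{intropertproductthm} to $Y = T^3$, modulo an identification of the sesquilinear pairing on perturbed Floer homology of the 3-torus with ring multiplication. Since $\eta|_{T^3}\neq 0$ by hypothesis, $T^3$ qualifies as an allowable cut with no restriction on $b^+(Z_i)$, so Theorem \ref{intropertproductthm} yields
\[
\O_{X,T^3,\s,\eta}(\alpha) = \langle \tau^{-1}\Psi_{Z_1,\eta,\s}(\alpha_1),\,\Psi_{Z_2,\eta,\s}(\alpha_2)\rangle
\]
up to sign and translation by an element of $K(X,T^3)$, where $\alpha_1\otimes\alpha_2\mapsto \alpha$ as usual.

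Next I would invoke the structural calculation of perturbed Floer homology of the 3-torus (carried out in section \ref{perturbsec}), which identifies $HF^-_\Dot(T^3,\s_{T^3};\M_{Z_i,\eta})$ with the Novikov ring $\K(Z_i,\eta)$ and identifies the pairing $\langle \tau^{-1}(\cdot),\cdot\rangle$ with the multiplication map $\K(Z_1,\eta)\otimes \K(Z_2,\eta)\to \M_{X,T^3,\eta}$ induced by $j_1^*\otimes j_2^*$. Substituting into the previous display yields the first formula
\[
\O_{X,T^3,\s,\eta} = j_1^*(\Psi_{Z_1,\eta,\s})\,j_2^*(\Psi_{Z_2,\eta,\s}).
\]
For the second equation, assume $b^+(X)\geq 2$ and apply Theorem \ref{intropertproductthm} once more: in this case $\O_{X,T^3,\s,\eta}(\alpha)$ is a genuine polynomial in $\zee[K(X,T^3)]$ whose coefficients are the {\OS} invariants $\Phi_{X,\s+t}(\alpha)$ for $t\in K(X,T^3)$. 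A Mayer-Vietoris computation for $X = Z_1\cup_{T^3} Z_2$ identifies $K(X,T^3) = \ker(H^2(X)\to H^2(Z_1)\oplus H^2(Z_2))$ with the image $\delta H^1(T^3)$ of the Mayer-Vietoris coboundary, delivering the claimed formula up to sign and translation.

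The hard part is the structural step above, namely that perturbed $HF^-_\Dot(T^3,\s_{T^3};\M_{Z,\eta})$ is the Novikov ring $\K(Z,\eta)$ and that the $\tau^{-1}$ pairing becomes ring multiplication. The key ingredients for this model calculation are: an explicit Heegaard diagram for $T^3$, the vanishing of perturbed $HF^\infty_\Dot$ ensured by $\eta|_{T^3}\neq 0$ (which forces $HF^-_\Dot = HF^-_{red}$ and makes $\tau$ invertible, so the pairing is defined on all of $HF^-_\Dot$), and the TQFT-style factorization of the trivial cobordism $T^3\times I$ that exhibits this pairing as a product on a Frobenius-type algebra. Once that identification is in place, both displayed formulas follow by essentially formal manipulation.
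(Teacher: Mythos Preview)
Your overall strategy matches the paper's: the corollary is indeed obtained by specializing the general gluing result to $Y=T^3$ and then identifying the perturbed Floer homology of $T^3$ (in coefficients $\K(Z_i,\eta)$) with the ring $\K(Z_i,\eta)$ itself, so that the sesquilinear pairing becomes multiplication under $j_1^*\otimes j_2^*$. Two small corrections are worth making.

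First, the opening displayed equation $\O_{X,T^3,\s,\eta}(\alpha)=\langle\tau^{-1}\Psi_{Z_1},\Psi_{Z_2}\rangle$ is not a consequence of Theorem~\ref{intropertproductthm}; it is the \emph{definition} of $\O$ (equation~\eqref{Odef}). Theorem~\ref{intropertproductthm} is what you invoke only for the second formula, when $b^+(X)\geq 2$, to identify $\O$ with the generating function of the ordinary invariants $\Phi_{X,\s+t}$.

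Second, your sketch of the ``hard part'' is not how the paper carries out the structural calculation, and the calculation is not in section~\ref{perturbsec}. The paper does not use an explicit Heegaard diagram for $T^3$ or any Frobenius-algebra argument. Instead it quotes the Ozsv\'ath--Szab\'o computation $HF^+_{red}(T^3,\s_0;R_{T^3})\cong\ker(\varepsilon)$ (Theorem~\ref{OStwistT3}), then uses flatness of the Novikov ring (Lemma~\ref{flatlemma}) to obtain $HF^-_\Dot(T^3,\s_0;\R_{T^3,\eta})\cong\R_{T^3,\eta}$ and hence $HF^-_\Dot(T^3,\s_0;\K(Z,\eta))\cong\K(Z,\eta)$ as a free rank-one module. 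Once the Floer groups on both sides are free of rank one over their coefficient rings, the $\L(t)$- (or $\K$-) valued pairing is forced to be multiplication up to a unit, and tracking through the unperturbed case pins down that unit; this is the content of Proposition~\ref{g1relinvtprop} and the discussion preceding it in section~\ref{calcsec}. Your proposed route via a direct Heegaard diagram and a $T^3\times I$ factorization might be made to work, but it is not what the paper does and would require independent justification.
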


We deduce the fiber sum formulae in Theorems \ref{gengthm1} and \ref{g1thm} from the following somewhat more general results, which apply in particular to the situation in which $M_1$, $M_2$, and/or $X$ have $b^+ = 1$. In each case, the perturbed invariants $\O_{M_i,\Sigma\times S^1}$ take values in $\M_{M_i,\Sigma\times S^1,\eta}$, which is isomorphic to the ring $\L(t)$ of Laurent series in the variable $t$ corresponding to the Poincar\'e dual of the surface $\Sigma$. Each of the following is obtained by an application of \eqref{Odef}, combined with knowledge of the relative invariants of manifolds of the form $\Sigma\times D^2$. In particular, Theorem \ref{introg1fibsumthm} follows quickly from the fact that up to multiplication by $\pm t^n$,
\[
\Psi_{T^2\times D^2,\eta,\s} = \frac{1}{t-1}
\]
where $\s$ is the \spinc structure with trivial first Chern class and $\eta\in H^2(T^2\times D^2;\arr)$ has $\int_{T^2} \eta >0$ (Proposition \ref{g1relinvtprop}). Note that this implies that the complement $Z$ of a torus of square 0 in a closed 4-manifold $M$ has relative invariant satisfying
\[
\rho(\Psi_{Z,\eta,\s}) = (t-1) \,\O_{M,T^3,\eta,\s},
\]
where $\eta$ is a class as above, and $\rho$ is induced by the map $\zee[[H^2(Z;\partial Z)]]\to \L(t)$ setting all variables other than $t$ equal to 1. 

\begin{theorem}\label{introg1fibsumthm} Let $X = M_1\#_{T_1 = T_2} M_2$ be the fiber sum of two 4-manifolds $M_1$, $M_2$ along tori $T_1$, $T_2$ of square 0. Assume that there exist classes $\eta_i\in H^2(M_i;\arr)$, $i = 1,2$, such that the restrictions of $\eta_i$ to $T_i\times S^1\subset M_i$ correspond under the gluing diffeomorphism $f: T_1\times S^1\to T_2\times S^1$, and assume that $\int_{T_i}\eta_i > 0$. Let $\eta\in H^2(X;\arr)$ be a class whose restrictions to $Z_i = M_i\setminus(T_i\times D^2)$ agree with those of $\eta_i$, and choose \spinc structures $\s_i\in Spin^c(M_i)$, $\s\in Spin^c(X)$ whose restrictions correspond similarly. Then for any $\alpha\in \A(X)$, the image of $\alpha_1\otimes \alpha_2$ under the map $\A(Z_1)\otimes \A(Z_2)\to \A(X)$, we have
\[
\rho(\O_{X,T\times S^1,\eta,\s}(\alpha)) = (t^{1/2} - t^{-1/2})^2\,\O_{M_1,T_1\times S^1,\eta_1,\s_1}(\alpha_1)\cdot\O_{M_2,T_2\times S^1,\eta_2,\s_2}(\alpha_2)
\]
up to multiplication by $\pm t^n$. 
\end{theorem}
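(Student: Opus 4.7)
The plan is to derive the formula by applying the $T^3$-gluing corollary of Theorem \ref{intropertproductthm} twice: once to the decomposition $X = Z_1\cup_{T^3} Z_2$, where $Z_i = M_i\setminus(T_i\times D^2)$, and again to each decomposition $M_i = Z_i\cup_{T^3}(T_i\times D^2)$. The compatibility hypothesis on the $\eta_i$ guarantees the existence of the global class $\eta$ and that $T^3$ is an allowable cut for $\eta$ in $X$, while the positivity $\int_{T_i}\eta_i>0$ ensures every perturbed relative invariant in sight is well-defined and lands in the appropriate Novikov completion.

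The first application produces, up to sign and translation by $K(X,T^3)$,
\[
\O_{X,T\times S^1,\eta,\s}(\alpha) = j_1^*\Psi_{Z_1,\eta,\s_1}(\alpha_1)\cdot j_2^*\Psi_{Z_2,\eta,\s_2}(\alpha_2),
\]
where $\alpha_1\otimes \alpha_2\mapsto \alpha$ under $\A(Z_1)\otimes \A(Z_2)\to \A(X)$. Next, applying the same corollary to $M_i = Z_i\cup(T_i\times D^2)$ with the element $\alpha_i\otimes 1$ and invoking Proposition \ref{g1relinvtprop}, which says $\Psi_{T^2\times D^2,\eta,\s} = (t-1)^{-1}$ up to $\pm t^n$, and then projecting via $\rho$, yields the identity noted just before the theorem,
\[
\rho\bigl(\Psi_{Z_i,\eta_i,\s_i}(\alpha_i)\bigr) = (t-1)\,\O_{M_i,T_i\times S^1,\eta_i,\s_i}(\alpha_i).
\]
Substituting this into the first display and applying $\rho$, which preserves $t$ and sends all other variables to $1$, gives
\[
\rho\bigl(\O_{X,T\times S^1,\eta,\s}(\alpha)\bigr) = (t-1)^2\,\O_{M_1}(\alpha_1)\cdot \O_{M_2}(\alpha_2).
\]
Since $(t-1)^2 = t\cdot(t^{1/2}-t^{-1/2})^2$, this matches the asserted formula modulo $\pm t^n$.

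The main bookkeeping issue will be the orientation conventions: as $\partial Z_1 = T^3 = -\partial Z_2$, the induced orientations on the common boundary are opposite, so one must check whether the factor coming from $Z_2$ appears literally as $(t-1)$ or as $(t^{-1}-1)$ after the identification of variables across the gluing. The two differ by $-t^{-1}$, which is absorbed by the $\pm t^n$ ambiguity, but the computation deserves care. A secondary point is that the lift $\alpha_i\in \A(Z_i)$ used when rewriting $\Psi_{Z_i}$ via $M_i$ must be the same one used in the $X$-decomposition, which is immediate since both uses invoke the inclusion $Z_i\hookrightarrow M_i$. Beyond this, no genuinely new analytic input is needed: once Proposition \ref{g1relinvtprop} and the general gluing theorem are in hand, the theorem reduces to a computation in the Novikov ring $\L(t)$.
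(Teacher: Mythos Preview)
Your proposal is correct and follows essentially the same route as the paper: both arguments combine the definition of $\O_X$ as a pairing of relative invariants along $T^3$ with Proposition \ref{g1relinvtprop} to rewrite $\rho(\Psi_{Z_i})$ as $(t-1)\,\O_{M_i}$, then multiply. The paper's version is slightly more explicit about the gluing map $f$, tracking that $f_*$ acts as conjugation $t\mapsto t^{-1}$ on the cyclic module $\L(t)$ (so the second factor contributes $(t^{-1}-1)$ before absorption into $\pm t^n$), which is exactly the orientation bookkeeping you anticipated.
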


In the higher-genus case we have the following.

\begin{theorem}\label{introgengfibsumthm} Let $X = M_1\#_{\Sigma_1= \Sigma_2} M_2$ be the fiber sum of two 4-manifolds $M_1$, $M_2$ along surfaces $\Sigma_1$, $\Sigma_2$ of genus $g>1$ and square 0. Let $\eta_1$, $\eta_2$, $\eta$ be 2-dimensional cohomology classes satisfying conditions analogous to those in the previous theorem, and choose \spinc structures $\s_1$, $\s_2$, and $\s$ restricting compatibly as before. If the Chern classes of each \spinc structure restrict to $\Sigma\times S^1$ as a class other than $2k\,PD[S^1]$ with $|k|\leq g-1$ then the {\OS} invariants of all manifolds involved vanish. Otherwise, writing $f$ for the gluing map $\Sigma_1\times S^1\to \Sigma_2\times S^1$, we have
\[
\rho(\O_{X,\Sigma\times S^1,\eta,\s}(\alpha)) = \sum_{\beta} \O_{M_1,\Sigma_1\times S^1,\eta_1,\s_1}(\alpha_1\otimes \beta)\cdot \O_{M_2,\Sigma_2\times S^1,\eta_2,\s_2}(\alpha_2\otimes f_*(\beta^\circ))\cdot u_{\beta,k}
\]
up to multiplication by $\pm t^n$.
\end{theorem}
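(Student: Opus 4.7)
The plan is to apply the definition \eqref{Odef} of the perturbed invariant to the decomposition $X = Z_1 \cup_{\Sigma\times S^1} Z_2$, with $Z_i = M_i \setminus (\Sigma_i \times D^2)$, and to express each $\O_{M_i,\Sigma_i\times S^1,\eta_i,\s_i}$ through the analogous decomposition $M_i = Z_i \cup_{\Sigma\times S^1}(\Sigma_i \times D^2)$. This reduces the product formula to a statement about expansion in a basis of the perturbed reduced Floer homology $HF^-_{red}(\Sigma\times S^1,\s_k;\R_\eta)$, where $2k = \langle c_1(\s),[\Sigma]\rangle$. The vanishing claim follows immediately from the adjunction inequality: when $c_1(\s)|_{\Sigma\times S^1}$ is not of the form $2k\,PD[S^1]$ with $|k|\leq g-1$, this perturbed Floer homology vanishes, so the relative invariants $\Psi_{Z_i,\eta_i,\s_i}$ and $\Psi_{\Sigma_i\times D^2,\eta_i,\s_i}$ are all zero and every $\O$ in sight vanishes.

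For the main identity I proceed in three steps. First, \eqref{Odef} gives
\[
\O_{X,\Sigma\times S^1,\eta,\s}(\alpha) = \langle \tau^{-1}\Psi_{Z_1,\eta_1,\s_1}(\alpha_1),\, \Psi_{Z_2,\eta_2,\s_2}(\alpha_2)\rangle,
\]
with the pairing computed in $HF^-_{red}(\Sigma\times S^1,\s_k;\R_\eta)$. Second, I compute this perturbed Floer homology and identify the relative invariants of the cap $\Sigma \times D^2$: for $\beta\in\B_k$ the element $\Psi_{\Sigma\times D^2,\eta,\s_k}(\beta)$ lands in the reduced homology, and as $\beta$ runs over $\B_k$ these form a basis, with the dual basis under the sesquilinear pairing given by $\Psi_{\Sigma_2\times D^2,\eta_2,\s_k}(f_*(\beta^\circ))$ (i.e., the corresponding invariants of the cap glued from the other side), modulo a polynomial correction $u_{\beta,k}$ in $t = e^{PD[\Sigma]}$ with constant term $1$ that is trivial except when $k=0$, where it reflects the $U$-torsion in the middle \spinc structure. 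Third, the analogous application of \eqref{Odef} to each $M_i$ gives
\[
\O_{M_i,\Sigma_i\times S^1,\eta_i,\s_i}(\alpha_i\otimes \beta) = \langle \tau^{-1}\Psi_{Z_i,\eta_i,\s_i}(\alpha_i),\,\Psi_{\Sigma_i\times D^2,\eta_i,\s_i}(\beta)\rangle.
\]
Hence the coefficients of $\Psi_{Z_2,\eta_2,\s_2}(\alpha_2)$ with respect to the basis $\{\Psi_{\Sigma_2\times D^2,\eta_2,\s_k}(f_*(\beta^\circ))\}_{\beta\in\B_k}$ are precisely the values $\O_{M_2,\Sigma_2\times S^1,\eta_2,\s_2}(\alpha_2\otimes f_*(\beta^\circ))$, and substituting this expansion into the formula for $\O_X$ produces the sum on the right of the claimed identity.

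The main obstacle is the second step: computing $HF^-_{red}(\Sigma\times S^1,\s_k;\R_\eta)$ for $|k|\leq g-1$ and verifying that the $\Psi_{\Sigma\times D^2,\eta,\s_k}(\beta)$ with $\beta\in\B_k$ span this homology freely with the described duality. The Ozsv\'ath-Szab\'o identification $HF^+(\Sigma\times S^1,\s_k)\cong H_*(\mathrm{Sym}^d(\Sigma))$ for $d = g-1-|k|$ provides a model of the $HF^+$ side, which must then be carefully transported to the $U$-completed, perturbed, reduced version together with its induced pairing, and matched against the relative invariants of $\Sigma\times D^2$ on the generators of $H_*(\mathrm{Sym}^d(\Sigma))$ furnished by $\B_k$. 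The case $k=0$ requires the greatest care, since $\mathrm{Sym}^{g-1}(\Sigma)$ contributes a nontrivially $U$-torsion summand and the pairing is no longer diagonal on the naive basis; the residual off-diagonal contributions package into the polynomial correction $u_{\beta,k}$.
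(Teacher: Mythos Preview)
Your strategy is essentially the paper's: use the definition of $\O$ on both $X=Z_1\cup Z_2$ and $M_i=Z_i\cup(\Sigma_i\times D^2)$, and bridge them via a basis expansion of the perturbed Floer homology of $\Sigma\times S^1$ with the cap relative invariants providing the basis. The paper formalizes this as Proposition~\ref{gengrelinvtprop}, which expresses $\rho(\Psi_{Z_i,\eta_i,\s_i}(\alpha_i))$ directly as $\sum_\beta \O_{M_i}(\alpha_i\otimes\tbeta)\cdot\beta$ in the explicit basis of $X(g,d)\otimes\L(t)$ coming from Theorem~\ref{groups1}; it then expands \emph{both} sides symmetrically and pairs using $\langle\tau^{-1}\beta_i,\,f_*(\gamma_j)\rangle=\delta_{ij}u_i$ with $\gamma_j=f_*^{-1}(\beta_j^\circ)$. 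Your version expands only the $Z_2$ side and substitutes, which is equivalent but leaves the orientation bookkeeping muddled: $\Psi_{Z_2}(\alpha_2)$ and $\Psi_{\Sigma_2\times D^2}(f_*(\beta^\circ))$ lie in $HF^-$ of oppositely oriented copies of $\Sigma_2\times S^1$, so the latter cannot literally be a ``basis'' for the former---they are dual via the pairing, and the actual basis you need is the one dual to \emph{that}. The paper resolves this by passing through the mapping cylinder $W_2=Z_2\cup C_f$, so that $f_*$ becomes the conjugate-linear map $\L(t)\to\overline{\L(t)}$, and by working with the explicit $\{\beta,\tbeta,\beta^\circ,\tbeta^\circ\}$ formalism rather than with the cap invariants themselves. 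Your identification of the $k=0$ correction as arising from the nonstandard $H_1$-action (Remark~\ref{H1-action}) is exactly right and matches the paper's $u_\beta$.
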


In this theorem, $\{\beta\}$ is a basis for $H_*(\mbox{Sym}^d \Sigma)$, $d = g-1-|k|$, as before, and $u_{\beta,k}$ is a polynomial in $t$ with constant coefficient 1, which is equal to 1 except possibly if $k = 0$.

\subsection{Examples} \subsubsection{Elliptic surfaces} For $n\geq 1$, let $E(n)$ denote the smooth 4-manifold underlying a simply-connected minimal elliptic surface with no multiple fibers and holomorphic Euler characteristic $n$.  In \cite{OSsymp}, Ozsv\'ath and Szab\'o calculated that $OS_{E(2)} = 1$, meaning that $\Phi_{E(2),\s}$ is trivial on all \spinc structures $\s$ with $c_1(\s)\neq 0$, while if $c_1(\s) = 0$ then $\Phi_{E(2),\s} = 1$. We infer {\it a posteriori} that $E(2)$ has simple type. 

In general, we have that $E(n)$ is diffeomorphic to the fiber sum of $n$ copies of the rational elliptic surface $E(1) = \cee P^2 \# 9\overline{\cee P}^2$, summed along copies of the torus fiber $F$ of the elliptic fibration, using the fibration structure to identify neighborhoods of the fibers. From Theorem \ref{introg1fibsumthm} we infer that the perturbed {\OS} invariant of $E(1)$, calculated with respect to the splitting along the boundary of a neighborhood of $F$ and using a \spinc structure whose Chern class restricts trivially to the complement of $F$, is given by the Laurent series $(t-1)^{-1}$, up to multiplication by $\pm t^n$. For other \spinc structures the perturbed invariant vanishes.

It is straightforward to deduce from this and Theorem \ref{g1thm} that for $n\geq 2$,
\[
OS_{E(n)} = (T - T^{-1})^{n-2},
\]
where $T$ is the class Poincar\'e dual to a regular fiber. In fact, Theorem \ref{g1thm} gives this after summing over rim tori using the homomorphism $\rho$ on the left hand side. Arguments based on the adjunction inequality \cite{OS3,OSsymp}, familiar from Seiberg-Witten theory \cite{GS}, show that only multiples of $T$ can contribute to $OS_{E(n)}$ and therefore application of $\rho$ is unnecessary. Likewise, the only ambiguity remaining in the formula above is an overall sign; the conjugation-invariance of $\Phi_{X,\s}$ when $b^+(X)\geq 2$ due to Ozsv\'ath and Szab\'o \cite{OS3} shows that $OS_{E(n)}$ must be a symmetric polynomial.
\subsubsection{Higher-genus sums} The elliptic surface $E(n)$ can be realized as the double branched cover of $S^2\times S^2$, branched along a surface obtained by smoothing the union of 4 parallel copies of $S^2\times\{pt\}$ and $2n$ copies of $\{pt\}\times S^2$. The projection $\pi_1:S^2\times S^2\to S^2$ to the first factor lifts to an elliptic fibration on $E(n)$, while projection $\pi_2$ on the second factor realizes $E(n)$ as a fibration with typical fiber a surface $\Sigma$ of genus $n-1$, which can be perturbed to be a Lefschetz fibration if desired. Note that $\Sigma$ intersects the fiber $F$ of the elliptic fibration in two (positive) points. Let $X_n = E(n)\#_\Sigma E(n)$ denote the fiber sum of two copies of $E(n)$ along $\Sigma$, and suppose $n\geq 3$. We wish to use Theorem \ref{gengthm1} to calculate the {\OS} invariants of $X_n$. 

A useful observation is that $E(n)$ has simple type by the example above. Corollary \ref{STcor} then shows that we can have a nontrivial contribution to $\rho(OS_{X_n})$ only when $|k| = g-1$, i.e., from \spinc structures $\s$ with $|\langle c_1(\s), [\Sigma]\rangle| = 2g-2 = 2n - 4$. From the preceding example and the fact that $[\Sigma].[F] = 2$, the right-hand side of \eqref{gengformula} in the case $|k| = g-1$ is equal to $\pm 1$, being the product of the invariants arising from $T^{\pm(n-2)}$. Since $T^{\pm(n-2)}$ is equal (up to sign) to the first Chern class $c_1(E(n))$, a convenient way to express these conclusions is that $OS_{X_n} = \pm K \pm K^{-1}$, where $K$ is the canonical class on $X_n$. This formula is true after summing over rim tori.

Note that $X_n$ is diffeomorphic to a minimal complex surface of general type, and therefore this calculation agrees with the corresponding one in Seiberg-Witten theory \cite{witten}.

\subsection{Organization} The first goal of the paper is to set up enough machinery for the proof of Theorem \ref{genproduct}. To this end, the next section recalls the definition of Heegaard Floer homology with twisted coefficients from \cite{OS1} and the corresponding constructions associated to 4-dimensional cobordisms in \cite{OS3}. Section \ref{relgradingsec} discusses a refinement of the relative grading on Heegaard Floer homology, available with twisted coefficients. Sections \ref{pairingsec}, \ref{H1actionsec}, and \ref{conjsec} extend other algebraic features of Heegaard Floer homology to the twisted-coefficient setting, including the pairing mentioned in Theorem \ref{genproduct} and the action on Floer homology by $H_1(Y;\zee)/tors$ which is useful in later calculations. With this machinery in place, section \ref{gluingsec} proves Theorem \ref{genproduct}. Section \ref{perturbsec} defines perturbed Heegaard Floer theory, and deals with the extension of many of the results in preceding sections to that case; in particular Theorem \ref{intropertproductthm}. After making the necessary Floer homology calculations in section \ref{calcsec}, section \ref{productformulasec} gives the proofs of Theorems \ref{introg1fibsumthm} and \ref{introgengfibsumthm}, and thence Theorem \ref{gengthm1} and \ref{g1thm}. We conclude with some remarks on manifolds of simple type in section \ref{simptypesec}.

\section{Preliminaries on Twisted Coefficients}

\subsection{Definitions}

We briefly recall the construction of the Heegaard Floer homology 
groups with ``twisted'' coefficients. For more details, the reader is
referred to \cite{OS1,OS2}. To a closed oriented 3-manifold $Y$ we can
associate a pointed Heegaard diagram $(\Sigma, \balpha,\bbeta,z)$
where $\Sigma$ is a surface of genus $g\geq 1$ and $\balpha =
\alpha_1,\ldots,\alpha_g$ and $\bbeta = \beta_1,\ldots,\beta_g$ are
sets of attaching circles for the two handlebodies in the Heegaard
decomposition. We consider intersection points between the
$g$-dimensional tori $T_\alpha = \alpha_1\times\cdots\times \alpha_g$
and $T_\beta = \beta_1\times\cdots\times\beta_g$ in the symmetric power
$Sym^g(\Sigma)$, which we assume intersect transversely. Recall that
the basepoint $z$, chosen away from the $\alpha_i$ and $\beta_i$,
gives rise to a map $s_z: T_\alpha\cap T_\beta \to Spin^c(Y)$. Given a
\spinc structure $\s$ on $Y$, and under suitable admissibility hypotheses on the Heegaard diagram, the generators for the Heegaard Floer
chain complex $CF^\infty(Y,\s)$ are pairs $[\x, i]$ where $i \in \zee$
and $\x\in T_\alpha\cap T_\beta$ satisfies $s_z(\x) = \s$.

The differential in $CF^\infty$ counts certain maps $u: D^2\to
Sym^g(\Sigma)$ of the unit disk in $\cee$ that connect pairs of
intersection points $\x$ and $\y$. That is to say, we consider maps
$u$ satisfying the boundary conditions:
\begin{eqnarray*}
u(e^{i\theta})\in T_\alpha \mbox{ for $\cos\theta \geq0$}
&&u(i) = \y\\
u(e^{i\theta})\in T_\beta \mbox{ for $\cos\theta \leq 0$}&&u(-i) = \x.
\end{eqnarray*}

For $g>2$ we let $\pi_2(\x,\y)$ denote the set of homotopy classes of
such maps; for $g=2$ we let $\pi_2(\x,\y)$ be the quotient of the set
of such homotopy classes by a further equivalence, the details of
which need not concern us (see \cite{OS1}).

There is a topological obstruction to the existence of any such disk
connecting $\x$ and $\y$, denoted $\epsilon(\x,\y)\in H_1(Y;\zee)$.
To any homotopy class $\phi\in\pi_2(\x,\y)$ we can associate the
quantity $n_z(\phi)$, being the algebraic intersection number between
$\phi$ and the subvariety $\{z\}\times Sym^{g-1}(\Sigma)$. The
following is a basic fact in Heegaard Floer theory:

\begin{prop}[\cite{OS1}]\label{disktop} Suppose $g>1$ and let $\x,\y\in T_\alpha\cap
T_\beta$. If $\epsilon(\x,\y)\neq 0$ then $\pi_2(\x,\y)$ is empty,
while if $\epsilon(\x,\y) = 0$ then there is an affine isomorphism
\[
\pi_2(\x,\y) = \zee\oplus H^1(Y;\zee),
\]
such that the projection $\pi_2(\x,\y)\to \zee$ is given by the map
$n_z$.
\end{prop}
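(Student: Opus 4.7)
The plan is to treat the two halves of the proposition in turn: first the existence criterion via the obstruction $\epsilon(\x,\y)$, then the affine identification of $\pi_2(\x,\y)$ when the obstruction vanishes. For the existence half, given any pair of paths $a\subset T_\alpha$ from $\x$ to $\y$ and $b\subset T_\beta$ from $\y$ to $\x$, their concatenation $\gamma=a*b$ is a loop in $Sym^g(\Sigma)$, and a disk in $\pi_2(\x,\y)$ exists iff $\gamma$ is null-homotopic in $Sym^g(\Sigma)$. The classical computation $\pi_1(Sym^g(\Sigma))\cong H_1(\Sigma;\zee)$ (abelian for $g\geq 1$) reduces this to the vanishing of $[\gamma]$ in $H_1(\Sigma;\zee)$. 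A different choice of $(a,b)$ alters $[\gamma]$ by an element of the image of $H_1(T_\alpha)\oplus H_1(T_\beta)\to H_1(\Sigma)$, so the intrinsic obstruction lies in $H_1(\Sigma;\zee)/(H_1(T_\alpha)+H_1(T_\beta))$. Mayer-Vietoris for the Heegaard splitting $Y=U_\alpha\cup_\Sigma U_\beta$ identifies this quotient canonically with $H_1(Y;\zee)$, and by construction the image of $[\gamma]$ is exactly $\epsilon(\x,\y)$.

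Now assume $\epsilon(\x,\y)=0$ and fix any $\phi_0\in\pi_2(\x,\y)$. Splicing at an interior point exhibits $\pi_2(\x,\y)$ as a torsor over $\pi_2(\x,\x)$, so it suffices to identify $\pi_2(\x,\x)$ with $\zee\oplus H^1(Y;\zee)$ in a way that sends $n_z$ to the first projection. I would proceed through the \emph{domain} construction: associate to $\phi\in\pi_2(\x,\x)$ the 2-chain $\mathcal{D}(\phi)$ on $\Sigma$ whose coefficient on each component of $\Sigma\setminus(\balpha\cup\bbeta)$ equals $n_w(\phi)$ for an interior point $w$ of that component. Because both endpoints are $\x$, the boundary $\partial\mathcal{D}(\phi)$ must be an integer linear combination of the $\alpha_i$ and $\beta_j$, and for $g>1$ the assignment $\phi\mapsto\mathcal{D}(\phi)$ is a bijection between $\pi_2(\x,\x)$ and the abelian group of all such 2-chains.

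This group of 2-chains splits naturally. The fundamental class $[\Sigma]$ (multiplicity one on every component) satisfies $n_z([\Sigma])=1$; subtracting $n_z(\phi)\cdot[\Sigma]$ from an arbitrary $\mathcal{D}(\phi)$ reduces it to a \emph{periodic domain}, by definition a 2-chain $P$ with $\partial P=\sum a_i\alpha_i+\sum b_j\beta_j$ and $n_z(P)=0$. The group of periodic domains is canonically isomorphic to $H_2(Y;\zee)\cong H^1(Y;\zee)$ via Poincar\'e duality: capping $\partial P$ with the compressing disks in the two handlebodies produces a closed surface in $Y$, and the inverse operation intersects a class in $H_2(Y)$ with $\Sigma$. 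This yields the advertised splitting $\pi_2(\x,\x)\cong\zee\oplus H^1(Y;\zee)$, in which the $\zee$-projection coincides by construction with $n_z$.

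The main technical obstacle is the bijectivity of the domain map $\phi\mapsto\mathcal{D}(\phi)$ for $g>1$, equivalently the computation of $H_2(Sym^g(\Sigma),T_\alpha\cup T_\beta)$ together with the verification that it is freely generated by the classes represented by $[\Sigma]$ (arising from the fiber of the Abel-Jacobi map $Sym^g(\Sigma)\to\mathrm{Jac}(\Sigma)$) and by the periodic domains. Once this step is in hand, the Poincar\'e-duality identification of periodic domains with $H^1(Y;\zee)$ and the compatibility of the decomposition with $n_z$ are essentially immediate.
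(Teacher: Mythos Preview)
The paper does not supply its own proof of this proposition; it is quoted from \cite{OS1} as background. Your argument is correct and is essentially the one given there: identify the obstruction via $\pi_1(Sym^g(\Sigma))\cong H_1(\Sigma)$ and Mayer--Vietoris, then analyze $\pi_2(\x,\x)$ through the domain map, splitting off the $\zee$ factor via $[\Sigma]$ and identifying the periodic domains with $H_2(Y)\cong H^1(Y)$. The step you flag as the technical obstacle---bijectivity of $\phi\mapsto\mathcal{D}(\phi)$---is precisely where the hypothesis $g>1$ enters (through the computation of $\pi_2'(Sym^g(\Sigma))\cong\zee$), and the paper's convention of passing to a further quotient when $g=2$ is exactly what makes the domain map injective in that borderline case.
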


We remark that if $\x = \y$, then the isomorphism in the above proposition is natural (not merely affine).

There is a natural ``splicing'' of homotopy classes
\[
\pi_2(\x,\y) \times \pi_2(\y,\mathbf{z})\to \pi_2(\x,\mathbf{z}),
\]
as well as an action
\[
\pi_2'(Sym^g(\Sigma_g)) \times \pi_2(\x,\y)\to \pi_2(\x,\y),
\]
where $\pi_2'$ denotes the second homotopy group divided by the action
of the fundamental group. (For $g>1$, $\pi_2'(Sym^g(\Sigma_g)) 
\cong \zee$, generated by a class $S$ with $n_z(S) = 1$. When $g>2$,
$\pi_2'(Sym^g(\Sigma_g)) = \pi_2(Sym^g(\Sigma_g))$.) The isomorphism
in the above proposition is affine in the sense that it respects the
splicing action by $\pi_2(\x,\x)$, under the natural identification $\pi_2(\x,\x) = \zee\oplus H^1(Y)$.

The ordinary ``untwisted'' version of Heegaard Floer homology takes
$CF^\infty$ to be generated (over $\zee$) by pairs $[\x, i]$ as above,
equipped with a boundary map such that the coefficient of $[\y,j]$ in
the boundary of $[\x,i]$ is the number of pseudo-holomorphic maps in
all homotopy classes $\phi\in\pi_2(\x,\y)$ having moduli spaces of formal dimension 1 and $n_z(\phi) = i-j$.
The twisted version is similar, but
where one keeps track of all possible homotopy data associated to
$\phi$. In light of the above proposition, this means that we should
form a chain complex freely generated by intersection points $\x$ as a
module over the group ring of $\zee\oplus H^1(Y)$, or
equivalently by pairs $[\x,i]$ over the group ring of $H^1(Y)$.
Following \cite{OS2}, we define:

\begin{definition} An {\em additive assignment} for the diagram
$(\Sigma,\balpha,\bbeta,z)$ is a collection of functions
\[
A_{\x,\y}: \pi_2(\x,\y)\to H^1(Y;\zee)
\]
that satisfies
\begin{enumerate}
\item $A_{\x,\z}(\phi*\psi) = A_{\x,\y}(\phi) + A_{\y,\z}(\psi)$
whenever $\phi\in\pi_2(\x,\y)$ and $\psi\in\pi_2(\y,\z)$.
\item $A_{\x,\y}(S*\phi) = A_{\x,\y}(\phi)$ for $S\in\pi_2'(Sym^g(\Sigma_g))$.
\end{enumerate}
\end{definition}

We will drop the subscripts from $A_{\x,\y}$ whenever possible. It is
shown in \cite{OS2} how a certain finite set of choices (a
``complete set of paths'') gives rise to an additive assignment in the
above sense. We can also assume that $A_{\x,\x}: \pi_2(\x,\x)\cong \zee\oplus H^1(Y)\to H^1(Y)$ is the natural projection on the second factor.

\begin{definition}\label{twistcxdef} Let $(\Sigma,\balpha,\bbeta,z)$ be a pointed
Heegaard diagram for $Y$ and $\s\in Spin^c(Y)$. Fix an additive assignment
$A$ for the diagram. The twisted Heegaard Floer chain complex
$CF^\infty(Y,\s;\zee[H^1(Y)])$ is the module freely generated
over $\zee[H^1(Y)]$ by pairs $[\x,i]$, with differential
$\partial^\infty$ given by
\[
\partial^\infty[\x,i] = \sum_{y\in T_\alpha\cap T_\beta}
\sum_{\begin{array}{c}\mbox{\scriptsize $\phi\in\pi_2(\x,\y)$}\\ \mbox{\scriptsize $\mu(\phi) = 1$}\end{array}}
\#\widehat{\mathcal{M}}(\phi)\cdot e^{A(\phi)} [\y, i-n_z(\phi)],
\]
where the symbol $e^{A(\phi)}$ indicates the variable in $\zee[H^1(Y)]$ corresponding to $A(\phi)$.
\end{definition}
Here $\mathcal{M}(\phi)$ denotes the space of holomorphic disks in the 
homotopy class $\phi$, where ``holomorphic'' is defined relative to 
an appropriately generic path of almost-complex structure on 
$Sym^g(\Sigma_g)$. For such a path, $\mathcal{M}(\phi)$ is a smooth 
manifold of dimension given by a Maslov index $\mu(\phi)$. There is 
an action of $\arr$ on  $\mathcal{M}(\phi)$ by reparametrization of the disk, and $\widehat{\mathcal{M}}(\phi)$ denotes 
the quotient of $\mathcal{M}(\phi)$ by this action. When $\mu(\phi) = 
1$, $\widehat{\mathcal{M}}(\phi)$ is a compact, zero-dimensional manifold. An appropriate choice 
of ``coherent orientation system'' serves to orient the points of  
$\widehat{\mathcal{M}}(\phi)$ in this case, and 
$\#\widehat{\mathcal{M}}(\phi)$ denotes the signed count of these points.
It is shown in \cite{OS1,OS2} that under appropriate admissibility
hypotheses on the diagram $(\Sigma,\balpha,\bbeta,z)$ the chain 
homotopy type of $CF^\infty(Y,\s;\zee[H^1(Y)])$ is an invariant of 
$(Y,\s)$.

As in the introduction, in much of what follows we will write $R_Y$ for 
the ring $\zee[H^1(Y)]$, or simply $R$ when the underlying 
3-manifold is apparent from context. Note that by choosing a basis 
for $H^1(Y)$ we can identify $R$ with the ring of Laurent 
polynomials in $b_1(Y)$ variables.

By following the usual constructions of Heegaard Floer homology, we 
obtain other variants of the above with coefficients in $R_Y$: namely 
by considering only generators $[\x,i]$ with $i<0$ we obtain a 
subcomplex $CF^-(Y,\s;R)$ whose quotient complex is $CF^+(Y,\s; R)$, with 
associated homology groups $HF^-$ and $HF^+$ respectively. There is 
an action $U: [\x,i]\mapsto[\x,i-1]$ on $CF^\infty$ as usual; the 
kernel of the induced action on $CF^+$ is written $\widehat{CF}$ with 
homology $\widehat{HF}(Y,\s;R)$. There is a relative grading on the Floer complex with respect to which $U$ decreases degree by 2; we will discuss gradings further in section \ref{relgradingsec}.

Given any module $M$ for $R_Y$ we can form Heegaard Floer 
homology with coefficients in $M$ by taking the homology of the 
complex $CF\otimes_R M$. In particular if $M = \zee$, equipped with 
the action of $R_Y$ by which every element of $H^1(Y)$ acts as 
the identity, we recover the ordinary untwisted theory.

For use in later sections, we introduce the following notion of {\it conjugation} of $R_Y$-modules. First, observe that the automorphism $x\mapsto -x$ of $H^1(Y)$ induces an automorphism $R_Y\to R_Y$ that we refer to as conjugation, and write as $r\mapsto \bar{r}$ for $r\in R_Y$. Now if $M$ is a module for $R_Y$, we let $\overline{M}$ denote the additive group $M$ equipped with the ``conjugate'' module structure in which module multiplication is given by 
\[
r\otimes m\mapsto \bar{r}\cdot m
\]
for $r\in R_Y$ and $m\in \overline{M}$. 

\subsection{Twisted cobordism invariants}\label{cobordismsec}

We now sketch the construction and main properties of 
twisted-coefficient Heegaard Floer invariants associated to 
cobordisms, which can be found in greater detail in \cite{OS3}. 
Recall that if $W: Y_1\to Y_2$ is an oriented 4-dimensional cobordism 
and $M$ is a module for $R_1 :=R_{Y_1} = \zee[H^1(Y_1)]$, then 
there is an induced module $M(W)$ for $R_2 = R_{Y_2}$ defined as follows. 
Let
\[
K(W) = \ker(H^2(W,\partial W)\to H^2(W))
\]
be the kernel of the map in the long exact sequence for the pair 
$(W,\partial W)$: then $\zee[K(W)]$ is a module for $R_1$ and $R_2$ 
via the coboundary maps $H^1(Y_i)\to K(W)\subset H^2(W,\partial W)$. Define
\[
M(W) = \overline{M}\otimes_{R_1} \zee[K(W)].
\]
Then $M(W)$ is a module for $R_2$ in the obvious way. The reason for the appearance of the conjugate module $\overline{M}$ above has to do with the fact that the orientation of $W$ induces the opposite orientation on $Y_1$ from the given one, and will be explained more fully in the next section.

Ozsv\'ath and Szab\'o show in \cite{OS3} how to associate to a
cobordism $W$ as above with \spinc structure $\s$ a homomorphism
\[
F_{W,\s}^\circ: 
HF^\circ(Y_1,\s_1; M)\to HF^\circ(Y_2,\s_2;M(W))\]
(where $\s_i$ denotes the 
restriction of $\s$ to $Y_i$, and $\circ$ indicates a map between each of the 
varieties of Heegaard Floer homology, respecting the long exact 
sequences relating them). This is defined as a composition
\[
F^\circ_W = E^\circ\circ H^\circ\circ G^\circ,
\]
where $G^\circ$ is associated to the 1-handles in $W$, $H^\circ$ to 
the 2-handles, and $E^\circ$ to the 3-handles. Note that the 
coefficient module remains unchanged by cobordisms consisting of 1- or 
3-handle additions. Indeed, such cobordisms induce homomorphisms in 
an essentially formal way, so we simply refer the reader to \cite{OS3} 
for the definition of $E^\circ$ and $G^\circ$. 

Suppose that $W$ is a cobordism consisting of 2-handle additions, so 
that we can think of $W$ as associated to surgery on a framed link 
$L\subset Y_1$. In this situation, Ozsv\'ath and Szab\'o  
construct a ``Heegaard triple'' $(\Sigma,\balpha,\bbeta,\bgamma, z)$ associated to $W$. This diagram describes three 
3-manifolds $Y_{\alpha\beta}$, $Y_{\beta\gamma}$ and 
$Y_{\alpha\gamma}$ obtained by using the indicated circles on 
$\Sigma$ as attaching circles, such that 
\[
Y_{\alpha\beta} = Y_1, \qquad Y_{\beta\gamma} = \#^{k}S^1\times S^2, 
\qquad Y_{\alpha\gamma} = Y_2,
\]
where $k$ is the genus of $\Sigma$ minus the number of components of 
$L$. In fact the diagram $(\Sigma, \balpha,\bbeta,\bgamma,z)$ 
describes a 4-manifold $X_{\alpha\beta\gamma}$ in a natural way, 
whose boundaries are the three manifolds above. Furthermore, in the 
current situation, $X_{\alpha\beta\gamma}$ is obtained from $W$ by 
removing the regular neighborhood of a 1-complex (see \cite{OS3}). 

We can arrange that the top-dimensional generator of 
$HF^{\leq 0}(Y_{\beta\gamma}, \s_0; \zee)\cong 
\Lambda^* 
H^1(Y_{\beta\gamma};\zee)\otimes\zee[U]$ is represented by an intersection point 
$\Theta\in T_\beta\cap T_\gamma$ (here $\s_0$ denotes the \spinc 
structure on $\#^k S^1\times S^2$ having $c_1(\s_0) = 0$). The map $F^\circ$ is defined by 
counting holomorphic triangles, with the aid of another additive 
assignment. To describe this, suppose $\x\in T_\alpha\cap T_\beta$, 
$\y\in T_\beta\cap T_\gamma$, and $\w\in T_\alpha\cap T_\gamma$ are 
intersection points arising from a Heegaard triple $(\Sigma, 
\balpha,\bbeta,\bgamma,z)$. Let $\Delta$ denote a standard 2-simplex, 
and write $\pi_2(\x,\y,\w)$ for the set of 
homotopy classes of maps $u: \Delta\to Sym^g(\Sigma)$ that send 
the boundary arcs of $\Delta$ into $T_\alpha$, $T_\beta$, and 
$T_\gamma$ respectively, under a clockwise ordering of the boundary 
arcs $e_\alpha$, $e_\beta$, and $e_\gamma$ of $\Delta$, and such that
\[
u(e_\alpha\cap e_\beta) = \x, \qquad u(e_\beta\cap e_\gamma) = \y, 
\qquad u(e_\alpha\cap e_\gamma) = \w.
\]
Again there is a topological obstruction $\epsilon(\x,\y,\w)\in 
H_1(X_{\alpha\beta\gamma}; \zee)$ that 
vanishes if and only if $\pi_2(\x,\y,\w)$ is nonempty. The analogue of 
Proposition \ref{disktop} in this context is the following. 

\begin{prop}[\cite{OS3}]\label{triangletop} Let 
$(\Sigma,\balpha,\bbeta,\bgamma,z)$ be a pointed Heegaard triple as 
above, and $X_{\alpha\beta\gamma}$ the associated 4-manifold. Then 
whenever $\epsilon(\x,\y,\w)=0$  we have an (affine) isomorphism
\[
\pi_2(\x,\y,\w) \cong \zee\oplus H_2(X_{\alpha\beta\gamma};\zee)
\]
where the projection to $\zee$ is given by $\psi\mapsto n_z(\psi)$. 
\end{prop}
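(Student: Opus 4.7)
The plan is to model the argument closely on the proof of Proposition \ref{disktop}. First, since $\epsilon(\x,\y,\w)=0$ we are given at least one homotopy class $\psi_0\in\pi_2(\x,\y,\w)$, so the goal is to exhibit $\pi_2(\x,\y,\w)$ as a torsor over $\zee\oplus H_2(X_{\alpha\beta\gamma};\zee)$ with the $\zee$-projection realized by $n_z$. I would set up the map $\psi\mapsto \psi-\psi_0$, interpreted via the natural splicing: bend $\psi$ and the ``reverse'' of $\psi_0$ together along the three boundary edges $e_\alpha,e_\beta,e_\gamma$ to produce a map of a sphere into $Sym^g(\Sigma)$, after which the choice of base triangle trivializes the torsor structure.

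Next I would analyze the group of ``difference classes'' obtained this way. Any such closed class projects to a well-defined domain on $\Sigma$ whose boundary is an integer combination of full $\alpha$-, $\beta$-, and $\gamma$-curves; call such domains \emph{triply periodic}. The multiple of the sphere generator $S\in\pi_2'(Sym^g(\Sigma_g))$ is detected exactly by $n_z$, accounting for the $\zee$-summand. Modding out by $[\Sigma]$ (the unique triply periodic domain with $n_z\neq 0$ corresponding to $S$), I then need to identify the group of triply periodic domains avoiding $z$ with $H_2(X_{\alpha\beta\gamma};\zee)$.

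For this identification I would use the standard handle description: $X_{\alpha\beta\gamma}$ is obtained from $\Sigma$ times a pair of pants $P$ by gluing three compression bodies $U_\alpha$, $U_\beta$, $U_\gamma$ along the three boundary components $\Sigma\times e_\alpha$, $\Sigma\times e_\beta$, $\Sigma\times e_\gamma$. Given a triply periodic domain $\mathcal P$ on $\Sigma\times\{pt\}\subset \Sigma\times P$, the $\alpha$-, $\beta$-, $\gamma$-components of $\partial\mathcal P$ bound compressing disks in the three compression bodies; capping off yields a 2-cycle $\Pi(\mathcal P)$ in $X_{\alpha\beta\gamma}$. A Mayer--Vietoris computation for the decomposition $X_{\alpha\beta\gamma}=(\Sigma\times P)\cup U_\alpha\cup U_\beta\cup U_\gamma$, together with the observation that $H_2(U_\star)=0$ and $H_2(\Sigma\times P)=H_2(\Sigma)\oplus H_0(\Sigma)\otimes H_2(P)$, shows that $\Pi$ descends to a well-defined isomorphism between the triply periodic domains modulo $[\Sigma]$ and $H_2(X_{\alpha\beta\gamma};\zee)$.

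The main obstacle is verifying this last identification, namely that every 2-cycle of $X_{\alpha\beta\gamma}$ is homologous to a cycle built out of a triply periodic domain capped by disks in the compression bodies, and that no nonzero triply periodic domain bounds. This is the triangle-analogue of the usual bijection between periodic domains on a Heegaard diagram for $Y$ and $H_2(Y;\zee)\oplus\zee\langle [\Sigma]\rangle$; once it is in hand, affineness of the action (respecting splicing by elements of $\pi_2(\x,\x)$, $\pi_2(\y,\y)$, $\pi_2(\w,\w)$) follows formally from the naturality statement after Proposition \ref{disktop}, completing the argument.
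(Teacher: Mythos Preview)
The paper does not supply its own proof of this proposition; it is quoted from \cite{OS3} and used as a black box. Your sketch is essentially the standard argument from that reference: fix a base triangle to make $\pi_2(\x,\y,\w)$ into a torsor, identify difference classes with triply periodic domains, split off the $\zee$ via $n_z$, and cap the remaining domains to produce classes in $H_2(X_{\alpha\beta\gamma})$, with a Mayer--Vietoris argument for the pair-of-pants decomposition establishing the bijection. There is nothing to compare against here, and your outline is correct.
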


There is an obvious ``splicing'' action on homotopy classes of 
triangles by disks on each corner; the above identification respects 
this action. 

Recall from \cite{OS2} that the basepoint $z$ gives rise to a map 
\[
s_z:\coprod_{\x,\y,\w}\pi_2(\x,\y,\w)\to Spin^c(X_{\alpha\beta\gamma}),
\]
such that triangles $\psi\in\pi_2(\x,\y,\w)$ and 
$\psi'\in\pi_2(\x',\y',\w')$ have $s_z(\psi) = s_z(\psi')$ if and 
only if there exist disks $\phi_\x\in\pi_2(\x,\x')$, 
$\phi_\y\in\pi_2(\y,\y')$ and $\phi_{\w}\in\pi_2(\w,\w')$ with 
$\psi' = \psi + \phi_\x + \phi_\y + \phi_\w$. In this case $\psi$ 
and $\psi'$ are said to be {\it \spinc equivalent}. Note that in 
case $(\Sigma,\balpha,\bbeta,\bgamma,z)$ describes a 2-handle 
cobordism $W$ as previously, we can think of $s_z$ as a function
\[
s_z : \coprod_{\x,\w} \pi_2(\x,\Theta,\w)\to Spin^c(W).
\]

\begin{definition}\label{assndef} An {\em additive assignment} for a Heegaard 
triple $(\Sigma,\balpha,\bbeta,\bgamma,z)$ describing a 2-handle cobordism 
$W: Y_1\to Y_2$ as above 
is a function 
\[
A_W: \coprod_{\s\in Spin^c(W)} s_z^{-1}(\s)\to K(W)
\]
obtained in the following manner. For a fixed $\psi_0\in 
s_z^{-1}(\s)$, let $\psi = \psi_0 + \phi_{\alpha\beta} + 
\phi_{\beta\gamma} + \phi_{\alpha\gamma}$ be an arbitrary element of 
$s_z^{-1}(\s)$. Then set
\[
A_W(\psi) = \delta(-A_1(\phi_{\alpha\beta}) + A_2(\phi_{\alpha\gamma}))
\]
where $A_i$ are additive assignments for $Y_i$ and $\delta: 
H^1(\partial W)\to H^2(W,\partial W)$ is the 
coboundary from the long exact sequence of $(W,\partial W)$. 
\end{definition}

We are now in a position to define the map on Floer homology induced 
by $W$ (given additive assignments on $Y_1$, $Y_2$, and $W$). We 
again refer to \cite{OS2,OS3} for the details required to make full 
sense of the following.

\begin{definition}\label{twistmapdef} For a triple $(\Sigma,\balpha,\bbeta,\bgamma,z)$ 
describing a 2-handle cobordism $W$ with \spinc structure $\s$, we 
define
\[
F_{W,\s}^\circ: HF^\circ(Y_1,\s_1;M)\to HF^\circ(Y_2,\s_2;M(W)),
\]
where $\s_i = \s|_{Y_i}$, to be the map induced on homology by the chain map
\[
[\x,i]\mapsto \sum_{\w\in T_\alpha\cap T_{\gamma}} 
\sum_{\begin{array}{c}\mbox{\scriptsize $\psi\in\pi_2(\x,\Theta,\w)$}\\ 
\mbox{\scriptsize $\mu(\psi) = 
0$}\end{array}} \#{\mathcal M}(\psi) \cdot [\w,i-n_z(\psi)]\otimes 
e^{A_W(\psi)}.
\]
Here $\mu(\psi)$ denotes the expected dimension of the moduli space 
${\mathcal M}(\psi)$ of pseudo-holomorphic triangles in the homotopy 
class $\psi$, and $\#{\mathcal M}(\psi)$ indicates the signed count of 
points in a compact oriented 0-dimensional manifold.
\end{definition}

We should note that while the Floer homology $HF^\circ(Y,\s; M)$ does 
not depend on the additive assignment $A_Y$, the map $F_{W,\s}$ does 
depend on the choice of $A_W$ as in definition \ref{assndef} through 
the reference triangle $\psi_0$. Changing this choice 
has the effect of pre-  (post-) composing $F_{W}$ with the action of 
an element of $H^1(Y_1)$ (resp $H^1(Y_2)$), which in turn 
act in $M(W)$ via the coboundary. Likewise the 
generator $\Theta$ is determined only up to sign, so that $F_{W}$ has 
a sign indeterminacy as well. Following \cite{OS3}, we let 
$[F_{W,\s}^\circ]$ denote the orbit of $F_{W,\s}^\circ$ under the 
action of $H^1(Y_1)\oplus H^1(Y_2)$. 

With the conventions employed here $F_{W,\s}^\circ$ is ``antilinear'' with respect to the action of $R_{Y_1}$, meaning that $F_{W,\s}^\circ(r\,\xi) = \bar{r}\,F_{W,\s}^\circ(\xi)$ for $r\in R_{Y_1}$.

\subsection{Composition law} \label{complaw}

An advantage to using twisted coefficent modules for Heegaard 
Floer homology is the availability of a refined composition law in 
this situation. To describe this, we must first understand the 
behavior of the coefficient modules themselves under composition of 
cobordisms. The following lemma will be useful in formulating results 
to come; as usual, ordinary (co)homology is taken with integer coefficients.

\begin{lemma}\label{coefflemma} Let $W = W_1\cup_{Y_1} W_2$ be the composition of two 
cobordisms $W_1:Y_0\to Y_1$ and $W_2:Y_1\to Y_2$. Define
\[
K(W,Y_1) = \ker[\rho_1\oplus\rho_2: H^2(W,\partial W) \to 
H^2(W_1)\oplus H^2(W_2)],
\]
where $\rho_i$ denotes the restriction map $H^2(W,\partial W)\to 
H^2(W_i)$. Then 
\[
\zee[K(W_1)]\otimes_{\zee[H^1(Y_1)]} \zee[K(W_2)] \cong \zee[K(W,Y_1)]
\]
as modules over $\zee[H^1(Y_0)]$ and $\zee[H^1(Y_2)]$.
\end{lemma}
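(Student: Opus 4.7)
The plan is to reduce the lemma to a purely topological statement about abelian groups and then prove that statement by combining Poincar\'e-Lefschetz duality with a Mayer-Vietoris sequence for the decomposition $W = W_1 \cup_{Y_1} W_2$.

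First I would unwind the tensor product on the left-hand side. Writing $\delta_i : H^1(Y_1) \to H^2(W_i, \partial W_i)$ for the coboundary in the LES of the pair $(W_i, \partial W_i)$, so that $\delta_i$ lands in $K(W_i)$ by exactness, the relation $g \cdot x \otimes y = x \otimes g \cdot y$ translates into
\[
\zee[K(W_1)] \otimes_{\zee[H^1(Y_1)]} \zee[K(W_2)] \;\cong\; \zee\bigl[\bigl(K(W_1)\oplus K(W_2)\bigr)/L\bigr],
\]
where $L$ is the image of the ``anti-diagonal'' coboundary $h \mapsto (\delta_1 h, -\delta_2 h)$. Thus the lemma reduces to producing an isomorphism $(K(W_1)\oplus K(W_2))/L \cong K(W, Y_1)$ of abelian groups that intertwines the natural $\zee[H^1(Y_0)]$- and $\zee[H^1(Y_2)]$-actions on each side.

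Next I would pass to homology. Poincar\'e-Lefschetz duality in dimension four gives $H^2(Z, \partial Z) \cong H_2(Z)$ and $H^2(Z) \cong H_2(Z, \partial Z)$ for any compact oriented 4-manifold $Z$, and intertwines the restriction $H^2(Z, \partial Z) \to H^2(Z)$ with the natural map $H_2(Z) \to H_2(Z, \partial Z)$. From the LES of the pair one obtains
\[
K(W_i) \;\cong\; \mathrm{image}\bigl(H_2(\partial W_i;\zee)\to H_2(W_i;\zee)\bigr),
\]
and analogously $K(W, Y_1)$ is identified with $\ker\bigl(H_2(W)\to H_2(W_1,\partial W_1)\oplus H_2(W_2,\partial W_2)\bigr)$. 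Under the same duality, the coboundary $\delta_i$ corresponds to the inclusion-induced map $H_2(Y_1)\to H_2(W_i)$.

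The main computational input is then the Mayer-Vietoris sequence for $W = W_1 \cup_{Y_1} W_2$:
\[
H_2(Y_1) \xrightarrow{(i_{1*},\,-i_{2*})} H_2(W_1)\oplus H_2(W_2) \xrightarrow{\;+\;} H_2(W) \to H_1(Y_1).
\]
Chasing through the diagram shows that an element of $H_2(W)$ has trivial image in $H_2(W_i,\partial W_i)$ for both $i$ precisely when it lies in the image of $H_2(\partial W_1)\oplus H_2(\partial W_2)\to H_2(W)$, which in turn factors through $K(W_1)\oplus K(W_2)$. Hence addition defines a surjection $K(W_1)\oplus K(W_2)\twoheadrightarrow K(W, Y_1)$, whose kernel is, by exactness of MV, the image of $H_2(Y_1)$ under $(i_{1*}, -i_{2*})$. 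Translating back through Step 2, this kernel is exactly $L$.

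Finally I would verify $\zee[H^1(Y_j)]$-linearity for $j = 0, 2$: the action of $h \in H^1(Y_j)$ via $\delta_{W_i}$ followed by the inclusion $K(W_i)\to K(W, Y_1)$ agrees with its action via $\delta_W$, simply by naturality of coboundaries under the commutative diagram of inclusions $Y_j\hookrightarrow W_i \hookrightarrow W$. The main obstacle is keeping the signs straight: one must check that the sign in $(i_{1*},-i_{2*})$ coming from the MV convention matches the sign in $L$ coming from the tensor product relation after passing through Lefschetz duality. This is a standard consequence of the compatibility between the MV boundary and the coboundary $\delta$ under duality, but it deserves to be written out carefully since getting it wrong would yield the wrong quotient.
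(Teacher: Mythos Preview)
Your proposal is correct and follows essentially the same route as the paper: both reduce the tensor product to the quotient $(K(W_1)\oplus K(W_2))/\mathrm{im}(H^1(Y_1))$ and then identify this with $K(W,Y_1)$ via the Mayer--Vietoris sequence for $W=W_1\cup_{Y_1}W_2$ under Poincar\'e--Lefschetz duality. The paper phrases the argument entirely in cohomology (calling the relevant row ``the Poincar\'e dual of the Mayer--Vietoris sequence'') and checks in one stroke that $f^{-1}(K(W,Y_1))=K(W_1)\oplus K(W_2)$, whereas you pass explicitly to homology and separate the surjectivity and kernel checks; your version is slightly more careful about signs and about the $\zee[H^1(Y_j)]$-linearity, which the paper leaves implicit.
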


\begin{proof} We have
\[
\zee[K(W_1)]\otimes_{\zee[H^1(Y_1)]} \zee[K(W_2)] \cong 
\zee\left[\frac{K(W_1)\oplus K(W_2)}{H^1(Y_1)}\right],
\]
so the claim amounts to exhibiting an isomorphism 
\[
\frac{K(W_1)\oplus K(W_2)}{H^1(Y_1)} \cong K(W, Y_1).
\]
To see this, consider the diagram
\begin{diagram}
H^1(Y_1) &\rTo & H^2(W_1,\partial W_1)\oplus H^2(W_2,\partial W_2) & 
\rTo^f & H^2(W, \partial W) \\
 & & & & \dTo^{\rho_1\oplus\rho_2} \\
 & & & & H^2(W_1)\oplus H^2(W_2),
\end{diagram}
where the horizontal row is (the Poincar\'e dual of) the 
Mayer-Vietoris sequence. Write
\[
i_*: H^2(W_1,\partial W_1)\to H^2(W,\partial W) 
\qquad\mbox{and}\qquad j_*: H^2(W_2,\partial W_2)\to H^2(W,\partial W)
\]
for the components of $f$; then it is not hard to see that
\[
\rho_1\circ i_*:  H^2(W_1,\partial W_1)\to H^2(W_1) 
\qquad\mbox{and}\qquad \rho_2\circ j_*: H^2(W_2,\partial W_2)\to 
H^2(W_2)
\]
agree with the maps induced by inclusion, while
\[
\rho_2\circ i_* = 0 \qquad\mbox{and}\qquad \rho_1\circ j_* = 0.
\]
From this it is easy to deduce that 
$f^{-1}(K(W, Y_1)) = K(W_1)\oplus K(W_2)$,
from which the lemma follows.
\end{proof}

\begin{remark}\label{spincrestriction} If $W$ is a cobordism between homology spheres, or 
more generally if $H^2(W,\partial W)\to H^2(W)$ is an isomorphism, 
then there is an identification
\[
K(W,Y_1) = \ker[H^2(W)\to H^2(W_1)\oplus H^2(W_2)],
\]
the kernel of the restriction map in the ordinary Mayer-Vietoris 
sequence in cohomology. In this case if $\s_1$ and $\s_2$ are \spinc structures on 
$W_1$ and $W_2$, then $K(W,Y_1)$ parametrizes \spinc structures $\s$ on $W$ 
such that $\s|_{W_i} = \s_i$ (when that set is nonempty). In the case of a closed 4-manifold $X$, the module $M_{X,Y}$ of the introduction is simply $\zee[K(W,Y)]$ where $W$ is obtained from $X$ by removing a 4-ball on each side of $Y$.
\end{remark}

When regarding $W$ as a single cobordism the group relevant to 
twisted coefficient modules is $K(W)$, while if $W = W_1\cup W_2$ is 
viewed as a composite the coefficient modules change by tensor product 
with the group ring of $K(W,Y_1)$ (in light of the lemma above). By 
commutativity of the diagram
\begin{diagram}
H^2(W,\partial W) &\rTo & H^2(W) \\
& \rdTo^{\rho_1\oplus\rho_2} & \dTo \\
&& H^2(W_1)\oplus H^2(W_2),
\end{diagram}
there is a natural inclusion $\iota : K(W)\to K(W,Y_1)$. This gives rise to a 
projection map 
\[
\Pi : \zee[K(W,Y_1)]\to \zee[K(W)],
\]
namely (c.f. \cite{OS3})
\[
\Pi(e^w) = \left\{ \begin{array}{ll} e^w & \mbox{if $w = \iota(v)$ for 
some $v$} \\ 0 & \mbox{otherwise.}\end{array}\right.
\]

Thus, if $M$ is a module for $\zee[H^1(Y_0)]$ we obtain a map
\[
\Pi_M: M(W_1)(W_2)\to M(W)
\]
by tensor product of the identity with $\Pi$ under the identifications
\[
M(W_1)(W_2) = \overline{M}\otimes_{\zee[H^1(Y_0)]} \zee[K(W,Y_1)] 
\qquad\mbox{and}\qquad M(W) = \overline{M}\otimes_{\zee[H^1(Y_0)]}\zee[K(W)].
\]

The refined composition law for twisted coefficients can be stated as 
follows.

\begin{theorem}[Theorem 3.9 of \cite{OS3}]\label{complaw} Let $W = W_1\cup_{Y_1} W_2$ be 
a composite cobordism as above with \spinc structure $\s$. Write 
$\s_i = \s|_{W_i}$. Then there are choices of representatives for the 
various maps involved such that
\[
[F_{W,\s}^\circ] = [\Pi_M\circ F^\circ_{W_2,\s_2}\circ 
F^\circ_{W_1,\s_1}].
\]
More generally, if $h\in H^1(Y_1)$ then for these choices we have
\[
[F_{W,\s - \delta h}^\circ] = [\Pi_M\circ F_{W_2,\s_2}^\circ\circ 
e^h\cdot F_{W_1,\s_1}^\circ],
\]
where $\delta h$ is the image of $h$ under the Mayer-Vietoris 
coboundary $H^1(Y_1)\to H^2(W)$.
\end{theorem}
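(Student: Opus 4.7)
The plan is to establish the identity via a holomorphic quadrilateral count on a Heegaard multi-diagram, following the strategy for the untwisted composition law in \cite{OS3}, while carefully tracking the twisted coefficient data. First I would reduce to the case that $W_1$ and $W_2$ are both 2-handle cobordisms; the 1-handle maps $G^\circ$ and 3-handle maps $E^\circ$ leave coefficient modules untouched and commute with 2-handle maps in an essentially formal way, so the substantive content lies in the 2-handle part.

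For that case I would choose an admissible Heegaard quadruple $(\Sigma,\balpha,\bbeta,\bgamma,\bgamma',z)$ such that the subtriple $(\balpha,\bbeta,\bgamma)$ describes $W_1$, the subtriple $(\balpha,\bgamma,\bgamma')$ describes $W_2$, and the subtriple $(\balpha,\bbeta,\bgamma')$ describes the composite $W$, where $\bgamma'$ is obtained from $\bgamma$ by the handleslides encoded in $(\bbeta,\bgamma,\bgamma')$. With respect to such a diagram, the composition $F^\circ_{W_2,\s_2}\circ F^\circ_{W_1,\s_1}$ is represented at the chain level by counting pairs of holomorphic triangles sharing an edge in $T_\gamma$. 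The associativity argument for Heegaard Floer triangle maps then produces a chain-homotopy between this count and the triangle count defined via $(\balpha,\bbeta,\bgamma')$, by examining the two types of ends of one-parameter families of holomorphic rectangles on the quadruple, whose ``necks'' degenerate along either $T_\beta$ or $T_\gamma$.

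The remaining work is to promote this identity to an equality of twisted coefficient modules. Given additive assignments $A_{W_1}$ and $A_{W_2}$ (with chosen reference triangles), concatenation of triangles induces a well-defined assignment on rectangles by additivity, so the rectangle count naturally takes values in $\zee[K(W_1)]\otimes_{R_{Y_1}}\zee[K(W_2)]$, which by Lemma \ref{coefflemma} is identified with $\zee[K(W,Y_1)]$. Two rectangles inducing the same \spinc structure on $W$ (rather than on the quadruple 4-manifold) differ by a class whose contribution to the assignment lies in the $H^1(Y_1)$-image coming from the Mayer-Vietoris sequence; under the diagram in the proof of Lemma \ref{coefflemma}, such classes lie in the image of $\iota: K(W)\to K(W,Y_1)$, and are precisely the ones preserved by $\Pi$. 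Thus $\Pi_M$ applied to the quadrilateral count recovers $F^\circ_{W,\s}$, which gives the first formula $[F^\circ_{W,\s}]=[\Pi_M\circ F^\circ_{W_2,\s_2}\circ F^\circ_{W_1,\s_1}]$ modulo the $H^1(Y_0)\oplus H^1(Y_2)$ indeterminacy. The second formula follows by tracking how multiplication by $e^h$ on the $Y_1$-side shifts the reference triangle for the induced assignment on the quadruple by a class projecting to $h\in H^1(Y_1)$; on the $W$ side this shift of reference translates, via Definition \ref{assndef}, into a shift of $\s$ by $\delta h\in H^2(W,\partial W)$.

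The main obstacle I expect is the combined bookkeeping of the rectangle degeneration and the Mayer-Vietoris identification of kernels: one must check that the 1-parameter families of rectangles have boundary exactly the sum of the two types of triangle pairs (verifying that sphere-bubbling and other degenerations cancel in pairs via the chain-map property of $\partial$ and $F^\circ$), and that the additive assignments on corresponding ends agree modulo the ambiguity captured by $\Pi_M$. The subtle point is exactly that pairs of triangles producing the same \spinc structure on $W$ but corresponding to distinct \spinc structures on the quadruple 4-manifold contribute terms that differ by a coboundary from $H^1(Y_1)$; the projection $\Pi_M$ is designed to equate these, and verifying that it does so consistently is what makes the twisted composition law work.
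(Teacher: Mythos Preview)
The paper does not actually prove this statement: it is quoted as Theorem~3.9 of \cite{OS3} and stated without proof, serving as a tool in the later sections. So there is no ``paper's own proof'' to compare against; your sketch is a reconstruction of the argument from the original reference.

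That said, your outline is the correct one and matches the strategy in \cite{OS3}: reduce to 2-handle cobordisms, set up an admissible Heegaard quadruple, and use the ends of one-dimensional moduli of holomorphic rectangles to produce the chain homotopy realizing associativity. The twisted-coefficient bookkeeping you describe---using Lemma~\ref{coefflemma} to identify $\zee[K(W_1)]\otimes_{R_{Y_1}}\zee[K(W_2)]$ with $\zee[K(W,Y_1)]$ and then applying $\Pi_M$ to collapse onto $\zee[K(W)]$---is exactly the mechanism that makes the refined composition law work. Two small points worth tightening: first, your description of the quadruple is slightly garbled (the phrase ``$\bgamma'$ is obtained from $\bgamma$ by the handleslides encoded in $(\bbeta,\bgamma,\bgamma')$'' does not quite parse; in the standard setup the fourth set of curves comes from the second 2-handle attachment, and the auxiliary triple $(\bbeta,\bgamma,\bgamma')$ describes a model cobordism whose associated map sends $\Theta\otimes\Theta$ to $\Theta$). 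Second, in the last sentence $\delta h$ lands in $H^2(W)$ via the Mayer-Vietoris coboundary, not in $H^2(W,\partial W)$. Neither point is a genuine gap; the argument is sound.
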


We should also remark that for a cobordism $W:Y_1\to Y_2$ with \spinc structure 
$\s$ the map 
\[
F_{W,\s}^\circ: HF^\circ(Y_1, \s_1; \zee)\to HF^\circ(Y_2,\s_2; \zee)
\]
in untwisted Floer homology can be obtained from the twisted-coefficient map
\[
HF^\circ(Y_1,\s_1;\zee)\to HF^\circ(Y_2,\s_2; \zee(W))
\]
(here $\zee(W)$ is the module $M(W)$ with $M = \zee$, namely 
$\zee(W) = \zee\otimes_{\zee[H^1(Y_1)]}\zee[K(W)] = 
\zee[\ker(H^2(W,Y_2)\to H^2(W))]$)
by composition with the map $\epsilon_*$ induced in homology by the homomorphism
\[
\epsilon: \zee(W)\to \zee
\]
of coefficient modules that sends each element of $\ker(H^2(W,Y_2)\to 
H^2(W))$ to $1$.

\section{Refined relative gradings}\label{relgradingsec} 

The $\zee$-coefficient version 
of Heegaard Floer homology is naturally a relatively cyclically 
graded theory, in general. This means that if $\mathcal S = 
\{[\x,i]\,|\,s_z(\x) = \s\}$ denotes the natural generating set for 
$CF^\infty(Y,\s;\zee)$ then there is a map
\[
\gr: {\mathcal S}\times{\mathcal S}\to \zee/\D(\s)\zee,
\]
where 
\begin{equation}\label{divisdef}
\D(\s) = \mbox{gcd}\{\langle c_1(\s), h\rangle\,| \, h\in 
H_2(Y;\zee)\}
\end{equation}
is the {\em divisibility} of $c_1(\s)$ (or by abuse of 
language, of $\s$ itself). The 
differential in $CF^\infty$ has degree $-1$ with respect to this 
grading, while the endomorphism $U$ has degree $-2$.

In the case of fully twisted coefficients (coefficients in 
$\zee[H^1(Y)]$), Ozsv\'ath and Szab\'o \cite{OS1} observe that there 
is a lift of this cyclic grading to a relative $\zee$-grading. Here we provide an extension of this construction to Floer homology with coefficients in an arbitrary (graded) module $M$, in which elements of $H^1(Y)\subset R_Y$ are explicitly assigned nontrivial degrees depending on their Chern numbers. That the action of such elements on fully-twisted Floer homology shifts degree by their Chern numbers is implicit in the definition given in \cite{OS2}.

\begin {definition}\label{sgradingdef} Fix a closed, oriented, \spinc 3-manifold $(Y,\s)$. Define the {\em $\s$-grading} of $\zee[H^1(Y)]$ by
\begin{equation}\label{sgrading}
\gr_\s(x) = -\langle c_1(\s)\cup x, [Y]\rangle \quad \mbox{for $x\in H^1(Y)$.}
\end{equation}
\end{definition}
The $\s$-grading makes $\zee[H^1(Y)]$ into a graded ring, isomorphic to a multivariable Laurent polynomial ring in which the variables have degrees determined by their negative Chern numbers \eqref{sgrading}. When thinking of $\zee[H^1(Y)]$ as a graded ring, we write it as $R_{Y,\s}$ or just $R_Y$. It is important to recognize that this grading depends on both the \spinc structure $\s$ and the orientation of $Y$, though we usually do not include $\s$ in the notation. In particular, if $-Y$ denotes the 3-manifold $Y$ with its orientation reversed, then although $R_{Y,\s} = R_{-Y,\s}$ as sets, the gradings have opposite sign. On the other hand, the conjugation homomorphism $c: r\mapsto \bar{r}$ induces an isomorphism of graded rings $c: R_{Y,\s}\to R_{-Y,\s}$. 

\begin{definition} Let $(\Sigma,\balpha,\bbeta,z)$ be a marked Heegaard triple describing the 3-manifold $Y$. Fix a \spinc structure $\s$ for $Y$ and an additive assignment $\{A_{\x,\y}\}$ for the diagram. The {\em relative $\zee$ grading} between generators $[\x,i]$ and $[\y,j]$ for $CF^\circ(Y,\s; R_Y)$ is defined by
\begin{equation}\label{grdef}
\ugr([\x,i],[\y,j]) = \mu(\phi) + 2(i-j) - 2n_z(\phi) - \langle c_1(\s)\cup A_{\x,\y}(\phi),[Y] \rangle,
\end{equation}
where $\phi$ is any element of $\pi_2(\x,\y)$. More generally, if $r_1, r_2\in R_Y$ are homogeneous elements, then we set
\[
\ugr(r_1\cdot [\x,i],\,r_2\cdot [\y,j]) = \ugr([\x,i],[\y,j]) + \gr_\s(r_1) - \gr_\s(r_2).
\]
\end{definition}

It is not hard to check that the expression \eqref{grdef} is independent of the choice of $\phi\in\pi_2(\x,\y)$, and that the differential in $CF^\infty(Y,\s; R_Y)$ has relative degree $-1$ with respect to the above grading.

Now suppose $M$ is a module for $R_Y$, which is equipped with a grading $\gr_M$ satisfying
\begin{equation}\label{compatgr}
\gr_M(r\cdot m) = \gr_\s(r) + \gr_M(m).
\end{equation}
(Here we suppose $R_Y$ is equipped with the $\s$-grading induced by some $\s\in Spin^c(Y)$.) Then the twisted Floer complex  $CF(Y,\s; M) = CF(Y,\s;R_Y)\otimes_{R_Y} M$ naturally carries a relative $\zee$ grading given by
\[
\ugr(m_1\,[\x,i], m_2\,[\y,j]) = \ugr([\x,i],[\y,j]) + \gr_M(m_1) - \gr_M(m_2),
\]
inducing a relative $\zee$ grading on the Floer homology with coefficients in $M$.

More generally, if \eqref{compatgr} holds modulo some integer $d\in \zee$, we obtain a relative $\zee/d\zee$ grading on $CF(Y,\s;M)$. For example, taking $M = \zee$ to be the trivial $R_Y$-module supported in grading 0, we have for $n\in \zee$ and $r\in H^1(Y)$, 
\[
\gr_M(r\cdot n) = \gr_M(n) = 0 \quad \mbox{while} \quad \gr_\s(r) + \gr_M(n) = -\langle c_1(\s)\cup r, [Y]\rangle.
\]
Thus \eqref{compatgr} holds modulo the divisibility $\D(\s)$ of $\s$; in particular, the ``untwisted'' Floer complex $CF(Y,\s; \zee)$ carries a relative cyclic grading by $\zee/\D(\s)\zee$.

In light of these observations, henceforth we will assume that all modules for $R_Y$ are graded, and often omit this assumption from statements. In particular, we will suppose that \eqref{compatgr} holds at least modulo some $d\in \zee$.

That the homology of $CF^\circ(Y,\s;M)$ is an invariant of $(Y,\s)$ follows by verifying that the arguments in \cite{OS1,OS2} respect the grading described here, together with the following. 

\begin{lemma}\label{addassnlemma} The chain complex $CF^\infty(Y,\s;M)$, equipped with the relative grading defined above, is independent of the choice of additive assignment up to graded chain isomorphism.
\end{lemma}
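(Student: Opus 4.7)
The plan is to construct, for any two additive assignments $A$ and $A'$ on the diagram, an explicit $R_Y$-linear bijection between generators that intertwines the two differentials and respects the relative $\zee$-grading. The key observation is that the difference $A'-A$ is a ``coboundary'' of a function on intersection points, so twisting generators by appropriate monomials in $R_Y$ exactly compensates for the change of assignment.

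First I would show that for any pair $\x,\y\in T_\alpha\cap T_\beta$ with $s_z(\x)=s_z(\y)=\s$, the quantity
\[
f(\x,\y) := A'_{\x,\y}(\phi) - A_{\x,\y}(\phi) \in H^1(Y)
\]
is independent of the choice of $\phi\in\pi_2(\x,\y)$. Any two such $\phi,\phi'$ differ by splicing with an element $\alpha\in\pi_2(\x,\x)\cong \zee\oplus H^1(Y)$, and by the convention that $A_{\x,\x}$ and $A'_{\x,\x}$ both equal the natural projection onto $H^1(Y)$, we have $A(\alpha)=A'(\alpha)$, so property (1) of an additive assignment gives the claim. Splicing additivity then yields the cocycle identity $f(\x,\z)=f(\x,\y)+f(\y,\z)$. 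Since $s_z(\x)=s_z(\y)=\s$ forces $\epsilon(\x,\y)=0$, the sets $\pi_2(\x,\y)$ are nonempty throughout, so fixing a basepoint $\x_0$ in each connected component and setting $g(\x):=f(\x_0,\x)$ realizes $f$ as a coboundary: $f(\x,\y)=g(\y)-g(\x)$.

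Next I would define $\Phi\colon CF^\infty_A(Y,\s;R_Y)\to CF^\infty_{A'}(Y,\s;R_Y)$ by
\[
\Phi([\x,i]) = e^{g(\x)}\,[\x,i],
\]
extended $R_Y$-linearly; this is clearly invertible. Tensoring with $M$ yields the corresponding isomorphism on $CF^\infty(Y,\s;M)$. A direct computation shows $\Phi$ is a chain map: the coefficient of $[\y,i-n_z(\phi)]$ in $\Phi\circ\partial_A[\x,i]$ is $e^{A(\phi)+g(\y)}$, while the corresponding coefficient in $\partial_{A'}\circ\Phi[\x,i]$ is $e^{g(\x)+A'(\phi)} = e^{g(\x)+A(\phi)+f(\x,\y)} = e^{A(\phi)+g(\y)}$, and these agree.

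Finally I would verify that $\Phi$ preserves the relative $\zee$-grading. Comparing formula \eqref{grdef} for the two assignments gives
\[
\ugr_A([\x,i],[\y,j]) - \ugr_{A'}([\x,i],[\y,j]) = \langle c_1(\s)\cup(g(\y)-g(\x)),[Y]\rangle,
\]
which is precisely the shift $\gr_\s(e^{g(\x)})-\gr_\s(e^{g(\y)})$ introduced when one multiplies generators by $e^{g(\x)}$ and $e^{g(\y)}$ in the target. Hence $\ugr_{A'}(\Phi[\x,i],\Phi[\y,j])=\ugr_A([\x,i],[\y,j])$, and $\Phi$ is the desired graded chain isomorphism. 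The main subtlety is really just the first step: establishing that $f$ is well-defined on pairs of intersection points; once the natural normalization $A_{\x,\x}=\mathrm{pr}_{H^1(Y)}$ is invoked, the rest is a formal coboundary argument.
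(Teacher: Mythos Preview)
Your proposal is correct and is essentially the same argument as the paper's. The paper fixes a complete set of paths $\{\theta_\x\in\pi_2(\x_0,\x)\}$ and defines $F([\x,i])=e^{A_2(\theta_\x)-A_1(\theta_\x)}[\x,i]$, which is exactly your $\Phi$ once one notes that your $g(\x)=f(\x_0,\x)=A'(\theta_\x)-A(\theta_\x)$; the paper's key identity $A_i(\theta_\x)+A_i(\phi)=A_i(\theta_\y)+H(\mathcal{P}_\phi)$ is the concrete form of your observation that $A'-A$ vanishes on periodic classes, and the grading check is identical.
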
 

\begin{proof} Suppose $A_1$ and $A_2$ are two additive assignments satisfying the criteria at the beginning of the previous section, and let $\{\theta_\x\in \pi_2(\x_0,\x)\}$ be a complete set of paths for the \spinc structure $\s$. That is, $\x_0$ is a fixed intersection point with $s_z(\x_0) = \s$ and $\theta_\x$ is some choice of homotopy class for each $\x\in s_z^{-1}(\s)$. Define a homomorphism $F: CF^\infty(Y,\s, A_1)\to CF^\infty(Y,\s, A_2)$ between the chain complexes constructed with the two choices of assignment, by the formula
\[
F([\x,i]) = e^{A_2(\theta_\x) - A_1(\theta_\x)} [\x,i].
\]
Then
\[
F(\partial^\infty[\x,i]) = \sum_{\y,\phi} \#\widehat{\mathcal M}(\phi) [\y,i-n_z(\phi)] \,e^{A_1(\phi)}\,e^{A_2(\theta_\y) - A_1(\theta_\y)},
\]
while
\[
\partial^\infty F([\x,i]) = \sum_{\y,\phi}\#\widehat{\mathcal M}(\phi)[\y,i-n_z(\phi)]\,e^{A_2(\theta_\x) - A_1(\theta_\x)}\,e^{A_2(\phi)}.
\]
Now, for $i = 1,2$ and given $\phi\in \pi_2(\x,\y)$, we have $\theta_\x * \phi = \theta_\y * {\mathcal P}_\phi$ for a periodic domain ${\mathcal P}_\phi\in \pi_2(\y,\y)$ (up to addition of a multiple of the sphere class $[S]$, which does not affect the value of the additive assignment). Therefore by additivity
\begin{equation}\label{Aeqn}
A_i(\theta_\x) + A_i(\phi) = A_i(\theta_\y) + H({\mathcal P}_\phi)
\end{equation}
where $H({\mathcal P}_\phi)\in H^1(Y;\zee)$ is the cohomology class corresponding to ${\mathcal P}_\phi$. It follows that the group ring elements appearing in the previous expressions are equal, so that $F$ is a chain map. Since $F$ is clearly an isomorphism of $R_Y$-modules, we get that $CF^\infty(Y,\s, A_1)$ and $CF^\infty(Y,\s, A_2)$ are isomorphic as ungraded $R_Y$-chain complexes. 

To see invariance of the relative gradings, we calculate that 
\begin{eqnarray*}
\ugr_{A_2}(F([\x,i]), F([\y,j])) &=& \ugr_{A_2}([\x,i],[\y,j]) + \gr_\s(A_2(\theta_\x) - A_1(\theta_\x)) - \gr_\s(A_2(\theta_\y) - A_1(\theta_\y))\\
&=& \mu(\phi) + 2(i-j) - 2n_z(\phi) - \langle c_1(\s)\cup A_2(\phi),[Y]\rangle\\&&\hspace*{.25cm} - \langle c_1(\s)\cup (A_2(\theta_\x) - A_1(\theta_\x)), [Y]\rangle\\&&\hspace*{.25cm} + \langle c_1(\s)\cup(A_2(\theta_\y) - A_1(\theta_\y)), [Y]\rangle
\end{eqnarray*}
Applying the identity \eqref{Aeqn} twice, this easily reduces to $\ugr_{A_1}([\x,i],[\y,j])$.
\end{proof}

We now show that one can always work with relatively $\zee$-graded Floer homology (rather than groups with a finite cyclic grading) if the coefficient module and \spinc structure are induced by a cobordism from $S^3$ to $Y$. To do so we spell out the notion of a conjugate module in the current, graded, context. As usual, if $Y$ is an oriented 3-manifold then $-Y$ denotes the same manifold with the opposite orientation.

\begin{definition} Suppose $M$ is a graded $R_Y$-module. The {\em conjugate module} $\overline{M}$ is the graded $R_{-Y}$ module whose underlying graded group is the same as $M$, but whose multiplication is given by 
\[
r\otimes m\mapsto \bar{r}\cdot m, \quad r\in R_{-Y}.
\]
\end{definition}

It is clear that if \eqref{compatgr} is satisfied for $M$ as a graded $R_Y$-module (modulo $d$), then the same is true for $\Mbar$ as a graded $R_{-Y}$ module.

\begin{prop}\label{Zgradingprop} Suppose $W: S^3\to Y$ is an oriented cobordism with \spinc structure $\s_W$, and $M(W)$ is the induced module for $R_Y$. Then $M(W)$ carries a natural grading induced by $\s_W$ that is compatible with the $\s$-grading on $R_Y$ in the sense of \eqref{compatgr}, where $\s$ is the restriction of $\s_W$ to $Y$. In particular, $HF^\circ(Y,\s;M(W))$ carries a relative $\zee$ grading.

More generally, if $W: Y_1\to Y_2$ is an oriented cobordism with \spinc structure $\s_W$ whose restrictions to $Y_1$ and $Y_2$ are $\s_1$ and $\s_2$ respectively, and $M$ is a module for $R_{Y_1}$ satisfying \eqref{compatgr} modulo $d$, then the induced module $M(W)$ carries a grading induced by $\s_W$ also satisfying \eqref{compatgr} modulo $d$. 
\end{prop}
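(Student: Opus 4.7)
The plan is to define the grading on $\zee[K(W)]$ directly by the formula
\[
\gr_{\s_W}(e^k) = -\langle c_1(\s_W) \cup k,\,[W, \partial W] \rangle
\]
for $k \in K(W) \subset H^2(W, \partial W)$, and then extend it to $M(W) = \Mbar \otimes_{R_{Y_1}} \zee[K(W)]$ by summing with the given grading on $\Mbar$. Two things must be checked: first, that the tensor product relation respects the sum-of-gradings, so that the grading descends to $M(W)$; and second, that the resulting grading on $M(W)$ is compatible (modulo $d$) with the $R_{Y_2}$-action via the $\s_2$-grading in the sense of \eqref{compatgr}.

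Both verifications reduce to the cohomological identity
\[
\langle c_1(\s_W) \cup \delta h,\,[W, \partial W]\rangle = \langle c_1(\s_W)|_{\partial W} \cup h,\,[\partial W]\rangle, \qquad h \in H^1(\partial W),
\]
which follows from the cochain-level computation $\alpha \cup d\widetilde{h} = d(\alpha \cup \widetilde{h})$ for $\alpha$ a closed cocycle of even degree and $\widetilde{h}$ any extension of $h$ to $W$, combined with the relation $\partial_\ast [W, \partial W] = [\partial W]$ for the Kronecker pairing. For $h \in H^1(Y_2)$ extended by zero on the $-Y_1$ component of $\partial W$, the right-hand side equals $\langle c_1(\s_2) \cup h, [Y_2]\rangle$, so the degree shift caused by $e^h$ acting on $\zee[K(W)]$ via the coboundary $\delta_2$ is $-\langle c_1(\s_2) \cup h, [Y_2]\rangle = \gr_{Y_2,\s_2}(e^h)$, which gives \eqref{compatgr} for the $R_{Y_2}$-action. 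For $h \in H^1(Y_1)$, the fact that $-Y_1$ (with reversed orientation) sits in $\partial W$ introduces an extra minus sign, producing degree shift $+\langle c_1(\s_1) \cup h, [Y_1]\rangle$ on $\zee[K(W)]$; but on $\Mbar$ the action of $e^h$ is by definition multiplication by $e^{-h}$ on $M$, which shifts degree by $-\gr_{Y_1, \s_1}(e^h) = +\langle c_1(\s_1) \cup h, [Y_1]\rangle$ thanks to the compatibility of $M$ itself. These two shifts agree, so the tensor product relation $(e^h \cdot m) \otimes e^k = m \otimes (e^h \cdot e^k)$ is degree-preserving and the grading descends to $M(W)$.

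The first case of the proposition (cobordisms from $S^3$) is a direct specialization of the general one, since $H^1(S^3) = 0$ forces $R_{S^3} = \zee$ concentrated in degree zero, so any graded abelian group $M$ satisfies \eqref{compatgr} trivially with $d = 0$; the construction above then endows $M(W) = M \otimes_\zee \zee[K(W)]$ with an honest $\zee$-grading, whence $HF^\circ(Y,\s; M(W))$ inherits a relative $\zee$-grading by combining with the generator-level grading of Definition \ref{twistcxdef}. The main obstacle is essentially bookkeeping of signs: one must simultaneously track the sign flip coming from the conjugation in the definition of $\Mbar$ and the sign flip coming from the reversed orientation of $Y_1 \subset \partial W$, and verify that they cancel precisely; this cancellation is exactly the geometric reason that the conjugate module $\Mbar$, rather than $M$ itself, appears in the definition of $M(W)$. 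Once this is set up, no input beyond Poincar\'e-Lefschetz duality for $(W,\partial W)$ is needed.
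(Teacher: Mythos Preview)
Your proposal is correct and follows essentially the same approach as the paper: both define the grading on $\zee[K(W)]$ by $\gr_W(k) = -\langle c_1(\s_W)\cup k,[W,\partial W]\rangle$ and then verify compatibility with the $R_{\pm Y_i}$-actions via the identity $\langle c_1(\s_W)\cup\delta h,[W,\partial W]\rangle = \langle c_1(\s_i)\cup h,[\pm Y_i]\rangle$. Your version is a bit more explicit about the cochain-level justification of that identity and about the sign bookkeeping showing why the conjugate module $\Mbar$ (rather than $M$) is what makes the tensor relation degree-preserving, but the substance is the same.
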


\begin{proof} Observe first that since $\partial W = -Y_1\coprod Y_2$, we should most naturally consider $\zee[K(W)]$ as a module for $R_{-Y_1}$ and $R_{Y_2}$. Recall that $M(W) = \Mbar\otimes_{R_{-Y_1}} \zee[K(W)]$, where $K(W) = \ker(H^2(W,\partial W) \to H^2(W))$. Define $\gr_W : K(W) \to \zee$ by
\[
\gr_W(k) = -\langle c_1(\s_W)\cup k, [W,\partial W]\rangle,
\]
and use this to impose a grading $\gr_W$ on $\zee[K(W)]$. We claim that this grading respects the action by $R_{-Y_1}$ and $R_{Y_2}$, where the latter are equipped with gradings coming from the restrictions of $\s_W$. To see this, it suffices to note that the actions of $R_{-Y_1}$ and $R_{Y_2}$ on $\zee[K(W)]$ are induced by the coboundary maps $\delta_i: H^1(Y_i)\to K(W)$, and that
\[
c_1(s_W) \cup \delta_i r = \delta_i( j_i^*c_1(\s_W) \cup r) = \delta_i(c_1(\s_i)\cup r)
\]
where $j_i: Y_i\to W$, $i = 1,2$ are the inclusions of the boundary components. Hence for $r\in H^1(Y_1)$,
\[
\langle c_1(\s_W)\cup\delta_1r, [W,\partial W]\rangle  = \langle c_1(\s_1)\cup r, [-Y_1]\rangle
\]
and correspondingly for elements of $H^1(Y_2)$. 
\end{proof}

\section{Pairings and Duality}\label{pairingsec}

In \cite{OS3}, Ozsv\'ath and Szab\'o defined a pairing
\[
\langle\cdot,\cdot\rangle: HF^+(Y,\s;\zee)\otimes HF^-(-Y,\s;\zee)\to 
\zee
\]
on Floer homology, with respect to which cobordism-induced maps 
satisfy a certain duality. Here we extend this pairing to Floer 
homology with twisted coefficients and prove a corresponding duality; throughout we use the ring $R_Y$ and modules $M$ that are graded via some choice of \spinc structure on $Y$ as in the previous section.

Recall that if $(\Sigma,\balpha,\bbeta,z)$ is a pointed Heegaard 
diagram for $Y$, then $(-\Sigma,\balpha,\bbeta, z)$ describes the oppositely-oriented manifold $-Y$, 
and the map $s_z$ is invariant under this change of orientation.

\begin{definition} Define a pairing
\[
\langle\cdot,\cdot\rangle : CF^\infty(Y,\s;R_Y)\otimes_{R_Y} 
\overline{CF^\infty(-Y,\s;R_{-Y})}\longto R_Y
\]
as follows: for generators $[\x,i]\in CF^\infty(Y,\s; R_Y)$ 
and $[\y,j]\in CF^\infty(-Y,\s; R_{-Y})$ set
\begin{equation}\label{pairingdef}
\langle [\x,i],[\y,j]\rangle = \left\{\begin{array}{ll} 1 & 
\mbox{if $\x= \y$ and $j = -i-1$} \\ 0 & 
\mbox{otherwise.}\end{array}\right.
\end{equation}
The desired pairing is obtained by extending by $R_Y$-linearity.
\end{definition}

We must check that this definition has the desired properties:

\begin{lemma}\label{pairinglemma} For any $\xi\in CF^\infty(Y,\s;R_Y)$, $\eta\in 
CF^\infty(-Y,\s;R_{-Y})$, we have
\begin{eqnarray*}
\langle \partial^\infty \xi, \eta\rangle &=& \langle \xi, 
\partial^\infty\eta\rangle\\
\langle U\xi, \eta\rangle &=& \langle \xi, U\eta\rangle.
\end{eqnarray*}
\end{lemma}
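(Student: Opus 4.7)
The plan is to exploit the natural correspondence of holomorphic disks in the Heegaard diagrams for $Y$ and for $-Y$, together with a coordinated choice of additive assignments, to show that each pseudo-holomorphic disk contributing to $\partial^\infty$ on one side corresponds to exactly one disk contributing on the other side, with matching counts, Maslov index, basepoint multiplicity, and with conjugate additive assignments.

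First I would set things up on the diagram level. A Heegaard diagram $(\Sigma,\balpha,\bbeta,z)$ for $Y$ becomes a Heegaard diagram $(-\Sigma,\balpha,\bbeta,z)$ for $-Y$; in particular $T_\alpha\cap T_\beta$ is the same set, and $s_z$ is preserved. Composing a disk $u\colon D^2\to \mathrm{Sym}^g(\Sigma)$ with the reflection $w\mapsto\bar w$ of $D^2$ and the complex-conjugate identification $\mathrm{Sym}^g(\Sigma)\to \mathrm{Sym}^g(-\Sigma)$ gives a natural bijection
\[
\pi_2^Y(\x,\y)\;\longleftrightarrow\;\pi_2^{-Y}(\y,\x),\qquad \phi\mapsto\bar\phi,
\]
which preserves the intersection number $n_z$ and the expected dimension of $\widehat{\mathcal M}$, and, for compatibly chosen coherent orientation systems, preserves the signed counts $\#\widehat{\mathcal M}(\phi)$.

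Second I would line up the additive assignments. One checks that a complete set of paths $\{\theta_\x\in\pi_2^Y(\x_0,\x)\}$ for $(Y,\s)$ gives, via $\theta_\x\mapsto \bar\theta_\x\in \pi_2^{-Y}(\x,\x_0)$, a complete set of paths for $(-Y,\s)$, and that with the resulting additive assignments one has $A^{-Y}(\bar\phi)=-A^Y(\phi)$ for every $\phi\in\pi_2^Y(\x,\y)$. (This identity comes from the fact that the cohomology class of a periodic domain changes sign under orientation reversal of $\Sigma$, together with axiom (1) in the definition of an additive assignment.) By Lemma~\ref{addassnlemma}, different coherent choices affect the complexes only by graded chain isomorphisms, so we lose no generality.

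Third I would simply compute. Unpacking the definitions, on one hand
\[
\langle\partial^\infty[\x,i],[\y,j]\rangle=\!\!\sum_{\substack{\phi\in\pi_2^Y(\x,\y)\\ \mu(\phi)=1,\;n_z(\phi)=i+j+1}}\!\!\#\widehat{\mathcal M}^Y(\phi)\,e^{A^Y(\phi)},
\]
while on the other hand, using that the second argument lives in the conjugate module (so the coefficient is conjugated when it is pulled out through $\langle\cdot,\cdot\rangle$),
\[
\langle[\x,i],\partial^\infty[\y,j]\rangle=\!\!\sum_{\substack{\psi\in\pi_2^{-Y}(\y,\x)\\ \mu(\psi)=1,\;n_z(\psi)=i+j+1}}\!\!\#\widehat{\mathcal M}^{-Y}(\psi)\,\overline{e^{A^{-Y}(\psi)}}.
\]
The bijection $\phi\mapsto\bar\phi$ from the first paragraph, together with $\overline{e^{A^{-Y}(\bar\phi)}}=\overline{e^{-A^Y(\phi)}}=e^{A^Y(\phi)}$, identifies the two sums term by term. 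Extending $R_Y$-sesquilinearly settles the first identity. For the second, $U[\x,i]=[\x,i-1]$ and the nonvanishing condition $j=-i-1$ in \eqref{pairingdef} is invariant under the simultaneous substitution $i\mapsto i-1$, $j\mapsto j-1$, so $\langle U\xi,\eta\rangle$ and $\langle\xi,U\eta\rangle$ are both equal to $\langle\xi,\eta\rangle$ evaluated with shifted indices, and these agree on generators, hence everywhere by $R_Y$-linearity.

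The only nontrivial step is verifying the sign/assignment bookkeeping in the bijection $\phi\mapsto\bar\phi$: matching the orientations of $\widehat{\mathcal M}^Y(\phi)$ and $\widehat{\mathcal M}^{-Y}(\bar\phi)$ under compatible coherent orientation systems, and producing additive assignments with $A^{-Y}(\bar\phi)=-A^Y(\phi)$. Both are essentially present in \cite{OS1,OS3}, but I expect this orientation/assignment coordination to be the main obstacle to writing out a fully rigorous argument.
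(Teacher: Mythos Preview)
Your proof is correct and follows essentially the same approach as the paper: both use the reflection bijection $\pi_2^Y(\x,\y)\leftrightarrow\pi_2^{-Y}(\y,\x)$, the identification of moduli spaces, and the key relation $A^{-Y}(\bar\phi)=-A^Y(\phi)$ to match the two sums term by term. The only minor difference is that the paper derives this last relation more directly, by observing that $A_Y$ itself serves as an additive assignment for $(-\Sigma,\balpha,\bbeta,z)$ and that $\phi*(\phi\circ r)$ is nullhomotopic, rather than going through complete sets of paths and the sign change of periodic domains.
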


\begin{proof} This is much like the proof of the corresponding fact 
in untwisted Floer homology \cite{OS3}, but we must be more careful 
with the coefficients. Observe that composition with the reflection $r: D^2\to 
D^2$ across the real axis gives a map $\pi_2(\x,\y)\to \pi_2(\y,\x)$ 
that exchanges $J$-holomorphic disks in $Sym^g(\Sigma)$ 
with $-J$-holomorphic disks in $Sym^g(-\Sigma)$; in other words
\[
{\mathcal M}_{-\Sigma}(\phi\circ r)  = {\mathcal M}_{\Sigma}(\phi)
\]
for $\phi\in\pi_2(\x,\y)$. 

Furthermore, if $A_Y$ is an additive assignment for 
$(\Sigma,\balpha,\bbeta,z)$ then we can think of $A_Y$ as also giving 
an additive assignment $A_{-Y}$ for $(-\Sigma,\balpha,\bbeta,z)$. For 
$\phi\in\pi_2(\x,\y)$ we have that $\phi * (\phi\circ r)$ is 
homotopic to a constant map, from which it follows that 
\[
A_{-Y}(\phi\circ r) = -A_{Y}(\phi).
\]
Since $n_z^\Sigma(\phi) = n_z^{-\Sigma}(\phi\circ r)$, we have
\begin{eqnarray*}
\partial^{\infty}[\y,j] &=& \sum_{\phi\in\pi_2(\y,\w)} \#\Mhat_{-\Sigma}(\phi) [\w,j-n_z^{-\Sigma}(\phi)]\otimes e^{A_{-Y}(\phi)}\\
&=& \sum_{\tilde{\phi}\in\pi_2(\w,\y)} \#\Mhat_{-\Sigma}(\tilde{\phi}\circ r)[\w, 
j-n_z^{-\Sigma}(\tilde{\phi}\circ r)]\otimes e^{A_{-Y}(\tilde{\phi}\circ r)}\\
&=& \sum_{\tilde{\phi}\in\pi_2(\w,\y)} \#\Mhat_{\Sigma}(\tilde{\phi}) [\w,j-n_z^{\Sigma}(\tilde{\phi})]\otimes 
e^{-A_Y(\tilde{\phi})}.
\end{eqnarray*}
From this it follows (using the conjugate module structure on the 
second factor) that
\begin{eqnarray*}
\langle [\x,i],\partial^\infty[\y,j]\rangle &=& 
\sum_{\begin{array}{c}\mbox{\scriptsize $\phi\in\pi_2(\x,\y)$} \\ 
\mbox{\scriptsize $\mu_\Sigma(\phi) = 1$} \\ \mbox{\scriptsize $n_z^\Sigma(\phi) = 
i + j + 1$}\end{array}} \#\Mhat(\phi) e^{A_Y(\phi)}\\
&=& \langle \partial^\infty[\x,i],[\y,j]\rangle.
\end{eqnarray*}
The first claim of the lemma follows from this, while the second is 
obvious.
\end{proof}

Thus we obtain a pairing on homology
\[
HF^+(Y,\s;R_Y)\otimes_{R_Y} \overline{HF^-(-Y,\s;R_{-Y})}\longto R_Y
\]
that descends to the reduced homologies.

More generally, suppose $M$ and $N$ are (graded) modules for  $R_Y$ and $R_{-Y}$, respectively: we can 
extend the construction above to a pairing between $HF^+(Y,\s;M)$ and 
$HF^-(-Y,\s;N)$. To this end, define
\[
\langle\cdot,\cdot\rangle : CF^\infty(Y,\s;M)\otimes_{R_Y} 
\overline{CF^\infty(-Y,\s;N)}\to M\otimes_{R_Y}\overline{N}
\]
on generators by
\[
\langle [\x,i]\otimes m, [\y,j]\otimes n\rangle = \langle 
[\x,i],[\y,j]\rangle\cdot m\otimes n,
\]
where the pairing on the right is the universal one just defined. It follows 
from the calculation above that the 
pairing descends to homology:
\[
HF^+(Y,\s;M)\otimes_{R_Y}\overline{HF^-(-Y,\s;N)}\longto M\otimes_{R_Y}\overline{N}.
\]

We can now give the analogue for twisted coefficients of Theorem 3.5 
of \cite{OS3}.

\begin{theorem}[Duality for twisted coefficients]\label{dualitythm} Let $W: Y_1\to Y_2$ 
be a cobordism and $M_1$ and $M_2$ coefficient modules for $R_{Y_1}$ and 
$R_{-Y_2}$ respectively. Write $W'$ for the manifold $W$ regarded as a 
cobordism $-Y_2\to -Y_1$, and let $\s$ be a \spinc structure on $W$ 
with restrictions $\s_i = \s|_{Y_i}$. Then for any $\xi\in 
HF^+(Y_1,\s_1;M_1)$ and $\eta\in HF^-(-Y_2,\s_2;M_2)$, we have
\[
\langle F^+_{W,\s}(\xi),\eta\rangle = \langle \xi, 
F^-_{W',\s}(\eta)\rangle.
\]
\end{theorem}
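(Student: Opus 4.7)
The plan is to follow the strategy of the untwisted duality theorem (Theorem 3.5 of \cite{OS3}), but carefully tracking the additive assignments and the conjugate-module structure built into the pairing. Since $F_{W,\s}^\circ$ factors as $E^\circ \circ H^\circ \circ G^\circ$ corresponding to 1-, 2-, and 3-handles, I would first reduce to the 2-handle case. The 1- and 3-handle maps $G^\circ$ and $E^\circ$ are defined in an essentially formal way (by tensoring with the top-dimensional class in the $\#^k S^1\times S^2$ summand, and then contracting), they do not alter the coefficient module, and their duality under orientation reversal of $W$ follows directly from the fact that a 1-handle attached to $Y_1$ inside $W$ becomes (after orientation reversal) a 3-handle attached to $-Y_1$ inside $W'$.

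For the 2-handle case, represent $W$ by a Heegaard triple $(\Sigma,\balpha,\bbeta,\bgamma,z)$. Then $W'$ is represented by the triple $(-\Sigma,\balpha,\bgamma,\bbeta,z)$, which reverses the clockwise ordering of the arcs of $\Delta$. As in the proof of Lemma \ref{pairinglemma}, composition with a reflection $r:\Delta\to\Delta$ gives a bijection between homotopy classes $\psi\in\pi_2(\x,\Theta,\w)$ for the original triple and $\psi\circ r\in\pi_2(\w,\Theta,\x)$ for the reversed triple, exchanging $J$-holomorphic triangles in $\mathrm{Sym}^g(\Sigma)$ with $(-J)$-holomorphic triangles in $\mathrm{Sym}^g(-\Sigma)$; so the signed counts $\#\M(\psi)$ and $\#\M(\psi\circ r)$ agree (for a compatible choice of coherent orientation system, as in \cite{OS3}), Maslov indices are preserved, and $n_z(\psi\circ r) = n_z(\psi)$.

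The key point specific to twisted coefficients is the behavior of $A_W$. Since $\psi * (\psi\circ r)$ is homotopic to a concatenation of boundary degenerations at $\x$, $\Theta$, $\w$, additivity of the assignment together with the prescription in Definition \ref{assndef} forces
\[
A_{W'}(\psi\circ r) \;=\; -\,A_W(\psi)\qquad\text{in } K(W) = K(W'),
\]
after a consistent choice of reference triangle. Combining this with the conjugate-module structure on $M_2$ appearing in the pairing (so that elements of $R_{Y_2}$ act with inverted sign on the right), the contribution of each triangle $\psi$ to $\langle F^+_{W,\s}[\x,i]\otimes m,\,[\w,j]\otimes n\rangle$ is
\[
[\x=\x,\ j = -(i-n_z(\psi))-1]\cdot \#\M(\psi)\cdot m\otimes e^{A_W(\psi)}\!\cdot n,
\]
while the corresponding triangle $\psi\circ r$ contributes the same expression to $\langle [\x,i]\otimes m,\,F^-_{W',\s}[\w,j]\otimes n\rangle$ once the conjugate action converts $e^{-A_W(\psi)}$ acting on $n$ into $e^{A_W(\psi)}$ acting on the right-hand tensor factor. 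Summing over all triangles then yields the identity on the chain level, which descends to the claimed duality on homology.

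The main obstacle I expect is the bookkeeping in the last step: verifying that the sign $A_{W'}(\psi\circ r)=-A_W(\psi)$ precisely cancels the conjugation built into the definition of $M(W)$ and the pairing, so that both sides produce the same element of $M_1\otimes_{R_{Y_1}}\overline{M_2}$ (rather than differing by a unit in $R_{Y_1}$). A secondary issue is checking, once at the outset, that the reference triangle $\psi_0$ used to normalize $A_W$ and the one used for $A_{W'}$ can be chosen compatibly under $r$; any mismatch there amounts only to an overall translation by an element of $H^1(Y_1)\oplus H^1(Y_2)$, which is absorbed into the indeterminacy $[F^\circ_{W,\s}]$ already present.
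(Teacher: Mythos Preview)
Your overall strategy matches the paper's: reduce to 2-handles and compare triangle counts under a reflection of the Heegaard triple. But your computation of the additive assignment under reflection is wrong, and the ``conjugation fix'' you propose does not repair it.

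The paper uses the reflection $R$ of the 2-simplex fixing the $\Theta$-vertex and swapping the other two, so that $\psi\circ R = \psi_0\circ R + (\phi_{\alpha\gamma}\circ r) + \phi_{\gamma\beta} + (\phi_{\alpha\beta}\circ r)$, where $r$ is the disk reflection. Applying Definition~\ref{assndef} for $W':-Y_2\to -Y_1$ gives
\[
A_{W'}(\psi\circ R)=\delta\bigl(-A_{-Y_2}(\phi_{\alpha\gamma}\circ r)+A_{-Y_1}(\phi_{\alpha\beta}\circ r)\bigr)=\delta\bigl(A_{Y_2}(\phi_{\alpha\gamma})-A_{Y_1}(\phi_{\alpha\beta})\bigr)=A_W(\psi),
\]
using $A_{-Y}(\phi\circ r)=-A_Y(\phi)$ from Lemma~\ref{pairinglemma}. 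So the sign is $+A_W(\psi)$, not $-A_W(\psi)$. Your heuristic ``$\psi*(\psi\circ r)$ is a concatenation of boundary degenerations, hence $A_{W'}=-A_W$'' imports the disk argument where it does not apply: $A_W$ is not an intrinsic homotopy invariant of the triangle but is \emph{defined} through the boundary disks via Definition~\ref{assndef}, and the built-in minus sign there, together with the swap of incoming/outgoing boundaries in $W'$, combines with the disk sign-flip to give no net sign change.

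Because of this, no conjugation trick is needed---and the one you invoke does not do what you claim. The element $e^{A_{W'}(\psi\circ R)}$ sits inside $\zee[K(W)]$ as an honest tensor factor of $M_2(W')$; the conjugate module structure on $\overline{M_2(W')}$ affects how $R_{Y_1}$ \emph{acts}, not the internal elements of $\zee[K(W)]$. So $m_1\otimes(m_2\otimes e^{-A_W(\psi)})$ is genuinely different from $m_1\otimes(m_2\otimes e^{A_W(\psi)})$, and your proposed cancellation fails. Once you correct the sign to $A_{W'}(\psi\circ R)=A_W(\psi)$, the two pairings match on the nose under the identification $M_1(W)\otimes_{R_{Y_2}}\overline{M_2}\cong M_1\otimes_{R_{Y_1}}\overline{M_2(W')}$, exactly as in the paper.
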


Observe that the two pairings in the theorem above take values in
\[
M_1(W)\otimes_{R_{Y_2}}\overline{M_2} = \Mbar_1\otimes_{R_{-Y_1}}\zee[K(W)]\otimes_{R_{Y_2}}\Mbar_2
\]
(for the right hand side) and
\[
M_1\otimes_{R_{Y_1}}\overline{M_2(W)} = M_1\otimes_{R_{Y_1}}\overline{\zee[K(W)]\otimes_{R_{Y_2}} \Mbar_2} =  M_1\otimes_{R_{Y_1}}\overline{\zee[K(W)]}\otimes_{R_{-Y_2}} M_2,
\]
(for the left). Thus the two target groups are identical (with conjugate module structures), and the equality of the theorem makes sense.

\begin{proof} We adapt the proof from \cite{OS3}. Decompose $W$ into a composition of $1$-handle 
additions, followed by $2$-handles and then $3$-handles. The 
verification of duality for 1- and 3-handle cobordisms is unchanged 
from the untwisted case given in \cite{OS3}, so we omit it here. 

Assume, then, that $W$ is a cobordism comprised entirely of 2-handle 
additions. Let $R$ denote the reflection of the standard 2-simplex 
$\Delta$ that fixes one corner and exchanges the other two. 
Specifically, if the edges are labeled $e_\alpha$, $e_\beta$ and 
$e_\gamma$, we take $R$ to exchange $e_\beta$ and $e_\gamma$ while 
reversing $e_\alpha$. If $A_W$ is an additive assignment for a Heegaard triple 
$(\Sigma,\balpha,\bbeta,\bgamma,z)$ associated to $W$ as in 
Definition \ref{assndef} (using a base triangle $\psi_0$), then we 
obtain an additive assignment $A_{W'}$ for $W'$ (described by the triple $(-\Sigma, \balpha,\bgamma,\bbeta,z)$) from the triangle 
$\psi_0\circ R$. 

More generally, for any (homotopy class of) triangle 
$\psi\in\pi_2(\x,\y,\w)$, precomposition with $R$ gives a triangle 
$\psi\circ R\in\pi_2(\w,\y,\x)$. Moreover, if $\psi = \psi_0 + 
\phi_{\alpha\beta} + \phi_{\beta\gamma} + \phi_{\alpha\gamma}$ then it 
is easy to see that 
\[
\psi \circ R = \psi_0\circ R + (\phi_{\alpha\gamma}\circ r) + 
\phi_{\gamma\beta} + (\phi_{\alpha\beta}\circ r),
\]
where $r$ is the reflection across the real axis used previously. 
Therefore
\[
A_{W'}(\psi\circ R) = \delta(-A_{-Y_2}(\phi_{\alpha\gamma}\circ r) + 
A_{-Y_1}(\phi_{\alpha\beta}\circ r)) = A_W(\psi)
\]
(c.f. the proof of Lemma \ref{pairinglemma}). Furthermore, just as in 
the case of disks we have an identification
\[
{\mathcal M}_{-\Sigma}(\psi\circ R) = {\mathcal M}_{\Sigma}(\psi).
\]
Thus for $m_i\in M_i$: 
\begin{eqnarray*}
\langle F_{W,\s}([\x,i]m_1), [\w,k]m_2\rangle &=& 
\left\langle\sum_{\V\in T_\alpha\cap T_\gamma \atop \psi\in\pi_2(\x,\Theta,\V)} \#{\mathcal 
M}_{\Sigma}(\psi)\cdot [\V, i - n_z(\psi)]m_1\otimes e^{A_W(\psi)}, 
[\w,k]m_2 \right\rangle \\
&=& \sum_{\psi\in\pi_2(\x,\Theta,\w)} \#{\mathcal M}_{\Sigma}(\psi)\cdot 
(m_1\otimes e^{A_W(\psi)})\otimes m_2,
\end{eqnarray*}
an element of $M_1(W)\otimes \overline{M}_2$. On the other hand,
\begin{eqnarray*}
\langle [\x,i]m_1, F_{W',\s}([\w,k]m_2)\rangle &=& \left\langle [\x,i]m_1, 
\sum_{\V\in T_\alpha\cap T_\beta \atop\tilde{\psi}\in \pi_2(\w,\Theta,\V)} \#{\mathcal 
M}_{-\Sigma}(\tilde{\psi})\cdot [\V, i-n_z(\tilde{\psi})]m_2\otimes 
e^{A_{W'}(\tilde{\psi})}\right\rangle\\
&=& \sum_{\tilde{\psi}\in \pi_2(\w,\Theta,\x)} \#{\mathcal 
M}_{-\Sigma}(\tilde{\psi})\cdot m_1\otimes (m_2\otimes 
e^{A_{W'}(\tilde{\psi})})\\
&=& \sum_{\psi\in\pi_2(\x,\Theta,\w)}\#{\mathcal 
M}_{-\Sigma}(\psi\circ R)\cdot m_1\otimes (m_2\otimes e^{A_{W'}(\psi\circ 
R)})\\
&=& \sum_{\psi\in\pi_2(\x,\Theta,\w)} \#{\mathcal M}_{\Sigma}(\psi) 
\cdot
m_1\otimes(m_2\otimes e^{A_W(\psi)})
\end{eqnarray*}
in $M_1\otimes \overline{M_2(W)}$.\end{proof}



\section{Action of First Homology}\label{H1actionsec}

In this section we extend to twisted coefficients an additional aspect 
of the algebraic structure of Heegaard Floer homology, namely the 
action of $\Lambda^*(H_1(Y)/tors)$ on $HF^\circ(Y,\s)$. We also 
discuss the interaction of this structure with cobordism-induced 
homomorphisms. Much of this section is a straightforward 
generalization of material from \cite{OS1,OS2,OS3}, so we omit many of 
the details. 

\begin{prop} Fix an oriented \spinc 3-manifold $(Y,\s)$ and a 
module $M$ for $R_Y = \zee[H^1(Y)]$. Then for any $h \in 
H_1(Y)/tors$ there is a chain endomorphism ${\mathcal A}_h$ of 
$CF^\infty(Y,\s;M)$ of degree $-1$, equivariant with respect 
to $U$ and the $R_Y$ action, with the property that 
${\mathcal A}_h\circ {\mathcal A}_h$ is chain homotopic to 0. 

Thus, the collection of maps ${\mathcal A}_h$ provides $HF^\circ(Y,\s;M)$ with the 
structure of a module over $R_Y[U]\otimes \Lambda^*(H_1(Y)/tors)$. 
\end{prop}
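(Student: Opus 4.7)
The plan is to mimic the Ozsv\'ath-Szab\'o construction of the $H_1(Y)/\text{tors}$-action from \cite{OS1,OS2} and check that inserting the additive assignment weights causes no new trouble, since the same moduli-space boundary analysis that governs the differential governs $\mathcal{A}_h$ and its square.

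First I would fix a class $h\in H_1(Y;\zee)/\text{tors}$ and represent it by an immersed, oriented 1-cycle $\gamma\subset \Sigma$ lying on the Heegaard surface in general position with respect to the attaching curves (if $h$ lies in the image of $H_1(\balpha)$ or $H_1(\bbeta)$, this is the standard case; in general one reduces to it after handleslides). For any $\phi\in\pi_2(\x,\y)$, the algebraic intersection number $a(\gamma,\phi)\in\zee$ defined in \cite{OS1} is obtained from the projection to $\Sigma$ of the $\alpha$-boundary of $\phi$ crossed with $\gamma$. I would then define
\[
\mathcal{A}_h([\x,i]) = \sum_{\y\in T_\alpha\cap T_\beta}\sum_{\substack{\phi\in \pi_2(\x,\y)\\ \mu(\phi)=1}} a(\gamma,\phi)\cdot \#\widehat{\mathcal{M}}(\phi)\cdot e^{A(\phi)}\,[\y,\,i-n_z(\phi)],
\]
and extend $R_Y$-linearly, then tensor over $R_Y$ with $M$. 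Equivariance with respect to $U$ and the $R_Y$-action is immediate from the formula since the weight $e^{A(\phi)}$ is placed in exactly the same position as in Definition \ref{twistcxdef}, and the degree is $-1$ because $a(\gamma,\phi)$ is unchanged if we shift the domain class, while the grading formula \eqref{grdef} for a $\mu=1$ disk gives $\ugr=-1$ between its endpoints.

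To prove $\mathcal{A}_h$ is a chain map, I would examine the ends of the compactified moduli spaces $\widehat{\mathcal{M}}(\phi)$ for $\phi$ of Maslov index $2$. As in \cite{OS1}, these ends correspond to broken flowlines $\phi = \phi_1 * \phi_2$, and weighting each end by $a(\gamma,\phi)\,e^{A(\phi)}$ together with additivity of $A$ under splicing, $A(\phi_1*\phi_2) = A(\phi_1) + A(\phi_2)$, as well as the additivity $a(\gamma,\phi)=a(\gamma,\phi_1)+a(\gamma,\phi_2)$, lets the signed count of broken disks assemble into exactly $\partial^\infty \mathcal{A}_h + \mathcal{A}_h \partial^\infty$, which therefore vanishes. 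To show $\mathcal{A}_h^2\simeq 0$, I would introduce a chain homotopy $H_h$ counting index-$1$ disks $\phi$ with a ``double intersection weight'' that measures $a(\gamma,\phi)^2/2$, essentially the formula of \cite{OS1}, with the factor $e^{A(\phi)}$ inserted; the identity $\partial H_h + H_h\partial = \mathcal{A}_h^2$ again follows from analyzing ends of index-$2$ moduli spaces together with the additivity of both $a(\gamma,\cdot)$ and $A(\cdot)$.

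Finally I would check that the chain-homotopy type of $\mathcal{A}_h$ depends only on $h$, not on the representative cycle $\gamma$: this comes down to the observation that $a(\gamma,\phi)-a(\gamma',\phi)$ can be written as an intersection with a domain bounded on $T_\alpha\cup T_\beta$, yielding an explicit chain homotopy built by the same counting prescription; the additive assignment passes through this argument transparently. The maps $\mathcal{A}_h$ clearly commute with the subcomplex/quotient structure defining $CF^\pm$ and $\widehat{CF}$, giving the module structure on each flavor. The genuine obstacle, if any, is purely bookkeeping: confirming that every appearance of $a(\gamma,\phi)$ in the untwisted proof is accompanied by an $e^{A(\phi)}$ factor that splits correctly under concatenation of disks; once that additivity is in hand, the rest of the argument is formally identical to the one in \cite{OS1,OS2}.
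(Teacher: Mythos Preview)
Your approach is correct, and the moduli-space bookkeeping you describe goes through just as you say. However, the paper makes a different choice of weight function: instead of the geometric intersection number $a(\gamma,\phi)$ coming from a representing cycle $\gamma\subset\Sigma$, it uses the pairing $\langle A(\phi),h\rangle$ of the additive assignment itself with $h$. Both are additive under splicing and agree on periodic domains, so the two chain maps differ by a coboundary and induce the same action on homology; but the paper's choice is better adapted to the twisted setting. In particular, with $\langle A(\phi),h\rangle$ one sees immediately that $\mathcal{A}_h$ is a kind of derivative of the twisted differential with respect to the $R_Y$-variables, which makes the subsequent remark (that the $H_1$-action is trivial on fully twisted Floer homology, and more generally vanishes for $h$ annihilated by the stabilizer $Z_M$) essentially formal. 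Your geometric definition would require an extra argument to reach that conclusion.

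One small slip: the chain homotopy witnessing $\mathcal{A}_h^2\simeq 0$ cannot literally be weighted by $a(\gamma,\phi)^2/2$, since that is not an integer; the standard device uses $\binom{a(\gamma,\phi)}{2}$ (or an equivalent ordered-pair count), so that additivity produces the cross term $a(\gamma,\phi_1)a(\gamma,\phi_2)$ needed to match $\mathcal{A}_h\circ\mathcal{A}_h$. This is presumably what you meant by ``essentially the formula of \cite{OS1}''.
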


\begin{proof} For a generator $[\x,i]\otimes m\in CF^\infty(Y,\s; M)$ we set
\[
{\mathcal A}_h([\x,i]\otimes m) = \sum_{\y\in T_\alpha\cap T_\beta}
\sum_{\begin{array}{c}\mbox{\scriptsize 
$\phi\in\pi_2(\x,\y)$}\\\mbox{\scriptsize $\mu(\phi) = 
1$}\end{array}} \#\widehat{\mathcal M}(\phi)\langle 
A(\phi),h\rangle\cdot [\y,i-n_z(\phi)]\otimes e^{A(\phi)}\cdot m.
\]
Then the proof that 
${\mathcal A}_h$ is a chain map whose square is trivial 
in homology is virtually identical to the proof in the untwisted 
case (c.f. Proposition 4.17 of \cite{OS1}), and it is straightforward to check that the action of ${\mathcal A}_h$ on homology is independent of the choice of additive assignment $A$ (c.f. the proof of Lemma \ref{addassnlemma}). 
\end{proof}

We will omit the map ${\mathcal A}_h$ from the notation and simply write 
$h.\xi$ for the action of $h$ on the element $\xi\in HF^\circ(Y,\s;M)$.

\begin{remark} Though the action of $H_1(Y)/tors$ is defined for 
Floer homology with any coefficients, it may be largely trivial. 
Indeed, suppose $M$ is an $R_Y$-module, and let $Z_M\subset H^1(Y)$ 
denote the stabilizer of $M$: that is, the set of all $\alpha\in H^1(Y)$ such that $\alpha m = m$ for all $m\in M$. Then it can be shown that if $h\in H_1(Y)$ has the 
property that
\[
\langle\alpha,h\rangle = 0 \quad\mbox{for all $\alpha\in Z_M$}
\]
then ${\mathcal A}_h$ is chain homotopic to 0. In particular, this implies 
that the $H_1(Y)/tors$ action on the fully twisted homology 
$HF^\circ(Y,\s; R_Y)$ is trivial.
\end{remark}

\begin{lemma}\label{skewaction} Let $(Y,\s)$ be as above, and let $M$ and $N$ be 
modules for $R_Y$ and $R_{-Y}$ respectively. Then for any $h\in H_1(Y)/tors$, any 
$\xi\in HF^+(Y,\s;M)$ and any $\eta\in HF^-(-Y,\s;N)$ we have
\[
\langle h.\xi,\, \eta\rangle = - \langle \xi,\, h.\eta\rangle.
\]
\end{lemma}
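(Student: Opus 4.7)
The plan is to adapt the reflection argument from the proof of Lemma \ref{pairinglemma} to the action ${\mathcal A}_h$, exploiting the fact that ${\mathcal A}_h$ differs from $\partial^\infty$ only through the integer weight $\langle A(\phi), h\rangle$ attached to each disk. The reflection $r: D^2 \to D^2$ across the real axis will again identify $J$-holomorphic disks in $\pi_2(\x,\y)$ on $\Sigma$ with $(-J)$-holomorphic disks in $\pi_2(\y,\x)$ on $-\Sigma$, and it satisfies the three key identities ${\mathcal M}_{-\Sigma}(\phi \circ r) = {\mathcal M}_\Sigma(\phi)$, $n_z^{-\Sigma}(\phi \circ r) = n_z^\Sigma(\phi)$, and, crucially, $A_{-Y}(\phi \circ r) = -A_Y(\phi)$.

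I would then compute both sides on generators $[\x,i]\otimes m$ and $[\y,j]\otimes n$, using that the pairing of two generators is supported exactly where the intersection points coincide and $j = -i-1$; after expanding the action, this turns into a sum over $\phi \in \pi_2(\x,\y)$ with $\mu(\phi) = 1$ and $n_z(\phi) = i+j+1$. The left side reads
\[
\langle {\mathcal A}_h([\x,i]\otimes m),\,[\y,j]\otimes n\rangle = \sum_{\phi} \#\widehat{\mathcal M}_\Sigma(\phi)\,\langle A_Y(\phi), h\rangle\,(e^{A_Y(\phi)} m)\otimes n,
\]
while the right side, after substituting $\phi = \tilde\phi \circ r$ and applying the three identities, becomes
\[
\langle [\x,i]\otimes m,\,{\mathcal A}_h([\y,j]\otimes n)\rangle = \sum_{\phi}\#\widehat{\mathcal M}_\Sigma(\phi)\,\bigl(-\langle A_Y(\phi), h\rangle\bigr)\,m \otimes (e^{-A_Y(\phi)} n).
\]

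To conclude I would invoke the conjugate module structure on $\overline{N}$, under which $e^{A_Y(\phi)} m \otimes n = m \otimes e^{-A_Y(\phi)} n$ in $M \otimes_{R_Y} \overline{N}$; this matches the group-ring factors on the two sides, so the only remaining discrepancy is the scalar sign $\pm \langle A_Y(\phi), h\rangle$, which is precisely the skew-symmetry claimed. The identity then passes to homology because both ${\mathcal A}_h$ and the pairing are already known to do so. I do not expect a genuine obstacle: the argument is a weighted version of Lemma \ref{pairinglemma}, and the only care needed is in keeping the conjugate-module conventions and the sign from $A_{-Y}(\phi\circ r) = -A_Y(\phi)$ straight.
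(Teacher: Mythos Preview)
Your proposal is correct and follows exactly the approach the paper indicates: the paper's own proof is a one-sentence pointer to Lemma \ref{pairinglemma}, noting that the only new feature is the integer weight $\langle A(\phi),h\rangle$, which changes sign under the reflection because $A_{-Y}(\phi\circ r) = -A_Y(\phi)$. You have simply written out that computation in full, with the module-balancing step $(e^{A_Y(\phi)}m)\otimes n = m\otimes(e^{-A_Y(\phi)}n)$ in $M\otimes_{R_Y}\overline{N}$ made explicit.
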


\begin{proof} This follows from a calculation very similar to the one 
in Lemma \ref{pairinglemma}. Indeed, the only difference is the 
appearance of the factors $\langle A(\phi),h\rangle$, which change 
sign under orientation reversal.
\end{proof}

We now extend the twisted cobordism invariants from the previous 
section to include the action of first homology. Specifically, for a 
cobordism $W:Y_0\to Y_1$ we 
wish to define $F_{W,\s}$ as a map
\begin{equation}\label{extendedhom}
F^\circ_{W,\s}: HF^\circ(Y_0,\s_0; M)\otimes \Lambda^*H_1(W)/tors 
\longto HF^\circ(Y_1,\s_1; M(W)).
\end{equation}

With the preceding 
in place the definition runs precisely as in the untwisted case in \cite{OS3}; we 
summarize the construction.

Suppose first that $W: Y_0\to Y_1$ is a cobordism consisting only of 
2-handle additions. Then it is easy to see that the map 
\[
i_* = {i_0}_* - {i_1}_* : H_1(Y_0)/tors \oplus H_1(Y_1)/tors \to 
H_1(W)/tors
\]
is surjective. Fix $h\in H_1(W)/tors$ and suppose $h = 
i_*(h_0,h_1)$. For $\xi\in HF^\circ(Y_0,\s_0;M)$, we set
\begin{equation}\label{commutatordef}
F^\circ_{W,\s}(\xi\otimes h) = F^\circ_{W,\s}(h_0.\xi) - 
h_1.F^\circ_{W,\s}(\xi).
\end{equation}
Clearly $F^\circ_{W,\s}((\xi\otimes h)\otimes h) = 0$, so the action 
extends to $\Lambda^*H_1(W)/tors$. 

In fact, we can define this action using a Heegaard triple 
$(\Sigma,\balpha,\bbeta,\bgamma,z)$ describing the cobordism, just 
as in Lemma 2.6 of \cite{OS3}. It follows as in that proof that the 
action of pairs $(h_0,h_1)$ in the image of $H_2(W,\partial W,\zee)$ 
is trivial, so the action descends as claimed to $H_1(W)/tors$. 

In general for a cobordism containing 1-, 2-, and 3-handles we write 
the induced homomorphism as a composition $F_W^\circ = E^\circ\circ 
H^\circ\circ G^\circ$ as in section \ref{cobordismsec}. This 
composition corresponds to a factorization $W = W_1\cup W_2\cup W_3$ 
where $W_i$ includes only handles of index $i$. As observed in 
\cite{OS3}, the inclusion induces an isomorphism $H_1(W_2)\to 
H_1(W)$; thus for $\omega\in \Lambda^*H_1(W)/tors$ we set 
\[
F_W^\circ(\xi\otimes \omega) = E^\circ(H^\circ(G^\circ(\xi)\otimes \omega))
\]
just as in \cite{OS3}. 

Many properties of the extended cobordism maps \eqref{extendedhom} 
follow from corresponding properties of the original ones. We mention 
two results here.

\begin{theorem}\label{gendualitythm} Let $W: Y_0\to Y_1$ be a cobordism with \spinc 
structure $\s$ and suppose $\omega\in\Lambda^* H_1(W)/tors$. Write $\s_i$ for 
$\s|_{Y_i}$ Then for modules $M$ and $N$ over $R_{Y_0}$ and $R_{-Y_1}$ 
respectively, and for any $x\in HF^+(Y_0,\s_0;M)$ and $y\in 
HF^-(-Y_1,\s_1;N)$, we have
\[
\langle F^+_{W,\s}(x\otimes \omega), \,y\rangle = \langle x, \, 
F^-_{W',\s}(y\otimes \omega)\rangle.
\]
\end{theorem}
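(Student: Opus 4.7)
The plan is to reduce to the case of a single class $h \in H_1(W)/tors$, split along the handle decomposition of $W$, and then extend by multiplicativity. Throughout, the key mechanism is that the skew-symmetry of the $H_1$-action (Lemma \ref{skewaction}) precisely cancels the minus sign in the defining formula \eqref{commutatordef} for $F_{W,\s}(\xi \otimes h)$, so that the duality of Theorem \ref{dualitythm} upgrades intact.

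First, I would reduce to the case $\omega = h \in H_1(W)/tors$ by induction on wedge length, using the fact that both sides are $R$-linear in $\omega$ in the obvious sense. Then I would reduce to the case where $W$ is a pure $2$-handle cobordism. For the $1$- and $3$-handle pieces of the standard decomposition $W = W_1 \cup W_2 \cup W_3$, the class $\omega$ is carried entirely on $W_2$ (via the isomorphism $H_1(W_2) \to H_1(W)$), and the maps $G$ and $E$ associated to $W_1$ and $W_3$ commute with the $H_1$-action in a trivial way; moreover, duality for these pieces is exactly what was already established in the proof of Theorem \ref{dualitythm} (following \cite{OS3}). Composing the three pieces and noting that $W' = W_3' \cup W_2' \cup W_1'$ with the roles of index $1$ and $3$ interchanged, the general case follows from the $2$-handle case.

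For the $2$-handle case, write $h = i_*(h_0, h_1)$ with $h_i \in H_1(Y_i)/tors$. By \eqref{commutatordef},
\[
\langle F^+_{W,\s}(x \otimes h),\, y\rangle = \langle F^+_{W,\s}(h_0 . x),\, y\rangle - \langle h_1 . F^+_{W,\s}(x),\, y\rangle.
\]
Apply Theorem \ref{dualitythm} to the first summand and Lemma \ref{skewaction} to the second to obtain
\[
\langle h_0 . x,\, F^-_{W',\s}(y)\rangle + \langle F^+_{W,\s}(x),\, h_1 . y\rangle.
\]
Applying Lemma \ref{skewaction} to the first term and Theorem \ref{dualitythm} to the second yields
\[
-\langle x,\, h_0 . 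F^-_{W',\s}(y)\rangle + \langle x,\, F^-_{W',\s}(h_1 . y)\rangle,
\]
which is exactly $\langle x,\, F^-_{W',\s}(y \otimes h)\rangle$ by the analogue of \eqref{commutatordef} applied to $W'$, since the roles of $(h_0, h_1)$ are interchanged when the cobordism is reversed. The independence of the definition on the splitting $h = i_*(h_0, h_1)$, already needed to make \eqref{commutatordef} well-defined, guarantees that this identification is unambiguous.

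The main obstacle, and the one requiring the most care, is bookkeeping: ensuring that the splittings $h = i_*(h_0, h_1)$ used on $W$ and on $W'$ are compatible (the inclusions from $\pm Y_0$ and $\pm Y_1$ reverse their roles), and that signs in the skew-symmetric pairing line up. Once the $2$-handle case is secured, extending to arbitrary $\omega = h_{(1)} \wedge \cdots \wedge h_{(k)}$ is a straightforward induction: each commutator-type term arising from \eqref{commutatordef} dualizes via Theorem \ref{dualitythm} and Lemma \ref{skewaction} into the corresponding commutator term for $W'$.
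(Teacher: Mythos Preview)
Your proof is correct and follows essentially the same approach as the paper's own argument: reduce to the $2$-handle case, write $h = i_*(h_0,h_1)$, and apply Theorem~\ref{dualitythm} and Lemma~\ref{skewaction} to each of the two terms in~\eqref{commutatordef}. The paper's version is terser---it does the $2$-handle computation in three lines and then dismisses the extension to general cobordisms and general $\omega$ as ``a simple matter''---whereas you spell out the induction on wedge length and the handling of $1$- and $3$-handles explicitly, but the core idea is identical.
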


\begin{proof} Assume first that $W$ consists of 2-handles only, and 
suppose $h\in H_1(W)/tors$ has the expression $h = i_*(h_0, h_1)$ for 
$h_i\in H_1(Y_i)/tors$. Then using the duality theorem for 
twisted coefficients (Theorem \ref{dualitythm}) and Lemma 
\ref{skewaction} we have
\begin{eqnarray*}
\langle F^+_{W,\s}(x\otimes h), \,y\rangle &=& \langle 
F^+_{W,\s}(h_0.x) - h_1.F^+_{W,\s}(x), y\rangle\\
&=& - \langle x, h_0.F^-_{W',\s}(y)\rangle + \langle x, 
\,F^-_{W',\s}(h_1.y)\rangle\\
&=& \langle x,\, F^-_{W',\s}(y\otimes h)\rangle.
\end{eqnarray*}
It is a simple matter to extend to general cobordisms and general $\omega$.
\end{proof}

\begin{theorem}\label{gencomplaw} The composition law (Theorem \ref{complaw}) holds for the extended maps 
\eqref{extendedhom}. More precisely, suppose $W = W_1\cup_{Y_1} W_2$ is a 
composite cobordism and write 
\[
j_*: \Lambda^* (H_1(W_1)/tors) \otimes 
\Lambda^* (H_1(W_2)/tors)\to \Lambda^*(H_1(W)/tors)
\]
for the surjection 
induced on exterior algebras by the Mayer-Vietoris map $H_1(W_1)\oplus 
H_1(W_2)\to H_1(W)$. Fix $\omega_i\in \Lambda^*H_1(W_i)/tors$ and 
write $\omega$ for the image of 
$\omega_1\otimes\omega_2$ under $j_*$. 
Then for any \spinc structure $\s$ on $W$, we can find choices of representatives for 
the maps $F^\circ$ such that for any $\alpha\in H^1(Y_1)$
\[
F_{W,\s-\delta \alpha}^\circ(\xi\otimes \omega) = \Pi_W\left[
F_{W_2,\s|_{W_2}}^\circ(e^\alpha\cdot F_{W_1,\s|_{W_1}}^\circ(\xi\otimes \omega_1)\otimes 
\omega_2)\right].
\]
\end{theorem}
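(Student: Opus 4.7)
The plan is to follow the same strategy as the proof of Theorem \ref{complaw}, reducing via the commutator definition \eqref{commutatordef} of the extended maps to the ``bare'' twisted-coefficient composition law. I begin with the standard reductions: using the factorization $F^\circ_{W_i} = E^\circ \circ H^\circ \circ G^\circ$ together with the fact that $H_1$ of the 2-handle subcobordism maps isomorphically to $H_1(W_i)/tors$ while $G^\circ$ and $E^\circ$ neither modify coefficient modules nor interact with the $H_1$-action, one reduces to the case where $W_1$ and $W_2$ each consist only of 2-handle additions. Since both sides of the desired identity are $\zee$-multilinear and alternating in the wedge factors of $\omega_1$ and $\omega_2$, it then suffices to treat the case where each $\omega_i = h_1^{(i)} \wedge \cdots \wedge h_{k_i}^{(i)}$ is a pure wedge, which we may handle by induction on the total degree $k_1 + k_2$. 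The base case $\omega_1 = \omega_2 = 1$ is precisely Theorem \ref{complaw}.

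For the inductive step, consider appending a class $h \in H_1(W_2)/tors$ to $\omega_2$ (the case of appending to $\omega_1$ is similar by symmetry of the argument). Choose a Mayer--Vietoris lift $h = i_{1*}(a_1) - i_{2*}(a_2)$ with $a_j \in H_1(Y_j)/tors$. Then the lift of $j_*(\omega_1 \otimes (\omega_2 \wedge h)) \in H_1(W)/tors$ at the outer boundary is, compatibly with an already chosen lift $(k_0, k_2)$ of $j_*(\omega_1 \otimes \omega_2)$, given by $(k_0, k_2 + a_2)$. Applying \eqref{commutatordef} to unfold the new wedge factor on each side of the proposed identity, and using the $R_{Y_1}$-equivariance of the $H_1(Y_1)$-action (so in particular that $a_1$ commutes with multiplication by $e^\alpha$), the identity reduces to the inductive hypothesis for $(\omega_1, \omega_2)$ applied at appropriately shifted source and target elements, combined with a single application of Theorem \ref{complaw}.

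The main obstacle will be the cancellation of the ``middle'' $H_1(Y_1)$ contribution. When $\omega_1$ contains a factor $h' \in H_1(W_1)$ that has already been unfolded in a previous step of the induction via some lift at $Y_1$, and one now unfolds $h \in H_1(W_2)$ via $a_1 \in H_1(Y_1)$, the two choices at the common boundary $Y_1$ may differ. The key observation is that the difference of any two such boundary lifts is an element of $H_1(Y_1)$ whose image under the Mayer--Vietoris surjection $H_1(Y_0) \oplus H_1(Y_1) \oplus H_1(Y_2) \to H_1(W)$ is zero, and hence acts trivially on $F^\circ_{W,\s - \delta \alpha}$ via the well-definedness remarked upon just after \eqref{commutatordef}. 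Dually, on the composed side the discrepancy is a pre-/post-composition by an element of $H^1(Y_1)$ that is absorbed by $\Pi_W$: it lies in the kernel of the natural inclusion $K(W) \hookrightarrow K(W, Y_1)$ and thus dies under the projection $\Pi_W: \zee[K(W, Y_1)] \to \zee[K(W)]$, exactly as in the proof of Theorem \ref{complaw} itself. Once this coherence of lifts is settled, the induction closes and the formula follows, up to the same sign and $H^1(Y_0) \oplus H^1(Y_2)$ indeterminacy built into the statement.
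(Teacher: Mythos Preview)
Your approach---reduce to 2-handles and induct on wedge degree via the commutator definition \eqref{commutatordef}---is exactly what the paper has in mind; its own proof is a one-line reference to Theorem \ref{complaw}, \eqref{commutatordef}, and \cite{OS3} Proposition 4.20.

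There is, however, a genuine confusion in your final paragraph. You write that on the composed side the $H_1(Y_1)$ discrepancy is ``absorbed by $\Pi_W$'' because it ``lies in the kernel of the natural inclusion $K(W)\hookrightarrow K(W,Y_1)$.'' This conflates the $H_1$-action ${\mathcal A}_h$ on Floer homology (the subject of \eqref{commutatordef}) with the $H^1$-action on coefficient modules (the subject of $\Pi_W$ and $e^\alpha$ in Theorem \ref{complaw}). These are unrelated structures, and in any case the inclusion $K(W)\hookrightarrow K(W,Y_1)$ is injective, so nothing nonzero lies in its kernel. The correct mechanism stays entirely on the $H_1$ side: unfolding $h=i_{1*}(a_1)-i_{2*}(a_2)$ in $F_{W_2}$, the $a_1$-term is
\[
a_1.\bigl(e^\alpha F_{W_1}(\xi\otimes\omega_1)\bigr)=e^\alpha\, F_{W_1}(\xi\otimes\omega_1\wedge\bar g),
\]
where $\bar g$ is the image of $a_1$ in $H_1(W_1)$ (use the lift $(0,-a_1)$ in \eqref{commutatordef} for $W_1$). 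By the inductive hypothesis this becomes $F_W(\xi\otimes\omega\wedge j_{1*}(\bar g))$ after applying $\Pi_W$, and since $j_*(h)=j_{1*}(\bar g)-i_{2*}(a_2)$ in $H_1(W)$, combining with the $a_2$-term yields $F_W(\xi\otimes\omega\wedge j_*(h))$ by the well-definedness of the $H_1(W)$-action noted after \eqref{commutatordef}. No further role for $\Pi_W$ beyond the base case is required. Relatedly, your notation ``$(k_0,k_2+a_2)$'' for a lift of $j_*(\omega_1\otimes(\omega_2\wedge h))$ does not parse: $\omega$ is an exterior-algebra element, not a single homology class, and the lift of the new factor $j_*(h)$ alone is not $(0,a_2)$ unless the image of $a_1$ in $H_1(W)$ happens to vanish.
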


\begin{proof} This follows from Theorem \ref{complaw} together with 
the formal properties of the $H_1$-action, particularly 
\eqref{commutatordef} in the case of 2-handles. (See \cite{OS3}, 
particularly Proposition 4.20. Note that here the strengthened 
composition law means that summing over \spinc structures is 
unnecessary.)
\end{proof}

\section{Conjugation and orientation reversal}\label{conjsec}

As in the original Heegaard Floer theory, there are simple relationships between the twisted Heegaard Floer homologies of $(Y,\s)$, $(-Y,\s)$, and $(Y, \bar{\s})$, where $\bar{\s}$ is the conjugate \spinc structure. To describe the effect of \spinc conjugation, recall that though we normally do not include it in the notation, the ring $R_Y$ depends on $\s$ through the grading \eqref{sgrading}, and here we write $R_{Y,\s}$ to indicate this. Thus $R_{Y,\s}$ and $R_{Y,\bar{\s}}$ are identical rings with opposite gradings; in fact $R_{Y,\bar{\s}} = R_{-Y,\s}$ as graded rings. In particular, if $M$ is a graded module for $R_{Y,\s}$, the conjugate module $\Mbar$ can be considered either as a module for $R_{-Y,\s}$ or for $R_{Y,\bar{\s}}$.

\begin{theorem} If $(Y,\s)$ is a closed \spinc 3-manifold and $M$ is a module for $R_{Y,\s}$, then there is a grading-preserving isomorphism of $R_{Y,\s}$-modules
\[
J: HF^\circ(Y,\s; M) \rTo^\sim \overline{HF^\circ(Y,\bar{\s};\Mbar)}.
\]
\end{theorem}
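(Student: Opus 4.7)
\emph{Proof plan.} I would construct $J$ from the classical diagrammatic conjugation symmetry of Heegaard Floer homology (as in \cite{OS1,OS2}), carefully tracking additive assignments and module structures in the twisted setting.

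First, from a pointed Heegaard diagram $(\Sigma,\balpha,\bbeta,z)$ for $(Y,\s)$, swapping the roles of $\balpha$ and $\bbeta$ and reversing the orientation of $\Sigma$ produces another pointed diagram for the same oriented $3$-manifold $Y$, but in which the basepoint map $s_z$ now assigns to an intersection point $\x$ (viewed as an element of $T_\beta\cap T_\alpha$, identified as a set with $T_\alpha\cap T_\beta$) the conjugate \spinc structure $\bar{\s}$. This is the well-known diagrammatic origin of the untwisted isomorphism $HF^\circ(Y,\s;\zee)\cong HF^\circ(Y,\bar{\s};\zee)$.

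Second, I would use a reflection $\sigma$ of the unit disk that fixes the two marked points $\pm i$ and swaps the two boundary arcs. Together with the identification $\mathrm{Sym}^g(\Sigma)=\mathrm{Sym}^g(-\Sigma)$ with opposite complex structure, this gives a bijection $\pi_2^{(\Sigma,\alpha,\beta)}(\x,\y)\to\pi_2^{(-\Sigma,\beta,\alpha)}(\x,\y)$ that exchanges $J$-holomorphic with $(-J)$-holomorphic disks and preserves both the Maslov index and the basepoint multiplicity $n_z$ (this is essentially the same bijection that appears in the proof of Lemma~\ref{pairinglemma}). Given an additive assignment $A$ for the original diagram, the formula $A'(\phi\circ\sigma):=-A(\phi)$ provides an additive assignment for the conjugate diagram, the sign arising because $\phi$ spliced with $\phi\circ\sigma$ is nullhomotopic.

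With this in hand, I would define $J_\#\colon CF^\infty(Y,\s;M)\to CF^\infty(Y,\bar{\s};\Mbar)$ on generators by $J_\#([\x,i]\otimes m)=[\x,i]\otimes m$, where $m$ on the right is regarded as an element of $\Mbar$. The differential applied to $J_\#([\x,i]\otimes m)$ in the target then reads
\[
\sum_{\y,\phi}\#\widehat{\mathcal M}(\phi)\,[\y,i-n_z(\phi)]\otimes\bigl(e^{-A(\phi)}\cdot m\bigr),
\]
where the $R_{Y,\bar{\s}}$-action on $\Mbar$ is $r\otimes m\mapsto\bar{r}\cdot m$. The coefficient $e^{-A(\phi)}\cdot m$ in $\Mbar$ therefore equals $e^{A(\phi)}\cdot m$ in $M$, showing that $J_\#$ intertwines the differentials and is equivariant over $R_{Y,\s}$ and over $U$. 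Since $\gr_{\bar{\s}}=-\gr_\s$ on $H^1(Y)$ and since $A'=-A$ under the bijection of disks, the grading formula \eqref{grdef} is preserved, so $J_\#$ is grading-preserving when the target is understood as $\overline{CF^\infty(Y,\bar{\s};\Mbar)}$. Passing to homology and invoking invariance under Heegaard moves and the choice of additive assignment (Lemma~\ref{addassnlemma}) gives the isomorphism $J$.

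The main obstacle is not the existence of the underlying diagrammatic symmetry, which is classical, but rather the bookkeeping that the sign in $A'$, the conjugate module structure on $\Mbar$, and the grading reversal $\gr_{\bar{\s}}=-\gr_\s$ all line up consistently so that $J_\#$ is simultaneously a chain map, an $R_{Y,\s}$-module map, and grading-preserving. Once this package is checked, the rest of the argument is formal.
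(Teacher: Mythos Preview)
Your proposal is correct and follows essentially the same argument as the paper: both use the conjugate diagram $(-\Sigma,\bbeta,\balpha,z)$, the reflection of the disk across the imaginary axis (the paper denotes this $f$), the induced additive assignment $A'(\phi\circ f)=-A(\phi)$, and the map $[\x,i]\otimes m\mapsto[\x,i]\otimes m$ extended antilinearly. One small inaccuracy: the reflection you need here fixes $\pm i$ and swaps the boundary arcs (imaginary-axis reflection), whereas the reflection $r$ in Lemma~\ref{pairinglemma} is across the real axis and swaps $\pm i$; the two play analogous roles but are not literally the same bijection.
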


\begin{proof} We mimic the argument in the untwisted case \cite{OS2}. Recall that if $(\Sigma,\balpha,\bbeta,z)$ is a Heegaard diagram for $Y$ and $\x\in T_\alpha\cap T_\beta$ has $s_z(\x) = \s$, then $(-\Sigma,\bbeta,\balpha,z)$ also describes $Y$, and in this diagram $s_z(\x) =\bar{\s}$. If $\{A_{\x,\y}\}$ is an additive assignment for $(\Sigma,\balpha,\bbeta,z)$, then we obtain an assignment $A'$ for $(-\Sigma,\bbeta,\balpha,z)$ by $A'_{\x,\y}(\phi) = -A_{\x,\y}(\phi\circ f)$, where $f$ is the reflection across the imaginary axis in $\cee$.

Define a homomorphism $J: CF^\circ(Y,\s; R_{Y,\s})\to CF^\circ(Y,\bar{\s};R_{Y,\bar{\s}})$ by mapping $[\x,i]$ in the diagram $(\Sigma,\balpha,\bbeta,z)$ to $[\x,i]$ in the diagram $(-\Sigma, \bbeta,\balpha,z)$ and extending by $\zee[H^1(Y)]$-antilinearity. Then it is a straightforward exercise to check that $J$ is a chain map preserving relative gradings, recalling that $\M_{-\Sigma}(\phi\circ f) = \M_{\Sigma}(\phi)$, $n_z^{-\Sigma}(\phi\circ f) = n_z^\Sigma(\phi)$, and $\mu_{-\Sigma}(\phi\circ f) = \mu_\Sigma(\phi)$. In general, 
\[
J: CF^\circ(Y,\s)\otimes_{R_{Y,\s}}M \to CF^\circ(Y,\bar{\s})\otimes_{R_{Y,\bar{\s}}} \Mbar
\]
is given by $[\x, i]\otimes m \mapsto [\x,i]\otimes m$. Since this is an antilinear chain isomorphism, the statement of the theorem follows.
\end{proof}

It is not hard to generalize the the naturality of cobordism-induced maps under conjugation to the twisted case.

Before describing the effect of orientation reversal, we pause to spell out our duality conventions. Let $M$ be a graded $R_{Y,\s}$-module, and set
\[
CF^*_\circ(Y,\s;M) = \Hom_{R_{Y,\s}}(CF_*^\circ(Y,\s), M),
\]
made into an $R_{Y,\s}$-module in the obvious way. For the grading, suppose $\alpha,\beta\in CF^*_\circ(Y,\s;M)$ are homogeneous (as homomorphisms between relatively graded $R_{Y,\s}$-modules). Set
\[
\gr_{CF^*}(\alpha,\beta) = \gr_M(\alpha(f)) - \gr_M(\beta(g)) - \gr_{CF_*}(f,g)
\]
for any homogeneous $f,g\in CF_*(Y,\s)$ with $\alpha(f)$ and $\beta(f)$ nonzero in $M$. Thus, for example, $\gr_{CF^*}(r\alpha,\alpha) = \gr_\s(r)$ for $r\in R_{Y,\s}$.

Observe that there is a natural generating set for $CF^*_\infty(Y,\s;R_{Y,\s})$. Namely, for a generator $[\x,i]\in CF_*^\infty(Y,\s)$, define $[\x,i]^*: CF_*^\infty(Y,\s)\to R_{Y,\s}$ by setting $[\x,i]^*([\y,j]\otimes r) = r$ if $[\y,j] = [\x,i]$, and $0$ otherwise. Since $CF_*^\infty(Y,\s)$ is a free complex over $R_{Y,\s}$, elements of $CF^*_\infty(Y,\s;M)$ can be expressed as combinations of elements of the form $[\x,i]^*\otimes m$, whose value on $[\y,j]$ is $[\x,i]^*([\y,j])\cdot m$. 

In terms of these generators, the coboundary in $CF^*$ can be expressed explicitly by
\[
\delta([\x,i]^*\otimes m) = \sum_{\phi\in \pi_2(\y,\x)\atop \mu(\phi) =1} \#\Mhat(\phi) [\y,i + n_z(\phi)]^* \otimes e^{A(\phi)}m.
\]
With the grading conventions outlined above, we have
\[
\gr_{CF^*(Y)}([\x,i]^*,[\y,j]^*) = -\gr_{CF_*(Y)}([\x,i],[\y,j]).
\]
Observe that with these conventions, the codifferential has degree $-1$, i.e., $CF^*_\infty(Y,\s;M)$ is a chain complex rather than a cochain complex. Likewise, the transpose action of $U$ given by $U: [\x,i]^*\mapsto [\x,i+1]^*$ decreases grading by 2.

\begin{theorem} For $(Y,\s)$ a closed \spinc 3-manifold and $M$ a module for $R_{Y,\s}$, there is a grading-preserving isomorphism of $R_{Y,\s}$-modules
\[
HF^\pm_*(Y,\s; M) \cong \overline{HF^*_\mp(-Y,\s; \Mbar)}.
\]
\end{theorem}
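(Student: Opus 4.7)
The plan is to realize the desired isomorphism at the chain level by combining the orientation-reversal trick on Heegaard diagrams with the duality pairing of Section \ref{pairingsec}.

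First I would set up the chain-level map. If $(\Sigma,\balpha,\bbeta,z)$ is a pointed Heegaard diagram for $Y$ compatible with $\s$, then $(-\Sigma,\balpha,\bbeta,z)$ is a Heegaard diagram for $-Y$ in the \spinc structure $\s$. Generators $\x\in T_\alpha\cap T_\beta$ are the same in both diagrams. Fixing an additive assignment $A$ on $(\Sigma,\balpha,\bbeta,z)$, one obtains an additive assignment $A'$ on $(-\Sigma,\balpha,\bbeta,z)$ by $A'(\phi)=-A(\phi\circ r)$, where $r$ is complex conjugation on the disk. Following the pairing formula \eqref{pairingdef}, define
\[
\Phi \colon CF^{\pm}_*(Y,\s;M) \longto \overline{CF^{*}_{\mp}(-Y,\s;\Mbar)}, \qquad [\x,i]\otimes m \;\longmapsto\; [\x,-i-1]^{*}\otimes m,
\]
extended $R_{Y,\s}$-linearly (the conjugation implicit in the ``bar'' on the right converts $R_{Y,\s}$-antilinearity on the level of generating monomials into ordinary linearity). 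Observe that $i\geq 0$ iff $-i-1<0$, which is why a generator of $CF^{+}(Y)$ maps to a generator of the $R$-dual of $CF^{-}(-Y)$, and vice versa.

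Second, I would check that $\Phi$ is a chain map, with a calculation parallel to Lemma \ref{pairinglemma}. For $\phi\in\pi_2(\x,\y)$ in $\Sigma$ one has $\phi\circ r\in\pi_2(\y,\x)$ in $-\Sigma$, together with
\[
\#\widehat{\M}_{-\Sigma}(\phi\circ r) = \#\widehat{\M}_{\Sigma}(\phi), \quad n_z^{-\Sigma}(\phi\circ r)=n_z^{\Sigma}(\phi), \quad \mu_{-\Sigma}(\phi\circ r)=\mu_{\Sigma}(\phi), \quad A'(\phi\circ r)=-A(\phi).
\]
Plugging this into the explicit formula for the codifferential on $CF^{*}_{\mp}(-Y,\s;\Mbar)$ given at the end of Section \ref{conjsec} and comparing with $\Phi\circ\partial^{\pm}$, the minus sign on the additive assignment is precisely absorbed by the conjugate $R_{-Y,\s}$-action on $\Mbar$, giving the equality $\delta\circ\Phi=\Phi\circ\partial^{\pm}$. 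The equivariance $\Phi\circ U=U\circ\Phi$ is immediate from the index shift in the definition.

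Third, I would verify that $\Phi$ is an isomorphism of (relatively) graded $R_{Y,\s}$-modules. That it is a bijection on generators is built into the definition, so it is an isomorphism of $R_{Y,\s}$-modules; independence from the choice of additive assignment follows, after conjugation, from Lemma \ref{addassnlemma}. For the grading, recall that the dual grading on $CF^{*}_{\mp}(-Y,\s;\Mbar)$ satisfies $\ugr([\x,i]^{*},[\y,j]^{*})=-\ugr_{-Y}([\x,i],[\y,j])$, while the grading on $CF_{*}^{\pm}(Y,\s;M)$ is given by \eqref{grdef}. Using that $R_{-Y,\s}$ has the opposite $\s$-grading to $R_{Y,\s}$ (Section \ref{conjsec}) and that $n_z^{-\Sigma}(\phi\circ r)=n_z^{\Sigma}(\phi)$, $\mu_{-\Sigma}(\phi\circ r)=\mu_{\Sigma}(\phi)$, $A'(\phi\circ r)=-A(\phi)$, a short calculation shows that the index shift $i\mapsto -i-1$ in $\Phi$ matches grading under the bar operation on the right-hand side.

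The main obstacle is the careful bookkeeping in paragraph two and three: keeping track of the orientation reversal of $\Sigma$, the sign flip of the additive assignment, the index shift $i\mapsto -i-1$, the conjugate $R$-module structure on $\Mbar$, and the dualization all at once, and confirming that the signs and gradings line up exactly so that $\Phi$ is an honest graded chain isomorphism. Once those routine but delicate identifications are in hand, passing to homology gives the claimed isomorphism $HF^{\pm}_{*}(Y,\s;M)\cong\overline{HF^{*}_{\mp}(-Y,\s;\Mbar)}$ on both the $HF^+$ and $HF^-$ sides simultaneously, and the standard five-lemma argument with the $HF^\infty$--$HF^\pm$--$HF^\mp$ long exact sequences confirms compatibility with the structural maps.
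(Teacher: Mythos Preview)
Your proposal is correct and follows essentially the same route as the paper: the paper defines the identical map $[\x,i]\otimes m\mapsto [\x,-1-i]^*\otimes m$ and simply asserts that one checks it is an $\zee[H^1(Y)]$-antilinear chain isomorphism preserving relative grading, whereas you have spelled out those verifications in detail using the orientation-reversal identities from Lemma~\ref{pairinglemma}. The extra five-lemma remark at the end is not needed for the statement but is harmless.
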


\begin{proof} Just as in the proof in \cite{OS2}, define a homomorphism $CF^\circ_*(Y,\s; M)\to CF^*_\circ(-Y,\s; \Mbar)$ by $[\x,i]\otimes m\mapsto [\x,-1-i]^*\otimes m$, where on the right we consider $m$ as an element of $\Mbar$. One checks easily that this gives rise to a $\zee[H^1(Y)]$-antilinear chain isomorphism that preserves relative grading.
\end{proof}

\section{Invariants for 4-manifolds}\label{gluingsec}. 

We briefly recall the definition of Ozsv\'ath-Szab\'o 4-manifold 
invariants from \cite{OS3}, and then proceed to discuss their calculation in the 
context of 4-manifolds obtained by gluing two manifolds with boundary.

Suppose $X$ is a closed 4-manifold having $b^+(X)\geq 2$. Then we can 
find an {\em admissible cut} for $X$: that is, a hypersurface 
$N\subset X$ separating $X$ into components $X = V_1\cup_N V_2$ with 
the following properties:
\begin{enumerate}
\item For $i = 1,2$, we have $b^+(V_i)\geq 1$.
\item The image of the Mayer-Vietoris map $\delta: H^1(N)\to H^2(X)$ 
is trivial.
\end{enumerate}
As observed previously (Remark \ref{spincrestriction}), the second 
condition ensures that \spinc structures on $X$ are determined by 
their restrictions to $V_1$ and $V_2$. 

The first condition is relevant because of the following.

\begin{lemma}[\cite{OS3}]\label{b+lemma} If $W$ is a cobordism having $b^+(W)\geq 1$ 
then for any \spinc structure $\s$ and in any coefficient module, the 
map $F_{W,\s}^\infty$ vanishes.
\end{lemma}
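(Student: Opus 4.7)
The plan is to reduce in two steps: first from an arbitrary coefficient module to the fully twisted case, and then to an elementary cobordism that can be analyzed via a Heegaard triple, adapting the untwisted argument of \cite{OS3}.

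For the first reduction, recall that $F_{W,\s}^\infty$ is $R_{Y_1}$-antilinear and is defined at the chain level by counting triangles weighted by $e^{A_W(\psi)}$ before tensoring with the coefficient module. For any $R_{Y_1}$-module $M$ we have a natural identification
\[
CF^\infty(Y_1,\s_1; M) \cong CF^\infty(Y_1,\s_1;R_{Y_1}) \otimes_{R_{Y_1}} M,
\]
and the cobordism map with coefficients in $M$ is obtained by tensoring the fully twisted map with the identity on $M$ (after conjugating on one side). Consequently, if the map in fully twisted coefficients vanishes then so does the map for any module $M$, and it suffices to treat the case $M = R_{Y_1}$.

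For the fully twisted case, I would use the hypothesis $b^+(W)\geq 1$ to isolate the source of positivity. Choose a handle decomposition of $W$ and factor $W = W_1\cup W'\cup W_2$, where $W'$ is an elementary 2-handle cobordism across which $b^+$ increases from $0$ to $1$. By the composition law in twisted coefficients (Theorem \ref{complaw}, and its extension Theorem \ref{gencomplaw}), the map $F_{W,\s}^\infty$ factors, up to the projection $\Pi$ and the action of $H^1$ of intermediate boundaries, through $F_{W',\s|_{W'}}^\infty$. Thus it is enough to prove that this middle piece vanishes on $HF^\infty$ in fully twisted coefficients.

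To handle $W'$, describe it by a Heegaard triple $(\Sigma,\balpha,\bbeta,\bgamma,z)$ and inspect the triangle count defining $F_{W',\s'}^\infty$. Because $b^+(W')=1$, the triple diagram carries a triple periodic domain of positive signature, and the analysis of \cite{OS3} produces cancellations (or nonexistence) among the rigid holomorphic triangles that would contribute to $F^\infty$ in the relevant \spinc structures. The hard part of the plan is Step 3: verifying that these cancellations persist in the twisted setting. The argument of \cite{OS3} is local on pairs of homotopy classes whose moduli spaces are identified or paired; to port it to fully twisted coefficients one must check that the paired classes $\psi,\psi'$ satisfy $A_{W'}(\psi)=A_{W'}(\psi')$, so that the weighted contributions $e^{A_{W'}(\psi)}$ and $e^{A_{W'}(\psi')}$ still cancel. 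Since pairings of homotopy classes in the standard argument differ by triangles that are homotopic rel boundary after splicing with canceling disks, and additive assignments are additive under splicing (with $A(S)=0$), this bookkeeping goes through; no new geometric input beyond \cite{OS3} is needed.
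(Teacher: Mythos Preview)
The paper does not prove this lemma; it merely cites \cite{OS3}. Your Step 1 is correct, but Step 2 as stated does not always succeed: one cannot in general isolate a \emph{single} 2-handle cobordism $W'$ with $b^+(W')=1$. For instance, $(S^2\times S^2)\setminus 2B^4$ is built from two $0$-framed 2-handles on a Hopf link, and each elementary 2-handle cobordism in that decomposition has $b^+=0$ even though the composite has $b^+=1$. One needs an additional move (e.g.\ the blow-up formula, which preserves vanishing of $F^\infty$) to arrange a sphere of square $+1$ and then factor through the model $\mathbb{CP}^2\setminus 2B^4\colon S^3\to S^3$.

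The more serious gap is Step 3. The mechanism you describe---cancellation of rigid triangles coming from a triply-periodic domain of positive square---cannot work as stated: adding such a periodic domain to a triangle changes its \spinc equivalence class, so it produces no cancelling partner within the sum defining $F^\infty_{W',\s'}$ for a fixed $\s'$. The argument in \cite{OS3} is instead a \emph{grading} observation on the model cobordism: every \spinc structure on $\mathbb{CP}^2\setminus 2B^4$ gives degree shift $\tfrac{(2k+1)^2-5}{4}=k(k+1)-1$, which is odd, while $HF^\infty(S^3)$ is supported in even degrees, forcing $F^\infty=0$. Since the degree-shift formula is purely topological and $HF^\infty(S^3;M)\cong M[U,U^{-1}]$ remains in even degrees for any coefficient module $M$, this argument requires no twisted-coefficient bookkeeping at all; your proposed check that $A_{W'}(\psi)=A_{W'}(\psi')$ on ``paired'' classes is addressing a phantom concern.
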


Recall that for all sufficiently large integers $r$, the subgroups 
$\ker(U_-^r)\subset HF^-(Y,\s)$ and $\imm(U_+^r)\subset HF^+(Y,\s)$ are 
independent of $r$ (where $U_\pm$ denotes the action of $U$ on 
$HF^\pm$). The {\em reduced Floer homology groups} are 
defined by $HF^-_{red}(Y,\s) = \ker(U_-^r)$ and $HF^+_{red}(Y,\s) = 
\coker(U_+^r)$. From the long exact sequence
\begin{diagram}
\cdots\longto HF^\infty(Y,\s)\longto  
HF^+(Y,\s) \stackrel{\tau}{\longto} HF^-(Y,\s)\longto\cdots
\end{diagram}
and the fact that $U$ is an isomorphism on $HF^\infty$ 
we see that Lemma \ref{b+lemma} implies that the image of 
$F^-_{W,\s}$ for $W$ a cobordism with $b^+(W)\geq 1$ lies in 
$HF^-_{red}$, while $F^+_{W,\s}$ factors through $HF^+_{red}$. Note 
also that the homomorphism $\tau$ in the sequence induces an 
isomorphism
\[
\tau: HF^+_{red}(Y,\s)\to HF^-_{red}(Y,\s).
\]
(All of the above holds in any coefficient system).

\begin{definition}[\cite{OS3}] Let $\Theta^-$ denote a top-degree 
generator of $HF^-(S^3)$. Let $N$ be an admissible cut for a 4-manifold $X$ as 
above, and fix a \spinc structure $\s$ on $X$. The {\em 
Ozsv\'ath-Szab\'o invariant} of $(X,\s)$ is the integer-valued function
\[
\Phi_{X,\s}: \A(X) := \zee[U]\otimes 
\Lambda^*(H_1(X)/tors)\longto \zee/\pm 1
\]
defined by
\[
\Phi_{X,\s}(U^n\otimes \omega) = \langle (F^+_{V_2}\circ \tau^{-1}\circ 
F^-_{V_1})(U^n\cdot\Theta^-\otimes \omega), \Theta^-\rangle.
\]
\end{definition}

Note that $\Phi_{X,\s}$ is defined only modulo a sign, due to the 
sign ambiguity of the maps associated to cobordisms.
\begin{remark}As a slight abuse of notation, if $Z$ is a 4-manifold with one 
boundary component $Y$, and $\s$ is a \spinc structure on $Z$, we will 
denote by $F^\circ_{Z,\s}$ the homomorphism $HF^\circ(S^3)\to 
HF^\circ(Y)$ induced by the cobordism obtained by removing a 4-ball 
from the interior of $Z$. 
\end{remark}
\begin{remark} It follows from the formula for the degree shift 
induced by a cobordism that $\Phi_{X,\s}$ is nonzero only on elements 
of $\A(X)$ having degree $d(\s)$, where 
\[
d(\s) = \frac{1}{4}(c_1^2(\s) - 2e(X) - 3\sigma(X)).
\]
Here $e(X)$ is the Euler characteristic of $X$ and $\sigma(X)$ is the 
signature, and $\A(X)$ is graded so that $U$ carries degree 2 and elements 
of $H_1(X)/tors$ carry degree 1.
\end{remark}


Ozsv\'ath and Szab\'o show that $\Phi_{X,\s}$ does not depend on the 
choice of admissible cut $N$, and therefore gives an invariant of 
smooth \spinc 4-manifolds with $b^+\geq 2$. An important property of $\Phi_{X,\s}$ is that it is nonzero for at most finitely many \spinc structures $\s$ on $X$. 

In many situations there are 
convenient decompositions $X = Z_1\cup_Y Z_2$, in which $Y$ fails to 
be admissible in the sense above---specifically, condition (2) in the definition of 
admissibility is violated. Ozsv\'ath and Szab\'o prove that one can 
use such a cut to obtain information about sums of invariants of $X$ 
(Lemma 8.8 of \cite{OS3}), but in order to obtain more detailed 
information we must pass to twisted coefficients. 

We express our results in terms of group rings. In 
the situation of cutting $X$ along a 3-manifold $Y$ satisfying (1) 
but not (2) in the definition of admissible cut, the relevant group 
is $K(X,Y) = \ker(H^2(X)\to H^2(Z_1)\oplus H^2(Z_2))$ (c.f. Remark 
\ref{spincrestriction}). For a given $\s\in Spin^c(X)$ and 
$\alpha\in \A(X)$, we would like a way to calculate the element
\begin{equation}\label{gpringexpression}
\sum_{t\in K(X,Y)} \Phi_{X,\s+t}(\alpha) \cdot e^t \in \zee[K(X,Y)]
\end{equation}
in terms of invariants on the manifolds-with-boundary $Z_1$ and $Z_2$. 
Indeed, the invariants of all \spinc structures on $X$ 
can be read from the coefficients of the above expressions for 
various $\s$.

 Since we 
we need to refer to maps in both twisted and untwisted Floer 
homology, in this section we will follow the notation of Ozsv\'ath and Szab\'o and 
write $\ul{F}_W^\circ$ for the map 
in twisted coefficients induced by $W$ and $F_W^\circ$ for the untwisted map. 

\begin{definition} Suppose $Z$ is an oriented 4-manifold with connected 
boundary $\partial Z = Y$ and $\s\in Spin^c(Z)$. Define the {\em relative 
Ozsv\'ath-Szab\'o invariant} $\Psi_{Z,\s}$ of $Z$ to be the 
function
\[
\Psi_{Z,\s}: \A(Z)\longto  HF^-(Y, \s|_Y; \zee[K(Z)])
\]
given by
\[
\Psi_{Z,\s}(U^n\otimes\omega) = 
[\ul{F}^-_{Z,\s}(U^n\cdot\Theta^-\otimes\omega)].
\]
Here the brackets indicate equivalence class under the action of $K(Z)$, 
where $K(Z) = \ker(H^2(Z,Y)\to H^2(Z))$.
\end{definition}

Recall that the twisted-coefficient map $\ul{F}^-_{Z,\s}$ is defined 
only up to the action of $\delta(H^1(\partial Z) ) = K(Z)$. Note also that if $b^+(Z)\geq 1$ then $\Psi_{Z,\s}$ takes values in $HF^-_{red}(Y)$.

The following result gives the central statement of Theorem \ref{genproduct} from the introduction, and shows how to calculate \eqref{gpringexpression} 
in terms of relative invariants.

\begin{theorem}\label{productthm} Let $X$ be a closed 4-manifold with $b^+(X)\geq 2$ and $Y\subset X$ a 
3-dimensional submanifold separating $X$ into components $Z_1$ and 
$Z_2$. Let $\s$ be a \spinc 
structure on $X$ and write $\s_i = \s|_{Z_i}$. Assume that $\Psi_{Z_i,\s_i}$ takes values in $HF^-_{red}$ for $i = 1,2$, and also that $b^+(Z_i) \geq 1$ for at least one of $Z_1$, $Z_2$. Then for any 
$\alpha_i\in \A(Z_i)$ we have
\begin{equation}\label{productformula}
\sum_{t\in K(X,Y)} \Phi_{X,\s+t}(\alpha)\cdot e^t  = \langle 
\tau^{-1}(\Psi_{Z_1,\s_1}(\alpha_1)), \Psi_{Z_2,\s_2}(\alpha_2)\rangle
\end{equation}
as elements of $\zee[K(X,Y)]$, up to sign and multiplication by an 
element of $K(X,Y)$. Here $\alpha$ is the image of 
$\alpha_1\otimes\alpha_2$ under the natural map 
$\A(Z_1)\otimes\A(Z_2)\to \A(X)$. 
\end{theorem}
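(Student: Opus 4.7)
The plan is to regard $W := X\setminus (B^4 \sqcup B^4)$ as a cobordism $S^3 \to S^3$ split along $Y$ as $W = W_1 \cup_Y W_2$ with $W_i := Z_i \setminus B^4$, and then combine the refined twisted composition law (Theorem~\ref{complaw}) with the twisted duality theorem (Theorem~\ref{gendualitythm}). The crucial observation is that $K(W) = 0$ since $X$ is closed, so the twisted map $\ul{F}^-_{W,\s'}$ coincides with its untwisted counterpart whose pairing against $\Theta^-$ yields $\Phi_{X,\s'}(\alpha)$; meanwhile $K(W,Y) \cong K(X,Y)$ is typically nontrivial, and by Lemma~\ref{coefflemma} the twisted composition along $Y$ takes values in a Floer homology with coefficients in $\zee[K(X,Y)]$, exactly the group packaging the \spinc structures $\s + t$ that appear on the left-hand side of \eqref{productformula}.

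\textbf{Composition.} Consider
\[
P(\alpha) = \ul{F}^+_{W_2,\s_2}\!\left(\tau^{-1}\!\left(\ul{F}^-_{W_1,\s_1}(\Theta^-\otimes \alpha_1)\right)\otimes \alpha_2\right) \in HF^+\!(S^3;\,\zee[K(X,Y)]);
\]
the hypothesis that $\Psi_{Z_i,\s_i}$ lies in reduced Floer homology guarantees that $\tau^{-1}$ is defined on the intermediate element. The refined composition law asserts that for each $h \in H^1(Y)$,
\[
[F^-_{W,\s-\delta h}] = \Pi\!\left[\ul{F}^+_{W_2,\s_2}\circ \tau^{-1}\circ e^h\cdot \ul{F}^-_{W_1,\s_1}\right],
\]
where $\Pi : \zee[K(X,Y)]\to \zee[\iota(K(W))] = \zee$ is the projection picking out the $e^0$-coefficient. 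Using the $R_{-Y}$-antilinearity of $\ul{F}^+_{W_2}$, inserting $e^h$ on the intermediate Floer homology becomes, after $\ul{F}^+_{W_2}$, multiplication by $e^{-\delta h}$ on $\zee[K(X,Y)]$; consequently, extracting the $e^0$-coefficient after inserting $e^h$ is the same as extracting the $e^{\delta h}$-coefficient of $\langle P(\alpha),\Theta^-\rangle$. As $h$ varies, $\delta h$ exhausts $K(X,Y)$ by the Mayer-Vietoris sequence, so pairing with $\Theta^-$ yields
\[
\langle P(\alpha),\Theta^-\rangle = \sum_{t\in K(X,Y)} \Phi_{X,\s+t}(\alpha)\, e^t
\]
up to sign and multiplication by an element of $K(X,Y)$.

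\textbf{Duality and main obstacle.} Applying Theorem~\ref{gendualitythm} to replace the outer cobordism map by its adjoint, one rewrites $\langle \ul{F}^+_{W_2,\s_2}(\xi\otimes\alpha_2),\Theta^-\rangle = \langle \xi,\, \ul{F}^-_{W_2',\s_2}(\Theta^-\otimes\alpha_2)\rangle = \langle \xi,\,\Psi_{Z_2,\s_2}(\alpha_2)\rangle$, where $W_2'$ denotes $W_2$ with reversed orientation. Substituting $\xi = \tau^{-1}\Psi_{Z_1,\s_1}(\alpha_1)$ converts $\langle P(\alpha),\Theta^-\rangle$ into $\langle \tau^{-1}\Psi_{Z_1,\s_1}(\alpha_1),\,\Psi_{Z_2,\s_2}(\alpha_2)\rangle$, as claimed. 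The main technical obstacle is book-keeping the indeterminacies: the sign ambiguity of cobordism maps, the $K(X,Y)$-action inherited from choices of additive assignments (Definition~\ref{assndef}) and reference triangles, and the conjugations intrinsic to both the sesquilinear pairing and the definition of induced coefficient modules. Concretely, one must verify that the ``shift by $e^h$'' in Theorem~\ref{complaw} is correctly translated across the conjugate-linear map $\ul{F}^+_{W_2}$ into the expected shift on $\zee[K(X,Y)]$, and that all these bookkeeping steps compose to give equality up to sign and $K(X,Y)$-multiplication; a secondary subtlety is the edge case where only one of $Z_1, Z_2$ has $b^+\geq 1$, in which one must check that $\tau^{-1}$ remains well-defined on the reduced Floer homology throughout the composition.
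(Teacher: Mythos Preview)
Your argument has a genuine gap at the step where you invoke the composition law. The refined composition law (Theorem~\ref{complaw}) states
\[
[\ul F^\circ_{W,\s-\delta h}] = [\Pi\circ \ul F^\circ_{W_2,\s_2}\circ e^h\cdot \ul F^\circ_{W_1,\s_1}],
\]
with the \emph{same} decoration $\circ$ throughout and \emph{no} $\tau^{-1}$ inserted. Your displayed identity $[F^-_{W,\s-\delta h}] = \Pi[\ul F^+_{W_2}\circ\tau^{-1}\circ e^h\cdot \ul F^-_{W_1}]$ is not the composition law; the two sides do not even take values in the same group. If one tries to repair this by using Lemma~\ref{lemma3} to write $\tau^{-1}\circ \ul F^-_W = \Pi\circ \ul F^+_{W_2}\circ\tau^{-1}\circ \ul F^-_{W_1}$, one obtains only the identity $0=\Pi\langle P(\alpha),\Theta^-\rangle$: indeed $\ul F^-_W$ lands in $HF^-_{red}(S^3;\zee[K(W)]) = HF^-_{red}(S^3) = 0$, so the left side is identically zero and the relation is vacuous. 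In particular it does not identify the coefficients of $\langle P(\alpha),\Theta^-\rangle$ with the numbers $\Phi_{X,\s+t}(\alpha)$.

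The underlying issue is that $\Phi_{X,\s}$ is \emph{defined} via an \emph{admissible} cut $N$ (one with $\delta H^1(N)=0$ and $b^+\geq 1$ on both sides), and your argument never touches such a cut. The paper's proof supplies exactly this missing ingredient: it chooses an admissible cut $N$ inside $Z_2$, writes $X = Z_1\cup_Y W\cup_N V_2$, starts from the definition $\Phi_{X,\s}(\alpha) = \langle \tau^{-1}\ul F^-_{V_1}(\Theta^-),\,\ul F^-_{V_2}(\Theta^-)\rangle$, uses the genuine composition law to factor $\ul F^-_{V_1}$ through $Y$ as $\Pi_{V_1}\circ \ul F^-_W\circ \ul F^-_{Z_1}$, then applies Lemma~\ref{lemma3} and duality to transfer $\ul F^-_W$ across the pairing and reassemble $\ul F^-_{Z_2}$ on the other side. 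Your duality step (the final paragraph) is fine, but it only shows $\langle P(\alpha),\Theta^-\rangle = \langle\tau^{-1}\Psi_{Z_1,\s_1}(\alpha_1),\Psi_{Z_2,\s_2}(\alpha_2)\rangle$; the substantive content of the theorem---that the coefficients of this quantity are the closed invariants $\Phi_{X,\s+t}$---requires the comparison with an admissible cut that you have omitted.
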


In the statement of the theorem, we are implicitly choosing 
representatives for $\Psi_{Z_i,\s_i}(\alpha_i)$ and pairing them using the 
twisted-coefficient pairing defined earlier. Lemma 
\ref{coefflemma} shows that the pairing does indeed take values in 
$\zee[K(X,Y)]$, and it follows also that different choices of 
representatives give rise to elements of $\zee[K(X,Y)]$ differing by 
multiplication by
an element of $K(X,Y)$.

The rest of this section is devoted to the proof of Theorem 
\ref{productthm}. For simplicity, we focus on the case $\alpha = 1$ in 
the following; the general case follows by an entirely analogous 
argument with Theorems \ref{gendualitythm} and \ref{gencomplaw} 
replacing Theorems \ref{dualitythm} and \ref{complaw}.

We begin with a few easy preparatory lemmas.

\begin{lemma}\label{lemma1} Fix a \spinc 3-manifold $Y$ and $R_Y$-modules $M$ and $N$. 
Let $\phi: M\to N$ be a module homomorphism, and write $\phi_*: 
HF^\circ(Y;M)\to HF^\circ(Y;N)$ for the induced map in Floer homology. 
Then the following diagram commutes:
\begin{diagram}
{}&\rTo & HF^-(Y;M) & \rTo & HF^\infty(Y;M) & \rTo & HF^+(Y;M) & \rTo &\\
&& \dTo^{\phi_*} & & \dTo^{\phi_*} & & \dTo^{\phi_*} & \\
{}&\rTo & HF^-(Y;N) & \rTo & HF^\infty(Y;N) & \rTo & HF^+(Y;N) & \rTo &
\end{diagram}
In particular, $\phi_*$ descends to a map on reduced homology, and 
commutes with $\tau$ (and $\tau^{-1}$). 
\end{lemma}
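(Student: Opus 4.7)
The induced map $\phi_*$ arises at the chain level from the tensor-product description $CF^\circ(Y,\s;M) = CF^\circ(Y,\s;R_Y)\otimes_{R_Y} M$. Namely, $\phi_*$ is the map on homology induced by $\mathrm{id}\otimes \phi$. This is a chain map because the Floer differential $\partial^\circ$ on $CF^\circ(Y,\s;R_Y)$ is $R_Y$-linear (it is defined by counting disks weighted by additive-assignment values in $R_Y$), and $\phi$ is an $R_Y$-module homomorphism, so the two operations act on independent tensor factors.

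My first step is to observe that the short exact sequence of $R_Y$-chain complexes
\[
0\longto CF^-(Y,\s;R_Y)\longto CF^\infty(Y,\s;R_Y)\longto CF^+(Y,\s;R_Y)\longto 0
\]
remains exact after tensoring over $R_Y$ with any module $M$. This is because each term is free as an $R_Y$-module (generated by the pairs $[\x,i]$ with $i<0$, $i\in\zee$, and $i\geq 0$ respectively), so the sequence is split exact as a sequence of $R_Y$-modules (though not as chain complexes). Consequently we obtain short exact sequences of chain complexes for both $M$ and $N$, and the chain map $\mathrm{id}\otimes \phi$ defines a morphism between them.

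Next I would invoke naturality of the long exact sequence in homology: a morphism of short exact sequences of chain complexes yields a commutative ladder between the associated long exact sequences. Applied here, this is exactly the commutative diagram asserted in the lemma, with the vertical arrows being $\phi_*$ in each of $HF^-$, $HF^\infty$, $HF^+$, and commutativity including the connecting homomorphism (which is the map labeled $\tau$ in the preceding paragraph of the paper).

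Finally, for the statement about reduced homology and $\tau^{-1}$, the key point is that $\phi_*$ manifestly commutes with the $U$-action, since $U$ acts on the $CF^\circ(Y,\s;R_Y)$ factor and $\phi$ acts on the $M$ factor. Hence $\phi_*$ sends $\ker(U_-^r)$ into $\ker(U_-^r)$ and $\imm(U_+^r)$ into $\imm(U_+^r)$, and so descends to the reduced groups. Since $\tau$ restricts to an isomorphism $HF^+_{red}\to HF^-_{red}$, the commutativity $\phi_*\circ \tau = \tau\circ \phi_*$ established above immediately gives $\phi_*\circ \tau^{-1} = \tau^{-1}\circ \phi_*$ on reduced homology. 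There is no real obstacle in this proof; the only care needed is in confirming the splitting of the short exact sequence as $R_Y$-modules so that tensoring with $M$ preserves exactness.
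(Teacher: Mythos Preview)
Your argument is correct and is precisely the standard justification the paper has in mind; the paper's own proof consists solely of the sentence ``This is clear.'' Your write-up spells out exactly why it is clear: the chain-level map $\mathrm{id}\otimes\phi$ gives a morphism of the short exact sequence of (free, hence split) $R_Y$-complexes, naturality of the long exact sequence yields the ladder, and $U$-equivariance handles the reduced groups and $\tau^{-1}$.
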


\begin{proof} This is clear. \end{proof}

\begin{lemma}\label{lemma2} For $i = 1,2$ let $M_i$ and $N_i$ be modules for $R_Y$ and $R_{-Y}$ respectively, and 
consider homomorphisms $\phi: M_1\to M_2$ and $\psi: N_1\to N_2$. For 
any $\xi\in HF^+(Y; M_1)$ and $\eta\in HF^-(-Y;N_1)$, we have
\[
\langle \phi_*(\xi),\psi_*(\eta)\rangle  =  \phi\otimes \psi 
(\langle\xi,\eta\rangle) \in M_2\otimes_{R_Y} \overline{N}_2
\]
\end{lemma}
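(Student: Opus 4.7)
The proof should be an essentially formal check that the pairing is natural with respect to module homomorphisms in each slot. The plan is to verify the identity at the chain level on the natural generators, then pass to homology.

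First, I would recall that $\phi: M_1 \to M_2$ induces a chain map $CF^+(Y,\s;M_1) \to CF^+(Y,\s;M_2)$ by $[\x,i]\otimes m \mapsto [\x,i]\otimes \phi(m)$, since the differential acts only on the $[\x,i]$ factor and the $R_Y$-equivariance of $\phi$ ensures compatibility with the twisted differential coefficients $e^{A(\phi)}$. Similarly for $\psi$ on the $-Y$ side. The induced maps $\phi_\ast$ and $\psi_\ast$ on homology are then the maps in the statement.

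Next I would evaluate both sides on generators. For $[\x,i]\otimes m_1 \in CF^+(Y,\s;M_1)$ and $[\y,j]\otimes n_1 \in CF^-(-Y,\s;N_1)$, the definition of the extended pairing from Section \ref{pairingsec} gives
\[
\langle \phi_\ast([\x,i]\otimes m_1),\, \psi_\ast([\y,j]\otimes n_1)\rangle
 = \langle [\x,i]\otimes \phi(m_1),\, [\y,j]\otimes \psi(n_1)\rangle
 = \langle[\x,i],[\y,j]\rangle \cdot \phi(m_1)\otimes \psi(n_1),
\]
while on the other hand
\[
\phi\otimes\psi\bigl(\langle [\x,i]\otimes m_1,\,[\y,j]\otimes n_1\rangle\bigr)
 = \phi\otimes\psi\bigl(\langle[\x,i],[\y,j]\rangle \cdot m_1\otimes n_1\bigr).
\]
Since $\langle[\x,i],[\y,j]\rangle \in \{0,1\}\subset R_Y$ is a scalar (not involving group-ring variables), pulling it through $\phi\otimes\psi$ gives exactly $\langle[\x,i],[\y,j]\rangle\cdot \phi(m_1)\otimes \psi(n_1)$. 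This verifies the identity on generators.

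Finally, both sides are $R_Y$-linear (resp. sesquilinear, in the appropriate sense of Section \ref{pairingsec}) in their inputs, so the identity extends by linearity to arbitrary chains, and passing to homology gives the claim. I do not anticipate any real obstacle: the only thing to be careful about is the bookkeeping with the conjugate module structure on the $-Y$ factor, which is automatic because $\psi$ is by assumption a homomorphism of $R_{-Y}$-modules and therefore also a homomorphism of the conjugate $R_Y$-modules, so the tensor product $\phi\otimes\psi$ over $R_Y$ is well-defined and the manipulation above respects it.
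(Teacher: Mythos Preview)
Your proposal is correct and is essentially what the paper's proof amounts to: the paper simply says ``This follows easily from the definitions,'' and you have spelled out that routine verification on generators. The only minor remark is that you could equally well work directly at the chain level without needing to pass through homology until the end, since the identity you check is already an equality of chain-level pairings.
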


\begin{proof} This follows easily from the definitions.\end{proof}

\begin{lemma}\label{lemma3} Suppose $W = W_1\cup_{Y_1} W_2: Y_0\to Y_2$ is a composite 
cobordism, and $\s_1$ and $\s_2$ are \spinc structures on $W_1$ and 
$W_2$ with $\s_1|_{Y_1} = \s_2|_{Y_1}$. If $F^-_{W_1,\s_1}$ has image in $HF^-_{red}(Y_1)$ then for 
any coefficient module $M$ for $Y_0$,
\begin{enumerate}
\item $\imm(F^-_{W_2,\s_2}\circ F^-_{W_1,\s_1}) \subset 
HF^-_{red}(Y_2; M(W_1)(W_2))$, and
\item $\tau^{-1}\circ F^-_{W_2,\s_2}\circ F^-_{W_1,\s_1} = 
F^+_{W_2,\s_2}\circ\tau^{-1}\circ F^-_{W_1,\s_1}$.
\end{enumerate}
\end{lemma}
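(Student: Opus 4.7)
The plan is to deduce both statements from the $U$-equivariance of the cobordism-induced map $F^-_{W_2,\s_2}$ and from the naturality, under cobordism maps, of the long exact sequence
\[
\cdots\longto HF^-(Y,\s;N)\longto HF^\infty(Y,\s;N)\longto HF^+(Y,\s;N)\stackrel{\tau}{\longto}HF^-(Y,\s;N)\longto\cdots
\]
The latter naturality holds because $F^\circ_{W_2,\s_2}$ is induced by a chain map on the short exact sequence $0\to CF^-\to CF^\infty\to CF^+\to 0$; the same additive assignment and same moduli counts define it in each of the three variants, so it intertwines the associated connecting homomorphisms. No new technology beyond the definition of $F^-_{W_2,\s_2}$ and the standard properties of the $U$-action is required.

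For part (1), I would recall that by definition $HF^-_{red}(Y_1,\s_1|_{Y_1};M(W_1))=\ker(U^r)$ for all sufficiently large $r$. Given any element $x\in\imm(F^-_{W_1,\s_1})\subset HF^-_{red}(Y_1;M(W_1))$, we have $U^r x=0$ for large $r$. Since $F^-_{W_2,\s_2}$ commutes with the $U$-action (this is part of the definition of cobordism-induced maps),
\[
U^r\,F^-_{W_2,\s_2}(x)=F^-_{W_2,\s_2}(U^r x)=0,
\]
so $F^-_{W_2,\s_2}(x)\in HF^-_{red}(Y_2;M(W_1)(W_2))$, which proves (1).

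For part (2), let $x\in\imm(F^-_{W_1,\s_1})\subset HF^-_{red}(Y_1;M(W_1))$. Since $\tau\colon HF^+_{red}\to HF^-_{red}$ is an isomorphism, $\tau^{-1}(x)\in HF^+_{red}(Y_1;M(W_1))$ is well-defined, and by (1) so is $\tau^{-1}(F^-_{W_2,\s_2}(x))\in HF^+_{red}(Y_2;M(W_1)(W_2))$. Naturality of the long exact sequence above under $F^\circ_{W_2,\s_2}$ gives the identity
\[
\tau\circ F^+_{W_2,\s_2}=F^-_{W_2,\s_2}\circ\tau,
\]
and applying this to $\tau^{-1}(x)\in HF^+_{red}(Y_1)$ yields
\[
\tau\bigl(F^+_{W_2,\s_2}(\tau^{-1}(x))\bigr)=F^-_{W_2,\s_2}(x).
\]
Since $\tau$ is injective on the reduced groups, we can apply $\tau^{-1}$ to both sides to obtain $F^+_{W_2,\s_2}(\tau^{-1}(x))=\tau^{-1}(F^-_{W_2,\s_2}(x))$, which is precisely (2). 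The only mild point to verify carefully is the naturality of the long exact sequence for the twisted coefficient modules $M(W_1)$ and $M(W_1)(W_2)$ above, but this follows directly from the fact that $F^\circ_{W_2,\s_2}$ is defined by the same chain-level formula in Definition \ref{twistmapdef} in every flavor $\circ\in\{\infty,-,+\}$; I anticipate this to be the only step that requires any bookkeeping, and it is routine.
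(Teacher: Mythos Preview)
Your proposal is correct and follows essentially the same approach as the paper. The paper's proof is terser---it states that (1) is ``clear'' from the fact that $F^-_{W_2,\s_2}$ preserves reduced Floer homology, and deduces (2) from the commutative square expressing $\tau\circ F^+_{W_2,\s_2}=F^-_{W_2,\s_2}\circ\tau$---but your $U$-equivariance argument for (1) and your naturality argument for (2) are exactly the content of those statements, just written out in more detail.
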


\begin{proof} (1) is clear from the fact that 
$F^-_{W_2,\s_2}$ maps $HF^-_{red}(Y_1; M(W_1))$ into 
$HF^-_{red}(Y_2;M(W_1)(W_2))$.

Statement (2) follows from the commutative diagram
\begin{diagram}
&&HF^+(Y_1;M(W_1))&\rTo^{F^+_{W_2,\s_2}} & HF^+(Y_2; M(W_1)(W_2))\\
&&\dTo^\tau &&\dTo^\tau\\
HF^-(Y_0; M) &\rTo^{F^-_{W_1,\s_1}} & HF^-(Y_1;M(W_1))& 
\rTo^{F^-_{W_2,\s_2}} & 
HF^-(Y_2; M(W_1)(W_2))
\end{diagram}
together with part (1).
\end{proof}

With these preliminaries in place, we turn our attention to the proof 
of Theorem \ref{productthm} (with $\alpha_1 = \alpha_2 = \alpha = 1$). Thus, let $X = Z_1\cup_Y Z_2$ be as in 
the statement of the theorem, and let us assume $b^+(Z_2)\geq 1$. Then we can find 
an admissible cut $N$ for $X$ contained in $Z_2$ (c.f. the 
construction in example 8.4 of \cite{OS3}). Suppose $X$ is decomposed 
into pieces $V_1$ and $V_2$ along $N$, so that 
\[
X = V_1\cup_N V_2 = Z_1\cup_Y W\cup_N V_2
\]
where $W = V_1\cap Z_2$ is a cobordism $Y\to N$. 

Let us fix a \spinc structure $\s$ on $X$. For simplicity we will 
omit the \spinc structure from the notation for homomorphisms induced 
by cobordisms, but all relevant cobordisms and their boundaries will be equipped with 
\spinc structures obtained by restricting $\s$.

By definition, we have
\begin{eqnarray}
\Phi_{X,\s}(1) &=& \langle F^+_{V_2}\circ \tau^{-1}\circ 
F^-_{V_1}(\Theta^-), \,\Theta^-\rangle\nonumber\\
&=& \langle \tau^{-1}\circ F^-_{V_1}(\Theta^-), \,
F^-_{V_2}(\Theta^-)\rangle\nonumber \\&=& \langle \tau^{-1}\circ\epsilon_*\circ 
\ul{F}^-_{V_1}(\Theta^-),\, \epsilon_*\circ 
\ul{F}^-_{V_2}(\Theta^-)\rangle\nonumber\\
&=& \langle 
\tau^{-1}\circ\ul{F}^-_{V_1}(\Theta^-),\ul{F}^-_{V_2}(\Theta^-)\rangle 
\label{formula1}
\end{eqnarray}
We have passed to twisted coefficients using the remark after 
Theorem \ref{complaw}. The last line uses Lemma \ref{lemma1} and 
the twisted pairing which 
takes values in $\zee[K(X,N)]$. Since $N$ is admissible the group 
$K(X,N)$ is trivial and hence the pairing is $\zee$-valued; the 
homomorphism $\epsilon_*\otimes \epsilon_*$ arising from Lemma 
\ref{lemma2} is the identity here.

According to Theorem \ref{complaw} we can find representatives for the 
maps involved that satisfy
\[
\ul{F}^-_{V_1} = \Pi_{V_1}\circ \ul{F}^-_{W}\circ \ul{F}^-_{Z_1},
\]
where $\Pi_{V_1}$ is the map induced in homology by 
a projection map $\zee[K(V_1,Y)]\to \zee[K(V_1)]$, which we also 
denote by $\Pi_{V_1}$. 
Different choices of representatives for $[\ul{F}^-_{V_1}]$ and the 
other maps differ by the action of $R_N$ on $\zee[K(X,N)] = \zee$, 
which is trivial. Hence we can replace \eqref{formula1} with
\begin{eqnarray}
\Phi_{X,\s}(1) &=& \langle \tau^{-1} \circ\Pi_{V_1}\circ 
\ul{F}^-_{W}\circ \ul{F}^-_{Z_1}(\Theta^-), \, 
\ul{F}^-_{V_2}(\Theta^-)\rangle\nonumber\\
&=& \Pi_{V_1}\otimes 1 \cdot \langle 
\tau^{-1}\circ\ul{F}^-_W\circ\ul{F}^-_{Z_1}(\Theta^-),\, 
\ul{F}^-_{V_2}(\Theta^-)\rangle\label{formula2}
\end{eqnarray}

\begin{lemma} Under the isomorphism
\[
\zee[K(V_1,Y)]\otimes_{R_N}\zee[K(V_2)] \cong 
\zee\left[\frac{K(V_1,Y)\oplus K(V_2)}{H^1(N)}\right],
\]
the map $\Pi_{V_1}\otimes 1$ corresponds to the homomorphism 
$\Pi_\zee$ sending an element of a group ring to the coefficient of 
the identity element.
\end{lemma}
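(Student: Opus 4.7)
The plan is to unravel $\Pi_{V_1}\otimes 1$ on simple tensors and match it, after the tensor-product-to-group-ring identification, against the ``coefficient of the identity'' map. The key input is admissibility of $N$, which via Lemma \ref{coefflemma} translates into surjectivity of a Mayer--Vietoris map.

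First I would pin down the target. By Lemma \ref{coefflemma}, $\zee[K(V_1)]\otimes_{R_N}\zee[K(V_2)] \cong \zee[K(X,N)]$, and admissibility forces $K(X,N)=0$, so the target is just $\zee$. For a simple tensor $e^w\otimes e^u$ with $w\in K(V_1,Y)$ and $u\in K(V_2)$, the map $\Pi_{V_1}\otimes 1$ returns $e^w\otimes e^u$ if $w\in K(V_1)$ (viewed via $\iota$) and $0$ otherwise. In the nonzero case, the image sits in $\zee[K(V_1)]\otimes_{R_N}\zee[K(V_2)] = \zee[K(X,N)] = \zee$, and since the only element of $K(X,N)=0$ is trivial, it equals $1\in\zee$. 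Hence $(\Pi_{V_1}\otimes 1)(e^w\otimes e^u) = 1$ if $w\in K(V_1)$, and $0$ otherwise.

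Next I would check that this indicator descends to the quotient $G=(K(V_1,Y)\oplus K(V_2))/H^1(N)$: the long exact sequence of $(V_1,\partial V_1)$ identifies $\delta_{V_1}(H^1(N))$ with $K(V_1)$, so the tensor relation $(w,u)\sim(w+\delta_{V_1}h,u-\delta_{V_2}h)$ preserves the condition $w\in K(V_1)$. The central step is then to prove that this condition cuts out exactly the identity class $0\in G$. One direction is immediate: if $[w,u]=0$ then $(w,u)=(\delta_{V_1}h,-\delta_{V_2}h)$ for some $h\in H^1(N)$, whence $w\in K(V_1)$. Conversely, given $(w,u)\in K(V_1)\oplus K(V_2)$, I need a single $h\in H^1(N)$ with $\delta_{V_1}h=w$ and $\delta_{V_2}h=-u$; equivalently, $(w,u)$ must lie in the image of the Mayer--Vietoris map $H^1(N)\to K(V_1)\oplus K(V_2)$. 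By Lemma \ref{coefflemma} the cokernel of this map is $K(X,N)$, which vanishes by admissibility, so the map is surjective and $[w,u]=0$.

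The main obstacle is conceptual rather than computational: one must recognize that admissibility of $N$ is used twice---first to identify the target $\zee[K(X,N)]$ with $\zee$, and more crucially to ensure that an arbitrary pair $(w,u)\in K(V_1)\oplus K(V_2)$ can be realized by a \emph{single} $h\in H^1(N)$ acting simultaneously on both factors via the coboundary. Once this observation is in hand, the lemma reduces to a short diagram chase through the identifications provided by Lemma \ref{coefflemma}.
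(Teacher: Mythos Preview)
Your proposal is correct and follows essentially the same route as the paper's proof: both reduce the claim to the vanishing $K(X,N)=0$ coming from admissibility of $N$, and identify the projection with $\Pi_\zee$ via the group-ring/tensor-product identification of Lemma~\ref{coefflemma}. Your version is more explicit---you compute on simple tensors and verify directly that the condition ``$w\in K(V_1)$'' cuts out the identity class using surjectivity of $H^1(N)\to K(V_1)\oplus K(V_2)$---whereas the paper packages the same content into a commutative square and the observation that the map $p$ is induced by the unique section $0\hookrightarrow (K(V_1,Y)\oplus K(V_2))/H^1(N)$.
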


\begin{proof} We have a diagram of identifications
\begin{diagram}
\zee[K(V_1,Y)]\otimes_{R_N}\zee[K(V_2)] & \rTo^{\Pi_{V_1}\otimes 
1} & \zee[K(V_1)]\otimes_{R_N}\zee[K(V_2)] \\
\dTo^= &&\dTo^=\\
\zee\left[\frac{K(V_1,Y)\oplus K(V_2)}{H^1(N)}\right] &\rTo^p & 
\zee\left[\frac{K(V_1)\oplus K(V_2)}{H^1(N)}\right]
\end{diagram}
Again, since $N$ is admissible 
\[
\frac{K(V_1)\oplus K(V_2)}{H^1(N)} = \ker(H^2(X)\to H^2(V_1)\oplus 
H^2(V_2)) = 0.
\]
The projection $p$ is induced by some map 
\[
\frac{K(V_1)\oplus K(V_2)}{H^1(N)}\to \frac{K(V_1,Y)\oplus 
K(V_2)}{H^1(N)},
\]
for which there is only one choice since the domain group is trivial. 
The construction of $p$ from this map proves the claim.
\end{proof}

Returning with this to equation \eqref{formula2}, we have
\begin{eqnarray}
\Phi_{X,\s}(1) &=& \Pi_\zee \langle \tau^{-1}\circ\ul{F}^-_W\circ\ul{F}^-_{Z_1}(\Theta^-),\, 
\ul{F}^-_{V_2}(\Theta^-)\rangle\nonumber\\
&=& \Pi_\zee\langle 
\ul{F}^+_{W}\circ\tau^{-1}\circ\ul{F}^-_{Z_1}(\Theta^-),\, 
F^-_{V_2}(\Theta^-)\rangle\nonumber\\
&=& \Pi_\zee\langle \tau^{-1}\circ \ul{F}^-_{Z_1}(\Theta^-),\, 
\ul{F}^-_{W}\circ\ul{F}^-_{V_2}(\Theta^-)\rangle\label{formula3}
\end{eqnarray}
using Lemma \ref{lemma3} and Theorem \ref{dualitythm}. Note that the pairings above can be thought of as taking values in
\[
\zee\left[\frac{K(Z_1)\oplus K(W)\oplus K(V_2)}{H^1(Y)\oplus 
H^1(N)}\right]
\]
with appropriate grading.

We would like to apply the composition law in \eqref{formula3} to 
replace $\ul{F}^-_{W}\circ\ul{F}^-_{V_2}$ by $\ul{F}^-_{Z_2}$, but we 
are missing a factor of $\Pi_{Z_2}$ required by Theorem \ref{complaw}. 
By commutativity of the square ($*$) in the following diagram, we are 
free to introduce this factor:
\begin{diagram} 
\zee[K(Z_1)]\otimes_{R_Y} \ol{\zee[K(Z_2,N)]} & \rTo^{1\otimes\Pi_{Z_2}} & 
\zee[K(Z_1)]\otimes_{R_Y}\overline{\zee[K(Z_2)]} \\
\dTo^= & (*) & \dTo^=\\
\zee\left[\frac{K(Z_1)\oplus K(Z_2,N)}{H^1(Y)}\right] & \rTo & 
\zee\left[\frac{K(Z_1)\oplus K(Z_2)}{H^1(Y)}\right]\\
\dTo^= & &\dTo^{\Pi_\zee}\\
\zee\left[ \frac{K(Z_1)\oplus K(W)\oplus K(V_2)}{H^1(Y)\oplus 
H^1(N)}\right] & \rTo^{\Pi_\zee} & \zee
\end{diagram}
Indeed, it follows that $\Pi_\zee = \Pi_\zee\circ (1\otimes 
\Pi_{Z_2})$ (after identifying the groups in the column on the left). 
Thus \eqref{formula3} becomes:
\begin{eqnarray*}
\Phi_{X,\s}(1) &=& \Pi_\zee\circ (1\otimes\Pi_{Z_2})\cdot\langle 
\tau^{-1}\circ \ul{F}^-_{Z_1}(\Theta^-),\, 
\ul{F}^-_{W}\circ\ul{F}^-_{V_2}(\Theta^-)\rangle\\
&=& \Pi_\zee\langle \tau^{-1}\circ \ul{F}^-_{Z_1}(\Theta^-), \, 
\Pi_{Z_2}\circ\ul{F}^-_W\circ\ul{F}^-_{V_2}(\Theta^-)\rangle\\
&=& \Pi_\zee\langle \tau^{-1}\circ\ul{F}^-_{Z_1}(\Theta^-),\, 
\ul{F}^-_{Z_2}(\Theta^-)\rangle,
\end{eqnarray*}
after possibly translating by an element of $R_Y$. This verifies the 
``constant coefficient'' of \eqref{productformula}. For the general 
statement, suppose $t = \delta h \in K(X,Y)$. Then since $\s + t = 
\s$ when restricted to $V_2$ we can follow the same steps as above 
(and using the second part of Theorem \ref{complaw}) to see
\begin{eqnarray}
\Phi_{X,\s + t}(1) &=& \langle\tau^{-1}\circ F^-_{V_1,\s- t}(\Theta^-), 
\, F^-_{V_2,\s}(\Theta)\rangle\nonumber\\
&=& \langle\tau^{-1}\circ \ul{F}^-_{V_1,\s - 
t}(\Theta^-),\,\ul{F}^-_{V_2,\s}(\Theta^-)\rangle\nonumber\\
&=& \langle\tau^{-1}\circ\Pi_{V_1}\circ\ul{F}^-_{W,\s}\circ e^{-h}\cdot 
\ul{F}^-_{Z_1,\s}(\Theta), \,\ul{F}^-_{V_2,\s}(\Theta^-)\rangle\label{transeqn}\\
&=& \Pi_\zee[ e^{-h}\cdot\langle\tau^{-1}\circ\ul{F}^-_{Z_1,\s}(\Theta^-), 
\ul{F}^-_{Z_2,\s}(\Theta^-)\rangle ]\nonumber
\end{eqnarray}
where we can use the same representatives for $[\ul{F}^-_{Z_i,\s}]$ as 
before. Since the action of $R_Y$ on $\zee[K(X,Y)]$ is via the 
coboundary, this last expression is exactly the coefficient of 
$e^{\delta h} = e^t$ in 
$\langle\tau^{-1}\circ\ul{F}^-_{Z_1,\s}(\Theta^-), 
\,\ul{F}^-_{Z_2,\s}(\Theta^-)\rangle$. This completes the proof of 
Theorem \ref{productthm}.

\section{Perturbed Heegaard Floer invariants}\label{perturbsec}

The utility of Theorem \ref{productthm} is limited in many practical circumstances by the restriction on $b^+(Z_i)$. In particular, if one wishes to split a 4-manifold along the boundary of a tubular neighborhood of a surface of square 0, it is not obvious whether the assumptions of that theorem are satisfied. In this section we show how to remedy this circumstance by making use of Heegaard Floer homology ``perturbed'' by a 2-dimensional cohomology class $\eta\in H^2(Y;\arr)$. (A version of this theory was mentioned briefly in \cite{OS1}; here we give a rather fuller treatment.) 

\subsection{Definitions and basic properties}
\begin{definition} Fix a closed oriented 3-manifold $Y$ and a class $\eta\in H^2(Y;\arr)$. The {\em Novikov ring} associated to $(Y,\eta)$ is the ring $\R_{Y,\eta}\subset \zee^{H^1(Y;\zee)}$ of $\zee$-valued functions on $H^1(Y;\zee)$ defined by the condition that $f\in \R_{Y,\eta}$ if and only if for each $N\in \zee$, the set $\supp(f) \cap \{a\in H^1(Y) | \langle a\cup \eta, [Y]\rangle < N\}$ is finite.

More concretely, we can think of $\R_{Y,\eta}$ as the collection of formal series
\[
\R_{Y,\eta} = \{\sum_{g\in H^1(Y;\zee)} a_g\cdot g \,|\, a_g\in \zee\}
\]
subject to the condition that for each $N\in \zee$ the set of $g\in H^1(Y)$ with $a_g$ nonzero and $\langle g\cup \eta,[Y]\rangle < N$ is finite.
\end{definition}

The multiplication on $\R_{Y,\eta}$ is the usual convolution product; note that in the case $\eta = 0$ we have $\R_{Y,\eta} = \zee[H^1(Y)]$. Clearly, $\R_{Y,\eta} = \R_{Y, c\eta}$ for any positive constant $c$. Furthermore, $\R_{Y,\eta}$ depends on the orientation of $Y$ in the sense that $\R_{-Y,\eta} = \R_{Y,-\eta}$.

We can now recite the definition of twisted-coefficient Heegaard Floer homology using $\R_{Y,\eta}$ in place of $R_Y$. 

\begin{definition} Let $(Y,\s)$ be a closed oriented \spinc 3-manifold, and let $\eta\in H^2(Y;\arr)$. Endow $\R_{Y,\eta}$ with the $\s$-grading defined by \eqref{sgrading}. Let $(\Sigma,\balpha, \bbeta, z)$ be a marked Heegaard diagram for $Y$, and choose an additive assignment $A$ for the diagram. The {\em $\eta$-perturbed Heegaard Floer complex} is the free $\R_{Y,\eta}$-module $CF^\infty(Y,\s; \R_{Y,\eta})$ generated by pairs $[\x, i]$ where $\x\in T_\alpha\cap T_\beta$ is an intersection point with $s_z(\x) = \s$, equipped with the relative $\zee$ grading defined in \eqref{grdef}.

The boundary operator is given as in Definition \ref{twistcxdef}, where $e^{A(\phi)}$ is interpreted as an element of $\R_{Y,\eta}$. 
\end{definition}

If $(\Sigma, \balpha, \bbeta, z)$ is strongly $\s$-admissible, in the sense of \cite{OS1}, then the definition above obviously yields the Heegaard Floer complex for the unperturbed theory with coefficients in the $R_Y$-module $\R_{Y,\eta}$, i.e., the complex $CF^\circ(Y,\s; R_Y)\otimes_{R_Y}\R_{Y,\eta}$.

In fact, the perturbed complex can be defined with relaxed admissibility hypotheses: if $\eta$ is generic in the sense that the induced map $H^1(Y;\zee)\to \arr$ is injective, weak admissibility suffices to define $HF^\infty(Y,\s; \R_{Y,\eta})$ and $HF^-(Y,\s;\R_{Y,\eta})$, while no admissibility conditions are necessary to define $HF^+(Y,\s;\R_{Y,\eta})$ or $\widehat{HF}(Y,\s;\R_{Y,\eta})$. However, we have no need for this generality, and the observation in the previous paragraph suffices to show that the perturbed Heegaard Floer homology is a topological invariant of $(Y,\s,\eta)$.

Note that if $M$ is a (graded) module for $R_Y$, we can obtain a module for $\R_{Y,\eta}$ by tensor product: $\M_\eta \equiv M\otimes_{R_Y}\R_{Y,\eta} $. Thus we can consider perturbed Heegaard Floer homology with coefficients in the ``completed'' module $\M_\eta$, namely the homology of the complex $CF(Y,\s; \R_{Y,\eta})\otimes_{\R_{Y,\eta}} \M_\eta$ (of course, since any module for $\R_{Y,\eta}$ is also a module for $R_Y$, we see trivially that any $\R_{Y,\eta}$ module is obtained in this way).
 
Calculation of perturbed Floer homology is facilitated by the following.

\begin{lemma}\label{flatlemma}
For any $\eta\in H^2(Y;\arr)$, the ring $\R_{Y,\eta}$ is flat as an $R_Y$-module.
\end{lemma}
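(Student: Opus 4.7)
When $\eta = 0$ we have $\R_{Y,\eta} = R_Y$ and the statement is trivial, so assume $\eta \neq 0$ and write $L\colon H^1(Y;\zee)\to\arr$ for the linear functional $g\mapsto \langle g\cup\eta,[Y]\rangle$. My first step is to reduce to the case where $L$ is injective. Because $H^1(Y;\zee)$ is free abelian and $K := \ker L$ is the kernel of a map to $\arr$, $K$ is a direct summand; fix a splitting $H^1(Y;\zee) = K\oplus C$. Since $L|_K = 0$, the Novikov condition on the support of a formal series constrains only the $C$-component, so a direct unpacking of the definitions gives a canonical isomorphism of $R_Y$-modules
\[
\R_{Y,\eta} \;\cong\; \zee[K]\otimes_\zee \R_{C,L|_C} \;\cong\; R_Y \otimes_{\zee[C]} \R_{C,L|_C}.
\]
Flatness is preserved under base change, so it suffices to show that $\R_{C,L|_C}$ is flat over $\zee[C]$ in the case where $L|_C$ is injective.

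Assume now that $L$ is injective on $C \cong \zee^m$. I would choose a basis $e_1,\ldots,e_m$ of $C$ with each $\alpha_i := L(e_i) > 0$ (obtained by replacing $e_i$ by $-e_i$ where necessary), set $t_i = e^{e_i}$, and analyze $\R_{C,L}$ as a certain localization of a formal completion. In the simplest situation, when $\alpha_1,\ldots,\alpha_m$ are all rational multiples of a single positive real number, an integral change of basis reduces $L$ to $L(\vec{k}) = c\,k_1$ for some $c>0$, and then
\[
\R_{C,L} \;\cong\; \zee[t_2^{\pm 1},\ldots,t_m^{\pm 1}]((t_1)) \;=\; \bigl(\zee[t_2^{\pm 1},\ldots,t_m^{\pm 1}][t_1]\bigr)^{\wedge}_{(t_1)}[t_1^{-1}].
\]
This exhibits $\R_{C,L}$ as a localization of the $(t_1)$-adic completion of a Noetherian polynomial ring. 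Since the Noetherian completion and subsequent localization are both flat, and since an $R_Y$-module on which $t_1$ acts invertibly is flat over $R_Y$ iff it is flat over $\zee[t_1,t_2^{\pm 1},\ldots,t_m^{\pm 1}]$, this disposes of the rational case.

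The main obstacle is the case in which $\alpha_1,\ldots,\alpha_m$ are linearly independent over $\cue$, since then no integral change of basis yields the clean Laurent-series description above. My plan here is to exhibit $\R_{C,L}$ as a filtered colimit of flat $\zee[C]$-modules. Concretely, for each element $g\in C$ with $L(g)>0$ the Novikov ring contains $(1-e^g)^{-1} = 1 + e^g + e^{2g} + \cdots$, and one can organize the corresponding localize-then-complete construction along a cofinal family of such elements; each individual completion is flat over the Noetherian ring $\zee[C]$ by standard commutative algebra, and Lazard's theorem then implies that the colimit is flat. Equivalently, one may identify $\R_{C,L}$ as a subring of the Mal'cev--Neumann series ring for the totally ordered abelian group $(C,{<}_L)$ and invoke the known flatness of such series rings over group rings. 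Either approach requires some care with the Novikov filtration in the incommensurable setting, but once that bookkeeping is done the flatness assertion follows.
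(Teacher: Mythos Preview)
Your reduction step contains a genuine error. The claimed isomorphism
\[
\R_{Y,\eta}\;\cong\;\zee[K]\otimes_\zee \R_{C,L|_C}\;\cong\;R_Y\otimes_{\zee[C]}\R_{C,L|_C}
\]
is false in general. Take $H^1(Y;\zee)=\zee s\oplus\zee t$ with $L(s)=0$, $L(t)=1$, so $K=\langle s\rangle$ and $C=\langle t\rangle$. The element $\sum_{n\geq 0} s^n t^n$ lies in $\R_{Y,\eta}$ (for each $N$ only the finitely many terms with $n<N$ have $L$-value below $N$), but it cannot be written as a finite sum $\sum_i p_i(s)\otimes f_i(t)$ with $p_i\in\zee[s^{\pm1}]$ and $f_i\in\zee((t))$, since the coefficient sequence $g_n(s)=s^n$ is not contained in any finitely generated $\zee$-submodule of $\zee[s^{\pm1}]$. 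The point is that the Novikov condition ``for each $N$, finitely many $(k,c)$ with $L(c)<N$'' is a joint finiteness condition on pairs, not a product of separate conditions; the correct description of $\R_{Y,\eta}$ is the ring of Laurent series in $C$ with \emph{coefficients} in $\zee[K]$, which is strictly larger than the tensor product. Consequently the base-change argument does not go through, and your reduction to injective $L$ collapses.

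Even granting the reduction, your treatment of the rank $m\geq 2$ case is only a sketch. Note that after reducing to $L$ injective, your ``simplest situation'' (all $\alpha_i$ rational multiples of one number) forces $m=1$, since an injective homomorphism $\zee^m\to\arr$ with rank-one image has $m=1$; so the Laurent-series argument really only covers $m=1$. For $m\geq 2$ the filtered-colimit idea is not made precise (you have not specified a directed system of flat modules with colimit $\R_{C,L}$), and the Mal'cev--Neumann suggestion does not help directly: $\R_{C,L}$ is a \emph{proper} subring of the Mal'cev--Neumann series ring (the Novikov condition is strictly stronger than well-ordered support), and flatness is not inherited by subrings.

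The paper's approach is rather different and avoids case analysis on the arithmetic of the $\alpha_i$ altogether. It chooses a basis $x_1,\ldots,x_b$ of $H^1(Y;\zee)$ with $L(x_i)=0$ for $i\leq k$ and $L(x_i)>0$ for $i>k$, then identifies $\R_{Y,\eta}$ directly with the localization $V^{-1}\Z_\eta$, where $\Z_\eta$ is the $(x_{k+1},\ldots,x_b)$-adic completion of the polynomial ring $\Z=\zee[x_1,\ldots,x_b]$ and $V$ is the multiplicative set generated by all the $x_i$. Flatness then follows from the standard fact that the completion of a Noetherian ring at an ideal is flat, together with flatness of localization: $\Z_\eta$ is flat over $\Z$, hence $V^{-1}\Z_\eta$ is flat over $V^{-1}\Z=R_Y$.
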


\begin{proof} Let $K$ denote the kernel of the homomorphism $H^1(Y;\zee)\to \arr$ given by $x \mapsto \langle x\cup \eta, [Y]\rangle$; note that $K$ is a direct summand of $H^1(Y;\zee)$. Let $\rk(K) = k$. The ring $R_Y$ can be identified with a Laurent polynomial ring in variables $\{x_1,\ldots, x_b\}$, $b = b_1(Y)$,  and we can choose the generators $x_i$ such that $\langle x_i\cup \eta,[Y]\rangle = 0$ for $i = 1,\ldots, k$, while $\langle x_i\cup \eta, [Y]\rangle >0$ for $i > k$. The Novikov ring $\R_{Y,\eta}$ can be constructed as follows. First let $\Z_\eta$ denote the (``partial'') power series ring obtained by completing the ring $\Z = \zee[x_1,\ldots, x_b]$ with respect to the ideal generated by $x_{k+1}, \ldots, x_b$. Then if $V$ denotes the multiplicative subset generated by the variables $x_1,\ldots, x_b$, we have that $\R_{Y,\eta} = V^{-1}\Z_\eta$. It is a standard fact that $\Z_\eta$ is flat over $\Z$ (see, e.g., \cite{eisenbud}), and it follows easily that $\R_{Y,\eta} = V^{-1}\Z_\eta$ is flat over $R_Y = V^{-1}\Z$. \end{proof}

\begin{definition} Let $Y$ be a closed oriented 3-manifold, $\s\in Spin^c(Y)$, and $\eta\in H^2(Y;\arr)$. We say $\eta$ is {\em generic for $\s$} if $\ker(c_1(\s))\not\subset \ker(\eta)$. That is to say, $\eta$ is generic for $\s$ if there exists a class $x\in H^1(Y)$ such that
\[
\eta\cup x \neq 0 \quad\mbox{but}\quad c_1(\s)\cup x = 0.
\]
\end{definition}

Observe that if $c_1(\s)$ is torsion and $\eta$ is nonzero then $\eta$ is automatically generic for $\s$, while if $b_1(Y) = 1$ and $c_1(\s)$ is non-torsion, then a generic class $\eta$ for $\s$ does not exist. Once $b_1(Y)>1$, however, any class $\eta\in H^2(Y;\arr)$ that is ``generic'' in the sense that $\langle\eta\cup \cdot , [Y]\rangle: H^1(Y;\zee)\to \arr$ is injective, is automatically generic for any \spinc structure $\s$.

In Seiberg-Witten theory, once $b_1(Y)>0$ it is possible to ``perturb away'' reducible solutions to the Seiberg-Witten equations on $Y$. The following can be seen as an analog of that statement in Heegaard Floer theory.

\begin{cor}\label{perturbcor} If $\eta\in H^2(Y;\arr)$ is generic for a \spinc structure $\s$ on $Y$, then
\[
HF^\infty(Y,\s;\R_{Y,\eta}) = 0,
\]
and therefore $HF^\infty(Y,\s;{\mathcal M}) = 0$ for any $\R_{Y,\eta}$-module $\mathcal M$. In particular, under this assumption, for any $R_Y$-module $M$ with completion $\M_\eta = M\otimes_{R_Y} \R_{Y,\eta}$ we have isomorphisms
\[
HF^\pm(Y,\s; \M_\eta) = HF^\pm_{red}(Y,\s; \M_\eta) \cong HF^\pm_{red}(Y,\s; M) \otimes_{R_Y} \R_{Y,\eta}.
\]
\end{cor}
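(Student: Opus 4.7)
The plan is to establish the vanishing $HF^\infty(Y,\s;\R_{Y,\eta}) = 0$ and derive the remaining assertions formally. By Lemma~\ref{flatlemma}, $\R_{Y,\eta}$ is flat over $R_Y$, so for each flavor of Heegaard Floer homology we have $HF^\circ(Y,\s;R_Y)\otimes_{R_Y}\R_{Y,\eta} \cong HF^\circ(Y,\s;\R_{Y,\eta})$, and flatness further commutes with the kernels and cokernels of $U^r$ that cut out the reduced subgroups. Thus, once the $HF^\infty$ vanishing is known, the long exact sequence among $HF^-$, $HF^\infty$, $HF^+$ forces $HF^\pm(Y,\s;\M_\eta) = HF^\pm_{red}(Y,\s;\M_\eta)$, and the tensor-product identification $HF^\pm_{red}(Y,\s;\M_\eta) \cong HF^\pm_{red}(Y,\s;M)\otimes_{R_Y}\R_{Y,\eta}$ follows immediately. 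Extending the vanishing from $\R_{Y,\eta}$ itself to an arbitrary $\R_{Y,\eta}$-module $\M$ follows because acyclicity of the free complex $CF^\infty(Y,\s;\R_{Y,\eta})$ makes it contractible, so $CF^\infty(Y,\s;\R_{Y,\eta})\otimes_{\R_{Y,\eta}}\M$ is acyclic as well.

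Genericity of $\eta$ for $\s$ produces $x \in H^1(Y;\zee)$ with $\langle c_1(\s)\cup x,[Y]\rangle = 0$ and $\int_Y \eta\cup x \neq 0$; replacing $x$ by $-x$ if necessary, I arrange $\int_Y \eta\cup x > 0$. The element $1-e^x \in R_Y$ then becomes a unit in $\R_{Y,\eta}$, with inverse $\sum_{n\geq 0} e^{nx}$: the series converges in the Novikov sense because $\langle nx\cup\eta,[Y]\rangle \to \infty$. So it suffices to show that $1-e^x$ annihilates $HF^\infty(Y,\s;R_Y)$ as an $R_Y$-module, because then
\[
HF^\infty(Y,\s;\R_{Y,\eta}) \,=\, HF^\infty(Y,\s;R_Y)\otimes_{R_Y}\R_{Y,\eta}
\]
is an $\R_{Y,\eta}$-module killed by a unit, and hence vanishes.

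The main obstacle is thus the annihilation claim. My approach is to choose a weakly $\s$-admissible pointed Heegaard diagram $(\Sigma,\balpha,\bbeta,z)$ for $Y$ admitting a periodic domain $\mathcal{P}$ with $H(\mathcal{P}) = x$ and $n_z(\mathcal{P}) = 0$; such a diagram is available after suitable handleslides. The $\s$-grading of Section~\ref{relgradingsec} gives $\gr_\s(e^x) = -\langle c_1(\s)\cup x,[Y]\rangle = 0$, so multiplication by $e^x$ is a degree-preserving $R_Y$-linear chain map on $CF^\infty(Y,\s;R_Y)$, and the problem reduces to producing a chain homotopy $K$ of degree $+1$ with $\partial K + K\partial = 1-e^x$. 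A natural candidate uses the action operator ${\mathcal A}_h$ of Section~\ref{H1actionsec} for $h\in H_1(Y;\zee)/\mathrm{tors}$ Poincar\'e dual to $x$, combined with a count of disks whose homotopy class differs from a reference class by a copy of $\mathcal{P}$; exploiting the additivity $A(\phi + \mathcal{P}) = A(\phi) + x$, these contributions separate into an $e^x$-weighted term and an identity term, with the $\mu(\phi) = 2$ disk degenerations delivering the required chain-homotopy identity. Sorting out the signs and ensuring that boundary-bubbling contributions cancel is the delicate bookkeeping at the heart of the argument; once settled, the corollary follows.
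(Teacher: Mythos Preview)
Your overall architecture matches the paper's: reduce via flatness to showing that $1-e^x$ kills $HF^\infty(Y,\s;R_Y)$, observe that $1-e^x$ is a unit in $\R_{Y,\eta}$, and conclude. The divergence is at the annihilation step. The paper does not build a chain homotopy at all; it simply invokes the Ozsv\'ath--Szab\'o computation (from \cite{OS2}) that for any $(Y,\s)$ the fully twisted group satisfies $HF^\infty(Y,\s;R_Y)\cong\zee[U,U^{-1}]$, with $x\in H^1(Y)$ acting as multiplication by $U^k$ where $2k=\langle c_1(\s)\cup x,[Y]\rangle$. For your generic $x$ this gives $k=0$, so $e^x$ acts as the identity and $1-e^x$ acts as zero---done in one line.

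Your alternative route, constructing an explicit $K$ with $\partial K+K\partial=1-e^x$ via periodic-domain counts and $\mu=2$ degenerations, is in the right spirit (such arguments do exist in the literature), but as written it is a sketch rather than a proof: you have not specified $K$, and the phrase ``sorting out the signs and ensuring that boundary-bubbling contributions cancel is the delicate bookkeeping at the heart of the argument'' is precisely where the content would be. In effect you are proposing to reprove, from scratch and at chain level, a consequence of a theorem that is already available and directly citable. Replace that paragraph with the citation to the $HF^\infty$ computation and the proof becomes complete and short.
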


\begin{proof} By the previous lemma, $HF^\infty(Y,\s; \R_{Y,\eta}) \cong HF^\infty(Y,\s; R_Y)\otimes_{R_Y}\R_{Y,\eta}$. Oszv\'ath and Szab\'o showed \cite{OS2} that for any 3-manifold $Y$, the fully-twisted Floer homology satisfies
\[
HF^\infty(Y,\s;R_Y) \cong \zee[U,U^{-1}]
\]
where an element $x\in H^1(Y;\zee)$ having $\langle c_1(\s)\cup x, [Y]\rangle = 2k$ acts as multiplication by $U^k$. Take $x$ to be as in the definition of generic above, so that $k = 0$. Without loss of generality we can assume that $\langle \eta \cup x, [Y]\rangle >0$, so that the element $1-x$ has an inverse $\sum_{n\geq 0} x^n$ in $\R_{Y,\eta}$. But then $1-x$ is a unit that acts as 0 on $HF^\infty(Y,\s;\R_{Y,\eta})$, meaning the latter module must vanish. 
The remaining statements follow easily from the flatness of $\R_{Y,\eta}$.
\end{proof}

As noted above, it is not always possible to guarantee the existence of a generic perturbation (namely when $b_1(Y) = 1$). Of more concern for our purposes, a similar situation arises when considering cobordisms $W: Y_1\to Y_2$, where perturbations and \spinc structures on $Y_i$ are taken to be induced from $W$. Here if $b_2(W) = 1$, for example, then any class $\eta$ induced from $W$ must be a multiple of the Chern class of a \spinc structure on $Y_2$ induced from $W$, and again we cannot arrange genericity regardless of the value of $b_1(Y)$. 

To deal with this situation we make a further completion of Heegaard Floer homology, this time with respect to $U$. 

\begin{definition}\label{Ucompldef} Let $\zee[[U]]$ denote the ring of integer power series in $U$. The {\em $U$-completed  Heegaard Floer groups} for $(Y,\s,\eta)$ in a module $\M$ are defined by
\[
HF^\circ_\Dot(Y,\s;\M) = HF^\circ(Y,\s; \M)\otimes_{\zee[U]} \zee[[U]].
\]
\end{definition}

Thus the perturbed, completed Floer homology $HF^\circ_\Dot(Y,\s; \M)$ is a module for $\R_{Y,\s}[[U]]$. Observe that since the action of $U$ is nilpotent on elements of $HF^+$, this completion has no effect on the latter group:
\[
HF^+_{\Dot}(Y,\s;\M) = HF^+(Y,\s;\M).
\]
There is a natural map $HF^\circ(Y,\s; \M)\to HF^\circ_\Dot(Y,\s;\M)$ that is typically (when $\eta$ is generic for $\s$, for example) an injection. We will often implicitly make use of this homomorphism when extending previous results to the $U$-completed setting.

The definition is most useful when the uncompleted group $HF^\circ(Y,\s; \M)$ carries a relative $\zee$ grading (not a cyclic grading). We will generally be interested in coefficient modules $\M$ that arise from cobordisms $S^3\to Y$, and in light of Proposition \ref{Zgradingprop} we will therefore be in the $\zee$-graded case.

\begin{cor}Let $(Y,\s)$ be a closed \spinc 3-manifold and $\eta\in H^2(Y;\arr)$ a fixed class. If $c_1(\s)$ is torsion, assume that $\eta\neq 0$. Then 
\[
HF^\infty_\Dot (Y,\s; \M_\eta) = 0
\]
for any $\R_{Y,\eta}$ module $\M_\eta$.
\end{cor}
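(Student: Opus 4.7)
The plan is to extend the proof of Corollary \ref{perturbcor} by exhibiting on $HF^\infty_\Dot(Y,\s;\M_\eta)$ a single operator that simultaneously acts as zero and as a unit; the passage to the $U$-completion is what permits us to produce such an operator in the absence of a generic perturbation. I would split the argument into two cases according to whether $c_1(\s)$ is torsion.

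If $c_1(\s)$ is torsion, the hypothesis $\eta\neq 0$ guarantees an element $x\in H^1(Y;\zee)$ with $\langle \eta\cup x,[Y]\rangle>0$; since $\langle c_1(\s)\cup x,[Y]\rangle = 0$ automatically, such an $x$ is generic for $\s$, and Corollary \ref{perturbcor} gives $HF^\infty(Y,\s;\M_\eta)=0$ even before $U$-completion, so the result in this case follows for free. Suppose instead that $c_1(\s)$ is non-torsion. Then I can choose $x\in H^1(Y;\zee)$ with $k:=\frac{1}{2}\langle c_1(\s)\cup x,[Y]\rangle>0$ (possibly after replacing $x$ by $-x$). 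The fact from \cite{OS2} recalled in the proof of Corollary \ref{perturbcor}, namely that $HF^\infty(Y,\s;R_Y)\cong\zee[U,U^{-1}]$ with $x$ acting as $U^k$, combined with the flatness of $\R_{Y,\eta}$ over $R_Y$ (Lemma \ref{flatlemma}), shows that $x-U^k$ acts as zero on $HF^\infty(Y,\s;\R_{Y,\eta})$. A chain-level null-homotopy of $x-U^k$ on the free $\R_{Y,\eta}$-chain complex $CF^\infty(Y,\s;\R_{Y,\eta})$ then propagates the relation $x\cdot\xi = U^k\cdot\xi$ to $HF^\infty(Y,\s;\M_\eta)$, and thence to the $U$-completion $HF^\infty_\Dot(Y,\s;\M_\eta)$, for any $\R_{Y,\eta}$-module $\M_\eta$.

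To conclude, I would observe that $x-U^k$ becomes a unit in the $U$-completed coefficient ring. Writing $x-U^k = x\,(1-x^{-1}U^k)$, the factor $x$ is a unit in $\R_{Y,\eta}$ (being an element of $H^1(Y;\zee)$), while the factor $1-x^{-1}U^k$ is a unit in $\R_{Y,\eta}[[U]]$ with explicit inverse $\sum_{n\geq 0}x^{-n}U^{nk}$, which is a well-defined element of $\R_{Y,\eta}[[U]]$ because $k>0$ forces each $U$-degree to receive contributions from only finitely many terms of the series. Since $HF^\infty_\Dot$ is by construction a module over the completed ring $\R_{Y,\eta}[[U]]$ alluded to at the end of the previous subsection, the unit $x-U^k$ acts invertibly on $HF^\infty_\Dot(Y,\s;\M_\eta)$; combined with the earlier observation that it acts as zero, this forces the module to vanish.

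The hard part will be the chain-level null-homotopy of $x-U^k$ on $CF^\infty(Y,\s;\R_{Y,\eta})$, which is what allows the relation to survive tensoring with an arbitrary coefficient module $\M_\eta$. Such a null-homotopy exists because $x-U^k$ is an $\R_{Y,\eta}$-linear chain map between free complexes inducing zero on homology, but making this precise requires either a direct construction using a complete set of paths (in the spirit of Lemma \ref{addassnlemma}) or an appeal to the derived-category observation that the mapping cone of $x-U^k$ is an acyclic complex of free $\R_{Y,\eta}$-modules, hence contractible. A secondary technical point is checking that the $\R_{Y,\eta}[U]$-action on $HF^\infty_\Dot$ extends to an $\R_{Y,\eta}[[U]]$-action, which is a formal consequence of the $U$-adic completion in Definition \ref{Ucompldef}.
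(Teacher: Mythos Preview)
Your approach mirrors the paper's: both dispose of the torsion case via Corollary~\ref{perturbcor}, and in the non-torsion case exhibit an element (your $x - U^k$, the paper's $1 - tU^k$; these differ only by the unit factor $x = t^{-1}$) that acts as zero yet is invertible in $\R_{Y,\eta}[[U]]$. The paper simply asserts at the outset that it suffices to check vanishing for $\M_\eta = \R_{Y,\eta}$, so your explicit discussion of how the relation propagates to arbitrary coefficient modules is more thorough than the paper's own treatment of that same step.

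That said, your derived-category justification for the chain-level null-homotopy is incorrect: the mapping cone of $x - U^k$ on $CF^\infty(Y,\s;\R_{Y,\eta})$ is \emph{not} acyclic. A chain map inducing zero on homology has cone satisfying $H_n(\mathrm{Cone}) \cong H_n \oplus H_{n-1}$ by the long exact sequence, and this is nonzero whenever $HF^\infty(Y,\s;\R_{Y,\eta})$ is---which can certainly happen when $\eta$ fails to be generic for $\s$ (the very situation the $U$-completion is meant to handle). Acyclicity of the cone is equivalent to $x - U^k$ inducing an \emph{isomorphism} on homology, the opposite of what you have. The desired null-homotopy may well exist, and your alternative suggestion of a direct construction via a complete set of paths is the more promising avenue; but the cone argument as stated does not establish it. Note that the paper glosses over this passage entirely, so your proposal is not worse than the original on this point---only your attempted justification of it is flawed.
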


The other conclusions of Corollary \ref{perturbcor} of course follow as well for the $U$-completed Floer homology perturbed by a compatible class $\eta$. Note that if $c_1(\s)$ is non-torsion, then it suffices to take $\eta = 0$.

\begin{proof} It suffices to show the vanishing with coefficients in $\R_{Y,\eta}$; if $c_1(\s)$ is torsion then a nonzero $\eta$ is necessarily generic for $\s$ so that Corollary \ref{perturbcor} applies. Otherwise, we can find  $t\in H^1(Y)$ such that $\langle c_1(\s)\cup t, [Y]\rangle = -2k$ with $k>0$; then as before $t$ acts as multiplication by $U^{-k}$ on $HF^\infty(Y,\s;R_Y)$. Hence the element $1-tU^k$ acts as 0, but the latter is a unit in the completed ring $\R_{Y,\eta}[[U]]$.
\end{proof}

We now wish to extend perturbed Heegaard Floer theory to cobordism-induced homomorphisms. To do so, we again follow the program from the unperturbed case; we need only make sure that the coefficient modules respect the algebraic nature of the Novikov rings. 

\begin{definition} Let $W: Y_1\to Y_2$ be an oriented cobordism between 3-manifolds $Y_i$, and fix $\eta\in H^2(W; \arr)$ with restrictions $\eta_i = \eta|_{Y_i}$. Let $\K(W,\eta)$ be the Novikov completion of $\zee[K(W)]$ with respect to $\eta$, where as usual $K(W) = \imm(H^1(\partial W;\zee)\to H^2(W,\partial W;\zee))$. Concretely, $\K(W,\eta)$ is the ring of formal series
\[
\K(W,\eta) = \{ \sum_{g\in K(W)} a_g \cdot g \,|\, a_g\in\zee\}
\]
subject to the condition that for each $N\in \zee$ the set of $g\in K(W)$ with $a_g$ nonzero and $\langle g\cup \eta, [W,\partial W]\rangle <N$ is finite. Then $\K(W,\eta)$ is a module for both $\R_{Y_1,\eta_1}$ and $\R_{Y_2, \eta_2}$.

If $\M$ is a (graded) module for $\R_{Y_1,\eta_1}$, the module $\M(W,\eta)$ for $\R_{Y_2,\eta_2}$ induced by $(W,\eta)$ is defined by
\[ 
\M(W,\eta) = \overline{\M} \otimes_{\R_{-Y_1,\eta_1}} \K(W,\eta).
\]
\end{definition}

Here $\K(W,\eta)$ can be given an integer grading depending on a choice of \spinc structure just as in Proposition \ref{Zgradingprop}. For the conjugate module appearing in the last statement, observe that the map $x\mapsto -x$ in $H^1(Y)$ induces a conjugation map $\R_{Y,\eta}\to \R_{-Y,\eta}$. Thus $\overline{\M}$, defined to be the same graded group as $\M$ with conjugate module structure, makes sense as a graded $\R_{-Y,\eta}$-module.

It is now straightforward to define a homomorphism
\[
F_{W,\eta,\s}^\circ: HF^\circ_\Dot(Y_1,\s_1,\M)\to HF^\circ_\Dot(Y_2,\s_2,\M(W,\eta))
\]
associated to a \spinc cobordism $(W,\s)$ with chosen perturbation $\eta$ (or similar maps between the groups without the ``$\Dot$''), by making the usual formal construction for 1- and 3-handles, and using Definition \ref{twistmapdef} for the 2-handles, where $e^{A_W(\psi)}$ is considered to lie in $\K(W,\eta)$. The proof that the result of this construction is a chain map whose induced map in homology is an invariant of $W$ (up to a sign and the action of $H^1(Y_1;\zee)\oplus H^1(Y_2;\zee)$) is identical to the proof in the unperturbed case in \cite{OS3}. Alternatively, one can deduce this fact from the corresponding fact in the unperturbed theory using Lemma \ref{flatlemma}.

Similarly, there is a composition law for perturbed cobordism maps that follows from the usual one given in Theorem \ref{complaw}. Indeed, suppose we are given a cobordism $W: Y_1\to Y_2$ and a module $M$ for $R_Y$, along with a class $\eta\in H^2(W;\arr)$. Write $\M = M\otimes_{R_{Y_1}} \R_{Y_1,\eta}$ for the Novikov completion of $M$ (we do not distinguish in the notation between $\eta$ and its restrictions to $Y_1$, $Y_2$); then $\M$ can also be considered as an $R_{Y_1}$-module. As such, we obtain an induced $R_{Y_2}$-module $\M(W) = \overline{\M}\otimes_{R_{-Y_1}}\zee[K(W)]$. It is not hard to see that the $\R_{Y_2,\eta}$-module induced by $(W,\eta)$ is then
\begin{equation}\label{inducedmodule}
\M(W, \eta) = \M(W)\otimes_{R_{Y_2}}\R_{Y_2,\eta} = \R_{-Y_1,\eta}\otimes_{R_{-Y_1}} M(W) \otimes_{R_{Y_2}} \R_{Y_2,\eta}
\end{equation}
and we have a commutative diagram (with or without $\Dot$'s)
\begin{equation}\label{pertdiag}
\begin{diagram}
HF_\Dot(Y_1,\M) & \rTo^{F_W} & HF_\Dot(Y_2, \M(W)) &\rTo^{\cdot\otimes 1} & HF_\Dot(Y_2, \M(W))\otimes_{R_{Y_2}} \R_{Y_2,\eta}\\
&\rdTo(4,2)_{F_{W,\eta}}&&&\dTo\\
&&&& HF_\Dot(Y_2,\M(W,\eta))
\end{diagram}
\end{equation}
where the vertical arrow is an isomorphism according to Lemma \ref{flatlemma}. Combining this observation with the original composition law gives the desired result for perturbed Floer homology. 

All of the algebraic constructions introduced previously for twisted coefficients go through with only minor modifications in the perturbed setup. The action of $H_1(Y;\zee)/tors$ on $HF^\circ_\Dot(Y,\s;\M)$ for $\M$ an $\R_{Y,\eta}$-module is defined just as before, as is the extension of cobordism-induced maps to incorporate this action. Likewise the previous definition applies to give a pairing
\[
HF^+_\Dot(Y,\s;\M)\otimes_{\R_{Y,\eta}} \overline{HF^-_\Dot(-Y,\s; \N)}\to \M\otimes_{\R_{Y,\eta}}\overline{\N}
\]
for any $\R_{Y,\eta}$-module $\M$ and $\R_{-Y,\eta}$-module $\N$ (and similarly without $\Dot$'s).

\subsection{Conjugation and orientation reversal}
The perturbed versions of the results of section \ref{conjsec} are straightforward generalizations, with the caveat that conjugation $R_{Y,\s}\to R_{Y,\bar{\s}}$ extends to the setting of Novikov rings only at the cost of reversing the sign of $\eta$. Indeed, if $r\in \R_{Y,\s,\eta}$, we can consider the conjugate $\bar{r}$ to lie either in $\R_{-Y,\s,\eta}$ or in $\R_{Y,\bar{\s},-\eta}$. Hence if $\M$ is a module for $\R_{Y,\s,\eta}$, we can think of $\overline{\M}$ as a module either for $\R_{-Y,\s,\eta}$ or for $\R_{Y,\bar{\s},-\eta}$.

\begin{theorem}\label{pertconjinvthm} If $(Y,\s)$ is a closed \spinc 3-manifold with class $\eta\in H^2(Y;\arr)$, and $\M$ is a module for $\R_{Y,\eta}$, then we have an isomorphism of $\R_{Y,\eta}$-modules
\[
HF^\circ_\Dot(Y,\s; \M) \cong \overline{HF^\circ_\Dot(Y,\bar{\s}, \overline{\M})}
\]
preserving relative gradings, where $\overline{\M}$ is considered as a module for $\R_{Y,\bar{\s},-\eta}$. In particular,
\[
HF^\circ_\Dot(Y,\s;\R_{Y,\s,\eta}) \cong \overline{HF^\circ_\Dot(Y,\bar{\s},\overline{\R_{Y,\s,\eta}})} \cong \overline{HF^\circ_\Dot(Y,\bar{\s}, \R_{Y,\bar{\s},-\eta})}.
\]
\end{theorem}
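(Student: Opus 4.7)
The plan is to mimic the proof of the analogous statement for ordinary twisted coefficients given earlier in section \ref{conjsec}, keeping careful track of the Novikov-completion condition and the $U$-completion. First I would fix a pointed Heegaard diagram $(\Sigma,\balpha,\bbeta,z)$ for $(Y,\s)$ with a chosen additive assignment $A$. The flipped diagram $(-\Sigma,\bbeta,\balpha,z)$ then presents $(Y,\bar{\s})$, and the induced additive assignment is $A'_{\x,\y}(\phi)=-A_{\x,\y}(\phi\circ f)$, where $f$ is the reflection across the imaginary axis in $\cee$.

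Next I would define a $\zee$-linear chain map
\[
J:\,CF^\circ(Y,\s;\R_{Y,\s,\eta})\otimes_{\R_{Y,\s,\eta}}\M \longto CF^\circ(Y,\bar{\s};\R_{Y,\bar{\s},-\eta})\otimes_{\R_{Y,\bar{\s},-\eta}}\overline{\M}
\]
by sending $[\x,i]\otimes m$ in the first diagram to $[\x,i]\otimes m$ in the second and extending antilinearly over $\R_{Y,\s,\eta}$; equivalently, $e^h\mapsto e^{-h}$ for $h\in H^1(Y;\zee)$. The one genuinely new point beyond the unperturbed case is that $J$ must be well-defined on the Novikov completions. This reduces to the elementary observation that a formal series $\sum_g a_g\,e^g$ meets the Novikov condition for $(Y,\eta)$ if and only if $\sum_g a_g\,e^{-g}$ meets it for $(Y,-\eta)$, since the two defining inequalities $\langle g\cup\eta,[Y]\rangle<N$ and $-\langle g\cup\eta,[Y]\rangle<N$ are interchanged by $g\mapsto -g$.

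The verification that $J$ is a chain map then proceeds exactly as in the proof of the analogous theorem in section \ref{conjsec}, using $\mathcal{M}_{-\Sigma}(\phi\circ f)=\mathcal{M}_\Sigma(\phi)$, $n_z^{-\Sigma}(\phi\circ f)=n_z^\Sigma(\phi)$, and $\mu_{-\Sigma}(\phi\circ f)=\mu_\Sigma(\phi)$; the sign flip built into $A'$ is precisely what is needed for the antilinearity of $J$ to yield a chain map. That $J$ preserves relative gradings is immediate from \eqref{grdef} together with $c_1(\bar{\s})=-c_1(\s)$, which translate into cancelling sign changes in the two gradings defining each side.

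Finally, since $J$ commutes with the action $U:[\x,i]\mapsto[\x,i-1]$ on both sides, it extends over $\zee[[U]]$ to an isomorphism of $U$-completed complexes, whose inverse is constructed by exchanging the roles of the two Heegaard diagrams. Passing to homology yields
\[
HF^\circ_\Dot(Y,\s;\M)\cong\overline{HF^\circ_\Dot(Y,\bar{\s};\overline{\M})}
\]
as claimed; taking $\M = \R_{Y,\s,\eta}$ gives the second assertion. The main obstacle is purely clerical: one has to verify that the involution $e^h\mapsto e^{-h}$ interacts correctly with the sign change $\eta\mapsto-\eta$ so that $J$ really does land in the target Novikov ring and respects its grading. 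Once this interaction is pinned down the argument runs entirely parallel to the unperturbed case.
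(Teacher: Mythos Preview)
Your proposal is correct and follows exactly the approach the paper has in mind. In fact the paper does not give a separate proof of this theorem at all: it simply remarks that the results of section~\ref{conjsec} are straightforward generalizations, the only caveat being that conjugation $e^h\mapsto e^{-h}$ carries $\R_{Y,\s,\eta}$ to $\R_{Y,\bar{\s},-\eta}$, and then places a $\Box$. Your write-up spells out precisely this caveat (the Novikov-condition check under $g\mapsto -g$) and the routine extension over $\zee[[U]]$, so you have supplied the details the paper leaves implicit.
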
\hfill$\Box$

Thus in the perturbed case, there is a natural equivalence between Floer homology for \spinc structure $\s$ perturbed by a form $\eta$, and the homology for $\bar{\s}$ perturbed by $-\eta$.

\begin{theorem} For $(Y,\s)$ a closed oriented \spinc 3-manifold with class $\eta\in H^2(Y;\arr)$, and $\M$ a module for $\R_{Y,\eta}$, there is an isomorphism
\[
HF^\pm_\Dot(Y,\s;\M)\cong \overline{HF^\Dot_\mp(-Y,\s; \overline{\M})},
\]
of relatively graded $\R_{Y,\s,\eta}$-modules, where $\overline{\M}$ is taken to be a module for $\R_{-Y,\s,\eta}$.
\end{theorem}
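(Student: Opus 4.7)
The strategy is to mirror the proof of the unperturbed duality theorem at the end of Section \ref{conjsec}, replacing $R_{Y,\s}$-coefficients by $\R_{Y,\s,\eta}$-coefficients throughout and then applying the $U$-completion. All of the geometric input from the unperturbed case carries over verbatim: the reflection-of-disks correspondence between $(\Sigma,\balpha,\bbeta,z)$ and $(-\Sigma,\balpha,\bbeta,z)$, together with the identities $\widehat{\mathcal M}_{-\Sigma}(\phi\circ r)=\widehat{\mathcal M}_\Sigma(\phi)$, $n_z^{-\Sigma}(\phi\circ r)=n_z^\Sigma(\phi)$, $\mu_{-\Sigma}(\phi\circ r)=\mu_\Sigma(\phi)$, and $A_{-Y}(\phi\circ r)=-A_Y(\phi)$ for a compatible additive assignment, are all purely geometric and indifferent to the choice of coefficient ring. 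The only novelty is that $e^{A(\phi)}$ is now interpreted in the Novikov ring rather than in $R_Y$.

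First I would define the chain-level $\R_{Y,\s,\eta}$-antilinear map
\[
\Phi\colon CF^\circ_*(Y,\s;\M)\longto CF^*_\circ(-Y,\s;\overline{\M}),\qquad [\x,i]\otimes m\longmapsto [\x,-1-i]^*\otimes m,
\]
and verify that it is a chain isomorphism preserving relative grading. The computation that $\Phi$ commutes with the differentials is the same as in Lemma \ref{pairinglemma}, the only ring-theoretic ingredient being the formal identity $\overline{e^{A(\phi)}}=e^{-A(\phi)}$ for the conjugation $\R_{Y,\s,\eta}\to\R_{-Y,\s,\eta}$, which holds termwise and so is automatic at the level of formal series. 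Preservation of the relative grading follows from the convention $\gr_{CF^*}=-\gr_{CF_*}$ recorded in Section \ref{conjsec}, combined with the shift $i\mapsto -1-i$ and the fact that $U$ carries degree $2$. Since $U$ acts on $CF^*$ as the transpose $[\x,i]^*\mapsto[\x,i+1]^*$ while on $CF_*$ it acts by $[\x,i]\mapsto[\x,i-1]$, the map $\Phi$ intertwines the two $U$-actions and therefore respects the natural sub- and quotient complexes, exchanging the roles of $+$ and $-$.

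Passing to homology yields an $\R_{Y,\s,\eta}$-antilinear, grading-preserving isomorphism $HF^\pm_*(Y,\s;\M)\cong\overline{HF^*_\mp(-Y,\s;\overline{\M})}$. Tensoring both sides with $\zee[[U]]$ over $\zee[U]$, which is a flat extension by the argument of Lemma \ref{flatlemma}, then delivers the desired isomorphism of $U$-completed groups. The main obstacle will be bookkeeping the interaction of conjugation, orientation reversal, and the $\s$-grading sign: one must verify that with the conventions $\R_{-Y,\s,\eta}=\R_{Y,\bar\s,-\eta}$ and $\overline{\M}$ regarded as a graded $\R_{-Y,\s,\eta}$-module, the identities $A_{-Y}(\phi\circ r)=-A_Y(\phi)$ and $\gr_{CF^*}=-\gr_{CF_*}$ combine to produce precisely the correct antilinearity and grading behavior for $\Phi$. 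Once this compatibility is settled, the proof reduces to a direct transcription of the unperturbed argument.
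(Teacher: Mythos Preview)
Your proposal is correct and follows exactly the approach implicit in the paper: the paper omits the proof entirely (marking the theorem with $\Box$), treating it as a direct transcription of the unperturbed orientation-reversal theorem at the end of Section~\ref{conjsec} using the chain-level map $[\x,i]\otimes m\mapsto[\x,-1-i]^*\otimes m$, followed by $U$-completion. One minor point: the reference to Lemma~\ref{flatlemma} for the flatness of $\zee[[U]]$ over $\zee[U]$ is misplaced (that lemma concerns Novikov rings), and in any case flatness is not needed here since you are tensoring an isomorphism of modules, not a short exact sequence.
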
\hfill$\Box$

As usual, there are obvious analogues of each of these results before taking $U$-completions. 

\subsection{Perturbed 4-manifold invariants}
We are now in a position to define invariants for closed 4-manifolds using perturbed Floer homology. If $(Z,\s)$ is a \spinc 4-manifold with boundary $Y$ and $\eta\in H^2(Z;\arr)$, we define the {\it perturbed relative invariant} for $(Z,\s, \eta)$ to be the map
\[
\Psi_{Z,\s,\eta}: \A(Z) \to HF^-_\Dot(Y,\s; \K(Z,\eta))
\]
given by $\Psi_{Z,\s,\eta}(U^n\otimes \omega) = [F^-_{Z,\s,\eta}(U^n\cdot \Theta^-\otimes\omega)]$, where the brackets indicate the equivalence class under the action of $K(Z)$ as before. Here $\K(Z,\eta)$ is the $\R_{Y,\eta}$-module induced by $Z$, thought of as a cobordism $S^3\to Y$; in other words $\K(Z,\eta)$ is the Novikov completion of $\zee[\ker(H^2(Z,Y)\to H^2(Z))]$ with respect to $\eta$.

\begin{definition} Let $X$ be a closed $4$-manifold and $\eta\in H^2(X;\arr)$. An oriented 3-dimensional embedded submanifold $Y\subset X$ is an {\em allowable cut for $\eta$} if $Y$ separates $X$ into two components, $X = Z_1\cup_Y Z_2$ with $\partial Z_1 = Y = -\partial Z_2$, and at least one of the following conditions are satisfied:
\begin{enumerate}
\item $\eta|_Y \neq 0$.
\item $b^+(Z_i)\geq 1$ for $i = 1,2$.
\end{enumerate}
\end{definition}

Observe that if property (1) of the definition holds, then it follows from Corollary \ref{perturbcor} that the induced map $F^\infty_{Z,\s,\eta}$ is trivial in perturbed, $U$-completed Floer homology for any \spinc structure on $X$. (Indeed, if the restriction of $c_1(\s)$ to $Y$ is a non-torsion class then we need not even assume (1), but of course in this case one can always find a class $\eta$ satisfying (1), namely the image in real cohomology of $c_1(\s)$. To avoid complicating the statements of results to follow, we ignore this point.) 

On the other hand, if $W: Y_1\to Y_2$ is a cobordism with $b^+(W)>0$ and $\eta\in H^2(W;\arr)$, then since the unperturbed map in $HF^\infty$ induced by $W$ is trivial, the same is true for the map perturbed by $\eta$, whether $\eta$ vanishes on $\partial W$ or not. Hence the perturbed relative invariant $\Psi_{Z,\s,\eta}$, for a component of $X$ arising from a cut allowable for $\eta$, takes values in the reduced Floer homology in both cases, and the following makes sense.

\begin{definition} Let $X$ be a closed oriented 4-manifold and $\s$ a \spinc structure on $X$. For a pair $(Y,\eta)$ consisting of an element $\eta\in H^2(X;\arr)$ and a cut $X = Z_1\cup_Y Z_2$ of $X$ that is allowable for $\eta$, the {\em perturbed {\OS} invariant} of $X$ associated to $(Y,\eta, \s)$ is the linear map ${\mbox{\normalfont\gothfamily O}}_{X,Y,\eta,\s}: \A(X)\to \K(X,Y, \eta)$ defined by
\[
\O_{X,Y,\eta,\s}(\alpha) = \langle \tau^{-1}\Psi_{Z_1,\s, \eta}(\alpha_1), \Psi_{Z_2,\s, \eta}(\alpha_2)\rangle,
\]
up to sign and multiplication by an element of $K(X,Y)$. Here $\alpha$ is the image of $\alpha_1\otimes \alpha_2$ under the natural map $\A(Z_1)\otimes \A(Z_2)\to \A(X)$. 
\end{definition}

In this definition, we set
\[
\K(X,Y,\eta) = \K(Z_1,\eta)\otimes_{\R_{Y,\eta}} \overline{\K(Z_2,\eta)}.
\]
This can be identified with the Novikov completion of the $R_Y$-module $\zee[K(X,Y)]$ with respect to $\eta$ as in Lemma \ref{coefflemma}. That is to say, $\K(X,Y,\eta) = \zee[K(X,Y)]\otimes_{R_Y} \R_{Y,\eta}$. Note that $\O_{X,Y,\eta,\s}$ depends on the orientation of $Y$ in the sense that $\O_{X,Y,\eta,\s} = \overline{\O_{X,-Y,\eta,\s}}$. Indeed, it is easy to see that  the two are related by the action of the obvious anti-homomorphism $\K(Z_1,\eta)\otimes\overline{\K(Z_2,\eta)}\to \K(Z_2,\eta)\otimes\overline{\K(Z_1,\eta)}$, which in turn corresponds to the conjugation homomorphism $\K(X,Y,\eta)\to \K(X,-Y,\eta)$.

We will see that when $b^+(X)\geq 2$, the definition above recovers the ordinary {\OS} invariants in the sense of Theorem \ref{productthm}: that is, $\O_{X,Y,\eta,\s}$ lies in $\zee[K(X,Y)]$, and the coefficients of this group ring element are the {\OS} invariants of $X$ in the various \spinc structures that have given restrictions to $Z_1$ and $Z_2$. The utility of this definition is that we no longer need to assume that $b^+(Z_i)\geq 1$ or even that $F^-_{Z_i,\s}$ takes values in the reduced Floer homology, so long as $\eta|_Y \neq 0$. 

It should be noted, however, that the existence of a 3-manifold $Y$ separating $X$ and a class $\eta\in H^2(X,\arr)$ restricting nontrivially to $Y$ implies that $X$ is indefinite; in particular $b^+(X)\geq 1$.

\begin{lemma}\label{relationlemma} Suppose $X$ is a 4-manifold with $b^+(X)\geq 2$, $Y$ a submanifold splitting $X$ into components $Z_1$ and $Z_2$ with $b^+(Z_i)\geq 1$ (or more generally satisfying the hypotheses of Theorem \ref{productthm}), and $\eta\in H^2(X;\arr)$ a perturbing class on $X$. Then for any \spinc structure $\s$ on $X$, $\O_{X,Y,\eta,\s}$ takes values in $\zee[K(X,Y)]\subset \K(X,Y,\eta)$, and 
\[
\O_{X,Y,\eta, \s} = \sum_{t\in K(X,Y)} \Phi_{X,\s+t}\cdot e^t
\]
up to multiplication by $\pm 1$ and an element of $K(X,Y)$.
\end{lemma}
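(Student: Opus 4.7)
The plan is to exploit the fact that under the hypotheses of Theorem \ref{productthm}, the unperturbed relative invariants $\Psi_{Z_i,\s_i}$ already take values in $HF^-_{red}(Y,\s_Y;\zee[K(Z_i)])$, so the perturbed invariants $\Psi_{Z_i,\s_i,\eta}$ should simply be their images under the natural inclusion $\zee[K(Z_i)]\hookrightarrow \K(Z_i,\eta)$ of coefficient modules, after which Theorem \ref{productthm} finishes the job.

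First, I would observe that by Lemma \ref{flatlemma} the ring $\R_{Y,\eta}$ is flat over $R_Y$, and hence the commutative diagram \eqref{pertdiag} identifies the $\eta$-perturbed cobordism map $F^-_{Z_i,\s,\eta}$ with the tensor product $F^-_{Z_i,\s}\otimes_{R_Y} 1_{\R_{Y,\eta}}$, via the inclusion of coefficient modules $\iota_i:\zee[K(Z_i)]\to \K(Z_i,\eta)$. Consequently $\Psi_{Z_i,\s,\eta}(\alpha_i) = (\iota_i)_*\Psi_{Z_i,\s}(\alpha_i)$, and since $(\iota_i)_*$ maps $HF^-_{red}$ to $HF^-_{red}$ (and therefore factors through the inclusion into $HF^-_\Dot$), this identifies the perturbed invariant as a simple ``extension of scalars'' of the unperturbed one.

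Next I would invoke Lemma \ref{lemma1} to commute the map $(\iota_i)_*$ past $\tau^{-1}$, and Lemma \ref{lemma2} to pull the module homomorphisms $\iota_1\otimes \iota_2$ out of the pairing. This yields
\[
\O_{X,Y,\eta,\s}(\alpha) = (\iota_1\otimes \iota_2)\bigl(\langle \tau^{-1}\Psi_{Z_1,\s}(\alpha_1),\,\Psi_{Z_2,\s}(\alpha_2)\rangle\bigr),
\]
where $\iota_1\otimes \iota_2$ is identified with the natural inclusion $\zee[K(X,Y)]\hookrightarrow \K(X,Y,\eta)$ coming from Lemma \ref{coefflemma} applied at the level of unperturbed and Novikov-completed modules. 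Applying Theorem \ref{productthm} to the argument of $\iota_1\otimes\iota_2$ then identifies it with $\sum_{t\in K(X,Y)}\Phi_{X,\s+t}(\alpha)\,e^t$, up to sign and translation by an element of $K(X,Y)$, which gives exactly the claimed identity in $\K(X,Y,\eta)$ and shows in particular that $\O_{X,Y,\eta,\s}$ lies in the uncompleted group ring $\zee[K(X,Y)]$.

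The only real subtlety, and what I expect to be the main bookkeeping obstacle, is keeping track of the various indeterminacies (the sign, the action of $K(X,Y)$, and the choices of representatives of the relative invariants) and confirming that they all match up consistently on the two sides of the identification $\zee[K(X,Y)]\otimes_{R_Y}\R_{Y,\eta}=\K(X,Y,\eta)$; this amounts to checking that the representatives chosen in the proof of Theorem \ref{productthm} can be lifted coherently to the perturbed cobordism maps, which is automatic from diagram \eqref{pertdiag} once the representatives for $\ul F^-_{Z_i,\s}$ are fixed.
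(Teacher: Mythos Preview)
Your approach is essentially the same as the paper's: relate the perturbed relative invariants to the unperturbed ones via the coefficient inclusion, pull the inclusion through $\tau^{-1}$ and the pairing, and apply Theorem~\ref{productthm}. The paper organizes this into two commutative diagrams (one for the cobordism maps, one for the pairing) rather than citing Lemmas~\ref{lemma1} and~\ref{lemma2} directly, but the content is identical.

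There is one point you gloss over that the paper treats with care. You write the map $\iota_1\otimes\iota_2:\zee[K(X,Y)]\to\K(X,Y,\eta)$ as an inclusion $\hookrightarrow$ without justification. The paper observes that when $\eta|_Y=0$ this map is the identity, but when $\eta|_Y\neq 0$ it is the map from $\zee[K(X,Y)]$ to its \emph{module} Novikov completion $\zee[K(X,Y)]\otimes_{R_Y}\R_{Y,\eta}$, and such maps are not injective in general (the paper gives an explicit counterexample in the remark following the proof). Injectivity is needed for the statement of the lemma even to make sense, since otherwise ``$\O$ lies in $\zee[K(X,Y)]\subset\K(X,Y,\eta)$'' is ill-posed. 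The paper verifies injectivity by using the hypothesis that $\eta$ extends to $X$: this forces the functional $\langle\eta\cup\cdot,[Y]\rangle$ on $H^1(Y)$ to factor through $\delta:H^1(Y)\to K(X,Y)$, so the module completion coincides with the \emph{ring} Novikov completion of $\zee[K(X,Y)]$, which is visibly injective. You should include this argument.
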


\begin{proof} We have a commutative diagram
\begin{equation}\label{pertcoborddiag}
\begin{diagram}
HF^-(S^3) & \rTo & HF^-_{red}(Y, \zee[K(Z_1)])\\
\dTo^= & & \dTo_{i_*}\\
HF^-(S^3) &\rTo & HF^-_\Dot(Y, \K(Z_1,\eta))
\end{diagram}
\end{equation}
where the upper arrow is the unperturbed, twisted-coefficient homomorphism induced by $(Z_1,\s)$, and the lower arrow uses the perturbation $\eta|_{Z_1}$. Here $i_*$ is the natural map induced by the homomorphism $\zee[K(Z_1)]\to \K(Z_1,\eta)$ of a ring to its Novikov completion; commutativity of the diagram is obvious from the definition of cobordism-induced maps. We have a similar diagram for $Z_2$ with $Y$ replaced by $-Y$. 

Likewise, there is a diagram
\begin{equation}\label{pertpairingdiag}
\begin{diagram} 
HF^-_{red}(Y, \zee[K(Z_1)]) \otimes_{R_Y} \overline{HF^-_{red}(-Y, \zee[K(Z_2)])} & \rTo^{\langle \tau^{-1}(\cdot), \cdot \rangle} & \zee[K(X,Y)] \\
\dTo^{i_*\otimes i_*} & & \dTo_j\\
HF^-_\Dot(Y, \K(Z_1,\eta)) \otimes_{\R_{Y,\eta}} \overline{HF^-_\Dot(-Y, \K(Z_2, \eta))} & \rTo^{\langle \tau^{-1}(\cdot), \cdot \rangle} & \K(X,Y, \eta)
\end{diagram}
\end{equation}

If $\eta|_Y = 0$, then the maps $i_*$ and $j$ are the identity maps: indeed, it follows from \eqref{inducedmodule} that $\K(Z_i,\eta) = \zee[K(Z_i)]$ and $\K(X,Y,\eta) = \zee[K(X,Y)]$. Thus in this case, the lemma is just a restatement of Theorem \ref{productthm}.

Assume that $\eta|_Y\neq 0$. We claim that the map $j:\zee[K(X,Y)]\to \zee[K(X,Y)]\otimes_{R_{Y}} \R_{Y,\eta}$ is injective, and this combined with Theorem \ref{productthm} clearly implies the lemma. To see the injectivity, observe that since $\eta$ is induced from the 4-manifold $X$, we have a diagram
\begin{diagram}
H^1(Y;\zee) && \rTo^\delta && K(X,Y)\\
& \rdTo_{\langle\eta\cup \cdot, [Y]\rangle} && \ldTo_{\langle\eta\cup \cdot, [X]\rangle}\\
&& \arr 
\end{diagram}
with $\delta$ surjective (recall that $K(X,Y)$ is identified with a subgroup of $H^2(X)$; c.f. Remark \ref{spincrestriction}).
From this it follows that the Novikov completion of $\zee[K(X,Y)]$ as an $R_Y$-module is the same as its Novikov completion $(\zee[K(X,Y)])^\wedge_{\eta}$ as a ring with respect to the function $\langle \eta\cup\cdot,[X]\rangle$. It is straightforward to see that the map $\zee[K(X,Y)]\to (\zee[K(X,Y))^{\wedge}_{\eta}$ from a group ring to its Novikov completion is injective.
\end{proof}

\begin{remark} The injectivity of $j: \zee[K(X,Y)]\to \zee[K(X,Y)]\otimes_{R_{Y}} \R_{Y,\eta}$ from $\zee[K(X,Y)]$ to its (module) completion is not automatic, as can be seen in the case of a cut $Y$ that is admissible in the sense of Ozsv\'ath and Szab\'o: in this case $K(X,Y)$ is trivial, $\zee[K(X,Y)] \cong \zee$, and $\zee\otimes_{R_Y}\R_{Y,\eta} = 0$ if $\eta\neq 0$. 
\end{remark}

To handle cases where not both sides of the cut $Y$ have nonvanishing $b^+$, we consider the dependence of $\O_{X,Y,\eta,\s}$ on $Y$. For our present purposes, the following suffices.

\begin{lemma}\label{cutinvlemma} Suppose $X$ is given with two disjoint cuts $Y_1$ and $Y_2$ that are allowable for a class $\eta\in H^2(X;\arr)$, and whose orientations are compatible. Then for a \spinc structure $\s$ on $X$, $\O_{X,Y_1,\eta, \s}$ and $\O_{X,Y_2,\eta,\s}$ contain the same information in the following sense.

There is a group $K(X; Y_1, Y_2)$ whose group ring $\zee[K(X; Y_1, Y_2)]$ has a Novikov completion $\K(X; Y_1,Y_2,\eta)$ depending on $\eta$. There are injections $K(X,Y_i)\to K(X; Y_1,Y_2)$ inducing surjective restriction homomorphisms $\pi_i: \K(X;Y_1,Y_2,\eta)\to \K(X,Y_i, \eta)$. Finally, there is a homomorphism $\tO:\A(X)\to \K(X;Y_1,Y_2,\eta)$ (depending on $\s$ and defined up to sign and translation by an element of $K(X;Y_1,Y_2)$) such that for appropriate choices of representatives,
\[
\pi_i\circ\tO = \O_{Y_i,\eta,\s}
\]
for $i = 1,2$. In particular, if we think of $\O_{X,Y_i,\eta,\s}$ as Laurent series (depending on elements of $\A(X)$), the coefficients of those series are equal to coefficients of $\tO$.
\end{lemma}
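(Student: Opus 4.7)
The plan is to exploit the fact that two disjoint cuts decompose $X$ into three pieces $X = W_1 \cup_{Y_1} W_2 \cup_{Y_2} W_3$, where $\partial W_1 = Y_1$, $\partial W_2 = -Y_1 \sqcup Y_2$, and $\partial W_3 = -Y_2$, and to construct a common ``triple'' refinement $\tO$ of both $\O_{X,Y_1,\eta,\s}$ and $\O_{X,Y_2,\eta,\s}$.

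First I would define
\[
K(X; Y_1, Y_2) = \ker\bigl(H^2(X;\zee) \to H^2(W_1)\oplus H^2(W_2)\oplus H^2(W_3)\bigr),
\]
and let $\K(X; Y_1, Y_2, \eta)$ be its Novikov completion with respect to the function $t \mapsto \langle t\cup\eta, [X]\rangle$. A Mayer--Vietoris argument analogous to Lemma \ref{coefflemma}, applied twice, identifies
\[
\K(W_1, \eta)(W_2, \eta)\otimes_{\R_{Y_2,\eta}}\overline{\K(W_3, \eta)} \cong \K(X; Y_1, Y_2, \eta),
\]
and exhibits each $K(X,Y_i)$ as the subgroup of $K(X;Y_1,Y_2)$ arising from the two-piece splitting across $Y_i$. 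The projections $\pi_i$ are then the maps on group rings that send elements outside $K(X,Y_i)$ to zero, extended continuously to the Novikov completions (this is the analogue of the projection map $\Pi$ of section \ref{complaw}).

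Next I would define $\tO(\alpha)$ by a triple pairing. Writing $\alpha$ as the image of $\alpha_1\otimes\alpha_2\otimes\alpha_3$ with $\alpha_i\in \A(W_i)$, set
\[
\tO(\alpha) = \bigl\langle \tau^{-1}\, F^-_{W_2, \s, \eta}(\Psi_{W_1,\s,\eta}(\alpha_1)\otimes \alpha_2),\; \Psi_{W_3,\s,\eta}(\alpha_3)\bigr\rangle,
\]
where $F^-_{W_2,\s,\eta}$ is the perturbed cobordism map of $W_2$ regarded as a cobordism $Y_1\to Y_2$, incorporating the $H_1$-action as in section \ref{H1actionsec}, and the pairing is the perturbed twisted pairing of section \ref{pairingsec}. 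By the identification above, $\tO(\alpha) \in \K(X;Y_1,Y_2,\eta)$, well defined up to sign and translation by an element of $K(X;Y_1,Y_2)$. For the projection to $\O_{X,Y_2,\eta,\s}$: the perturbed composition law (obtained from Theorem \ref{complaw} via the flatness of $\R_{Y,\eta}$ over $R_Y$ and the diagram \eqref{pertdiag}) identifies $\Pi_{W_1\cup W_2}\circ F^-_{W_2,\s,\eta}\circ \Psi_{W_1,\s,\eta}$ with $\Psi_{W_1\cup W_2,\s,\eta}$, and $\Pi_{W_1\cup W_2}$ (extended by the identity on the $W_3$ factor) is precisely $\pi_2$; hence $\pi_2\circ \tO = \O_{X,Y_2,\eta,\s}$. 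For $\pi_1$, I would first apply the perturbed duality theorem (the analogue of Theorem \ref{gendualitythm}) to move $F^-_{W_2,\s,\eta}$ off the left argument and onto the right (viewing $W_2$ now as a cobordism $-Y_2\to -Y_1$), then invoke the composition law to identify the resulting expression with $\Psi_{W_2\cup W_3,\s,\eta}$ modulo the projection $\Pi_{W_2\cup W_3} = \pi_1$, which yields $\pi_1\circ \tO = \O_{X,Y_1,\eta,\s}$ by the definition of $\O_{X,Y_1,\eta,\s}$.

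The main obstacle I expect is the algebraic bookkeeping: tracking signs, the $H^1$-translation indeterminacy in the representatives of $[F^-]$ and $[\Psi]$, and verifying the triple tensor product identification together with its compatibility with the various Novikov completions. Once that algebraic setup is in hand, the projection identities reduce to essentially formal iterations of the composition and duality theorems in the spirit of the proof of Theorem \ref{productthm}.
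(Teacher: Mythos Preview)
Your proposal is correct and follows essentially the same route as the paper: the same three-piece decomposition, the same definition of $K(X;Y_1,Y_2)$, the same triple-pairing definition of $\tO$, and the same reduction to the two $\O_{X,Y_i,\eta,\s}$ via the perturbed composition law on one side and duality (together with the analogue of Lemma~\ref{lemma3} to commute $\tau^{-1}$ past $F^-_{W_2}$) on the other. The only cosmetic difference is that the paper writes out the argument for $\alpha=1$ and declares the general case merely notationally heavier, whereas you keep the $\A(X)$-action in the picture throughout.
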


Compatibility of the orientations of $Y_1$ and $Y_2$ means that in the decomposition $X = Z_1\cup_{Y_1} Z_2 \cup_{Y_2} Z_3$, we have $\partial Z_1 = Y_1$, $\partial Z_2 = -Y_1\coprod Y_2$, and $\partial Z_3 = -Y_2$.

Intuitively, $\K(X;Y_1,Y_2,\eta)$ is a Novikov ring coming from a group containing both $K(X,Y_1)$ and $K(X,Y_2)$ as subgroups. One can think of $\tO$ as a polynomial (Laurent series) in a large number of variables, and the perturbed invariants $\O_{Y_i,\eta,\s}$ are ``sections'' of this polynomial. Put another way, $\tO$ is an integer function on a group $K(X;Y_1,Y_2)$, of which $K(X,Y_1)$ and $K(X,Y_2)$ are subgroups. One obtains each of the functions $\O_{X,Y_i,\eta,\s}$ by restriction of $\tO$ to the corresponding subgroup (or its cosets); it is in this sense that the $\O_{X,Y_i,\eta,\s}$ contain the same information.

\begin{proof} Suppose $Y_1$, $Y_2$ divide $X$ into pieces $X = Z_1\cup_{Y_1} Z_2\cup_{Y_2} Z_3$. We prove the lemma by evaluating the invariants coming from the two splittings on $\alpha\in\A(X)$; in fact we assume $\alpha = 1$. The general case is only notationally more difficult. 

According to the composition law, we can find representatives for the maps involved such that
\begin{eqnarray}
\O_{X,Y_1,\eta,\s}&=& \langle\tau^{-1}\Psi_{Z_1,\eta}, \Psi_{Z_2\cup Z_3, \eta}\rangle\nonumber\\
&=& \langle\tau^{-1} F^-_{Z_1,\eta}(\Theta^-), F^-_{Z_2\cup Z_3,\eta}(\Theta^-)\rangle\nonumber\\
&=& 1\otimes \Pi \langle\tau^{-1}F^-_{Z_1,\eta}(\Theta^-),\, F^-_{Z_2,\eta}\circ F^-_{Z_3,\eta}(\Theta^-)\rangle.\label{side1}
\end{eqnarray}
Similarly,
\begin{equation}
\O_{X,Y_2,\eta,s} = \Pi\otimes 1\langle \tau^{-1} F^-_{Z_2,\eta}\circ F^-_{Z_1,\eta}(\Theta^-),\, F^-_{Z_3,\eta}(\Theta^-)\rangle.
\label{side2}
\end{equation}
We therefore define
\[
\tO = \langle\tau^{-1}F^-_{Z_1,\eta}(\Theta^-), F^-_{Z_2,\eta}\circ F^-_{Z_3,\eta}(\Theta^-)\rangle= \langle\tau^{-1} F^-_{Z_2,\eta}\circ F^-_{Z_1,\eta}(\Theta^-), F^-_{Z_3,\eta}(\Theta^-)\rangle
\]
using duality and the analog of Lemma \ref{lemma3} in the perturbed case. Here we also note that the pairings above take values in
\[
\K(Z_1,\eta)\otimes_{\R_{Y_1,\eta}}\overline{\K(Z_2\cup Z_3, Y_2,\eta)} \quad\mbox{and}\quad  \K(Z_1\cup Z_2, Y_1,\eta)\otimes_{\R_{Y_2,\eta}} \overline{\K(Z_3,\eta)},
\]
which are mutually isomorphic to the Novikov completion of
\[
\zee\left[\frac{K(Z_1)\oplus K(Z_2)\oplus K(Z_3)}{H^1(Y_1)\oplus H^1(Y_2)}\right]
\]
with respect to (the linear function on $K(Z_1)\oplus K(Z_2)\oplus K(Z_3)$ induced by) $\eta$. Note that just as in Lemma \ref{coefflemma}, there is an isomorphism
\[
\frac{K(Z_1)\oplus K(Z_2)\oplus K(Z_3)}{H^1(Y_1)\oplus H^1(Y_2)} \cong \ker[\rho_1\oplus\rho_2\oplus\rho_3: H^2(X)\to H^2(Z_1)\oplus H^2(Z_2)\oplus H^2(Z_3)], 
\]
where $\rho_i$ is the restriction $H^2(X)\to H^2(Z_i)$. We denote the above group by $K(X; Y_1,Y_2)$ and the Novikov completion of $\zee[K(X;Y_1,Y_2)]$ with respect to $\eta$ by $\K(X;Y_1,Y_2,\eta)$. With these algebraic identifications understood, the lemma follows from \eqref{side1} and \eqref{side2}.
\end{proof}

Finally we obtain the following, which is a restatement of Theorem \ref{intropertproductthm} from the introduction. It should be seen as a generalization of Theorem \ref{productthm} that allows us to calculate {\OS} invariants using essentially any cut $Y$, if we are willing to use an appropriate perturbation.

\begin{theorem}\label{pertproductthm} Let $X$ be a closed oriented 4-manifold with $b^+(X)\geq 2$, and $Y\subset X$ a submanifold determining a decomposition $X = Z_1\cup_Y Z_2$, where $Z_i$ are connected 4-manifolds with boundary. Fix a class $\eta\in H^2(X;\arr)$, and assume that $Y$ is an allowable cut for $\eta$. If $b^+(Z_1)$ and $b^+(Z_2)$ are not both 0, then for any \spinc structure $\s$ on $X$ and element $\alpha\in \A(X)$,
\begin{equation}\label{pertproductform}
\sum_{t\in K(X,Y)}\Phi_{X,\s + t}(\alpha) e^{t} = \O_{X,Y,\eta,\s}(\alpha) = \langle \tau^{-1}\Psi_{Z_1,\eta,\s}(\alpha_1),\,\Psi_{Z_2,\eta,\s}(\alpha_2)\rangle
\end{equation}
up to sign and multiplication by an element of $K(X,Y)$, where $\alpha_1\otimes \alpha_2\mapsto \alpha$ as before. If $b^+(Z_1) = b^+(Z_2) = 0$ then the same is true after possibly replacing $\eta$ by another class $\tilde{\eta}$, where $\tilde{\eta}|_{Z_i} = \eta|_{Z_i}$ for $i = 1,2$.
\end{theorem}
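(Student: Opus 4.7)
The plan is to prove the theorem in three cases according to how many of $b^+(Z_1)$, $b^+(Z_2)$ vanish; the first two reduce to the machinery of section~\ref{gluingsec}, while the third is the main obstacle.

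If both $b^+(Z_i)\geq 1$, then the unperturbed relative invariants $\Psi_{Z_i,\s_i}$ land in $HF^-_{red}(\pm Y;\zee[K(Z_i)])$ by Lemma~\ref{b+lemma}, so the hypotheses of Theorem~\ref{productthm} are met and the conclusion follows immediately from Lemma~\ref{relationlemma}.

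Suppose next that exactly one of $b^+(Z_i)$ vanishes; say $b^+(Z_1)=0$ and $b^+(Z_2)\geq 1$. Allowability of $Y$ together with the failure of condition~(2) forces $\eta|_Y\neq 0$, so by Corollary~\ref{perturbcor} we have $HF^\infty_\Dot(Y,\s_Y;\M)=0$ for any $\R_{Y,\eta}$-module $\M$; hence the perturbed relative invariants $\Psi_{Z_i,\eta,\s_i}$ take values in reduced Floer homology automatically and $\O_{X,Y,\eta,\s}$ is well-defined. The plan is to choose an admissible cut $N\subset Z_2$ (available since $b^+(Z_2)\geq 1$ and $b^+(X)\geq 2$, as in the proof of Theorem~\ref{productthm}), producing a three-piece decomposition $X=Z_1\cup_Y W\cup_N V_2$ in which $V_1:=Z_1\cup_Y W$ and $V_2$ each have $b^+\geq 1$. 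Applying Lemma~\ref{cutinvlemma} to the disjoint allowable pair $(Y,N)$ gives a single universal invariant $\tO\in\K(X;Y,N,\eta)$ with $\pi_Y(\tO)=\O_{X,Y,\eta,\s}$ and $\pi_N(\tO)=\O_{X,N,\eta,\s}$. The first case applied to the cut $N$ identifies $\O_{X,N,\eta,\s}(\alpha)=\Phi_{X,\s}(\alpha)$, and running the same analysis with shifted \spinc structures $\s+t$ for $t\in K(X,Y)\subset K(X;Y,N)$---incorporating the second part of the composition law (Theorem~\ref{complaw}) to pull the shift through the cobordism maps, exactly as in equation~\eqref{transeqn}---identifies each $K(X,Y)$-coefficient of $\tO$ with $\Phi_{X,\s+t}(\alpha)$. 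Restriction by $\pi_Y$ then yields the desired formula.

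Finally suppose $b^+(Z_1)=b^+(Z_2)=0$. This is the main obstacle: no admissible cut disjoint from $Y$ exists in either $Z_i$, so the bootstrap through Lemma~\ref{cutinvlemma} used in the previous case is unavailable. The remedy is the freedom the statement grants to replace $\eta$ by some $\tilde\eta$ with $\tilde\eta|_{Z_i}=\eta|_{Z_i}$, i.e., modifying $\eta$ by a class in $H^2(X;\arr)$ that vanishes on both $Z_i$. Since the relative invariants $\Psi_{Z_i,\tilde\eta,\s_i}$ depend only on $\tilde\eta|_{Z_i}=\eta|_{Z_i}$, they are unchanged by this modification; only the ambient Novikov completion $\K(X,Y,\tilde\eta)$ of the target changes. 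For a generic choice of $\tilde\eta$ the plan is to adapt the composition-law and duality arguments from the proof of Theorem~\ref{productthm} using an admissible cut $N\subset X$ (which necessarily meets $Y$ transversely), replacing the clean three-piece decomposition of the previous case by a more intricate bookkeeping in which pieces of $Z_1$ and $Z_2$ appear on both sides of $N$. The hard part is tracking the individual $K(X,Y)$-coefficients through the resulting sequence of composition-law applications so as to identify them with the $\Phi_{X,\s+t}$ and recover the stated formula with $\tilde\eta$ in place of $\eta$.
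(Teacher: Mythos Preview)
Your first two cases are fine, though your Case~2 takes a different route from the paper: rather than finding an admissible cut $N\subset Z_2$ and invoking Lemma~\ref{cutinvlemma} for the pair $(Y,N)$, the paper constructs a new cut $Y'$ disjoint from $Y$ by the geometric trick described below, arranging $b^+(Z_1')=1$ directly. Your approach works in Case~2 because $\eta|_Y\neq 0$ forces $HF^\infty_\Dot(Y;\M)=0$, so the reduced-homology hypothesis of Theorem~\ref{productthm} is automatic in the perturbed theory and you are essentially re-running that proof with Novikov coefficients.

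Your Case~3, however, has a genuine gap. You propose to take an admissible cut $N$ that necessarily meets $Y$ and then do ``intricate bookkeeping'' with the resulting four-piece decomposition, but you give no indication of how to carry this out, and in particular no mechanism for applying Lemma~\ref{cutinvlemma} (which requires \emph{disjoint} cuts) or any substitute. The freedom to replace $\eta$ by $\tilde\eta$ with the same restrictions to each $Z_i$ is not by itself what resolves the obstruction.

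The missing idea is geometric and produces a disjoint cut. Since $\eta|_Y\neq 0$ (forced by allowability when both $b^+(Z_i)=0$), there is a surface $\Sigma\subset Y$ with $\int_\Sigma\eta\neq 0$; hence $[\Sigma]\neq 0$ in $H_2(X;\arr)$ and there is an embedded surface $S\subset X$ meeting $\Sigma$ transversely in a single point. The pair $\{[\Sigma],[S]\}$ contributes a hyperbolic summand to the intersection form. Now build a cut $Y'$ \emph{disjoint from $Y$}: push $Y$ slightly into $Z_2$ and attach the boundary of the $Z_2$-half of a tubular neighborhood of $S$. This absorbs $\Sigma\cup S$ into the new piece $Z_1'$, giving $b^+(Z_1')=1$ while $b^+(Z_2')=b^+(Z_2)$. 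Lemma~\ref{cutinvlemma} now applies to the disjoint pair $(Y,Y')$, and one iterates the construction at most once more from $Y'$ to reach a cut with $b^+\geq 1$ on both sides, where Lemma~\ref{relationlemma} finishes the job. The modification $\tilde\eta=\eta+\epsilon\,PD_X[\Sigma]$ enters only to ensure $Y'$ is allowable in the subcase $\eta|_{Y'}=0$; since $PD_X[\Sigma]$ lies in the image of $\delta:H^1(Y;\arr)\to H^2(X;\arr)$, this $\tilde\eta$ restricts to $Z_1$ and $Z_2$ exactly as $\eta$ does, matching the allowance in the statement.
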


\begin{proof} If both $b^+(Z_1)\geq 1$ and $b^+(Z_2)\geq 1$ then this follows from Lemma \ref{relationlemma}. Assume, therefore, that $b^+(Z_1) = 0$. We wish to find a cut $Y'$ for $X$ such that (1) $Y'$ is disjoint from $Y$, and (2) in the decomposition $X = Z_1'\cup_{Y'} Z_2'$, we have $b^+(Z_i') \geq 1$ for $i = 1,2$.

To find $Y'$, first consider the restriction $\eta|_Y$. Since $Y$ is allowable for $\eta$ and $b^+(Z_1) =0$, we must have $\eta|_Y \neq 0\in H^2(Y;\arr)$. Hence we can find a surface $\Sigma\subset Y$ such that $\int_\Sigma\eta \neq 0$, and since $\eta$ is defined on $X$, we infer $[\Sigma]$ is nonvanishing in $H_2(X;\zee)$. Clearly $\Sigma . \Sigma = 0$. Let $S$ be an embedded surface in $X$ intersecting $\Sigma$ transversely in a single point; then $\{[\Sigma], [S]\}$ determine a direct summand of the intersection matrix of $X$ having one positive and one negative eigenvalue. Let $N$ be a tubular neighborhood of $S$ in $X$; then $Y$ separates $N$ into two components $N_1\cup N_2$, with $N_i\subset Z_i$. Let $\tilde{Z}_1$ be obtained by adding a collar $Y\times [0,\epsilon]\subset Z_2$ to $Z_1$, and set $Z_1' = \tilde{Z}_1 \cup N_2$. Thus $Y' = \partial Z_1'$ is obtained by pushing $Y$ slightly into $Z_2$ and attaching the boundary of $N_2$.

Since $\Sigma\cup S\subset Z_1'$ we see $b^+(Z_1') = 1$; on the other hand, the complement $Z_2' = X\setminus Z_1'$ has $b^+(Z_2') = b^+(Z_2)$. There are several cases to distinguish.

{\it Case 1: $b^+(Z_2') \geq 1$.} Here we are done, by Lemma \ref{relationlemma} and Lemma \ref{cutinvlemma}.

{\it Case 2: $b^+(Z_2') = 0$, but $\eta|_{Y'} \neq 0$.} Then $Y'$ is still an allowable cut for $\eta$ and disjoint from $Y$, so Lemma \ref{cutinvlemma} applies. We can now run the construction above with $Z_2'$ playing the role of $Z_1$; the result is a new cut $Y''$, disjoint from $Y'$, with $b^+(Z_i'')\geq 1$ for $i = 1,2$. Lemma \ref{cutinvlemma} implies that the invariants calculated from $Y$, $Y'$, and $Y''$ agree, while Lemma \ref{relationlemma} shows that the invariants calculated from $Y''$ are the {\OS} invariants.

{\it Case 3: $b^+(Z_2') = 0$ and $\eta|_{Y'} = 0$.} Let $\tilde{\eta} = \eta + PD_X[\Sigma]$, where $PD_X[\Sigma]$ denotes the image in real cohomology of the Poincar\'e dual of $[\Sigma]$ in $H^2(X;\arr)$. Then it is easy to see that $\tilde{\eta}|_{Y'} \neq 0$, so that $Y'$ is an allowable cut for $\tilde{\eta}$. Note that since the classes $\tilde{\eta}$ and $\eta$ differ by an element in the image of $\delta: H^1(Y;\arr)\to H^2(X;\arr)$, they agree on $Z_1$ and $Z_2$. Running the preceding proof with $\tilde{\eta}$ in place of $\eta$ we end in case 2 above, hence the conclusion of the theorem holds with the modified perturbation.
\end{proof}

Note that in case 3 of the proof, it works just as well to take $\tilde{\eta} = \eta + \epsilon PD_X[\Sigma]$, where $\epsilon$ is an arbitrary nonzero real number. Thus Theorem \ref{pertproductthm} could be rephrased to say that when $b^+(X)\geq 2$, the perturbed {\OS} invariants are equal to the ordinary {\OS} invariants when calculated with respect to a cut $Y$ that is allowable for a ``generic'' class $\eta\in H^2(X;\arr)$.

The preceding results provide sufficient understanding of the dependence of $\O_{X,Y\eta,\s}$ on $Y$ for our purposes. We do not study the dependence of the perturbed invariants on $\eta$ here.

\section{Heegaard Floer homology of a surface times a circle}\label{calcsec}

From the general considerations of the preceding sections, we turn now to the problem mentioned in the introduction of determining the behavior of {\OS} invariants under fiber sum. Since a fiber sum along surfaces with trivial normal bundle is obtained by gluing two manifolds together along the product of the summing surface $\Sigma$ with a circle, and the relative invariants of the pieces take values in the Floer homology of the latter manifold, we will need a fairly detailed understanding of that Floer homology.

This section is devoted to the calculation of the perturbed Heegaard Floer homology groups of $\Sigma\times S^1$, for a particular choice of perturbation $\eta$. Indeed, our choice of $\eta$ is induced by the cobordism $\Sigma_g \times D^2 - D^4$. 
The main input for this computation comes from 
\cite{us} where most of the technical tools required have been developed.  
We start this section by elucidating the new phenomena associated with working with twisted coefficients in 
surgery exact sequences. 

\subsection{Exact sequences with twisted coefficients}
Let $K$ be a nullhomologous knot in a 3-manifold $Y$. Following typical notation in the subject, we write $Y_\ell = Y_\ell (K)$ for the 3-manifold obtained 
by $\ell$-framed surgery on $K$. As described in \cite{OS2}, 
there are exact sequences relating $HF^+$ (or $\widehat{HF}$) 
of the two triples of three 3-manifolds ($Y_0$, $Y$, $Y_{-n}$) and ($Y_0$, $Y_{n}$,$Y$)
with $n>0$ but otherwise arbitrary:
\begin{align} \label{LES}
... & \stackrel{G}{\longrightarrow}  HF^+(Y_0, [\s _k])  \stackrel{H}{\longrightarrow}  
HF^+(Y,\s)  
\stackrel{F}{\longrightarrow} HF^+(Y_{-n}, \s_k) \stackrel{G}{\longrightarrow} ...  \cr
... & \stackrel{G}{\longrightarrow}  HF^+(Y_0, [\s _k])  \stackrel{H}{\longrightarrow}  
HF^+(Y_n,\s_k)  
\stackrel{F}{\longrightarrow} HF^+(Y, \s) \stackrel{G}{\longrightarrow} ... 
\end{align}
By abuse of notation we have labeled the maps appearing in the two sequences by the same letters although 
they are of course different functions. 
The map $F:HF^+(Y,\s)\to HF^+(Y_{-n},\s_k)$ will be of special interest below and we proceed by first providing
more details concerning its definition as well as explaining the notation from \eqref{LES}. 

Let $W_{-n}$ be the cobordism from $Y$ to 
$Y_{-n}$ obtained by attaching a $-n$-framed 2-handle to $Y\times [0,1]$ along $K\times \{1\}$. Let  
$\sigma\subset Y$ be a Seifert surface of $K$ and let $S\subset W_{-n}$ be the surface obtained by capping off 
$\sigma \times \{1\}$ with the core of the attaching 2-handle. Given a 
spin$^c$-structure $\s \in Spin^c(Y)$ let $\s_k \in Spin^c(Y_{-n})$ be the spin$^c$-structure on $Y_{-n}$ which is 
spin$^c$-cobordant to $\s$ via $(W_{-n}, \rs _{k,0})$ where $\rs_{k,\ell} \in Spin^c(W_{-n})$ is uniquely determined 
by $\rs_{k,\ell }|_{Y}=\s$, $\langle c_1(\rs_{k,\ell}),[S]\rangle = 2k- (2\ell -1)n$ 
and $k\in \{0,1,...,n-1\}$.\footnote{Every spin$^c$-structure 
$\rs \in Spin^c(W_{-n})$ with $\rs|_{Y} = \s$ and $\rs |_{Y_{-n}} = \s_k$ is of the form $\rs = \rs_{k,\ell}$ 
for some $\ell \in \mathbb{Z}$.} By $[\s_k]$ we denote the preimage $Q_{\pm }^{-1}(\s_k)$ of a 
surjective map $Q_{\pm} :Spin^c(Y_0) \to Spin^c(Y_{\pm n})$ defined in \cite{OS2} whose details need not concern us 
save the fact that when $n\gg 0$ this preimage includes at most a single spin$^c$-structure whose Floer homology $HF^+(Y_0,\tk)$ is nontrivial. By writing 
$HF^+(Y_0,[\s_k])$ we mean the direct sum of $HF^+(Y_0,\tk)$ over all spin$^c$-structures $\s \in [\s_k]$.    

The map $F$ from \eqref{LES} is a sum 
\begin{equation} \label{summandsofF}
F=\bigoplus _{\ell \in \mathbb{Z}} F_\ell, \quad \quad \mbox{ where } \quad \quad 
F_\ell : HF^+(Y,\s) \to HF^+(Y_{-n},\s_k)
\end{equation}
is the homomorphism induced by $(W_{-n}, \rs _{k,\ell})$. 

Recall that when $c_1(\s)$ is torsion both $HF^+(Y,\s)$ and $HF^+(Y_{-n},\s_k)$ 
come equipped with an absolute $\mathbb{Q}$-grading $\widetilde{gr}$ lifting the relative $\mathbb{Z}$-grading
$gr$ (cf \cite{OS4}). With respect to the absolute grading $\widetilde{gr}$ the degrees of the maps $F_\ell$
on homogeneous elements are 
\begin{equation} \label{degreeoffl}
\deg F_\ell = \frac{1}{4} \left( 1 - \frac{(2k-(2\ell-1)n)^2}{n}\right)
\end{equation}
This function attains its maximum at $\ell = \frac{1}{2} - \frac{k}{n}$, though of course $\ell$ is constrained to be an integer. When $k\neq 0$ there is therefore a unique value of $\ell$ corresponding to the maximal degree shift, while for $k=0$ the maximum is attained for both $\ell = 0,1$.

To state the version of the sequence for twisted coefficients we first introduce some more notation. 
With the choice of a Seifert surface $\sigma\subset Y$ of $K$ as above, let 
$\hat{\sigma} \subset Y_0$ be the surface obtained by capping off $\sigma$ with the surgery disk. 
Set $t = P.D.([\widehat{\sigma}]) \in H^1(Y_0;\mathbb{Z})$ and let $L(t)=\mathbb{Z}[t,t^{-1}]$ 
be the ring of Laurant polynomials in $t$; equivalently $L(t)$ is the group ring on the subgroup of $H^1(Y_0)$ generated by $t$. There is a natural homomorphism $\zee[H^1(Y_0)]\to L(t)$ induced by the map $\alpha \mapsto \langle \alpha,[K]\rangle\cdot t$, or in multiplicative notation $\alpha\mapsto t^{\langle \alpha,[K]\rangle}$. (Here $[K]$ indicates the homology class in $H_1(Y_0)$ coming from the core of the surgery torus.) We use this map to endow $L(t)$ with the structure of a $\zee[H^1(Y_0)]$ module; observe that if $\tk\in Spin^c(Y_0)$ is a \spinc structure whose Chern class is dual to a multiple of $[\hat{\sigma}]$ then $L(t)$ is naturally a graded module for $R_{Y_0,\tk}$ with $\gr_{\tk}(t) = -\langle c_1(\tk)\cup t, [Y_0]\rangle$.

More generally, suppose $\s\in Spin^c(Y)$ is a \spinc structure on the original 3-manifold, and $M$ is a graded module for $R_{Y,\s}$. Then the surgery cobordism $Y\to Y_0$, equipped with some \spinc structure, induces a graded module for $R_{Y_0,\tk}$ that we denote by $M[t^{\pm 1}]$, whose underlying group is $M\otimes_{\zee[H^1(Y)]}L(t)$ and where $H^1(Y)$ acts trivially on $L(t)$.


With this understood, the next theorem can be found in \cite{OS2}.
\begin{theorem} \label{surgeryseq}
Let $Y$ be a three manifold, $K$ a nullhomologous knot in $Y$ and $M$ an $R_Y$-module. 
With the notation as above and 
for any $n>0$ there are surgery long exact sequences of $R_{Y_0}\otimes \mathbb{Z}[U]$-modules for the Heegaard Floer homology groups 
with twisted coefficients  
\begin{align} \nonumber
... & \stackrel{\ug}{\longrightarrow}  HF^+(Y_0, [\s _k];M[t^{\pm 1}] )  \stackrel{\uh}{\longrightarrow}  
HF^+(Y,\s;M)[t^{\pm 1}]  
\stackrel{\uf}{\longrightarrow} HF^+(Y_{-n}, \s_k;M)[t^{\pm 1}] \stackrel{\ug}{\longrightarrow} ... \cr 
... & \stackrel{\ug}{\longrightarrow}  HF^+(Y_0, [\s _k];M[t^{\pm 1}] )  \stackrel{\uh}{\longrightarrow}  
HF^+(Y_n,\s;M)[t^{\pm 1}]  
\stackrel{\uf}{\longrightarrow} HF^+(Y, \s_k;M)[t^{\pm 1}] \stackrel{\ug}{\longrightarrow} ... 
\end{align}
The analogous sequences for $\widehat{HF}$ are also exact. 
\end{theorem}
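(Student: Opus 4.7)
The plan is to adapt the proof of the untwisted surgery exact sequence from \cite{OS2} to the twisted setting by carefully tracking the additive assignments and the coefficient-module changes under the relevant cobordisms. First I would fix a Heegaard multi-diagram $(\Sigma,\balpha,\bbeta,\bgamma,\bdelta,z)$ describing the $4$-tuple $(Y, Y_{-n}, Y_0, \#^{g-1}S^1\times S^2)$, where the quadruple $(\bbeta,\bgamma,\bdelta)$ differs only in a single curve encoding the three surgery slopes on $K$. The maps $\uf$, $\ug$, $\uh$ are then defined via counts of holomorphic triangles in the three-subdiagrams, weighted by group ring elements $e^{A_W(\psi)}$ coming from additive assignments as in Definition \ref{twistmapdef}. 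Since $K$ is null-homologous and $n\neq 0$, the cobordism $W_{-n}:Y\to Y_{-n}$ has $K(W_{-n})=0$, so the twisted coefficient module $M(W_{-n})=M$; on the other hand, the $0$-surgery cobordism $W_0:Y\to Y_0$ has $K(W_0)\cong\zee$ generated by a class dual to the capped-off Seifert surface, and one readily identifies $M(W_0)=M\otimes_{\zee[H^1(Y)]}\zee[K(W_0)]=M[t^{\pm 1}]$, matching the conventions of the theorem.

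The next step is to verify that the successive compositions $\uh\circ\ug$, $\uf\circ\uh$, $\ug\circ\uf$ are chain homotopic to zero. In the untwisted case these chain homotopies are provided by counts of holomorphic rectangles in Heegaard quadruples; the corresponding statement with twisted coefficients requires showing that the group-ring factors produced at the four triangular corners of such a rectangle combine to produce exactly the coefficient assigned to the composite homotopy. This in turn is a consequence of the additivity axiom for $\{A_{\x,\y}\}$, together with the fact that the four boundary triangles of a rectangle glue to give a square-class whose associated coefficient lies in the target module $M[t^{\pm 1}]$ (or $M$) after applying the appropriate identifications. The argument is a direct generalization of the analogous computation in the unperturbed case, with the only substantive addition being the verification that periodic squares contribute trivially thanks to the vanishing of $A$ on periodic classes (up to the $[S]$-ambiguity).

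With the null-homotopies in place, exactness is derived from the standard algebraic principle that a sequence of chain complexes admitting null-homotopies of successive compositions and a quasi-isomorphism onto the mapping cone yields a long exact sequence. The quasi-isomorphism is obtained, as in \cite{OS2}, by stretching the diagram so that only ``small'' holomorphic triangles concentrated near the canonical intersection of $\bgamma$ and $\bdelta$ contribute. The main obstacle is precisely here: one must show that these small triangles carry trivial additive assignment and therefore contribute $\pm 1$ (up to the module action), so that the resulting model map is identified with the identity after the change-of-coefficient functor $(-)\otimes L(t)$. This follows because the domains of the small triangles are supported in a disk and hence represent the zero class in $H^1$, so $e^{A(\psi)}=1$, and the coefficient module computation reduces to the one already handled in step one. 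Once the exact sequence is established for $HF^+$, the same argument applied to $\widehat{CF}$ (the kernel of the $U$-action) yields the $\widehat{HF}$ version verbatim.
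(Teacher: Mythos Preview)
The paper does not prove this theorem at all; it simply attributes the result to Ozsv\'ath and Szab\'o \cite{OS2} and moves on. Your outline is a faithful sketch of how the argument in \cite{OS2} runs (triangle maps on a Heegaard multi-diagram, null-homotopies via rectangle counts, and an exactness argument via small triangles), so in that sense you have supplied what the paper omits.

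One point deserves correction. Your claim that $K(W_{-n})=0$ is not right: for the $(-n)$-framed $2$-handle cobordism on a null-homologous knot, $H^1(W_{-n})\cong H^1(Y)$ maps to $H^1(\partial W_{-n})\cong H^1(Y)\oplus H^1(Y_{-n})$ diagonally, so $K(W_{-n})\cong H^1(Y)$ rather than zero. Fortunately your conclusion $M(W_{-n})\cong M$ survives, because $\zee[K(W_{-n})]$ is then free of rank one over $R_{-Y}$ and the tensor product collapses; but the reason you gave is incorrect. The identification $M(W_0)\cong M[t^{\pm 1}]$ is right for the right reason: the $0$-framed cobordism contributes a genuinely new $\zee$ summand to $K$, dual to the capped Seifert surface, and this is exactly the source of the variable $t$.

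A second small remark: the ``quasi-isomorphism onto the mapping cone'' packaging you invoke is a later refinement of the original \cite{OS2} argument (it appears explicitly in Ozsv\'ath--Szab\'o's work on branched double covers). The original proof in \cite{OS2} argues exactness more directly, but either route is valid and the key analytic input---that the small triangles near the surgery region carry trivial additive assignment---is the same.
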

We shall refer to the above sequences as the {\em negative $n$} and {\em positive $n$
surgery sequences} respectively. As in \eqref{LES} we abuse notation by labelling the maps in both sequences
by the same letters.  
It is worthwhile to single out a case of special interest later on, namely the choice of 
$M=\mathbb{Z}$ with trivial $R_Y$-module structure. In this case the negative $n$ sequence becomes  
\begin{align} \label{LEStwisted}
... \stackrel{\ug}{\longrightarrow}  HF^+(Y_0, [\s _k];L(t) )  \stackrel{\uh}{\longrightarrow}  
HF^+(Y,\s)[t^{\pm 1}]  
\stackrel{\uf}{\longrightarrow} HF^+(Y_{-n}, \s_k)[t^{\pm 1}] \stackrel{\ug}{\longrightarrow} ... 
\end{align}

For any choice of $\eta \in H^2(Y_0;\mathbb{Z})$, using the 
flatness property of the Novikov ring $\R_{Y_0,\eta}$ we obtain this useful consequence of the above theorem:
\begin{cor}
Suppose $\langle \eta\cup t, [Y_0]\rangle >0$. Then for any $n>0$ there is a long exact sequence 
\begin{align} \label{LEStwistedNovikov}
\cdots \stackrel{\ug}{\longrightarrow}  HF^+(Y_0, [\s _k];\L(t))  
\stackrel{\uh\otimes 1}{\longrightarrow}  & 
HF^+(Y,\s)[t^{\pm 1}]\otimes _{R_{Y_0}} \R_{Y_0,\eta}   
\stackrel{\uf\otimes 1 }{\longrightarrow}  \cr 
& \stackrel{\uf\otimes 1 }{\longrightarrow}   HF^+(Y_{-n}, \s_k)[t^{\pm 1}]\otimes _{R_{Y_0}} \R_{Y_0,\eta}  
 \stackrel{\ug\otimes 1}{\longrightarrow} \cdots,
\end{align}
where $\L(t)$ denotes the ring of Laurent series in $t$.
\end{cor}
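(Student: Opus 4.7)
The plan is to apply the functor $-\otimes_{R_{Y_0}}\R_{Y_0,\eta}$ to the twisted-coefficient surgery exact sequence \eqref{LEStwisted} from Theorem \ref{surgeryseq} (the $M=\zee$ case). By Lemma \ref{flatlemma}, the Novikov ring $\R_{Y_0,\eta}$ is flat over $R_{Y_0}$, so tensoring preserves exactness, yielding an exact sequence of $\R_{Y_0,\eta}$-modules with maps $\uf\otimes 1$, $\ug\otimes 1$, and $\uh\otimes 1$. The two outer terms become $HF^+(Y,\s)[t^{\pm1}]\otimes_{R_{Y_0}}\R_{Y_0,\eta}$ and $HF^+(Y_{-n},\s_k)[t^{\pm1}]\otimes_{R_{Y_0}}\R_{Y_0,\eta}$, which already match the form of the conclusion. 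It remains to identify the central term $HF^+(Y_0,[\s_k];L(t))\otimes_{R_{Y_0}}\R_{Y_0,\eta}$ with $HF^+(Y_0,[\s_k];\L(t))$.

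To handle this, I would first move the tensor product inside the Floer homology. By Lemma \ref{flatlemma} again, flatness lets us commute $-\otimes_{R_{Y_0}}\R_{Y_0,\eta}$ with the homology of the Floer complex, so that
\[
HF^+(Y_0,[\s_k];L(t))\otimes_{R_{Y_0}}\R_{Y_0,\eta}\;\cong\; HF^+\!\bigl(Y_0,[\s_k];\,L(t)\otimes_{R_{Y_0}}\R_{Y_0,\eta}\bigr).
\]
Thus the corollary reduces to an algebraic identification of coefficient modules: one must verify that $L(t)\otimes_{R_{Y_0}}\R_{Y_0,\eta}\cong \L(t)$ compatibly with the $R_{Y_0}$-action used to twist the Floer complex.

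For that identification, recall that $L(t)=\zee[t,t^{-1}]$ is an $R_{Y_0}$-module via $\alpha\mapsto t^{\langle\alpha,[K]\rangle}$ for $\alpha\in H^1(Y_0)$. The hypothesis $\langle\eta\cup t,[Y_0]\rangle>0$ says precisely that under the ring map $R_{Y_0}\to L(t)$, the $\eta$-filtration on $R_{Y_0}$ pushes forward to a filtration on $L(t)$ in which $t$ has strictly positive weight. Consequently, completing in the $\eta$-direction adjoins exactly those semi-infinite series in $t$ that are bounded below in $t$-degree, producing the Laurent series ring $\L(t)=\zee(\!(t)\!)$.

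The main obstacle is the clean verification of this last ring-level identification, since $L(t)$ is not a summand of $R_{Y_0}$ and the relevant module structure depends on both $\eta$ and $[K]$. I would handle it by splitting $H^1(Y_0)$ as $\ker(\langle\cdot,[K]\rangle)\oplus \zee\langle\mu\rangle$ for a suitable $\mu$ paired with $[K]$, writing $R_{Y_0}$ accordingly as a Laurent polynomial ring in variables of differing $\eta$-weights, and tracking the completion factor by factor as in the proof of Lemma \ref{flatlemma}. With the identification $L(t)\otimes_{R_{Y_0}}\R_{Y_0,\eta}\cong \L(t)$ established, substitution into the tensored sequence yields \eqref{LEStwistedNovikov}, completing the proof.
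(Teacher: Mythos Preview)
Your approach is exactly the paper's: the corollary is stated with the one-line justification ``using the flatness property of the Novikov ring $\R_{Y_0,\eta}$,'' i.e., tensor the sequence \eqref{LEStwisted} with $\R_{Y_0,\eta}$ over $R_{Y_0}$ and invoke Lemma~\ref{flatlemma}. Your additional discussion of the identification $L(t)\otimes_{R_{Y_0}}\R_{Y_0,\eta}\cong\L(t)$ fills in a step the paper leaves implicit; note that this identification really uses that the $\eta$-weight vanishes on $\ker(\langle\cdot,[K]\rangle)$ (which holds in the paper's application $\eta=PD[S^1]$), since otherwise some $s-1$ with $s\in\ker\phi$ becomes a unit in $\R_{Y_0,\eta}$ and the tensor product collapses.
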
 

There is a straightforward relationship between the exact sequences \eqref{LES} and \eqref{LEStwisted}.

\begin{prop} \label{theforce} 
Let $K$ be a nullhomologous knot in $Y$ and let 
$F : HF^+(Y,\s ) \rightarrow HF^+(Y_{-n},s_k)$ and 
$\uf:HF^+(Y,\s )[t^{\pm 1}]\rightarrow HF^+(Y_{-n},s_k)[t^{\pm 1}] $ be the maps appearing in the exact sequences 
\eqref{LES} and \eqref{LEStwisted} respectively. Let $F_i$ be the components of $F$ as in \eqref{summandsofF}. 
Then 
$$ \uf = \sum _{\ell \in \mathbb{Z}} F_\ell\otimes t^\ell $$
up to sign and overall multiplication by a power of $t$.

Moreover, when $k\ne 0$ or $\s$ is nontorsion, for all sufficiently large $n$ the only nonzero terms in the above formula are those for which $\ell = 0$ or $1$.
When $k=0$ and $\s$ is torsion, then the same is true for the restriction of $\uf$ to $HF^+_{\leq d_0}(Y,\s)$ for any fixed grading $d_0$ of $HF^+(Y,\s)$.  
\end{prop}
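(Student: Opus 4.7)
The plan is to first verify the formula by unpacking Definition \ref{twistmapdef}. The twisted cobordism map is defined on chains by a sum over holomorphic triangles $\psi \in \pi_2(\x,\Theta,\w)$, each weighted by $e^{A_{W_{-n}}(\psi)} \in \zee[K(W_{-n})]$. In the surgery context, $K(W_{-n})$ is generated by $PD[S]$, and the module $L(t)$ arises from $\zee[K(W_{-n})]$ via $PD[S] \mapsto t$. Triangles representing the \spinc structure $\rs_{k,\ell}$ are precisely those for which $A_{W_{-n}}(\psi)$ pairs with $PD[S]$ to give $\ell$, modulo a fixed overall shift dictated by the choice of reference triangle $\psi_0 \in s_z^{-1}(\rs_{k,0})$. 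Grouping contributions by \spinc equivalence class then yields $\uf = \sum_\ell F_\ell \otimes t^\ell$ up to sign and an overall power of $t$.

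For the second part I would argue via the degree shift formula \eqref{degreeoffl}. The quadratic $\ell \mapsto \deg F_\ell$ has a unique real maximizer lying between $0$ and $1$ for the relevant range of $k$, so the integers giving the two largest degree shifts are $\ell = 0$ and $\ell = 1$ (both achieving the maximum simultaneously precisely when $k=0$); for any other integer $\ell$, $\deg F_\ell - \deg F_1$ tends to $-\infty$ at a rate proportional to $n$. When $c_1(\s)$ and $c_1(\s_0)$ are torsion (the $k=0$, $\s$-torsion case), both groups carry absolute $\cue$-gradings, and $HF^+(Y_{-n},\s_0)$ is bounded below in degree by its $d$-invariant. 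For $x \in HF^+_{\leq d_0}(Y,\s)$, the image $F_\ell(x)$ has degree at most $d_0 + \deg F_\ell$; for $n$ large and $\ell \notin \{0,1\}$, this falls below $d(Y_{-n},\s_0)$, forcing $F_\ell(x) = 0$. When $k \neq 0$ or $\s$ is non-torsion, at least one of $c_1(\s)$, $c_1(\s_k)$ is non-torsion, so the corresponding $HF^+$ group is finitely generated over $\zee$ (being equal to its reduced part); combined with the degree asymptotics, this forces $F_\ell = 0$ entirely for $\ell \notin \{0,1\}$ once $n$ is sufficiently large.

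The main technical obstacle will be pinning down the overall power of $t$ in the first part, which requires tracking how the choice of reference triangle $\psi_0$ interacts with the additive assignment $A_{W_{-n}}$ and the homomorphism $K(W_{-n}) \to L(t)$. A secondary concern in the non-torsion case is justifying that $F_\ell$ can only be nonzero for finitely many $\ell$ to begin with; this follows from compactness of the relevant moduli spaces combined with the relative $\zee$-grading structure developed in Section \ref{relgradingsec}, which ensures that for a fixed input element, only finitely many output gradings (and hence finitely many values of $\ell$) can support a nonzero image.
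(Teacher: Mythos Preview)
Your approach to the first claim matches the paper's: both group holomorphic triangles by the \spinc structure they represent on $W_{-n}$ and identify this with the power of $t$. The paper does so via an explicit Heegaard-diagrammatic quantity $n_\gamma(\psi)$ (from \cite{OS2,OSsurg}) together with Proposition~6.3 of \cite{OS3}; your route through Definition~\ref{twistmapdef} and the additive assignment $A_{W_{-n}}$ is equivalent in content. One minor point worth a sentence: the map $\uf$ in \eqref{LEStwisted} is the one arising from the surgery-triangle construction of \cite{OS2}, not a priori the map of Definition~\ref{twistmapdef}, so you should identify the two descriptions.

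There is a concrete error in your second paragraph. You claim that when $k\neq 0$ or $\s$ is nontorsion, ``at least one of $c_1(\s),\,c_1(\s_k)$ is non-torsion.'' This fails precisely in the case $\s$ torsion, $k\neq 0$: since $K$ is nullhomologous and the framing is nonzero, the cobordism induces an identification $H_2(Y)\cong H_2(Y_{-n})$, and $c_1(\s_k)$ pairs with this group exactly as $c_1(\s)$ does (both being restrictions of $c_1(\rs_{k,0})$). Hence $\s_k$ is torsion on $Y_{-n}$ whenever $\s$ is torsion on $Y$, independently of $k$, and in that case both Floer groups carry infinite towers, so your finite-generation step collapses. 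The paper's own proof is in fact no more careful here---it also asserts finite generation of $HF^+(Y_{-n},\s_k)$---but what is really being used, via the parenthetical citation of \cite{OSknot}, is large-surgery stabilization: for $n\gg 0$ the target is identified (with a grading shift linear in $n$) with a fixed group, against which the quadratic-in-$n$ decay of $\deg F_\ell$ for $\ell\notin\{0,1\}$ can be played off. That is the argument you should give. Your treatment of the $k=0$ torsion case via the $d$-invariant lower bound is correct and is exactly the paper's.
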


\begin{proof} The homomorphisms in both sequences are defined by counts of holomorphic triangles in appropriate Heegaard triple-diagrams, and the stated relationship between $\uf$ and $F$ follows from elementary considerations in these diagrams. Indeed, in notation from \cite{OS2} (see also \cite{OSsurg}), the map in the twisted sequence can be written as
\[
\uf([\x,i]) = \sum_{\psi\in\pi_2(\x,\Theta,\y)} \#\M(\psi) [\y,i-n_z(\psi)] \cdot t^{n_\gamma(\psi)},
\]
where the sum is over homotopy classes of triangles $\psi$ in a diagram $(\Sigma,\balpha,\bbeta,\bgamma,z)$ describing the natural cobordism $Y\to Y_{-n}$. In this situation we are using twisted coefficients on $Y_{-n}$ constructed by fixing a reference point $\tau$ lying on the surgery circle $\gamma_g$, such that the boundary operator in the twisted chain complex for $Y_{-n}$ records (in the power of $t$) the intersection of the $\gamma$-component of a holomorphic disk with the subvariety $V = \gamma_1\times\cdots\times \gamma_{g-1}\times\{\tau\}\subset T_\gamma\subset Sym^g(\Sigma)$. (This formal device induces trivially twisted coefficients on $Y_{-n}$.) In the formula above, the power $n_\gamma(\psi)$ is similarly the intersection of the $\gamma$-component of the boundary of $\psi$ with $V$. 

The first claim of the proposition amounts to the fact that the power of $t$ appearing above determines and is determined by the value $\langle c_1(s_z(\psi)),H(\P)\rangle$, where $H(\P)$ is the 2-dimensional homology class of the triply-periodic domain $\P$ corresponding to the generator of the 2-dimensional homology of the surgery cobordism. This in turn follows easily from inspection of the Heegaard triple itself, together with the expression for $\langle c_1(s_z(\psi)),H(\P)\rangle$ in terms of data on the Heegaard diagram obtained by Ozsv\'ath and Szab\'o (Proposition 6.3 of \cite{OS3}).

To see the remaining claims, recall that the homomorphism $F_\ell$ (corresponding to the \spinc structure on the cobordism with $\langle c_1(\rs_{k,\ell}),[S]\rangle = 2k - (2\ell -1)n$) induces a shift in degree given by \eqref{degreeoffl}. Hence for $\ell\neq 0,1$, the degree of $F_\ell$ is at least $2n$ lower than that of $F_0$ or $F_1$. When $k\neq 0$ or $\s$ is nontorsion, the group $HF^+(Y_{-n},\s_k)$ is finitely generated; hence for $n$ sufficiently large, this observation implies that only $F_0$ and $F_1$ can be nontrivial (note that for sufficiently large $n$, $HF^+(Y_{-n}, \s_k)$ is independent of $n$, c.f. \cite{OSknot}). For $k = 0$ and $\s$ torsion, the same holds after restriction to a finitely generated subgroup.
\end{proof}

\subsection{A surface cross a circle - Partially twisted coefficients} \label{partiallytwistedhf}
In this section we apply the general discussion from the previous section to the case of 
$Y=\#^{2g} (S^1 \times S^2)$ and $K=K_g=\#^g B(0,0)$ with $B(m,n)$ defined in  figure \ref{pic1}. 
\begin{figure}[htb!] 
\centering
\includegraphics[width=6cm]{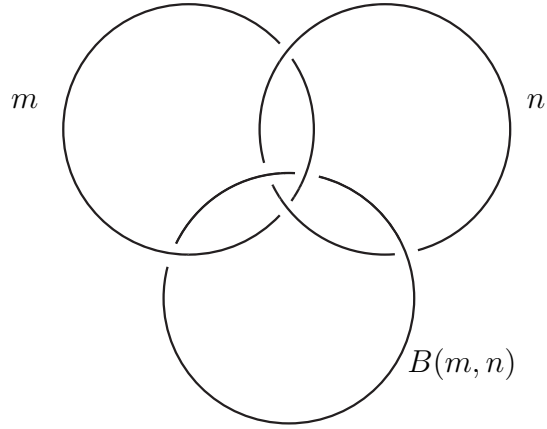}
\put(-190,120){$m$}
\put(5,120){$n$}
\put(-40,20){$B(m,n)$}
\caption{The knot $B(m,n)\subset L(m,1)\#L(n,1)$ with $m,n \in \mathbb{Z}$. }  \label{pic1}
\end{figure}
Let $\sigma _1$ be the Seifert surface for $K_1$ obtained from the obvious disk bounded by $B(0,0)$ 
in Figure \ref{pic1} by adding two 1-handles where the other two components of the Borromean rings 
intersect that disk. Let $\sigma = \sigma _g = \#^g \sigma _1$ be the choice of Seifert surface for $K_g$. 
It is then not hard to see that $Y_0 = \Sigma _g \times S^1$ and $t$ becomes the 
Poincar\'e dual of $[\Sigma _g]$. For the rest of this subsection and the next, we assume $g\geq 2$.

Let $\s$ be the unique torsion \spinc structure on $Y$. Then for $n\gg 0$ the only \spinc structure in 
the set $[\s_k]$ having nontrivial $HF^+$ consists of the unique spin$^c$-structure $\s_k\in Spin^c(\Sigma _g \times S^1)$ 
with $c_1(\s_k) = 2k P.D.([S^1])$. We shall only focus on the Novikov ring $\R_{Y_0,\eta}$ 
associated to 
$$\eta = P.D.([S^1]) $$

With these choices of $Y, K, \sigma$ the maps $F_0, F_1:HF^+(Y,\s) \to HF^+(Y_{-n},\s_k)$ appearing in 
proposition \ref{theforce} have been 
made completely explicit by the results from  \cite{us, OSknot}. Before proceeding we describe these 
maps. 

Let us use the shorthand  $\Lambda ^k$ and $\Lambda^*$ to denote $\Lambda^kH^1(\Sigma _g;\mathbb{Z})$ and 
$\Lambda^*H^1(\Sigma _g;\mathbb{Z})$ respectively. We embed
the $\mathbb{Z}[U]$-module $\Lambda^* \otimes \mathbb{Z}[U,U^{-1}]$ 
into a 2-dimensional coordinate system by placing $\Lambda^k\otimes U^\ell$ at coordinate
$(-\ell,k-g-\ell)$. We equip the coordinate plane with a $\mathbb{Q}$-grading 
$\widetilde{gr}: \mathbb{Z}^2 \to \mathbb{Q}$ by setting 
\begin{equation} \label{Qgrading}
\widetilde{gr}(i,j) = i+j.
\end{equation}
In this picture the action of $U$ can be thought of as translation by $(-1,-1)$; as usual it decreases 
grading by two. We shall write 
$$H\{ \mbox{condition on} (i,j)  \}$$
to denote the various $\mathbb{Z}[U]$ sub- and quotient modules of $\Lambda^*\otimes \mathbb{Z}[U,U^{-1}]$ 
obtained as a direct sum over all the terms in the coordinate system which reside at  
coordinates $(i,j)$ subject to the stated conditions. 
For example $H\{i<0\}$ is the submodule $\Lambda^*\otimes (U\cdot \mathbb{Z}[U])$ and 
$H\{i\ge 0\}$ is the quotient module $\Lambda^*\otimes \T$, where $\T =\mathbb{Z}[U,U^{-1}]/U\cdot \mathbb{Z}[U]$.

For $0\le d\le g-1$ we define the $\mathbb{Z}[U]$-module $X(g,d)$ as 
$$ X(g,d) = \bigoplus _{i=0}^{d} \left( \Lambda^i \otimes _\mathbb{Z} \frac{\T}{U^{i - d -1}\cdot \zee[U^{-1}]}   \right)$$
In the notation above, $X(g,d)$ is isomorphic to 
\begin{equation} \label{xgdidentification}
 X(g,d) \cong H\{i\ge 0 \mbox{ and } j<d+1-g  \}
\end{equation}
as a $\mathbb{Z}[U]$-module. We shall refer to this identification as the {\em standard embedding} of 
$X(g,d)$ into $\Lambda^*\otimes \mathbb{Z}[U,U^{-1}]$. We shall encounter \lq\lq non-standard embeddings\rq\rq of 
$X(g,d)$ as well, see Theorem \ref{groups1} below. 

It was shown in \cite{OSknot} that for $Y = \#^{2g}S^1\times S^2$ and $K = \#^g B(0,0)$ as above, and $n\gg 0$, there are 
$\Lambda^*H_1(\Sigma_g;\mathbb{Z})\otimes\mathbb{Z}[U]$-module isomorphisms
\begin{equation} \label{identifications}
HF^+(Y,\s) \cong H\{i\ge 0\} \quad \mbox{ and } \quad HF^+(Y_{-n},\s_k) \cong H\{ i\ge 0 \mbox{ and } j \ge k\} 
\end{equation}
where the action of $\Lambda^* H_1(\Sigma _g;\mathbb{Z})\cong \Lambda^*  H_1(Y;\mathbb{Z})$ on $\Lambda^k\otimes U^\ell$ is given by 
\begin{equation} \label{standardaction}
\gamma \cap (\alpha \otimes U^\ell) = \iota _\gamma \alpha \otimes U^\ell + (P.D. (\gamma) \wedge 
\alpha )\otimes U^{\ell +1} \quad \quad \gamma \in H_1(\Sigma _g;\mathbb{Z}), \, \alpha \in \Lambda ^k
\end{equation}
Here $\iota _\gamma$ is contraction with $\gamma$ and $P.D.(\gamma)$ is the Poincar\'e dual of $\gamma$ 
taken on $\Sigma _g$. By virtue of \eqref{xgdidentification} this action 
induces an action on $X(g,d)$. 
We shall refer to \eqref{standardaction} as the {\em standard action}, and use the cap product notation $\cap$ to distinguish it from actions of first homology on Floer homology that need not be ``standard'' (we use the ``dot'' notation for the latter: $h.\xi$ for $h\in H_1$, $\xi\in HF^\circ$).

To describe  the maps $F_0, F_1:H\{i\ge 0\} \to H\{i\ge 0 \mbox{ and } j\ge k\}$ (under the identifications 
\eqref{identifications}) we need a bit more notation first. Let 
$e_1,\ldots, e_{2g}$ be a symplectic basis for $H_1(\Sigma_g;\mathbb{Z})$ and set 
$\omega = e_1\wedge e_2 + \cdots + e_{2g-1}\wedge e_{2g} \in \Lambda^2H_1(\Sigma _g;\mathbb{Z})$. For a given  
$\beta \in \Lambda^1$ define $\beta \angle:\Lambda ^k \to 
\Lambda^{k-1}$ as contraction associated to $\omega$, i.e. 
$$ \beta \angle (\alpha _1\wedge ... \wedge \alpha _k) = \sum _{\ell=1}^k (-1)^\ell 
\omega( \alpha_\ell, \beta)\,  \, 
\alpha _1\wedge ... \wedge \widehat{\alpha}_\ell\wedge .. \wedge \alpha _k  $$
where $\omega(\alpha_\ell, \beta)$ refers to the natural pairing between homology and cohomology on
$\Sigma _g$. The contraction $\angle$ defined this way extends readily to a contraction 
$\angle :\Lambda ^m \otimes \Lambda^k \to \Lambda^{k-m}$ as 
$(\beta_1\wedge...\wedge \beta_m)\angle (\alpha _1\wedge ... \wedge \alpha _k) = 
\beta_1\angle (\beta_2\angle ( ... ( \beta_m\angle( \alpha _1\wedge ... \wedge \alpha _k)...))$. Let 
 $\tilde\star :\Lambda ^k \to \Lambda^{2g-k}$ be the \lq\lq Hodge-Lefschetz star operator\rq\rq  associated to 
$\omega$ and defined as 
$$ \tilde \star \, \alpha = \frac{1}{g!} \,\,   \alpha \angle  \omega^g $$
where we have by abuse of notation used $\omega$ to also denote 
$e^1\wedge e^2 + ... + e^{2g-1}\wedge e^{2g} \in \Lambda^2$, which is the dual 
of the symplectic form $\omega$ from earlier. Here $e^i\in H^1(\Sigma _g;\mathbb{Z})$, $i=1,...,2g$  
is the dual symplectic basis of $e_i\in H_1(\Sigma _g;\mathbb{Z})$, i.e. $e^i(e_j) = \delta_{ij}$. 
Let 
\begin{align} \nonumber
\pi _k :  H\{i \ge 0 \} & \to H\{i\ge 0 \mbox{ and } j \ge k\} \cr
\pi_{i\ge 0} :  \Lambda^*\otimes \mathbb{Z}[U,U^{-1}]  & \to H\{i\ge 0\} \cr
\pi_{j\ge 0} :  \Lambda^*\otimes \mathbb{Z}[U,U^{-1}]  & \to H\{j\ge 0\} 
\end{align}
be the natural projection maps and let 
$J:H\{i\ge 0\} \to H\{j\ge 0\}$ be the map 
$$ J(xU^\ell ) = \pi_{j\ge0}  \left( 
(-1)^{k+g-1} \, \exp(2\omega U) \angle \, (\tilde \star \, x) \, U^{g+\ell - k}\right) 
\quad \mbox{ when } \quad x\in \Lambda^k,$$
where by convention, contraction with $U^n$ is taken to mean multiplication by $U^{-n}$.
With this understood, it was shown in \cite{us} that 
$F_0, F_1:H\{i\ge0 \} \to H\{i\ge0 \mbox{ and }j\ge k\}$ are given by 
\begin{align} \nonumber 
F_0 = \left\{ 
\begin{array}{cl}
\pi_k & ; \, k \le 0 \cr
\pi_k \circ (U^{-k}\, J ) & ; \, k >0 
\end{array}
\right.\quad \quad \mbox{ and } \quad \quad 
F_1= \left\{ 
\begin{array}{cl}
\pi_k \circ (U^{-k} \, J ) & ; \, k \le 0 \cr
\pi_k & ; \, k >0 
\end{array}
\right.
\end{align}
With all these preliminaries out of the way and with our notation in place, we now turn to the actual 
calculations of the twisted Heegaard Floer groups of $\Sigma _g \times S^1$. 
The adjunction inequality implies that for any 
spin$^c$-structure $\s$ on $\Sigma_g \times S^1$ which is not among the $\s_k$, the associated Heegaard Floer 
groups $HF^+(\Sigma _g\times S^1,\s;M)$ vanish (for any coefficient module $M$);
the same is true for $\s=\s_k$ when $|k|\ge g$. 
The remaining spin$^c$-structures $\s_k$ with $|k|\le g-1$ give rise to nontrivial 
Heegaard Floer groups as the next theorem explains. 
\begin{theorem} \label{groups1}
Pick an integer $k$ with $|k|\le g-1$. If $k\ne 0$ choose $\Lambda$ to be either $L(t)$ or $\L(t)$ and 
if $k=0$ choose $\Lambda = \L(t)$. 
Then the Heegaard Floer homology groups $HF^+(\Sigma _g \times S^1,\s_k;\Lambda)$ 
are isomorphic to 
$$  HF^+(\Sigma _g \times S^1,\s_k;\Lambda) \cong X(g,d)\otimes \Lambda \quad \quad \mbox{ with } \quad \quad d= g-1-|k|$$
as $\mathbb{Z}[H^1(\Sigma_g\times S^1;\mathbb{Z})]\otimes _\mathbb{Z} \mathbb{Z}[U]$-modules. 
The action of $\Lambda^*H_1(\Sigma_g;\mathbb{Z})\subset \Lambda^*H_1(\Sigma_g\times S^1;\mathbb{Z})$ on 
$X(g,d)$ is 
the restriction of the standard action \eqref{standardaction} under the non-standard embedding of $X(g,d)$ into 
$\Lambda^*H^1(\Sigma_g;\mathbb{Z})\otimes \mathbb{Z}[U,U^{-1}]\otimes \Lambda$ given by 
$$ x\mapsto x+ \pi_{-|k|} \left( \sum _{\ell \ge 1} (-t\,  U^{|k|} J)^\ell \, x \right) .$$
%
\end{theorem}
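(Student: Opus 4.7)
The plan is to invoke the twisted surgery exact sequence of Theorem \ref{surgeryseq} applied to $(Y,K)=(\#^{2g}(S^1\times S^2),K_g)$, whose $0$-surgery is precisely $Y_0=\Sigma_g\times S^1$ with $t=PD[\Sigma_g]$. With coefficient module $M=\zee$ one obtains, for each integer $n\gg 0$, a long exact triangle relating $HF^+(Y_0,[\s_k];L(t))$, $HF^+(Y,\s)[t^{\pm 1}]$, and $HF^+(Y_{-n},\s_k)[t^{\pm 1}]$. Because $c_1(\s_k)=2k\,PD[S^1]$ is torsion only when $k=0$, Proposition \ref{theforce} gives for $k\neq 0$ an equality $\uf = F_0\otimes 1 + F_1\otimes t$ after suitable normalization, and a corresponding statement holds on bounded-grading truncations when $k=0$. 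The surgery-exact-triangle computation therefore reduces to an algebraic problem on $H\{i\geq 0\}[t^{\pm 1}]$.

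Next I would substitute the explicit formulas for $F_0$ and $F_1$ recalled from \cite{us}: under the identifications in \eqref{identifications} one has (in the case $k\leq 0$)
\[
\uf \;=\; \pi_k\circ\bigl(1 + t\,U^{|k|}J\bigr),
\]
and an analogous expression with $U^{|k|}J$ and $t$ swapped for $k>0$. The crucial algebraic observation is that the operator $1+tU^{|k|}J$ is a unit on the relevant coefficient module: it has formal inverse $\sum_{\ell\geq 0}(-tU^{|k|}J)^\ell$. For $k\neq 0$ the shift $U^{|k|}$ strictly decreases the Maslov grading, so on each fixed grading only finitely many powers of $t$ appear and the series already converges in $L(t)$; for $k=0$ the sum need not terminate and convergence is only achieved in the Novikov completion $\L(t)$, which is precisely the hypothesis imposed on $\Lambda$ in that case. (For $k>0$ one rewrites analogously, possibly after first applying the conjugation symmetry of Theorem \ref{pertconjinvthm} to reduce to $k\leq 0$.)

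Since $\uf$ factors as $\pi_k$ composed with an isomorphism of the ambient module, it is surjective, so the long exact sequence splits into short exact sequences and $\uh$ realizes an isomorphism
$
HF^+(Y_0,[\s_k];\Lambda)\;\xrightarrow{\;\cong\;}\;\ker(\uf).
$
By construction, $\ker(\uf)$ is the image of $\ker(\pi_k)\cap H\{i\geq 0\}\cong X(g,d)$ under $(1+tU^{|k|}J)^{-1}$. Chasing this identification term-by-term and separating the piece that already sits inside $H\{j<-|k|\}$ (which contributes $x$ itself, as $X(g,d)$ lies there under the standard embedding) from the piece pushed into $H\{j\geq -|k|\}$ (which is captured by the projection $\pi_{-|k|}$) yields exactly the formula
\[
x\;\longmapsto\;x+\pi_{-|k|}\!\left(\sum_{\ell\geq 1}(-tU^{|k|}J)^\ell x\right)
\]
asserted in the theorem. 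This gives both the $\zee[U]$-module identification and the embedding into $\Lambda^* H^1(\Sigma_g)\otimes\zee[U,U^{-1}]\otimes\Lambda$.

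The $\Lambda^*H_1(\Sigma_g;\zee)$-equivariance is then automatic: the map $\uh$ is induced by a cobordism, hence commutes with the $H_1$-action, and under the identification \eqref{identifications} the action on $HF^+(Y,\s)[t^{\pm 1}]$ is exactly the standard action \eqref{standardaction}. Consequently the induced action on $HF^+(Y_0,\s_k;\Lambda)\cong X(g,d)\otimes\Lambda$ is precisely the restriction of the standard action under the non-standard embedding. The main obstacle is technical: one must verify that the inverse series $\sum(-tU^{|k|}J)^\ell$ lives in the advertised ring (forcing the passage to $\L(t)$ when $k=0$), track the truncation issues behind Proposition \ref{theforce} carefully so that the finite identification of $\uf$ with $F_0+tF_1$ survives in the limit $n\to\infty$, and keep straight the bookkeeping of projections $\pi_k$ versus $\pi_{-|k|}$ in the case $k>0$, where an intermediate application of conjugation symmetry is the cleanest route.
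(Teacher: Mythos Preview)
Your proposal is correct and follows essentially the same route as the paper's proof: the twisted surgery sequence for $(\#^{2g}S^1\times S^2, K_g)$, the identification of $\uf$ with $F_0+tF_1$ via Proposition~\ref{theforce}, surjectivity of $\uf$, and identification of $\ker\uf$ with $X(g,d)\otimes\Lambda$ under the embedding built from the geometric series in $-tU^{|k|}J$. The only minor difference in presentation is that for $k\neq 0$ the paper establishes surjectivity of $\uf$ by the degree argument ($F_0=\pi_k$ is surjective and $\deg F_1<\deg F_0$) rather than by inverting $1+tU^{|k|}J$ directly, reserving the explicit preimage construction $x_y=\pi_{i\geq 0}\sum_{\ell\geq 0}(-tJ)^\ell y$ for the $k=0$ case; your unified invertibility argument is cleaner but requires a little care since $J$ maps $H\{i\geq 0\}$ to $H\{j\geq 0\}$ rather than back into $H\{i\geq 0\}$, so an intermediate projection $\pi_{i\geq 0}$ is needed to make the iterated operator well-defined on the domain of $\uf$.
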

\begin{proof}
The proof of the theorem follows slightly different arguments depending on whether $k\ne 0$ or $k=0$. 
We first address the former.

{\em Case of $k\ne 0$ }
For concreteness let us assume $k<0$. Choose $\Lambda =L(t)$ for now and consider the exact sequence \eqref{LEStwisted}.  
By proposition \ref{theforce} the map $\uf$ equals $F_0+ t F_1 $ once $n$ is chosen sufficiently large (which 
we assume tacitly throughout). 
It follows from \eqref{degreeoffl} that $\deg F_0 = \deg F_1 - 2k$ and thus $\deg F_0  > \deg F_1$ when $k<0$.  
Since $F_0$ is clearly surjective 
and $\deg F_1<\deg F_0$ we see that $\uf$ is also surjective. Moreover the kernel of $\uf$ 
is generated by elements of the form 
$$ \kerr \uf  = \left\langle \pi_{k} \left.\left( \sum _{\ell\ge 1} (-t\,  U^{-k} J)^\ell \, x \, \right) \right| \, 
x \in H\{ i\ge 0 \mbox{ and } j<k\}  \right\rangle $$
Notice that the sum $\sum_{\ell \ge 1} (-t\,  U^{-k} J)^\ell \, x$ is finite and thus well defined. 
Projection onto the homogeneous term of highest degree establishes the isomorphism  
$\kerr \uf \cong X(g,d)$ with $d=g-1-|k|$. Since the sequence \eqref{LEStwisted} is equivariant with 
respect to the $H_1(\Sigma _g;\mathbb{Z})$-action, the proposition (for the case $k<0$ and $\Lambda =L(t)$) 
follows. The results with $\Lambda =\L(t)$ follows from the result for $L(t)$ by tensoring 
with $\R_{Y_0,\eta}$ and using the flatness of $\R_{Y_0,\eta}$. The case of $k>0$ can be proved analogously, or  by appeal to conjugation invariance (Theorem \ref{pertconjinvthm}).

{\em Case of $k=0$ } 
Consider once more the sequence \eqref{LEStwisted} and note that upon restriction to a given grading we may take $\uf = F_0 + tF_1$ according to 
Proposition \ref{theforce}. The one key difference from the case of $k\ne 0$ 
is that the degrees of $F_0$ and $F_1$ are now equal. 

We begin by showing that $\uf$ is again surjective: for a given $y\in H\{i\ge 0 \mbox{ and } j\ge 0\}$ let 
$x_y\in H\{i\ge 0\}\otimes \Lambda _\eta$ be 
$x_y= \pi_{i\ge 0} \left( \sum _{\ell\ge 0} (-tJ)^\ell y \right) $. Then 
$$ \uf (x_y) = \pi_0(\mbox{id} + tJ)\left(\pi_{i\ge 0}  \sum _{\ell\ge 0} (-tJ)^\ell y \right)= 
\pi_0(\mbox{id} + tJ)\left(  \sum _{\ell\ge 0} (-tJ)^\ell y \right) = \pi_0(y) = y $$
To determine the kernel of $\uf$ pick a kernel element 
$\xi  = \xi_0 + \xi_1 t + \xi_2t^2+ ... \in \kerr (\uf)$. Such an element $\xi$ is then subject to the 
infinite system of equations
\begin{align} \label{infinitesystem}
\pi_0(\xi_0) & = 0 \cr
\pi_0(\xi_1 + J(\xi_0))  & = 0 \cr 
& \vdots \cr
\pi_0(\xi_k + J(\xi_{k-1}))& = 0 \cr 
& \vdots 
\end{align}
The equation $\pi_0(\xi_0)=0$ implies $\xi_0 \in H\{i\ge 0 \mbox{ and } j<0\}$. The second equation 
determines the $H\{i\ge 0 \mbox{ and } j\ge 0\}$-component of $\xi_1$ uniquely but imposes no condition  
on the $H\{i\ge 0 \mbox{ and } j< 0\}$-component of $\xi_1$. The same holds true for all $\xi_k$, $k\ge1$:
\begin{itemize}
\item  The $H\{i\ge 0 \mbox{ and } j\ge 0\}$-{component of } $\xi_k$ is determined by $\xi_{k-1}$. 
\item  The $H\{i\ge 0 \mbox{ and } j< 0\}$-{component of } $\xi_k$  can be chosen arbitrarily.  
\end{itemize}
This immediately shows that the kernel of $\uf$ is isomorphic (but {\sl not} equal!) to 
$H\{i\ge 0 \mbox{ and } j<0\}\otimes \L(t)$. 
As the above system shows, the isomorphism 
$H\{i\ge 0 \mbox{ and } j<0\}\otimes \L(t) \to \kerr(\uf)\subset H\{i\ge0\}\otimes \L(t)$ is given by
the $\L(t)$-equivariant map
$$ \xi \mapsto \xi + \pi_{0} \left( \sum _{\ell \ge 1} (-tJ)^\ell  \xi \right) $$
\end{proof}
\begin{remark} \label{H1-action}
Consider the embedding $X(g,d) \hookrightarrow 
\Lambda^*H^1(\Sigma_g;\mathbb{Z})\otimes \mathbb{Z}[U,U^{-1}]\otimes \Lambda$ from theorem \ref{groups1}:
$$ x\mapsto x - \pi_{k} \left( tU^{-k}J - (t U^{-k}J)^2 x + (t U^{-k}J)^3 x  - ...  \right) $$
%
Notice that the induced action by $H_1(\Sigma_g)$ on $X(g,d)$ is standard in the lowest power of $t$ but typically has 
nonzero ``correction terms'' involving higher powers of $t$. 
However, when $3|k|>g-2$ then all of the terms $(tU^{-k}J)^\ell$ for $\ell \ge 1$ lie in the 
kernel of $\pi_k$ showing that in that range the 
$\Lambda^*H_1(\Sigma_g;\mathbb{Z})$-action has no correction terms. This was already observed by 
Ozsv\'ath-Szab\'o \cite{OSknot} in the case of $\mathbb{Z}$ coefficients.
\end{remark}

\begin{remark}
The isomorphism $HF^+(\Sigma_g\times S^1,\s_k;\Lambda)\cong X(g,d)\otimes \Lambda$ from theorem \ref{groups1}
does not extend to the case of $k=0$ and $\Lambda=L(t)$. With $k=0$ and $\Lambda=L(t)$ the
 infinite system \eqref{infinitesystem}
becomes a finite system which terminates with the equation $\pi_0(J(\xi_m))=0$ for some choice of $m\in \mathbb{N}$. 
This equation breaks the symmetry of the system and imposes additional restraints not satisfied by all 
elements of the form $\pi_0 \sum_{\ell\ge 0} (-tJ)^\ell x$ with  $x\in X(g,g-1)\otimes L(t)$. 
\end{remark}

\begin{remark} It was seen in the proof of the theorem that the homomorphism $F: HF^+(Y,\s; \Lambda)\to HF^+(Y_{-n}, \s_k; \Lambda)$ is surjective in all cases, so that $HF^+(\Sigma\times S^1,\s_k; \Lambda)$ can be thought of as a submodule of $HF^+(Y,\s;\Lambda) = H\{i\geq 0\}\otimes \Lambda$. The latter carries a grading with respect to which $t\in \Lambda$ ($ = L(t)$ or $\L(t)$) carries degree $0$, so we can use this to impose a similar grading on $HF^+(\Sigma\times S^1,\s_k;\Lambda) = X(g,d)\otimes \Lambda$. Equivalently, we grade the latter group by lifting the natural grading on $X(g,d)$, induced by the standard embedding. This grading lifts the relative cyclic grading obtained by forgetting the grading on $R_{Y_0}$ in the definition of twisted-coefficient Floer homology, and has the property, for example, that homogeneous summands are $R_{Y_0}$-submodules. However, it is no longer the case that the action by $H_1(Y_0)$ decreases degree by $1$, or is even homogeneous. We will refer to this alternative grading as the {\em height} in $HF^+(\Sigma\times S^1,\s_k;\Lambda)$.
\end{remark}

In the next section we will have occasion to consider the relative {\OS} invariant of the 4-manifold $\Sigma\times D^2$, for which the next result is central.

\begin{theorem} \label{cobordisminducedmaps1}
Consider the cobordism $W$ from $\Sigma _g \times S^1$ to $S^3$ obtained by removing a small 4-ball from 
$\Sigma _g \times D^2$. For $|k|\le g-1$ let $\rs _k \in Spin^c(W)$ be the unique spin$^c$-structure on $W$ which restricts to 
$\s_k$ on $\Sigma _g \times S^1$. If $k=0$ let $\Lambda=\L(t)$ and if $k\ne 0$ choose $\Lambda$ to be 
either $L(t)$ or $\L(t)$. 
Then the component of the map $\mathcal{F}_k : HF^+(\Sigma_g \times S^1,\s_k;\Lambda) \to 
HF^+(S^3)\otimes \Lambda$ induced by $(W,\rs_k)$ with image in the lowest-degree part $HF^+_0(S^3)\otimes \Lambda$ is given by projection onto the summand of lowest height, corresponding to $H\{(0,-g)\}\otimes \Lambda \cong \Lambda ^0H^1(\Sigma) \otimes U^0 \otimes \Lambda\subset X(g,d)\otimes \Lambda$. 
\end{theorem}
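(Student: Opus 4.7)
The plan is to decompose $W$ into a $2$-handle cobordism followed by $2g$ three-handle attachments and to apply the composition law for perturbed Heegaard Floer cobordism maps. Since $\Sigma_g$ admits a natural CW decomposition consisting of one $0$-cell, $2g$ $1$-cells and a single $2$-cell, the $4$-manifold $\Sigma_g\times D^2$ has a handle decomposition comprising one $0$-handle, $2g$ $1$-handles and a single $2$-handle attached along $K_g=\#^g B(0,0)\subset Y:=\#^{2g}(S^1\times S^2)$ with framing $0$. Reversing orientation, we therefore factor $W=W_3\circ W_2$, where $W_2:\Sigma_g\times S^1\to Y$ is the $2$-handle cobordism dual to the $0$-surgery on $K_g$, and $W_3:Y\to S^3$ consists of the $2g$ complementary $3$-handles. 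The composition law (Theorem \ref{complaw} together with its perturbed analogue in diagram \eqref{pertdiag}) gives $\mathcal F_k=F_{W_3}\circ F_{W_2,\rs_k}$ up to the usual sign and coefficient-module indeterminacies.

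The key observation is that $F_{W_2,\rs_k}$ coincides with the map $\uh$ in the twisted surgery exact sequence \eqref{LEStwistedNovikov}, because the surgery triangle is built from precisely this cobordism. By construction in the proof of Theorem \ref{groups1}, the identification $HF^+(\Sigma_g\times S^1,\s_k;\Lambda)\cong X(g,d)\otimes\Lambda$ realizes $\uh$ as the non-standard embedding $\xi\mapsto\xi+\pi_{-|k|}\bigl(\sum_{\ell\geq 1}(-tU^{|k|}J)^\ell\xi\bigr)$ into $HF^+(Y,\s_0)[t^{\pm 1}]\otimes\R_\eta$. On the other end, the $3$-handle map $F_{W_3}:HF^+(Y,\s_0)\cong\Lambda^*H^1(\Sigma_g)\otimes\T\to HF^+(S^3)=\T$ is the standard projection onto the top exterior power: it sends $(e_1\wedge\cdots\wedge e_{2g})\otimes U^\ell$ to $U^\ell$ and vanishes on all lower wedges, by duality with the well-known formula for $2g$ successive $1$-handle maps.

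To pin down which summand of $X(g,d)\otimes\Lambda$ contributes to $HF^+_0(S^3)\otimes\Lambda$ I use a degree-shift count. With $\chi(W)=1-2g$, $\sigma(W)=0$ and $c_1^2(\rs_k)=0$ (because $[\Sigma_g]^2=0$), the map $\mathcal F_k$ shifts absolute grading by $\tfrac14(0-2(1-2g)-0)=g-\tfrac12$. For $k=0$ the absolute grading of \cite{OSknot} places the generator $1\otimes 1\in\Lambda^0\otimes U^0\subset X(g,g-1)$ at grading $\tfrac12-g$---precisely the unique grading mapping onto $U^0\in HF^+_0(S^3)$---and every other homogeneous element of $X(g,g-1)$ sits strictly higher. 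For $k\neq 0$ the relative grading on $X(g,d)\otimes\Lambda$, together with the conjugation symmetry $\s_k\leftrightarrow\s_{-k}$ furnished by Theorem \ref{pertconjinvthm}, forces the same conclusion. The main remaining task---and the main obstacle---is verifying that $\mathcal F_k$ restricted to $\Lambda^0\otimes U^0\otimes\Lambda$ is indeed a unit multiple of the natural identification with $HF^+_0(S^3)\otimes\Lambda$. This requires tracking the $\Lambda^{2g}\otimes U^0$ component through the correction terms $\pi_{-|k|}(-tU^{|k|}J)^\ell\cdot 1$, using $\tilde\star(1)=\omega^g/g!$ and the explicit expansion of $\exp(2\omega U)\angle$, and checking that exactly one term survives both the projection $\pi_{-|k|}$ and the top-wedge projection of $F_{W_3}$---a careful bookkeeping exercise in the $(i,j)$-coordinates and wedge degrees that the earlier analysis of $J$ in Section \ref{partiallytwistedhf} has essentially set up.
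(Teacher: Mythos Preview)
Your overall strategy---decomposing $W$ into a $2$-handle cobordism $W_2:\Sigma_g\times S^1\to Y$ followed by $2g$ three-handles $W_3:Y\to S^3$, identifying $F_{W_2}$ with the map $\uh$ from the surgery sequence, and then composing with the $3$-handle map---is exactly the paper's approach. The decomposition and the identification of $F_{W_2}$ with $\uh$ are both correct.

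The genuine gap is your description of the $3$-handle map. You assert that $F_{W_3}$ projects onto the top exterior power $\Lambda^{2g}$, but in the conventions used here (where $\Lambda^k\otimes U^\ell$ sits at coordinate $(-\ell,k-g-\ell)$ with grading $k-g-2\ell$), the $3$-handle map $HF^+(\#^{2g}S^1\times S^2)\to HF^+(S^3)$ projects onto the \emph{lowest}-degree summand, i.e.\ onto $\Lambda^0\otimes U^0$ at coordinate $(0,-g)$ and grading $-g$, shifting degree up by $g$. Your own degree-shift count confirms this: the total shift $g-\tfrac12$ takes grading $\tfrac12-g$ to $0$, and the only summand of $HF^+(Y,\s_0)$ in grading $-g$ is $\Lambda^0\otimes U^0$, not $\Lambda^{2g}\otimes U^0$ (which sits in grading $+g$). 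The ``duality with the $1$-handle formula'' you invoke actually produces the opposite conclusion once the orientation reversal in the pairing is accounted for.

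This error is what creates the ``main obstacle'' you describe. With the correct $3$-handle map, the argument finishes in one line: the non-standard embedding sends $x\in X(g,d)$ to $x+\pi_{-|k|}(\cdots)$, and since the correction terms lie in $H\{j\ge -|k|\}$ while the coordinate $(0,-g)$ has $j=-g<-|k|$, the $(0,-g)$-component of the image is simply the $\Lambda^0\otimes U^0$-component of $x$ itself. There is no need to track $\Lambda^{2g}$-components through the operator $J$ at all; that computation, besides being unnecessary, would not yield the stated result.
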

\begin{proof}
We decompose the cobordism $W$ as $W=W_0 \cup W_1\cup ... \cup W_{2g}$ where $W_0$ is the cobordism from 
$\Sigma _g \times S^1$ to $Y=\#^{2g}(S^1\times S^2)$ obtained by attaching a 0-framed 2-handle to the latter 
along the knot $K_g$. The orientation on $W_0$ is the one that induces the orientations 
$\partial W_0 = -(\Sigma _g \times S^1) \sqcup Y$ on its boundary components. The cobordisms 
$W_i$, $i=1,...,2g$ are obtained by the obvious 3-handle additions corresponding to the $2g$ 1-handles of $Y$. 

As explained in section \ref{cobordismsec}, the map $\mathcal{F}_k$ can be calculated by separately calculating the 
contribution from each of the maps $\mathcal{F}_{W_i}$ induced by $W_i$ (the spin$^c$-structure on 
$W_i$ is the restriction of $\rs_k|_{W_i}$ which we omit from the notation for simplicity). 

The map $\mathcal{F}_{W_0}$ is just the map $\uh$ from the sequence \eqref{LEStwisted}, 
it maps $HF^+(\Sigma_g\times S^1,\s_k;\Lambda)$ isomorphically onto the kernel of $\uf$. The latter 
kernel was explicitly identified in the proof of theorem \ref{groups1} and equals the image of the embedding 
of $H\{i\ge 0 \mbox{ and } j <-|k|\} \otimes \Lambda \hookrightarrow \Lambda ^*
\otimes \mathbb{Z}[U^{-1}]\otimes\Lambda$ given by 
$$ x \mapsto x + \pi_k \left(\sum _{\ell\ge 1}  (-tU^{-k}J)^\ell x \right) $$
Indeed, under the identification of 
$HF^+(\Sigma_g \times S^1,\s_k;\Lambda)$ with $H\{i\ge 0 \mbox{ and } j <-|k|\} \otimes \Lambda$ 
(theorem \ref{groups1} and \eqref{xgdidentification}) this 
embedding precisely corresponds to the map $\mathcal{F}_{W_0}$. 

It is a simple matter to see that the homomorphism in Floer homology induced by the composition of 3-handle cobordisms $\#^{2g} S^1\times S^2 \to S^3$ is given by projection onto the lowest-degree factor (and shifting degree up by $g$). The result follows from the above description of the image of $HF^+(\Sigma\times S^1,\s_k;\Lambda)$ in $HF^+(\#^{2g}S^1\times S^2)$.


\end{proof}

\subsection{A surface cross a circle - Universally twisted coefficients}
We will have need for a limited amount of information on the Floer homology of $\Sigma\times S^1$ with ``universal'' coefficients, i.e., coefficients in the group ring $R_{\Sigma\times S^1}$. Continuing our notation from the last section, we let $Y$ be the manifold $Y=\#^{2g}(S^1\times S^2)$. 
An easy application of Theorem \ref{surgeryseq} (for surgery
on the unknot in $S^3$ and with $n=1$) and the connected sum formula for $HF^+$ 
and $\widehat{HF}$ (cf. \cite{OS2}) yields 
$$ \widehat{HF}(\#^{2g}(S^1\times S^2),\s_0;R_Y) \cong \mathbb{Z}_{(-g)} \quad \quad \quad 
HF^+(\#^{2g}(S^1\times S^2),\s_0;R_Y) \cong  \T_{-g}$$
where $\T_n = \zee[U,U^{-1}]/U\cdot \zee[U]$ as before, graded such that the summand of lowest degree lies in degree $n$. Since 
$HF^+(\#^{2g}(S^1\times S^2),\s;R_Y)$ and $\widehat{HF}(\#^{2g}(S^1\times S^2),\s;R_Y)$ are zero for all 
spin$^c$-structures $\s \ne \s_0$, we shall drop the spin$^c$-structure from our notation. Also, we shall 
drop the 3-manifold from our notation for the knot Floer homology groups whenever there is not risk 
of confusion. 


\begin{lemma} \label{b00universal}
Let $g\ge 1$, set $Y=\#^{2g}(S^1\times S^2)$ and let $K_g$ be the nullhomologous knot 
$K_g=\#^g B(0,0)\subset Y$. 
Then for $j\in \{-g,\ldots,g\}$, the twisted knot Floer homology $\widehat{HFK}(K_g, j;R_Y)$ is a free module over $R_Y$ having rank $2g\choose g+j$ and supported in degree $j$,
and is zero for all other values of $j$.
\end{lemma}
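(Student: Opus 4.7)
The plan is to reduce the lemma to the vanishing of the $\widehat{HFK}$ differential at the chain level, so that passing from $\zee$ to $R_Y$ coefficients simply replaces each chain generator with a free rank-one $R_Y$-summand.

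I would start from a standard doubly-pointed Heegaard diagram for $(Y, K_g)$, of genus $3g$, obtained either directly or as the $g$-fold connected sum of the genus-$3$ diagram used by Ozsv\'ath and Szab\'o \cite{OS2} to compute the untwisted $\widehat{HFK}$ of the Borromean knot $B(0,0)\subset Y_1$. In this diagram a direct enumeration produces exactly $\binom{2g}{g+j}$ elements of $T_\alpha\cap T_\beta$ in Alexander grading $j\in\{-g,\ldots,g\}$, and the usual grading computation shows that each of these generators has Maslov grading equal to its Alexander grading. Over $\zee$ this is enough to deduce $\widehat{HFK}(K_g, j;\zee) \cong \zee^{\binom{2g}{g+j}}$, but the equality $M = A$ on generators actually yields a stronger, coefficient-independent conclusion.

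Indeed, any class $\phi\in\pi_2(\x,\y)$ with $n_z(\phi)=n_w(\phi)=0$ preserves Alexander grading (i.e.\ $A(\x) = A(\y)$), while if additionally $\mu(\phi)=1$ it shifts Maslov grading by one. With all generators satisfying $M = A$, the endpoints of such a $\phi$ would have $M(\x) - M(\y) = 1$ yet $A(\x) - A(\y) = 0$, contradicting $M = A$. Hence no such $\phi$ exists and
\[
\widehat\partial [\x] \;=\; \sum_{\y}\sum_{\substack{\phi\in\pi_2(\x,\y)\\ \mu(\phi)=1,\ n_z(\phi)=n_w(\phi)=0}} \#\Mhat(\phi)\cdot e^{A(\phi)}\,[\y] \;=\; 0
\]
identically, independent of the additive assignment or coefficient module. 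Consequently $\widehat{CFK}(Y,K_g, j;R_Y)$ is $R_Y$-freely generated by its $\binom{2g}{g+j}$ intersection points in Alexander grading $j$, all in Maslov grading $j$, which is precisely the claim of the lemma.

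The main (mild) obstacle is producing the Heegaard diagram with the stated combinatorial and bigrading data; this amounts to a review of the Ozsv\'ath-Szab\'o computation for the Borromean knot plus the observation that the relevant combinatorics is additive under connected sum of pointed Heegaard diagrams. Should one prefer, the general case can also be deduced from the $g=1$ case by induction via a K\"unneth formula for twisted knot Floer homology under connected sum, exploiting the natural isomorphism $R_{Y_1\# Y_2} \cong R_{Y_1}\otimes_\zee R_{Y_2}$ induced by $H^1(Y_1\# Y_2) \cong H^1(Y_1)\oplus H^1(Y_2)$ together with the Vandermonde identity $\sum_{j_1+j_2=j}\binom{2g_1}{g_1+j_1}\binom{2g_2}{g_2+j_2} = \binom{2(g_1+g_2)}{(g_1+g_2)+j}$; but the direct grading-based vanishing argument sketched above seems cleanest.
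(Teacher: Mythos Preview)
Your approach is correct and genuinely different from the paper's. The paper proves the $g=1$ case by stringing together several instances of the twisted knot surgery triangle (Theorem~\ref{surgeryknotseq}), passing through the auxiliary knots $B(\infty,\pm1)$, $B(\pm1,\pm1)$, $B(0,\pm1)$ before reaching $B(0,0)$; it even has to resolve a small $R_Y$-module extension problem along the way, and then appeals to the connected-sum formula for $g>1$. Your chain-level argument bypasses all of this: once a weakly admissible doubly-pointed diagram with $M=A$ on every generator is in hand, the $\widehat{CFK}$ differential vanishes for bigrading reasons independently of the coefficient module (periodic self-disks being excluded since $c_1(\s_0)=0$ forces their Maslov index to vanish), and $R_Y$-freeness of the homology is immediate. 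This is the cleaner route; the paper's method has the compensating virtue of staying entirely at the homology level with tools already assembled in Section~\ref{calcsec}.

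Two small points worth tightening. First, the diagram you invoke is treated in \cite{OSknot}, not \cite{OS2}, and you should make sure the computation there really is chain-level---i.e.\ exhibits an admissible diagram with exactly four generators for $B(0,0)$---rather than merely a determination of the filtered chain homotopy type over $\zee$; only the former transports directly to $R_Y$-coefficients. Second, note explicitly that weak $\s_0$-admissibility holds for the base diagram and is preserved under the pointed connected sum of diagrams, so that no winding (and hence no extra generators spoiling $M=A$) is required.
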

The proof of this lemma relies on a filtered version of theorem \ref{surgeryseq}. 
\begin{theorem}[{\OS} \cite{OSknot}]  \label{surgeryknotseq}
Let $K,L \subset Z$ be two nullhomologous knots with linking number 0. Let $K_0$, $K_1$ and $K_{-1}$ be 
the knots in 
$Z_0(L)$, $Z_1(L)$ and $Z_{-1}(L)$ induced by $K$ where $Z_\ell(L)$ is the result of $\ell$-framed surgery on $L$. 
Then for any $\s \in Spin^c(Z)$ and any $R_Z$-module $M$ there are exact sequence of $R_{Z_0(L)}$-modules
\begin{align} \nonumber 
... \to \widehat{HFK}(K,\s,j;M)[t^{\pm 1}]\to \widehat{HFK}(K_0,[\s_k],j;M[t^{\pm 1}])\to
\widehat{HFK}(K_1,\s_k,j;M)[t^{\pm 1}]\to ... \cr
... \to \widehat{HFK}(K,\s,j;M)[t^{\pm 1}]\to \widehat{HFK}(K_{-1},[\s_k],j;M[t^{\pm 1}])\to
\widehat{HFK}(K_0,\s_k,j;M)[t^{\pm 1}]\to ...
\end{align}
\end{theorem}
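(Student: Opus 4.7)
The plan is to combine the two ingredients we already have at hand: the twisted-coefficient surgery exact sequence for $HF^+$ (Theorem \ref{surgeryseq}) and the untwisted filtered surgery exact sequence for $\widehat{HFK}$ from \cite{OSknot}. Concretely, I would start by fixing a Heegaard quadruple $(\Sigma,\balpha,\bbeta,\bgamma,w,z)$ subordinate to the pair $(L,K)\subset Z$, where the $\balpha$-curves together with one of $\bbeta,\bgamma,\bbeta+\bgamma$ describe $Z$, $Z_0(L)$ and $Z_{\pm 1}(L)$ respectively, and where $w,z$ are two basepoints placed on opposite sides of $K$ so that the relative count $n_z(\phi)-n_w(\phi)$ defines the filtration by the Alexander grading $j$. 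Since $\mathrm{lk}(K,L)=0$ and both knots are nullhomologous, $K$ persists as a well-defined nullhomologous knot in each of $Z_0(L)$, $Z_{\pm 1}(L)$, so the filtration on $CF^\infty$ of each surgered manifold really is computed by the same pair of basepoints.

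Next I would build, exactly as in the proof of Theorem \ref{surgeryseq} recalled in \cite{OS2}, the three chain maps $\uf,\ug,\uh$ by counting holomorphic triangles in this Heegaard multi-diagram and weighting them by the exponentials $e^{A_W(\psi)}$ of the additive assignment attached to the surgery cobordism. The essential observation is that these triangle-counting maps all respect the $z$-filtration: the filtration shift of a holomorphic triangle $\psi$ is $n_z(\psi)-n_w(\psi)$, so filtering by this quantity turns the short exact sequence of chain complexes underlying Theorem \ref{surgeryseq} into a short exact sequence of $(w,z)$-filtered complexes. Taking the associated graded with respect to this filtration then produces the claimed long exact sequences for $\widehat{HFK}$, with the twisted-coefficient bookkeeping carried along unchanged from the unfiltered case. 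The $R_Z$- and $R_{Z_0(L)}$-module structures are preserved because the additive assignments pulled back from Theorem \ref{surgeryseq} do not interact with the $z$-basepoint.

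The main technical step, and the only place where care is required, is admissibility of the Heegaard multi-diagram: we need strong admissibility (or the weaker notion that suffices in the $HF^+$/$\widehat{HFK}$ setting) simultaneously for all of $Z$, $Z_0(L)$, $Z_{\pm 1}(L)$ and for the knot $K$ with both basepoints. This is handled by the standard winding procedure of \cite{OS1,OS2} applied to both the $\bbeta$- and $\bgamma$-curves, adapted so that the winding takes place away from $K$; since $\mathrm{lk}(K,L)=0$ the winding can be arranged without altering the relative placement of the two basepoints, so that the filtration is not disturbed.

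The final step is invariance: once the sequences are constructed for one admissible diagram, the usual triangle-of-triangles argument (together with the independence of additive assignment, Lemma \ref{addassnlemma}) shows that the maps $\uf,\ug,\uh$ are well-defined on homology up to the expected action of $H^1$ on each side, giving the two stated exact sequences. The $n=-1$ sequence follows from the $n=1$ sequence by reversing orientations, exactly as in \cite{OSknot}.
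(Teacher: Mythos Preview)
The paper does not give its own proof of this statement: it is quoted as a result of Ozsv\'ath and Szab\'o from \cite{OSknot} and closed with an empty $\Box$. Your sketch is in the right spirit and matches the argument one finds in \cite{OSknot} (extended to twisted coefficients in the evident way): the triangle-counting maps in the surgery sequence for $\widehat{CF}$ respect the Alexander filtration coming from the second basepoint---this is exactly where $\mathrm{lk}(K,L)=0$ enters---and therefore induce the desired maps on associated graded pieces.

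Two small corrections are worth flagging. First, the surgery exact triangle does not arise from a literal short exact sequence of chain complexes; the mechanism in \cite{OS2} and \cite{OSknot} is rather that the composite of two consecutive triangle maps admits an explicit null-homotopy, and one of the three complexes is shown to be quasi-isomorphic to the mapping cone of the map between the other two. The filtered statement then follows because both the triangle maps and the null-homotopy are themselves filtered (again using $\mathrm{lk}(K,L)=0$), so the mapping-cone identification descends to each filtration level. Second, the two sequences are not related by ``reversing orientations'': they come from two genuinely different Heegaard triples (corresponding to the $(\infty,0,1)$ and $(\infty,-1,0)$ surgery triads) and are each proved directly by the same mechanism.
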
\hfill$\Box$

\begin{proof}[Proof of lemma \ref{b00universal}] 
Lemma \ref{b00universal} follows from repeated applications of theorem 
\ref{surgeryknotseq} to various triples of knots. Our proof is a straightforward 
adaptation of the $\mathbb{Z}$-coefficient proof first obtained by Ozsv\'ath and Szab\'o in \cite{OS6}. 
We first consider the case of  $g=1$. 

The three knots $B(\infty,1)\to B(0,1)\to B(1,1)$ fit into the positive $n$ surgery sequence from 
theorem \ref{surgeryknotseq}. It is easy to see that $B(\infty,1)$ is the unknot in $S^3$ while 
$B(1,1)$ is the right-handed trefoil. Thus  
\begin{align} \nonumber
\widehat{HFK}(B(\infty,1),j)& \cong \left\{
\begin{array}{cl}
\mathbb{Z}_{(0)} & \quad ; \quad j=0 \cr
0 &  \quad ; \quad j\ne 0
\end{array}
\right. \cr
& \cr\end{align} 
and
\begin{align}\nonumber
\widehat{HFK}(B(1,1),j)& \cong \left\{
\begin{array}{cl}
\mathbb{Z}_{(0)} & \quad ; \quad j=1 \cr
\mathbb{Z}_{(-1)} & \quad ; \quad j=0 \cr
\mathbb{Z}_{(-2)} & \quad ; \quad j=-1 \cr
0 &  \quad ; \quad j\ne 0,\pm 1
\end{array}
\right. 
\end{align}
where a subscript $(n)$ indicates that the corresponding module is supported in degree $n$.

Using these in the surgery sequence leads to 
$$ \widehat{HFK}(B(0,1),j;R_{S^1\times S^2}) \cong \left\{
\begin{array}{cl}
(R_{S^1\times S^2})_{\left(\frac{3}{2}\right)} & \quad ; \quad j=1 \cr
(R^2_{S^1\times S^2})_{\left(\frac{1}{2}\right)} & \quad ; \quad j=0 \cr
(R_{S^1\times S^2})_{\left(-\frac{1}{2}\right)} & \quad ; \quad j=-1 \cr
0 &  \quad ; \quad j\ne 0, \pm 1
\end{array}
\right. $$
In a similar vein using the negative $n$ surgery sequence from theorem \ref{surgeryknotseq} for the triple 
$B(\infty,-1)\to B(-1,-1)\to B(0,-1)$ (and observing that $B(-1,-1)$ is the left-handed trefoil) leads to 
$$ \widehat{HFK}(B(0,-1),j;R_{S^1\times S^2}) \cong \left\{
\begin{array}{cl}
(R_{S^1\times S^2})_{\left(\frac{1}{2}\right)} & \quad ; \quad j=1 \cr
(R^2_{S^1\times S^2})_{\left(-\frac{1}{2}\right)} & \quad ; \quad j=0 \cr
(R_{S^1\times S^2})_{\left(-\frac{3}{2}\right)} & \quad ; \quad j=-1 \cr
0 &  \quad ; \quad j\ne 0, \pm 1
\end{array}
\right. $$
For our next set of surgery sequences note that $B(0,\infty)$ is the unknot in $S^1\times S^2$ and therefore
$$ \widehat{HFK}(B(0,\infty),j;R_{S^1\times S^2} ) \cong \left\{
\begin{array}{cl}
\mathbb{Z}_{\left( -\frac{1}{2}\right)} & \quad ; \quad j=0 \cr
0 &  \quad ; \quad j\ne 0
\end{array}
\right. $$
where $\mathbb{Z}$ is the trivial $R_{S^1\times S^2}$-module. 
Using the negative $n$ surgery sequence on the triple $B(0,\infty)\to B(0,-1)\to B(0,0)$ for $j=0$ 
shows that $\widehat{HFK}_{(-1)}(B(0,0),0;R_{\#^2 (S^1\times S^2)}) = 0$. 
The positive $n$ surgery sequence for the triple $B(0,\infty)\to B(0,0) \to B(0,1)$ for $j=\pm 1$ leads 
to 
\begin{align} \nonumber
\widehat{HFK}(B(0,0),j;R_{\#^2 (S^1\times S^2)}) \cong (R_{\#^2 (S^1\times S^2)})_{(j)} \quad \quad j=\pm 1
\end{align}
while $j=0$ yields the sequence
$$ 0 \to \widehat{HFK}_{(0)}(B(0,0),0;R_{\#^2 (S^1\times S^2)}) \to 
(R^2_{\#^2 (S^1\times S^2)})_{\left(-\frac{1}{2} \right)} \to 
(R_{S^1\times S^2})_{\left(-\frac{1}{2} \right)} \to 0 $$
Let $s_1,s_2$ be two generators of $H^1(\#^2(S^1\times S^2);\mathbb{Z})$, then we can write the above as 
$$ 0 \to \widehat{HFK}_{(0)}(B(0,0),0;R_{\#^2 (S^1\times S^2)}) \to 
\mathbb{Z}^2[s_1^{\pm 1}, s_2^{\pm 1}] _{\left(-\frac{1}{2} \right)} \stackrel{f}{\to} 
\mathbb{Z}[s_2^{\pm 1}] _{\left(-\frac{1}{2} \right)} \to 0 $$
Notice that $f$ factors through the quotient 

\centerline{
\xymatrix{
\mathbb{Z}^2[s_1^{\pm 1}, s_2^{\pm 1}] \ar[r]^{f} \ar[d]_\pi  &   \mathbb{Z}[s_2^{\pm 1}]  \\
\frac{\mathbb{Z}^2[s_1^{\pm 1}, s_2^{\pm 1}]}{(s_1-1)\cdot \mathbb{Z}^2[s_1^{\pm 1}, s_2^{\pm 1}]} 
\cong \mathbb{Z}^2[s_2^{\pm 1}]
\ar[ru]_{\tilde f} &  \\
}
}
\vskip1mm
\noindent Here $\pi$ is the map which sends a pair of polynomials 
$(p(s_1,s_2),q(s_1,s_2))\in \mathbb{Z}^2[s_1^{\pm 1},s_2^{\pm 1}]$ to 
$(p(1,s_2),q(1,s_2)\in \mathbb{Z}^2[s_2^{\pm 1}]$. 
Regarding $\tilde f$ as a $\mathbb{Z}[s_2^{\pm 1}]$-module homomorphism between free 
$\mathbb{Z}[s_2^{\pm 1}]$ modules we see that $\kerr(\tilde f) \cong \mathbb{Z}[s_2^{\pm 1}]$ 
inducing a splitting of the domain of $\tilde f$ as $\mathbb{Z}^2[s_2^{\pm 1}]\cong \kerr (\tilde f) \oplus 
\mathbb{Z}[s_2^{\pm 1}]$, the latter two summands generated by pairs of polynomials 
$(a_1(s_2),0)$ and $(0, a_2(s_2))$  with $a_i(s_2)\in \mathbb{Z}[s_2^{\pm 1}]$. 
Setting $a_i(s_1,s_2) = a_i(s_2)$, it becomes an easy exercise to see that the two pairs of polynomials 
$(a_1(s_1,s_2),0)$ and $(0,a_2(s_1,s_2))$ generate 
$\mathbb{Z}^2[s_1^{\pm 1},x_2^{\pm 1}]$. The thus induced splitting of $\mathbb{Z}^2[s_1^{\pm 1},s_2^{\pm 1}]$
is respected by $\pi$ which shows that 
$$\kerr (f) = \pi^{-1}(\kerr \tilde f) \cong 
\mathbb{Z}[s_1^{\pm 1}, s_2^{\pm 1}] \oplus ((s_1-1)\cdot \mathbb{Z}[s_1^{\pm 1}, s_2^{\pm 1}])
\cong R^2_{\#^2 (S^1\times S^2)}$$
completing the proof of lemma \ref{b00universal} when $g=1$. The case of $g>1$ follows from this and 
the connected sum formula for knot Floer homology \cite{OSknot}.
\end{proof}
The results of Lemma \ref{b00universal} can be rewritten in a more concise way as follows: let $M$ 
be the free $R$-module of rank $2g$ (with $R$ still denoting $R_Y$). Then for $j=-g,...,g$ we have $\widehat{HFK}(B(0,0),j;R) \cong \Lambda^{g+j} M$
supported entirely in grading $j$. 

Recall that there is a spectral sequence whose $E^2$ and $E^\infty$ terms equal 
$\widehat{HFK}(K_g,j;R)$ and $\widehat{HF}(\#^{2g}(S^1\times S^2);R) \cong \mathbb{Z}_{(-g)}$ respectively. 
Since the grading of any term in $\widehat{HFK}(K_g,j;R)$ equals $j$, it follows that 
this spectral sequence collapses after the $E^2$ term, in particular the only nonzero differentials in the
spectral sequence are those on the $E^1$ level: 
\begin{equation} \label{Rdifferential} \partial_v: \Lambda^{\ell+1}M \to \Lambda^\ell M \quad \quad \quad \ell = 0,...,2g-1\end{equation}
%
%
In particular, we infer that the chain complex 
\begin{equation}\label{freeres} 0 \to \Lambda^{2g}M\stackrel{\partial_v}{\to} \Lambda^{2g-1}M \stackrel{\partial_v}{\to} \cdots 
\stackrel{\partial_v}{\to} \Lambda^1M \stackrel{\partial_v}{\to} \Lambda^0 M \to 0  \end{equation}
is a (minimal) free resolution of $\mathbb{Z}$ in the category of $R$-modules. 

As before, knowledge of the knot Floer homology of $K_g$ allows calculation of the Floer homology of large integer surgeries along $K_g$.

\begin{prop} \label{hfhatuniversal}
Choose integers $g, n$ with $g\ge 2$  
and $n\gg 0$. Let $\varepsilon : R_Y\to \mathbb{Z}$ be the 
augmentation homomorphism sending each element of $H^1(Y)$ to $1$. Then 
\begin{align} \nonumber
\widehat{HF}(Y_{n} ,\s_{\pm(g-1)};R_Y) & \cong R_{(g-1)} \oplus R_{(g-2)} \oplus \mathbb{Z}_{(-g)} \cr
HF^+(Y_{-n},\s_{\pm (g-1)};R_Y) & \cong \kerr_{(-g+1)} (\varepsilon) \oplus \T_{-g+2} \cr
HF^+_d(Y_{-n},\s_k;R_Y) & \cong 0  
\end{align}
where the third line is valid for all $d<-|k|$ and $|k|\ne g-1$. 
The grading used is the absolute $\mathbb{Q}$-grading for ${HF}^\circ(Y_{\pm n},\s_k;R_Y)$
shifted by $\frac{n-(2k-n)^2}{4n}$ for convenience.
\end{prop}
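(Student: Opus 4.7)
The plan is to invoke Ozsv\'ath--Szab\'o's large surgery formula with twisted coefficients---or equivalently, to run the twisted surgery exact sequences of Theorem \ref{surgeryseq}---in order to express $\widehat{HF}(Y_n,\s_k;R_Y)$ and $HF^+(Y_{-n},\s_k;R_Y)$, for $n \gg 0$, as homologies of explicit subquotient complexes of the knot Floer chain complex $CFK^\infty(Y,K_g;R_Y)$. The required input on knot Floer homology is provided by Lemma \ref{b00universal}, which identifies $\widehat{HFK}(K_g,j;R_Y) \cong \Lambda^{g+j}M$, concentrated in the single filtration degree $j$, where $M$ is the free $R_Y$-module of rank $2g$. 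The crucial algebraic input is the observation recorded after Lemma \ref{b00universal}: the vertical differentials \eqref{Rdifferential} assemble into the minimal free resolution \eqref{freeres} of $\mathbb{Z}$ over $R_Y$, since the spectral sequence from knot Floer homology to $\widehat{HF}(Y;R_Y)$ collapses after the $E^1$-page with limit $\mathbb{Z}_{(-g)}$.

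For the vanishing statement, if $|k| \leq g-2$, the subquotient of $CFK^\infty(Y,K_g;R_Y)$ relevant to $HF^+_d(Y_{-n},\s_k;R_Y)$ in degrees $d < -|k|$ is an interior segment of the resolution \eqref{freeres}, hence acyclic by exactness. For the extremal spin$^c$ structure $\s_{g-1}$, the complex involves only the top of the resolution---the injective initial differential $\partial_v \colon \Lambda^{2g}M \cong R_Y \to \Lambda^{2g-1}M$---plus the $U$-tower contribution arising from the $HF^\infty$-part of the theory. In the $\widehat{HF}$ case the tower collapses to a single $\mathbb{Z}_{(-g)}$ (corresponding to the bottom of $\widehat{HF}(Y;R_Y)$), and combining this with the two-step complex at the top produces the claimed sum $R_{(g-1)} \oplus R_{(g-2)} \oplus \mathbb{Z}_{(-g)}$ after accounting for the grading shift specified in the proposition. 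In the $HF^+$ case the bottom of the tower produces the $U$-infinite quotient $\T_{-g+2}$, while the top of the complex is controlled by the cokernel of the final vertical differential of \eqref{freeres}, which by exactness equals $\ker(\varepsilon)$ supported in degree $-g+1$.

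The case $k = -(g-1)$ follows from the case $k = g-1$ by conjugation symmetry, which interchanges $\s_{g-1}$ and $\s_{-(g-1)}$ while preserving the universal coefficient module. The main obstacle will be the careful bookkeeping of gradings in the large surgery formula adapted to twisted coefficients, together with confirming that the differentials arising in the subquotient complexes of $CFK^\infty(Y,K_g;R_Y)$ correspond, under the identifications of Lemma \ref{b00universal}, to the vertical differentials $\partial_v$ organized by the free resolution \eqref{freeres}; once this compatibility is established, the remaining computation is purely homological-algebraic.
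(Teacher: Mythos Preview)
Your overall strategy matches the paper's: both use the large-surgery identification of $HF^\circ(Y_{\pm n},\s_k;R_Y)$ with homologies of subquotient complexes of $CFK^\infty(Y,K_g;R_Y)$, together with Lemma \ref{b00universal} and the free resolution \eqref{freeres}. However, there is a genuine gap in your execution. The subquotient complexes $C\{\max(i,j-k)=0\}$ and $C\{\min(i,j-k)\ge 0\}$ carry \emph{both} a vertical differential $\partial_v$ (decreasing $j$) and a horizontal differential $\partial_h$ (decreasing $i$); the $d^1$ of the relevant spectral sequence is $\partial_v\oplus\partial_h$, not $\partial_v$ alone. Your description of the $\widehat{HF}(Y_n,\s_{g-1})$ complex as ``the injective initial differential $\partial_v:\Lambda^{2g}M\to\Lambda^{2g-1}M$'' is not accurate: the extra summand $\Lambda^{2g}M\otimes U$ sits at $(i,j)=(-1,g-1)$ and is the \emph{target} of the horizontal map from $\Lambda^{2g-1}M$ at $(0,g-1)$; its would-be vertical target $(-1,g-2)$ lies outside the hook-shaped region. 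Likewise in the $HF^+$ computation at degree $-g+1$, the group $\Lambda^1M$ receives both $\partial_v$ from $\Lambda^2M$ and $\partial_h$ from $\Lambda^0M\otimes U$, and you must show the latter contributes nothing new.

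The missing ingredient is exactly the paper's identity \eqref{comparingimages}, which asserts that $\partial_v$ and $\partial_h$ have coinciding images and kernels in the relevant bidegrees. This is deduced by comparing $H\{i=0\}\cong\widehat{HF}(Y;R_Y)\cong\zee_{(-g)}$ (computed column-wise via $\partial_v$) with $H\{i\ge0\}\cong HF^+(Y;R_Y)\cong\T_{-g}$; the agreement forces $\partial_h$ to be redundant wherever both are present. Once you have \eqref{comparingimages}, the corner computations collapse to the resolution-theoretic statements you outline and the answers follow. So your sketch is on the right track, but the step you flag as ``confirming that the differentials correspond to the vertical differentials $\partial_v$'' is not a bookkeeping matter---it requires this comparison argument, without which the computation cannot be completed.
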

\begin{proof} Just as in the case of $L(t)$ coefficients considered previously, we can obtain information on the fully-twisted Floer homology of large-$n$ surgeries from the knot complex. To be explicit, we have from \cite{OSknot},
\begin{align} \label{hfandhfkuniversal}
\widehat{HF}(Y;R_Y) & \cong H\{i=0\}  \cong H\{j=0\} \cr
HF^+(Y;R_Y) & \cong H\{ i\ge 0\} \cr
\widehat{HF}(Y_n,\s_k;R_Y) & \cong H\{\max (i,j-k) = 0 \} \cr
\widehat{HF}(Y_{-n},\s_k;R_Y) & \cong H\{\min (i,j-k) = 0 \} \cr
HF^+(Y_{-n},\s_k;R_Y) & \cong H\{\min(i,j-k) \ge 0 \}
\end{align}
in the obvious notation. For each of the cases above there is a spectral sequence converging to  the desired homology coming from the filtration on the knot complex given by $i+j$; in each case the $E^1$ term is the appropriate sub-quotient of $\widehat{HFK}(K_g; R_Y)\otimes \zee[U,U^{-1}]$. In the untwisted case there are no further differentials in this spectral sequence. In the case at hand that is no longer true, but for dimensional reasons there can be no differentials past $d^1$.

The latter differential decomposes into ``vertical'' and ``horizontal'' components; the vertical component is $\partial_v$ from \eqref{Rdifferential} and we write $\partial_h$ for the horizontal component. Note that in any given row, $\partial_h$ also gives the maps in a free resolution of $\zee$ over $R_Y$.  See figure \ref{pic2} for an illustration of these concepts.
\begin{figure}[htb!] 
\centering
\includegraphics[width=14cm]{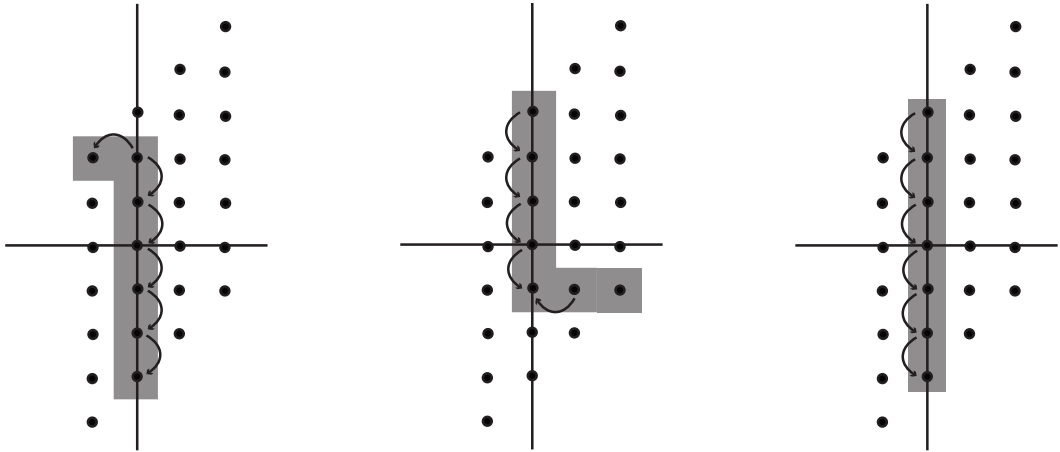}
\caption{The complexes $C\{\max (i,j-k)=0\}$ (left) and $C\{\min(i,j-\ell)=0\}$ (center) and 
$C\{i=0\}$ (right) for $g=3$ and 
$k=g-1$, $\ell=-g+2$. The vertical arrows indicate the nontrivial components $\partial_v$ of $d^1$ while 
the horizontal arrows represent $\partial _h$. All other $d^r$ 
for $r\ge 2$ are zero for dimensional reasons. The homology of $C\{i=0\}$ is $\mathbb{Z}_{(-g)}$ 
occurring at coordinate $(0,-g)$.}  \label{pic2}
\end{figure}
Comparing the homology of $C\{i=0\}$ to that of $C\{i\ge 0\}$ one finds that 
\begin{align} \label{comparingimages}
\imm (\partial_v : \Lambda ^{k} M \otimes U^\ell \to \Lambda ^{k-1} M \otimes U^\ell ) & = 
\imm (\partial _h : \Lambda ^{k-2}\otimes U^{\ell-1} \to \Lambda ^{k-1} M \otimes U^\ell  ) \cr
\kerr (\partial_v : \Lambda ^{k} M \otimes U^{\ell -1} \to \Lambda ^{k-1} M \otimes U^{\ell -1} ) & = 
\kerr (\partial _h : \Lambda ^{k}\otimes U^{\ell -1} \to \Lambda ^{k+1} M \otimes U^{\ell}  ) 
\end{align}
for all $\ell \le 0$ and for any $k$.

To see that the first isomorphism of the statement holds, we need to compute the homology
$H\{\max (i, j-g+1) = 0 \} $ (where we have chosen to consider the spin$^c$-structure $\s_{g-1}$ for concreteness).
Since most of $C\{\max (i,j-g+1) =0\}$ agrees with $C\{i=0\}$ (figure \ref{pic2}), most of their homologies 
agree as well. In fact since $C\{ \max (i,j-g+1)=0 \mbox{ and } j\le g-2\} \cong 
C\{i=0 \mbox{ and } j\le g-2\}$ we find that 
$$ \oplus _{d\le g-3} \widehat{HF}_{(d)} (Y_n,\s_{g-1};R_Y) \cong 
 \oplus _{d\le g-3} \widehat{HF}_{(d)} (Y;R_Y) \cong 
\mathbb{Z}_{(-g)} $$ 
When $d=g-1$ the homology of $C_{(d)}\{\max(i,j-g+1)=0\}$ is (with the help of \eqref{comparingimages})
$$\kerr \left( \Lambda ^{g-1}M \to \Lambda ^{g-2}M \oplus (\Lambda^gM\otimes U) \right)  
= \imm \left( \Lambda ^g M  \to \Lambda ^{g-1}M \right) 
\cong \Lambda ^g M \cong R_Y $$
In the remaining case of $d=g-2$ the relevant homology group is 
$$ \frac{(\Lambda ^gM \otimes U) \oplus \kerr (\Lambda ^{g-2}M \to \Lambda ^{g-3}M)}{\imm \left(\Lambda ^{g-1}M \to 
(\Lambda ^gM\otimes U)\oplus \Lambda ^{g-2}M\right)} \cong \Lambda ^gM \otimes U \cong R_Y$$
as follows from \eqref{comparingimages}. 

To deduce the second isomorphism from the proposition we next calculate the homology $H\{\min(i,j+g-1)\ge 0\}$
(where we have chosen $k=-g+1$). Observe that the portions of the complexes 
$C\{\min(i,j+g-1)\ge 0\}$ and $C\{i\ge 0\}$ lying in grading  $gr \ge -g+2$ are isomorphic thus leading
to the summand $\T_{-g+2}$ of 
$H\{\min(i,j+g-1)\ge 0\}$. In the remaining grading $-g+1$ we have from \eqref{Rdifferential} and \eqref{freeres}
\begin{align}\nonumber 
HF^+(Y_{-n},\s_{-g+1};R_Y) & \cong \frac{\Lambda ^1M}
{\mbox{Im}\left( \partial_v\oplus \partial_h : \Lambda ^2M \oplus (\Lambda ^0M\otimes U) \to \Lambda^1 M\right)} \cr
& \cong \frac{\Lambda ^1M}
{\mbox{Im}\left( \partial_v: \Lambda ^2M  \to \Lambda^1 M\right)} \cr
& \cong \mbox{Im}\left( \partial _v :\Lambda^1M \to \Lambda^0 M  \right) \cr
& \cong \kerr (\varepsilon : \Lambda ^0 M \to \mathbb{Z}) 
\end{align}
as claimed. 
Finally, the third isomorphism of the proposition follows directly from \eqref{hfandhfkuniversal}. 
\end{proof}

As in the previous subsection, our interest in the fully-twisted Floer homology of $\Sigma_g\times S^1$ is focused on that component that maps nontrivially to the lowest-degree Floer homology of $S^3$ under the cobordism $\Sigma\times D^2\setminus B^4$. To that end, we have the following. 

\begin{theorem} \label{universalgroups1}
Assume $g\ge 2$ 
and choose an integer $k$ with $|k|\le g-1$. 
Consider the tail portion of the negative $n$ surgery sequence \eqref{LEStwisted} (with $n\gg 0$ and 
with grading conventions 
as in proposition \ref{hfhatuniversal})
$$... \to HF_{(-g+1)}^+(Y_{-n},\s_k;R)[t^{\pm 1}]\stackrel{G}{\to} 
HF^+_{(-g+1/2)}(\Sigma _g \times S^1,\s_k;R_{\Sigma _g \times S^1})\stackrel{H}{\to} 
HF^+_{(-g)}(Y; R)[t^{\pm 1}]\to 0$$ 
where the subscript $(-g+1/2)$ in the middle term serves merely as a label rather than a reference to a 
grading (except in the case when $k=0$ where it coincides with the $\mathbb{Q}$-grading). Then 
$$ HF ^+_{(-g+1/2)}(\Sigma _g \times S^1,\s_k;R_{\Sigma _g \times S^1}) \cong 
\left\{ 
\begin{array}{cl}
L(t) & \quad ; \quad |k| \ne g-1   \cr
& \cr
R_{\Sigma _g \times S^1} & \quad ; \quad  |k| =  g-1 
\end{array} \right.
$$
Moreover, $H$ is an isomorphism when $|k|\ne g-1$ while in the case $|k|=g-1$, $H$ is the quotient map   
$$ R_{\Sigma_g\times S^1}\to L(t)$$
induced by $\alpha\mapsto t^{\langle \alpha, [K_g]\rangle}$ for $\alpha\in H^1(\Sigma_g\times S^1)$. 
\end{theorem}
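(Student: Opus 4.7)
The strategy is to apply the $R_Y$-coefficient refinement of the negative-$n$ surgery sequence \eqref{LEStwisted}, in which $Y_0 = \Sigma_g \times S^1$ carries the fully twisted coefficient module $R_{\Sigma_g \times S^1} = R_Y \otimes_\zee \zee[t,t^{-1}]$, while the Floer homologies on $Y = \#^{2g}(S^1 \times S^2)$ and $Y_{-n}$ are $R_Y$-twisted and then tensored with $\zee[t,t^{-1}]$ (the meaning of the bracket notation $[t^{\pm 1}]$). Using Proposition \ref{hfhatuniversal} together with the identification $HF^+(Y,\s_0;R_Y) \cong \T_{-g}$, the lowest-degree component $HF^+_{(-g)}(Y;R_Y) \cong \zee$ gives $HF^+_{(-g)}(Y;R)[t^{\pm 1}] \cong L(t)$, while $HF^+_{(-g+1)}(Y_{-n},\s_k;R_Y)$ equals $\kerr(\varepsilon) \subset R_Y$ when $|k| = g-1$ and vanishes when $|k| < g-1$ (since $HF^+_d(Y_{-n},\s_k;R_Y) = 0$ for $d < -|k|$).

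When $|k| \neq g-1$, the source of $G$ vanishes and the tail collapses to $0 \to HF^+_{(-g+1/2)} \stackrel{H}{\to} L(t) \to 0$, forcing $H$ to be an isomorphism and establishing the theorem in this case. When $|k| = g-1$, injectivity of $G$ follows from exactness of the sequence combined with the vanishing $HF^+_{(-g+1)}(Y;R_Y) = 0$ (since $\T_{-g}$ is supported only in the even-offset degrees $-g, -g+2, \ldots$), yielding a short exact sequence of $R_{\Sigma_g \times S^1}$-modules
\[
0 \to \kerr(\varepsilon) \otimes_\zee \zee[t,t^{-1}] \to HF^+_{(-g+1/2)}(\Sigma_g \times S^1,\s_{\pm(g-1)};R_{\Sigma_g \times S^1}) \stackrel{H}{\to} L(t) \to 0.
\]

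To identify the middle term as $R_{\Sigma_g \times S^1}$ and $H$ as the claimed quotient, I would compare with the tautological short exact sequence
\[
0 \to \kerr(\varepsilon) \cdot R_{\Sigma_g \times S^1} \to R_{\Sigma_g \times S^1} \to L(t) \to 0
\]
of $R_{\Sigma_g \times S^1}$-modules, whose right-hand map is exactly $\alpha \mapsto t^{\langle \alpha,[K_g]\rangle}$. Lifting the generator $\Theta^+ \in HF^+_{(-g)}(Y;R_Y)$ through the surjection $H$ to an element $\widetilde\Theta^+$ of the middle group produces an $R_{\Sigma_g \times S^1}$-module map $R_{\Sigma_g \times S^1} \to HF^+_{(-g+1/2)}$ defined by $1 \mapsto \widetilde\Theta^+$. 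By construction this descends to the identity on $L(t)$, so the five-lemma reduces the theorem to checking that the restriction of this map to $\kerr(\varepsilon) \cdot R_{\Sigma_g \times S^1}$ surjects onto $\kerr(\varepsilon) \otimes \zee[t,t^{-1}]$, equivalently that the $H^1(\Sigma_g)$-action on $\widetilde\Theta^+$ is faithful enough to generate the kernel subgroup.

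The main obstacle is this faithfulness statement, which requires a direct analysis of the $H^1(\Sigma_g)$-action on the fully twisted Floer homology of $\Sigma_g \times S^1$. Unlike the partially twisted situation of Theorem \ref{groups1}, where the $H^1(\Sigma_g)$-action on $X(g,0) = \T$ is standard and therefore zero on $\Lambda^0$-classes, the fully twisted action contains additional correction terms that can be computed by tracing the $H^1$-equivariance (in the sense of section \ref{H1actionsec}) through the surgery triangle; I expect that the resulting action on $\widetilde\Theta^+$ sends generators $\gamma \in H^1(\Sigma_g)$ to explicit nonzero elements of the image of $G$ corresponding to $(\gamma - 1)\cdot 1 \in \kerr(\varepsilon) \cdot R_{\Sigma_g \times S^1}$, providing the required surjectivity and completing the identification.
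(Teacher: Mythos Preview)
Your argument for $|k|\neq g-1$ is correct and coincides with the paper's: Proposition~\ref{hfhatuniversal} gives $HF^+_{(-g+1)}(Y_{-n},\s_k;R_Y)=0$ in this range, so the tail of the negative-$n$ sequence collapses and $H$ is an isomorphism onto $L(t)$.

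For $|k|=g-1$ you have correctly set up the short exact sequence
\[
0\to \kerr(\varepsilon)[t^{\pm1}]\to HF^+_{(-g+1/2)}(\Sigma_g\times S^1,\s_{\pm(g-1)};R_{\Sigma_g\times S^1})\stackrel{H}{\to} L(t)\to 0,
\]
but your proposed resolution of the extension has a genuine gap. Defining $\phi:R_{\Sigma_g\times S^1}\to HF^+_{(-g+1/2)}$ by $r\mapsto r\cdot\widetilde\Theta^+$ and asking that $\phi$ restrict to an isomorphism on the kernels is not an auxiliary verification---it is \emph{equivalent} to deciding the extension. If the sequence were split one could choose $\widetilde\Theta^+$ in a complementary copy of $L(t)$, and then $(\gamma-1)\cdot\widetilde\Theta^+=0$ for every $\gamma\in H^1(\Sigma_g)$. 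The $R_{\Sigma_g\times S^1}$-equivariance of the maps in the surgery triangle cannot rule this out, since that equivariance is already encoded in the statement that your sequence is a sequence of modules. Your appeal to Section~\ref{H1actionsec} does not help either: that section treats the $H_1$-action via the operators $\mathcal{A}_h$, and the Remark there notes that on fully twisted Floer homology this action is chain-homotopically trivial. So the ``direct analysis'' you expect to carry out has no obvious input to feed on.

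The paper avoids the extension problem by a different computation. From the negative-$n$ sequence in higher degrees it first deduces that $HF^+(\Sigma_g\times S^1,\s_{\pm(g-1)};R_{\Sigma_g\times S^1})$ is concentrated entirely in height $(-g+1/2)$, whence $\widehat{HF}\cong (HF^+)^{\oplus 2}$. It then switches to the \emph{positive} $n$ surgery sequence for $\widehat{HF}$, where Proposition~\ref{hfhatuniversal} supplies $\widehat{HF}(Y_n,\s_{g-1};R_Y)\cong R_{(g-1)}\oplus R_{(g-2)}\oplus\zee_{(-g)}$ explicitly. In that sequence the map to $\widehat{HF}(Y;R_Y)[t^{\pm1}]\cong\zee_{(-g)}[t^{\pm1}]$ annihilates the two free $R$-summands on grading grounds and is an isomorphism on the $\zee_{(-g)}$-summand (read off from the $HF^+$ sequence), yielding $\widehat{HF}(\Sigma_g\times S^1,\s_{g-1};R_{\Sigma_g\times S^1})\cong R_{\Sigma_g\times S^1}^{\oplus 2}$ directly, with no extension to resolve. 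Halving gives $HF^+_{(-g+1/2)}\cong R_{\Sigma_g\times S^1}$, and the description of $H$ as the quotient $R_{\Sigma_g\times S^1}\to L(t)$ then follows from the original negative-$n$ sequence.
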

As previously we shall refer to 
$HF^+_{(-g+1/2)}(\Sigma _g \times S^1,\s_k;R_{\Sigma _g \times S^1})$ as the submodule of {\em lowest height}.

\begin{proof}
The claim of the theorem for $|k|\ne g-1$ follows at once from proposition \ref{hfhatuniversal} 
and the negative $n$ surgery sequence \eqref{LEStwisted}. 

The same proposition and surgery sequence for $k=-g+1$ lead to the exact sequences
\[
0\to \left( \kerr (\varepsilon)\right)[t^{\pm 1}]_{(-g+1)} \to HF^+_{(-g+1/2)}(\Sigma _g \times S^1,\s_{-g+1};R_{\Sigma _g \times S^1}) 
\to L(t)_{(-g)} \to 0 \]
in the lowest degree, and 
\[
0 \to HF^+_{h}\Sigma _g \times S^1,\s_{-g+1};R_{\Sigma _g \times S^1}) \to L(t)_{(-g+2\ell)} \stackrel{\cong}{\to}
L(t)_{(-g+2\ell)} \to 0 
\]
for summands $HF^+_{h}\Sigma _g \times S^1,\s_{-g+1};R_{\Sigma _g \times S^1})$ of non-minimal height $h$ and any $\ell \ge 1$. We conclude from this that 
$$ HF^+(\Sigma _g \times S^1,\s_{-g+1};R_{\Sigma _g \times S^1}) = 
HF^+_{(-g+1/2)}(\Sigma _g \times S^1,\s_{-g+1};R_{\Sigma _g \times S^1})$$
and since the action of $\mathbb{Z}[U]$ on either side is trivial we also obtain the isomorphism 
$$\widehat{HF}(\Sigma _g \times S^1,\s_{-g+1};R_{\Sigma _g \times S^1})\cong HF^+(\Sigma _g \times S^1,\s_{-g+1};R_{\Sigma _g \times S^1})^{\oplus 2},
$$
i.e., $\widehat{HF}$ is two adjacent copies of $HF^+$. 
Unfortunatly the negative $n$ surgery sequence doesn't allow for an effective calculation of 
$\widehat{HF}(\Sigma _g \times S^1,\s_{-g+1};R_{\Sigma _g \times S^1})$, it leads to 
a nontrivial extension problem. 
However, the positive $n$ surgery sequence \eqref{LEStwisted} 
for $\widehat{HF}$ in conjunction with 
proposition \ref{hfhatuniversal}  yield the exact sequence 
\[
0\to 
 \widehat{HF} _{\left( -g+\frac{1}{2} \right)}(\Sigma_g \times S^1,\s_{g-1};R_{\Sigma _g \times S^1}) \to (R_{(g-1)}\oplus R_{(g-2)}\oplus\zee_{(-g)})[t^{\pm 1}] \rTo  \mathbb{Z}_{(-g)}[t^{\pm 1}]\to 0
\]
(where $R = R_{\#^{2g}S^1\times S^2}$).  It follows from the sequence in $HF^+$ that the last map is an isomorphism on the $\zee_{(-g)}[t^{\pm 1}]$ factors, and the result follows.

The case of $k=-g+1$ follows a similar argument.  
\end{proof}
\section{Product Formulae}\label{productformulasec}

We can now piece together the ingredients of the preceding sections to deduce the results stated in the introduction. The conceptual plan is reasonably straightforward: if $X = M_1\#_\Sigma M_2$ is a fiber sum as in the introduction, then the {\OS} invariants for $X$ are given by a pairing between relative invariants for $Z_i = M_i\setminus (\Sigma\times D^2)$, after perturbing by a class $\eta\in H^2(X;\arr)$ that restricts nontrivially to $\Sigma \times S^1$, according to Theorem \ref{pertproductthm}. 

To determine the relative invariants $\Psi_{Z_i,\eta}$ of the pieces and obtain a formula for $OS_X$ in terms of $OS_{M_i}$, we observe that the {\OS} invariants of the $M_i$ are themselves determined by the pairing between $\Psi_{Z_i,\eta}$ and the relative invariant of $\Sigma\times D^2$, again using the perturbed version of Floer theory since, even if $b^+(Z_i)>0$, we have $b^+(\Sigma\times D^2) = 0$ and Theorem \ref{productthm} need not apply. Hence we need to understand the perturbed relative invariant $\Psi_{\Sigma\times D^2, \eta}$, as well as the relevant pairing on Floer homology. 

Now, it is easy to see that the coefficient module for $\Sigma\times S^1$ induced by $\Sigma\times D^2$ (with a 4-ball removed) is $\zee[K(\Sigma\times D^2)] = L(t)$, where $t\in H^1(\Sigma\times S^1;\zee)$ is a generator Poincar\'e dual to $\Sigma\times pt$. There is little choice in the perturbation $\eta$ on $\Sigma\times D^2$; namely we take $\eta$ to be (a positive multiple of the) Poincar\'e dual to the relative class $pt\times D^2$, which has $\langle t\cup \eta, [\Sigma\times S^1]\rangle = 1$. Thus the Novikov completion of $L(t)$ with respect to $\eta$ is the ring $\L(t)$ of Laurent series in $t$.

Assuming $[\Sigma]$ to be (rationally) nontrivial in $M_1$ and $M_2$, we can extend $\eta$ to $M_i$, and consider the relative invariants of the complements $Z_i$. In particular, if $\K(Z_i,\eta)$ is the module for $\R_{\Sigma\times S^1,\eta}$ induced by $(Z_i,\eta)$, we are interested in the pairing
\begin{align*}
HF^-_\Dot(\Sigma\times S^1, \s; \K(Z_i,\eta))\otimes \overline{HF^-_\Dot(-\Sigma\times S^1, \s; \L(t))} &\to \K(M_i,\Sigma\times S^1,\eta)\\
\xi_1\otimes \xi_2 & \mapsto \langle \tau^{-1}(\xi_1),\,\xi_2\rangle
\end{align*}
between the perturbed Floer homologies. In fact, more specifically we are interested in the homomorphism $HF^-_\Dot(\Sigma\times S^1, \s; \K(Z_i,\eta))\to \K(M_i, \Sigma\times S^1, \eta)$ induced by the pairing above when $\xi_2$ is equal to $\Psi_{\Sigma\times D^2, \eta}$, the relative invariant for $\Sigma\times D^2$. 

Now, it is a simple exercise to see that $K(M_i, \Sigma\times S^1)$ is cyclic, generated by the Poincar\'e dual of $[\Sigma]$ in $H^2(M_i)$. Since $\Sigma$ is assumed to represent a non-torsion class in each of $M_1$ and $M_2$, then, we have $\K(M_i,\Sigma\times S^1,\eta) \cong \L(t)$. There is a natural surjection 
\[
\rho: K(Z_i)\cong \frac{H^1(\Sigma\times S^1)}{H^1(Z_i)}\to \frac{H^1(\Sigma\times S^1)}{H^1(Z_i) + H^1(\Sigma\times D^2)} \cong K(M_i, \Sigma\times S^1),
\]
inducing a surjection $\rho: \K(Z_i,\eta) \to \L(t)$, and a commutative diagram
\begin{equation}\label{redcoeffdiag}
\begin{diagram}
HF^-_\Dot(\Sigma\times S^1; \K(Z_i, \eta)) & \rTo^{\langle\tau^{-1}(\cdot),\Psi_{\Sigma\times D^2, \eta}\rangle} & \K(M_i, \Sigma\times S^1, \eta) \cong \L(t)\\
\dTo_{\rho_*} && \dTo^{id}\\
HF^-_\Dot(\Sigma\times S^1; \L(t)) &\rTo^{\langle\tau^{-1}(\cdot),\Psi_{\Sigma\times D^2, \eta}\rangle}  & \L(t)
\end{diagram}
\end{equation}
(c.f. Lemmas \ref{lemma1} and \ref{lemma2}). We will see that the arrow on the bottom of this diagram is determined essentially uniquely by algebraic considerations. Hence, determining the pairing mentioned above is equivalent to understanding the change-of-coefficient map $\rho_*$, and the relative invariant $\Psi_{\Sigma\times D^2,\eta}$. 

Naturally, we cannot hope to do better than determining these objects up to a unit in $\L(t)$. Since units abound in a power series ring, this is not necessarily sufficient. However, we know that all the algebra we must use in the context of Novikov rings is induced from corresponding algebra over ordinary group rings: that is, the perturbed case is an obvious Novikov completion of the unperturbed case. Since $L(t)$ has many fewer units than $\L(t)$, this is a useful observation: we work initially in twisted, but unperturbed, coefficients.

\subsection{Relative invariants in case $g = 1$} 

Let $M$ be a closed $4$-manifold with $b^+(M)\geq 1$ containing a smoothly embedded torus $T\hookrightarrow M$ with trivial normal bundle. We assume that $[T]$ is an element of infinite order in $H_2(M)$. Write $T\times D^2$ for a tubular neighborhood of $T$, and let $Z = M\setminus (T\times D^2)$ be the complement of this neighborhood. We wish to understand the relationship between the {\OS} invariants of $M$ and the relative invariant of $Z$. If $b^+(M) = 1$, we will be interested in the invariant $\O_{M,T\times S^1, \eta}$, where $\eta\in H^2(M;\arr)$ is a class that restricts to $T\times D^2$ as a nonzero multiple of the Poincar\'e dual of the relative class $[pt \times D^2]$. 

Recall the following result of Ozsv\'ath and Szab\'o.

\begin{theorem}[\cite{OS4}]\label{OStwistT3} The twisted Heegaard Floer homology $HF^+(T^3, \s; R_{T^3})$ is trivial unless $\s$ is equal to the unique \spinc structure $\s_0$ with $c_1(\s_0) = 0$. In this case, there is an isomorphism 
\[
HF^+(T^3, \s_0;R_{T^3}) = \T_{1/2}\oplus \ker(\varepsilon),
\]
where $\T_{1/2}$ is the $\zee[U]$-module $\zee[U,U^{-1}]/ U\cdot\zee[U]$, graded so that its homogeneous summand of least degree lies in dimension $1/2$, and 
\[
\varepsilon: R_{T^3} = \zee[H^1(T^3)]\to \zee
\]
is the augmentation homomorphism that maps each element of $H^1(T^3)$ to $1$. In the above, $\ker(\varepsilon)$ lies in degree $-1/2$.
\end{theorem}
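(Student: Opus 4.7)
The plan is to split the theorem into the vanishing assertion for $\s\neq \s_0$ and the explicit calculation for $\s=\s_0$, and to attack the latter through the negative-$n$ surgery exact sequence of Theorem \ref{surgeryseq} applied to the Borromean knot $K_1 = B(0,0) \subset \#^2(S^1\times S^2)$. The basic observation here is that $0$-surgery on $K_1$ yields $T^3$, and that the surgery variable $t$ (Poincar\'e dual to the capped-off Seifert surface $\hat\sigma$) adjoins precisely the generator of $H^1(T^3)$ missing from the image of $H^1(\#^2(S^1\times S^2))$; consequently, the coefficient module $R_Y[t^{\pm 1}]$ appearing in the surgery sequence is naturally identified with $R_{T^3}$, so the middle term of that sequence computes exactly what we want.

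For the vanishing assertion, fix $\s\neq\s_0$ so that $c_1(\s)$ is non-torsion in $H^2(T^3;\zee)$, and choose a product torus $T^2\times\{pt\}\subset T^3$ whose homology class pairs nontrivially with $c_1(\s)$. The twisted adjunction inequality---identical in form to the untwisted statement, since the argument of Ozsv\'ath--Szab\'o is blind to the choice of coefficient module---then forces $HF^+(T^3,\s;R_{T^3}) = 0$. For the computation at $\s_0$, the two inputs needed are $HF^+(\#^2(S^1\times S^2),\s_0;R_Y) \cong \T_{-1}$, recorded in Section \ref{partiallytwistedhf}, and $HF^+(Y_{-n}(K_1),\s_0;R_Y)$ for $n\gg 0$; the latter can be read off from the spectral sequence beginning with Lemma \ref{b00universal} at $g=1$, whose nontrivial $d^1$-differential is controlled by the free resolution \eqref{freeres} of $\zee$ over $R_Y$.

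Feeding these into
\[
\cdots \to HF^+(Y_{-n}(K_1),\s_0;R_Y)[t^{\pm 1}] \to HF^+(T^3,\s_0;R_{T^3}) \to HF^+(Y,\s_0;R_Y)[t^{\pm 1}] \to \cdots
\]
with connecting map $F_0+tF_1$ as in Proposition \ref{theforce}, where $F_0,F_1$ are supplied by the Hodge--Lefschetz formalism of Section \ref{partiallytwistedhf}, a degree-by-degree comparison should produce the stated splitting: the $\T_{1/2}$ tower will arise as the image of $HF^\infty(T^3,\s_0;R_{T^3}) \cong \zee[U,U^{-1}]$ (which carries a trivial $H^1$-action because $c_1(\s_0)=0$), while the reduced part sits in degree $-1/2$ and inherits its $R_{T^3}$-module structure from the cokernel. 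The main obstacle will be identifying this reduced part with $\ker(\varepsilon)$ as an $R_{T^3}$-module rather than merely as an abstract abelian group: this requires tracking how the action of both $R_Y\subset R_{T^3}$ and the newly adjoined variable $t$ interact with the connecting maps, and it is here that one would invoke the duality pairing of Theorem \ref{dualitythm} together with the conjugation symmetry of Theorem \ref{pertconjinvthm} to pin down the $R_{T^3}$-structure and the relative grading, comparing against the known untwisted answer $HF^+(T^3,\s_0;\zee) \cong \T_{1/2}\oplus \zee^2_{(-1/2)}$ as a final consistency check.
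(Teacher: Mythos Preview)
The paper does not prove this statement: it is quoted verbatim from \cite{OS4} (note the bracketed citation in the theorem head) and no argument is supplied. So there is no ``paper's own proof'' to compare against.

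That said, your proposed argument is a natural one and is essentially the $g=1$ specialization of what the paper does for $g\geq 2$ in Sections~\ref{partiallytwistedhf} and the subsequent universal-coefficients subsection (note that those subsections explicitly assume $g\geq 2$, precisely because the $g=1$ case is handled by citing this theorem). Your identification $R_Y[t^{\pm 1}] \cong R_{T^3}$ is correct, and the vanishing for $\s\neq\s_0$ via the twisted adjunction inequality is fine. The delicate point, which you flag but perhaps underestimate, is that for $g=1$ the only relevant \spinc structure is $\s_0$ with $k=0$, and this is exactly the case where $F_0$ and $F_1$ have equal degree and Proposition~\ref{theforce} only controls $\uf$ grading-by-grading; the infinite system \eqref{infinitesystem} does not terminate, and the paper's analysis for $k=0$ requires passing to the Novikov completion $\L(t)$ rather than $L(t)$. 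So your surgery-sequence approach over $R_{T^3}$ (i.e., over $L(t)$ in the $t$-variable) will not directly yield a clean kernel/cokernel computation in the way you suggest. One would instead need to compute $HF^+(Y_{-n}(K_1),\s_0;R_Y)$ directly from the knot filtration (as in Proposition~\ref{hfhatuniversal}, but now for $g=1$) and then run the sequence; this is doable but is more work than your sketch indicates, and the identification of the reduced piece with $\ker(\varepsilon)$ as an $R_{T^3}$-module is the crux.
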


In particular, the reduced Floer homology in the fully-twisted case is $HF^+_{red}(T^3, \s_0; R_{T^3}) = \ker(\varepsilon)$, lying entirely in degree $-1/2$, where $\s_0$ is the torsion \spinc structure.

\begin{prop}\label{redcoeffprop} Let $Z = M\setminus (T\times D^2)$ be the complement of an essentially embedded torus in a 4-manifold as above, and let $K(Z) = \ker(H^2(Z, \partial Z)\to H^2(Z))$ as usual, so that $K(Z) \cong H^1(T^3)/ H^1(Z)$. Then $HF^+_k(T^3, \s_0; \zee[K(Z)]) = 0$ if $k < -1/2$, and there is an isomorphism
\[
HF^+_{-1/2}(T^3, \s_0; \zee[K(Z)]) \cong \ker(\varepsilon)\otimes_{R_{T^3}} \zee[K(Z)].
\]
The change-of-coefficient map $\rho_*: HF^+(T^3, \s_0; \zee[K(Z)])\to HF^+(T^3, \s_0; L(t))$ is given by the natural map
\[
id \otimes \rho: \ker(\varepsilon)\otimes_{R_{T^3}} \zee[K(Z)]\to \ker(\varepsilon)\otimes_{R_{T^3}} L(t)
\]
\end{prop}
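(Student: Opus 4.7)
The plan is to deduce the statement from Theorem \ref{OStwistT3} by a change-of-coefficients argument. Observe that $\zee[K(Z)] \cong \zee[H^1(T^3)/V]$, where $V \subset H^1(T^3)$ denotes the image of the restriction $H^1(Z)\to H^1(\partial Z) = H^1(T^3)$, and its $R_{T^3}$-module structure is the natural one coming from the coboundary. Since $CF^+(T^3,\s_0;R_{T^3})$ is a free $R_{T^3}$-chain complex (on generators $[\x,i]$ with $i \ge 0$) and bounded below in absolute degree, the standard hyperhomology spectral sequence yields
\[
E^2_{p,q} = \tor^{R_{T^3}}_p\bigl(HF^+_q(T^3,\s_0;R_{T^3}),\, \zee[K(Z)]\bigr) \Longrightarrow HF^+_{p+q}(T^3,\s_0;\zee[K(Z)]).
\]

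Feeding in Theorem \ref{OStwistT3}, the support of $HF^+_q(T^3,\s_0;R_{T^3})$ is concentrated at $q = -\tfrac{1}{2}$ (where it equals $\ker(\varepsilon)$) and at $q = \tfrac{1}{2}+2k$ for $k \ge 0$ (where it is $\zee$ with $R_{T^3}$ acting through $\varepsilon$). The essential input is the vanishing of this support below degree $-\tfrac{1}{2}$: for any $p \ge 0$ with $p+q < -\tfrac{1}{2}$ and $q$ in the above support, we would need $q \ge -\tfrac{1}{2}$ and hence $p < 0$, a contradiction. Thus $E^2_{p,q} = 0$ whenever $p+q < -\tfrac{1}{2}$, giving the vanishing $HF^+_k(T^3,\s_0;\zee[K(Z)]) = 0$ for $k < -\tfrac{1}{2}$. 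At total degree $-\tfrac{1}{2}$ the only admissible pair with $p \geq 0$ is $(p,q) = (0,-\tfrac{1}{2})$, so $E^\infty_{0,-1/2} = E^2_{0,-1/2} = \ker(\varepsilon) \otimes_{R_{T^3}} \zee[K(Z)]$, since every differential into or out of this position either originates from the vanishing range or has $p<0$.

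The assertion about $\rho_*$ then follows from naturality of the spectral sequence with respect to the $R_{T^3}$-module homomorphism $\rho:\zee[K(Z)]\to L(t)$: the same analysis with $L(t)$ in place of $\zee[K(Z)]$ identifies $HF^+_{-1/2}(T^3,\s_0;L(t))\cong \ker(\varepsilon)\otimes_{R_{T^3}} L(t)$, and at the $E^\infty_{0,-1/2}$ level the induced map is manifestly $\mbox{id}_{\ker(\varepsilon)}\otimes\rho$.

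The main obstacle is verifying the existence and convergence of this hyperhomology spectral sequence in the twisted Heegaard Floer setting, but this is routine once one notes that $CF^+$ is free over $R_{T^3}$ and bounded below in absolute degree. As an alternative that sidesteps the spectral sequence entirely, one can choose a basis $h_1,\ldots, h_r$ of $V$ and iterate the long exact sequences in twisted Floer homology associated to the short exact sequences of $R_{T^3}$-modules $0 \to R_j \xrightarrow{e^{h_{j+1}}-1} R_j \to R_{j+1}\to 0$, where $R_j = R_{T^3}/(e^{h_1}-1,\ldots,e^{h_j}-1)$; each $R_j$ is a domain (being the group ring of the torsion-free abelian group $H^1(T^3)/\langle h_1,\ldots,h_j\rangle$), so $e^{h_{j+1}}-1$ is a non-zero-divisor and the sequence is genuinely short exact. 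The vanishing $HF^+_q(T^3,\s_0;R_j)=0$ for $q<-\tfrac12$ propagates inductively, and in the final step one reads off the claimed isomorphism directly from the long exact sequence.
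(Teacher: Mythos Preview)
Your spectral-sequence argument is correct and is precisely the proof given in the paper. One small remark on your alternative route: the quotient $H^1(T^3)/\langle h_1,\ldots,h_j\rangle$ need not be torsion-free when $V$ fails to be a direct summand of $H^1(T^3)$, so $R_j$ is not always a domain; however, since the $h_i$ are $\zee$-linearly independent, $h_{j+1}$ still has infinite order in that quotient, and this already suffices to make $e^{h_{j+1}}-1$ a non-zero-divisor in $R_j$, so the inductive step goes through with that corrected justification.
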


\begin{proof} For an $R_{T^3}$-module $M$, there is a ``first quadrant'' universal coefficients spectral sequence converging to $HF^+(T^3, \s_0; M)$, whose $E_2$ term is $\tor^j(HF^+_i(T^3; R_{T^3}), M)$. In particular, the group in lowest total degree in the $E_2$ term is 
\[
\tor^0(HF^+_{-1/2}(T^3; R_{T^3}), M) = HF^+_{-1/2}(T^3; R_{T^3})\otimes M = \ker(\varepsilon)\otimes M.
\]
From the structure of the differentials in the spectral sequence this group must survive as the lowest-degree part of $E_\infty$, which proves the first statement. 

 If $\rho: M\to M'$ is a module homomorphism then we get a corresponding  map of spectral sequences, for which the map on the $E_2$ term is $id\otimes \rho$ on the $j = 0$ row. The second statement of the theorem follows as before, since we consider only the bottom-degree groups.
\end{proof}

Note that although the fully-twisted Floer homology for $T^3$ in dimension $-1/2$ is precisely equal to the reduced Floer homology, the same is not true in other coefficient systems (indeed, with untwisted coefficients, the reduced Floer homology is trivial). For example, there is an isomorphism 
\[
HF^+_{-1/2}(T^3,\s_0; L(t)) = \ker(\varepsilon)\otimes L(t) =\zee\oplus \zee\oplus L(t),
\]
and in this decomposition only the $L(t)$ factor lies in the reduced Floer homology. However, there is a natural projection $HF^+\to HF^+_{red}$ for any 3-manifold and any coefficient module. When we apply $\rho_*$ we often implicitly compose with this projection without including it in the notation, and hope this will not cause confusion; note that this problem disappears when we pass to perturbed Floer homology.

In the special case $K(Z) = H^1(T^3)$ (equivalently, the restriction $H^1(Z)\to H^1(T^3)$ is trivial), we can identify the change-of-coefficient map (in reduced Floer homology) with the surjection
\begin{align*}
H: \ker(\varepsilon)&\to L(t)\\
a(r,s,t) &\mapsto \frac{a(1,1,t)}{t-1}.
\end{align*}
Indeed, thinking of $\zee[K(Z)] = R_{T^3}$ as the ring of Laurent polynomials in three variables $r, s, t$, we have that $\ker(\varepsilon)$ is the ideal generated by the three elements $r-1$, $s-1$, and $t-1$. In particular, if $a(r,s,t)\in \ker(\varepsilon)$, then $a(1,1,t)$ is divisible by $t-1$; the given map is uniquely determined up to units in $R_{T^3}$.

With this understanding of the change of coefficients in the unperturbed case, we can now introduce a perturbation $\eta$. As before, we choose any $\eta\in H^2(M;\arr)$ such that the restriction of $\eta$ to $T^2\times D^2$ is Poincar\'e dual to $pt\times D^2$. To understand the relative invariant $\Psi_{T^2\times D^2,\eta}$, observe first that $HF^+(T^3,\s;\M)$ is trivial for any $\R_{T^3}$-module $\M$ unless $\s = \s_0$, the unique \spinc structure with $c_1(\s_0) = 0$. Thus from now on we consider only \spinc structures on $M$ that restrict to $\s_0$ on $T^3$, which means also that $c_1(\s)|_{T^2\times D^2} = 0$. It is straightforward to see that in the fully-twisted case, $HF^+(T^3,\s_0; \R_{T^3,\eta}) \cong \R_{T^3,\eta}$, using Lemma \ref{flatlemma} and Theorem \ref{OStwistT3}.

Consider the complement $Z = M\setminus (T^2\times D^2)$ as a cobordism $S^3\to T^3$ by removing a 4-ball (we still use the symbol $Z$ for this cobordism). In this situation the diagram \eqref{pertdiag} becomes
\begin{diagram}
HF^-(S^3) & \rTo^{F^-_Z} & HF^-(T^3; \zee[K(Z)])\\
&\rdTo_{F^-_{Z,\eta}} & \dTo_{i_*}\\
&& HF^-_\Dot(T^3; \K(Z,\eta)),
\end{diagram}
where $i_*$ is the homomorphism induced by the natural inclusion $i: \zee[K(Z)]\to \K(Z,\eta)$.  If $K(Z) = H^1(T^3)$ (e.g., if the complement of $T$ in $M$ is simply-connected), then in the lowest nontrivial degree, $i_*$ is a map $\ker(\varepsilon)\to \R_{T^3}$. This is induced by the natural homomorphisms
\begin{diagram}
HF^-(T^3;R_{T^3}) & \rTo & HF^-(T^3; R_{T^3})\otimes \R_{T^3,\eta} & \stackrel{\sim}{\longto} & HF^-(T^3; R_{T^3}\otimes \R_{T^3,\eta}) &\rTo& HF^-_\Dot(T^3; \R_{T^3,\eta})\\
[x] & \rMapsto& [x]\otimes 1 &\rMapsto&&& [x\otimes 1],
\end{diagram}
so we can think of $i_*$ as the homomorphism mapping a Laurent polynomial $a(r,s,t)\in \ker(\varepsilon)$ into the Laurent series ring $\R_{T^3,\eta}$ by the natural inclusion.

Consider the diagram
\begin{equation}\label{twistpairdiag}
\begin{diagram}
HF^+_{-1/2}(T^3;R_{T^3}) & \rTo & L(t)\\
\dTo_{i_*} && \dTo\\
HF^+_{-1/2}(T^3; \R_{T^3,\eta}) & \rTo & \L(t)\\
\dTo_{\rho_*} &\ruTo_{\mbox{\tiny $\langle\cdot, \Psi_{T^2\times D^2,\eta}\rangle$}} & \\
HF^+_{-1/2}(T^3; \L(t)) 
\end{diagram}
\end{equation}
where the upper arrow is $a \mapsto \langle a, F^-_{T^2\times D^2}(\Theta^-)\rangle$, and the middle arrow is $b \mapsto \langle b, \Psi_{T^2\times D^2, \eta}\rangle$. In the unperturbed  case, we have $\langle a, F^-_{T^2\times D^2}(\Theta^-)\rangle = \langle \rho_*(a), F^-(\Theta^-)\rangle  = \langle F^+_{T^2\times D^2}(\rho_*(a)), \Theta^-\rangle$ where $\rho_*$ is the projection to $L(t)$ coefficients. 
From previous work we know that in $L(t)$ coefficients, $F^+_{T^2\times D^2}$ induces a surjection to $L(t)$. It follows from (the unperturbed analog of) \eqref{redcoeffdiag} and Proposition \ref{redcoeffprop} that the upper arrow in \eqref{twistpairdiag} is given by
\[
a \mapsto \langle a, F^-_{T^2\times D^2}(\Theta^-)\rangle = H(a),
\]
where $H: \ker(\varepsilon)\to L(t)$ is the homomorphism introduced above.

On the other hand, if we think of $a\in \ker(\varepsilon)$ as a Laurent polynomial $a(r,s,t)$ then $i_*(a) = a$. The coefficient-change map $\rho_*:HF^+(T^3; \R_{T^3,\eta})\cong \R_{T^3,\eta}\to HF^+(T^3;\L(t))\cong \L(t)$ is necessarily the reduction $b(r,s,t)\mapsto b(1,1,t)$, so that $\rho_*(i_*(a)) = a(1,1,t)$. 

Thus, $\Psi_{T^2\times D^2,\eta}(1)$ is a generator of $HF^-_{-1/2}(T^3;\L(t))$, satisfying the property that for $a(r,s,t) \in \ker(\varepsilon)$,
\[
\langle a, \Psi_{T^2\times D^2, \eta}(1)\rangle = H(a) = \frac{a(1,1,t)}{t-1},
\]
up to a unit in $L(t)$. Identifying $HF^\pm_{-1/2}(T^3; \L(t)) = \L(t)$, the diagonal arrow in the preceding diagram can be taken to be multiplication by $\Psi_{T^2\times D^2,\eta}(1)$. We conclude:

\begin{prop}\label{g1relinvtprop} The relative invariant $\Psi_{T^2\times D^2,\eta}$ can be identified with the map $\A(T^2\times D^2)\to HF^-(T^3,\s_0;\L(t))\cong \L(t)$ whose value on the element $1$ is 
\[
\Psi_{T^2\times D^2,\eta}(1) = \frac{1}{t-1} 
\]
up to multiplication by $\pm t^n$, and which vanishes on elements of $\A(T^2\times D^2)$ having nonzero degree.  

Let $M$ be a closed 4-manifold containing an embedded torus $T$ with trivial normal bundle, and $\eta\in H^2(M;\arr)$ a class with $\int_T\eta >0$. Let $\s$ be a \spinc structure on $M$, and write $Z = M\setminus (T\times D^2)$. If $\langle c_1(\s) , [T]\rangle \neq 0$, then the relative invariant $\Psi_{Z,\eta,\s}$ and the closed invariant $\O_{M,T\times S^1, \eta,\s}$ both vanish.

If $\langle c_1(\s), [T]\rangle = 0$, then the relative invariant $\Psi_{Z,\eta,\s}$ takes values in the Novikov ring $\K(Z,\eta)$. Furthermore, the value of the perturbed {\OS} invariant  on a class $\alpha_1\otimes \alpha_2$ with $\alpha_1\in \A(Z)$, $\alpha_2\in \A(T^2\times D^2)$, is given up to multiplication by $\pm t^n$ by
\[
\O_{M, T^3,\eta,\s}(\alpha_1\otimes \alpha_2) = \langle \tau^{-1}\Psi_{Z,\eta,\s}(\alpha_1), \Psi_{T^2\times D^2,\eta,\s}(\alpha_2)\rangle =\left\{\begin{array}{ll}  \displaystyle\frac{1}{t-1} \rho(\Psi_{Z,\eta,\s}(\alpha_1)) & \mbox{if $\alpha_2 = 1$} \vspace{1ex}\\ 0 & \mbox{otherwise,}\end{array}\right.
\]
where $\rho$ is the natural map $\K(Z,\eta)\to \L(t)$ induced by the projection $K(Z)\to K(M, T^3) \cong \zee$.
\end{prop}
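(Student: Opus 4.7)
The proof has two distinct parts: computing the relative invariant $\Psi_{T^2\times D^2,\eta}$ itself, and then feeding this into the pairing formula to obtain the closed-manifold statements. For the first part, my plan is to chase through the commutative diagram \eqref{twistpairdiag} in exactly the way indicated in the paragraphs preceding the statement. Recall that for any $\R_{T^3,\eta}$-module $\M$, $HF^+(T^3,\s;\M)$ vanishes unless $\s=\s_0$, so attention restricts to the torsion \spinc structure on $T^2\times D^2$, which has trivial first Chern class. Under this restriction, $HF^-_\Dot(T^3,\s_0;\L(t))$ is concentrated in a single degree and is a free $\L(t)$-module of rank one, so homogeneity forces $\Psi_{T^2\times D^2,\eta}$ to vanish on elements of nonzero degree in $\A(T^2\times D^2)$, leaving only $\Psi_{T^2\times D^2,\eta}(1)$ to pin down.

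To pin down this value I plan to work backwards from the unperturbed, twisted-coefficient computation. In the case of trivial $H^1(Z)$-restriction (equivalently, $K(Z)=H^1(T^3)$), Theorem \ref{OStwistT3} together with Proposition \ref{redcoeffprop} identifies the top arrow of \eqref{twistpairdiag} with the map $H:\ker(\varepsilon)\to L(t)$ given by $a(r,s,t)\mapsto a(1,1,t)/(t-1)$, the point being that $H$ is uniquely determined up to a unit of $L(t)$ since any surjection from $\ker(\varepsilon)$ to the rank-one free module $L(t)$ that respects the natural $R_{T^3}$-action and the previously established formula $F^+_{T^2\times D^2}\colon L(t)\twoheadrightarrow L(t)$ (from the intermediate product theorem applied with $b^+\geq 1$ on the other side) must take this form. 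Now the naturality square obtained from $i_*\colon L(t)\to \L(t)$ and $\rho_*\colon \R_{T^3,\eta}\to \L(t)$ forces the diagonal pairing $a\mapsto \langle a,\Psi_{T^2\times D^2,\eta}(1)\rangle$ to coincide with $a(1,1,t)/(t-1)$ after passing to $\L(t)$. Identifying $HF^-_\Dot(T^3,\s_0;\L(t))$ with $\L(t)$ and using that the pairing is just multiplication, this reads $\Psi_{T^2\times D^2,\eta}(1)=1/(t-1)$ up to a unit of $\L(t)$; but the only units appearing in the corresponding unperturbed picture are $\pm t^n$, so this is the ambiguity inherited here.

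For the closed-manifold statements, the vanishing when $\langle c_1(\s),[T]\rangle\neq 0$ is essentially automatic: the restriction of $\s$ to $T^3$ is then non-torsion, so $HF^-_\Dot(T^3,\s|_{T^3};\K(Z,\eta))=0$ by the adjunction-type vanishing (Theorem \ref{OStwistT3} combined with flatness, Lemma \ref{flatlemma}), which forces $\Psi_{Z,\eta,\s}=0$ and consequently $\O_{M,T^3,\eta,\s}=0$. When $\langle c_1(\s),[T]\rangle=0$, the restriction to $T^3$ is $\s_0$ and the reduced perturbed Floer homology is concentrated in one degree where the fully-twisted version is $\ker(\varepsilon)$; the relative invariant lies in $HF^-_\Dot(T^3,\s_0;\K(Z,\eta))$, which by Proposition \ref{redcoeffprop} (extended to the perturbed setting via flatness) is $\ker(\varepsilon)\otimes_{R_{T^3}}\K(Z,\eta)$. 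Applying the definition $\O_{M,T^3,\eta,\s}(\alpha_1\otimes\alpha_2)=\langle\tau^{-1}\Psi_{Z,\eta,\s}(\alpha_1),\Psi_{T^2\times D^2,\eta,\s}(\alpha_2)\rangle$ together with the first part of the proposition gives $0$ when $\alpha_2\neq 1$, and otherwise multiplies $\tfrac{1}{t-1}$ by the image of $\Psi_{Z,\eta,\s}(\alpha_1)$ under the coefficient-change map $\rho$ induced by $K(Z)\to K(M,T^3)\cong\zee$, precisely as claimed.

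I expect the main obstacle to be the bookkeeping for units: matching up the indeterminacies of $\Psi_{T^2\times D^2,\eta}$ (which is a priori defined only up to the action of $H^1(T^3)$ and a sign) with the cleaner $\pm t^n$ ambiguity in the final statement. The control here comes from working out the unperturbed $L(t)$-valued version first, where the unit group is just $\pm t^n$, and then verifying that the inclusion $L(t)\hookrightarrow\L(t)$ does not enlarge the ambiguity because the normalization is already fixed by the reduction to $L(t)$-coefficients via the commutative square $i_*\circ (\text{unperturbed})=(\text{perturbed})\circ i_*$ in \eqref{pertcoborddiag}.
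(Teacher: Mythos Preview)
Your proposal is correct and follows essentially the same route as the paper: the computation of $\Psi_{T^2\times D^2,\eta}(1)$ via the diagram \eqref{twistpairdiag} and the identification of the top arrow with $H$, followed by plugging into the pairing definition, is exactly the argument the paper gives (indeed, most of it is already spelled out in the paragraphs preceding the proposition). Your treatment of the unit ambiguity and of the general $K(Z)$ case is slightly more explicit than the paper's, but the content is the same; note that your identification $HF^-_\Dot(T^3,\s_0;\K(Z,\eta))\cong \ker(\varepsilon)\otimes_{R_{T^3}}\K(Z,\eta)$ agrees with the paper's $\K(Z,\eta)$ since $t-1$ becomes a unit after Novikov completion.
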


\begin{proof} First, observe that since the Floer homology $HF^-_\Dot(T^3,\s_0;\M)$ is supported entirely in degree $-1/2$ for any $\R_{T^3,\eta}$-module $\M$, the elements of $\A(T^2\times D^2)$ having nonzero degree must act trivially. Thus, only $\Psi_{T^2\times D^2,\eta}(1)$ can be nontrivial.

 If $H^1(Z)\to H^1(T^3)$ is trivial, then $\K(Z,\eta) = \R_{T^3,\eta}$ and the result follows from the preceding discussion. In the general case, observe that since $HF^-_\Dot(T^3,\s_0; \R_{T^3,\eta})\cong \R_{T^3,\eta}$ is free over $\R_{T^3,\eta}$, we have 
\[
HF^-_\Dot(T^3,\s_0;\K(Z,\eta))\cong \R_{T^3,\eta} \otimes_{\R_{T^3,\eta}} \K(Z,\eta) = \K(Z,\eta).
\]
Likewise, by identifying $HF^+(T^3, \s_0;\zee[K(Z)])$ with $\ker(\varepsilon)\otimes \zee[K(Z)]$ as in Proposition \ref{redcoeffprop}, it is straightforward to see that the pairing with $\Psi_{T^2\times D^2,\eta}$ behaves as indicated.
\end{proof}

\subsection{Relative invariants in case $g>1$}

We follow an outline similar to the previous subsection; as before, we begin with twisted but unperturbed coefficients.

Let $\Sigma\hookrightarrow M$ be an embedded surface of square 0 and genus $g$ and $\s\in Spin^c(M)$ a \spinc structure. We have seen that unless $c_1(\s)|_{\Sigma\times S^1}$ is Poincar\'e dual to $2k[pt \times S^1]$ with $|k|\leq g-1$, the Floer homology $HF^+(\Sigma\times S^1,\s; N) = 0$ for any $R_{\Sigma\times S^1}$-module $N$ (and similarly after perturbation), forcing $\O_{M,\Sigma\times S^1,\eta,\s} = 0$ for such $\s$. Thus we suppose that the restriction of $\s$ has the indicated form; we write $\s_k$ for the \spinc structure on $\Sigma\times S^1$ with $ c_1(\s_k) = 2k\,PD[S^1]$.

The following are easy consequences of Theorems \ref{groups1} and \ref{cobordisminducedmaps1}.

\begin{lemma} Let $N$ be a module for $R_{\Sigma\times S^1}$. Then when $|k| = g-1$, we have an isomorphism 
\[
HF^+_{red}(\Sigma\times S^1, \s_k; N) = HF^+(\Sigma\times S^1, \s_k; N)  = N.
\]

If $Z = M\setminus (\Sigma\times D^2)$ is the complement of a surface representing a class of infinite order in $H_2(M;\zee)$, then the homomorphism
\[
\rho_*: HF^+_{red}(\Sigma\times S^1,\s_k; \zee[K(Z)])\to HF^+_{red}(\Sigma\times S^1, \s_k; L(t))
\]
is equal to the projection $\rho: \zee[K(Z)]\to \zee[K(M, \Sigma\times S^1)] = L(t)$.
\end{lemma}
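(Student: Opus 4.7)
The plan is to reduce both claims to a universal coefficients argument, starting from the fully-twisted computation in Theorem \ref{universalgroups1}. That theorem established that when $|k|=g-1$, the Floer homology $HF^+(\Sigma \times S^1, \s_k; R_{\Sigma\times S^1})$ is entirely supported in the lowest-height component and is there a free $R_{\Sigma\times S^1}$-module of rank one. Unpacking the corresponding $X(g,0)$ as a $\zee[U]$-module gives $X(g,0) \cong \zee$, a single generator on which $U$ necessarily acts trivially (there is no lower-degree summand to receive it). In particular the reduced and unreduced groups coincide: $HF^+_{red}(\Sigma\times S^1,\s_k;R_{\Sigma\times S^1}) = HF^+(\Sigma\times S^1,\s_k;R_{\Sigma\times S^1})$.

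Next I would run a universal coefficients spectral sequence exactly of the type used in the proof of Proposition \ref{redcoeffprop}: for any $R_{\Sigma\times S^1}$-module $N$, there is a first-quadrant spectral sequence with
\[
E_2^{p,q} = \tor^p_{R_{\Sigma\times S^1}}\!\bigl(HF^+_q(\Sigma\times S^1,\s_k;R_{\Sigma\times S^1}),\,N\bigr) \;\Longrightarrow\; HF^+_{p+q}(\Sigma\times S^1,\s_k;N).
\]
Because the fully-twisted homology is free of rank one and concentrated in a single degree, the $p>0$ columns vanish, the sequence degenerates at $E_2$, and we obtain
\[
HF^+(\Sigma\times S^1,\s_k;N) \;\cong\; R_{\Sigma\times S^1}\otimes_{R_{\Sigma\times S^1}} N \;\cong\; N,
\]
still concentrated in a single degree. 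Triviality of the $U$-action is inherited from the fully-twisted case, so $HF^+_{red}(\Sigma\times S^1,\s_k;N) = HF^+(\Sigma\times S^1,\s_k;N) = N$, which is the first claim.

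For the second claim I would invoke naturality. The $R_{\Sigma\times S^1}$-module homomorphism $\rho:\zee[K(Z)]\to L(t)$ (induced by the natural quotient $K(Z) \cong H^1(\Sigma\times S^1)/H^1(Z) \twoheadrightarrow K(M,\Sigma\times S^1) \cong \zee$, which is well-defined since $[\Sigma]$ has infinite order in $H_2(M;\zee)$) induces a map of universal coefficients spectral sequences, and on the surviving $p=0$ row of the $E_2$-page this map is $id \otimes \rho$. Under the degeneration isomorphism $R_{\Sigma\times S^1}\otimes_{R_{\Sigma\times S^1}} N \cong N$, the induced homomorphism $\rho_*$ on Floer homology is therefore identified with $\rho$ itself, which is the second claim.

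The one place requiring care is verifying that the universal coefficients spectral sequence genuinely applies here — that is, checking that the twisted Heegaard Floer complex $CF^+(\Sigma\times S^1,\s_k;R_{\Sigma\times S^1})$ is a complex of free $R_{\Sigma\times S^1}$-modules of an appropriate bounded type, so that the standard construction of the spectral sequence goes through and the base change commutes as claimed. This is immediate from the definition of the twisted complex in Section 2, which is free over $R_{\Sigma\times S^1}$ on generators indexed by intersection points, and it mirrors exactly the setup used successfully in Proposition \ref{redcoeffprop}. Once the spectral sequence is in place, rank-one freeness of the fully-twisted homology (supplied by Theorem \ref{universalgroups1}) does all the remaining work.
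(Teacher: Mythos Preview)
Your proposal is correct and follows essentially the same approach as the paper: both invoke Theorem \ref{universalgroups1} to see that the fully-twisted $HF^+$ is free of rank one over $R_{\Sigma\times S^1}$ and concentrated in a single degree, then apply the universal coefficients spectral sequence argument of Proposition \ref{redcoeffprop} (which degenerates by freeness) and its naturality for the change-of-coefficients map. One small wording quibble: your reference to ``the corresponding $X(g,0)$'' conflates the $L(t)$/$\L(t)$-coefficient calculation of Theorem \ref{groups1} with the fully-twisted calculation of Theorem \ref{universalgroups1}; the point you need (single degree, trivial $U$-action) is already contained in the latter, so you can simply drop that sentence.
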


\begin{lemma} Let $N$ be a module for $R = R_{\Sigma\times S^1}$, and suppose $|k|< g-1$. Then there is an isomorphism
\[
HF^+_{bot}(\Sigma\times S^1,\s_k; N) = N\otimes_R L(t),
\]
where $HF^+_{bot}$ denotes the component in the lowest height, and furthermore, this summand lies in the reduced Floer homology. Here $t\in H^1(\Sigma\times S^1;\zee)$ is dual to $[\Sigma]$ and $L(t)$ is an $R$-module in the usual way. 

In particular, if $Z = M\setminus (\Sigma\times D^2)$ as above then for $|k|< g-1$,
\[
HF^+_{bot}(\Sigma\times S^1, \s_k; \zee[K(Z)]) = L(t),
\]
and the map $\rho_*: HF^+_{bot}(\Sigma\times S^1, \s_k; \zee[K(Z)])\to HF^+_{bot}(\Sigma\times S^1, \s_k; L(t))$ is the identity.
\end{lemma}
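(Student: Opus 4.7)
My plan is to deduce the lemma from Theorem~\ref{universalgroups1}, which settles the universal coefficient case $N=R$, via a change-of-coefficients argument, and then to specialize to $N=\zee[K(Z)]$ by a direct algebraic computation.

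By Theorem~\ref{universalgroups1}, $HF^+_{bot}(\Sigma\times S^1,\s_k;R)\cong L(t)$ for $|k|<g-1$. Here $L(t)$ carries its natural $R$-module structure as the quotient of $R=\zee[H^1(\Sigma\times S^1)]$ by the augmentation of the subring $\zee[H^1(\Sigma)]\subset R$. This bottom summand lies in $HF^+_{red}$: in the identification with $X(g,d)$ provided by Theorem~\ref{groups1} (after base change to $\L(t)$), the bottom summand sits inside the $i=0$ piece, which is a finitely generated $\zee[U]$-module annihilated by $U^{d+1}$. Hence it cannot lie in the image of $U^r$ for large $r$ and descends nontrivially to $HF^+_{red}$.

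To extend the computation to an arbitrary $R$-module $N$, I would use the change-of-coefficients spectral sequence converging to $HF^+(\Sigma\times S^1,\s_k;N)$ with $E^2_{p,q}=\tor^R_p(HF^+_q(\Sigma\times S^1,\s_k;R),\,N)$. The contribution to the lowest-height piece coming from $\tor^R_0$ applied to $HF^+_{bot}(\ldots;R)=L(t)$ is exactly $L(t)\otimes_R N = N\otimes_R L(t)$, which has the desired form. The main obstacle is ruling out contributions to the lowest-height piece of the target coming from higher $\tor$-terms associated to higher-height summands of $HF^+(\ldots;R)$. The cleanest way to handle this is to exhibit, at the chain level, a splitting of $CF^+(\Sigma\times S^1,\s_k;R)$ into a direct sum of free $R$-subcomplexes in which the lowest-height piece appears as a summand; such a splitting can be extracted from the explicit surgery-sequence description of $HF^+_{bot}$ in the proof of Theorem~\ref{universalgroups1}, where the bottom summand is realized as the kernel of a specific quotient map between chain complexes of free $R$-modules. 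Once this splitting is in hand, the height decomposition is preserved by $\otimes_R N$, yielding the first claim.

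For the specific case $N=\zee[K(Z)]$, I would compute $L(t)\otimes_R \zee[K(Z)]$ algebraically. Both factors are quotients of $R$: namely, $L(t)=R/J$ where $J$ is the ideal generated by $\{x-1\,|\,x\in H^1(\Sigma\times D^2)\}$ (using $H^1(\Sigma\times D^2)\cong H^1(\Sigma)\subset H^1(\Sigma\times S^1)$), while $\zee[K(Z)]=R/I$ where $I$ is analogously generated by $H^1(Z)$. Their tensor product over $R$ is the group ring of the further quotient
\[
H^1(\Sigma\times S^1)/(H^1(\Sigma\times D^2)+H^1(Z)),
\]
which by the Mayer--Vietoris computation recorded in Remark~\ref{spincrestriction} is naturally $\zee[K(M,\Sigma\times S^1)]\cong L(t)$, the final isomorphism using the non-torsion hypothesis on $[\Sigma]$. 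The statement $\rho_*=\mathrm{id}$ follows from the naturality of these quotient maps: $\rho$ is induced on coefficients by the composition of these same projections. The principal technical obstacle in the overall argument is the splitting step in the second paragraph.
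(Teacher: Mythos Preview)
Your approach is essentially the same as the paper's, which simply cites ``formal arguments as in the proof of Proposition~\ref{redcoeffprop}''. The step you flag as the principal technical obstacle is not as hard as you suggest, and you do not need a full chain-level splitting into $R$-subcomplexes. It suffices to observe that the height grading lifts to the chain level: the surgery triangle realizes $CF^+(\Sigma\times S^1,\s_k;R)$, up to quasi-isomorphism, as the mapping cone of $\uf$ between complexes of free $R_{\Sigma\times S^1}$-modules, and this cone carries the height grading (the absolute grading on $Y$ and $Y_{-n}$, with $t$ placed in degree zero). Each height-graded piece is then a free $R$-module, so the universal-coefficients spectral sequence is first-quadrant in the height index and the corner term $E^2_{0,h_{\min}}=L(t)\otimes_R N$ survives exactly as in Proposition~\ref{redcoeffprop}; higher $\tor$ of higher-height summands lands only in strictly higher total height. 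Your treatment of the case $N=\zee[K(Z)]$ and of $\rho_*$ is correct.
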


As in the previous section, ``lowest height'' refers to the $R_{\Sigma\times S^1}$-submodule of the Floer homology that maps to the component of lowest degree in the Floer homology of $\#^{2g}S^1\times S^2$.

The results above follow easily from the fact that for $g>1$,
\[
HF^+_{bot}(\Sigma\times S^1, \s_k; R) = \left\{\begin{array}{ll} R & \mbox{if $|k| = g-1$}\\ L(t) & \mbox{if $|k|<g-1$,}\end{array}\right.
\]
together with formal arguments as in the proof of Proposition \ref{redcoeffprop}.

Applying a perturbation $\eta\in H^2(M;\arr)$ restricting as a positive multiple of the Poincar\'e dual of $[pt\times D^2]$ on $\Sigma\times D^2$ as before, we obtain:

\begin{prop}\label{gengrelinvtprop} For the \spinc structure $\s_k$ on $\Sigma\times D^2$ with $c_1(\s_k)$ Poincar\'e dual to $2k[pt\times D^2]$, the relative invariant $\Psi_{\Sigma\times D^2,\eta, \s_k}$ is a linear map $\A(\Sigma\times D^2)\to HF^-_\Dot(\Sigma\times S^1, \s_k; \L(t))$ whose value $\Psi_{\Sigma\times D^2,\eta, \s_k}(1)$ lies in the summand of maximal height. Furthermore, there is a natural identification $HF^-_{top}(\Sigma\times S^1,\s_k;\L(t)) \cong \L(t)$ such that
\[
\Psi_{\Sigma\times D^2,\eta, \s_k}(1) = 1
\]
up to multiplication by $\pm t^n$. More generally, if $\alpha\in \A(\Sigma\times D^2)$ then $\Psi_{\Sigma\times D^2, \eta, \s_k}(\alpha ) = \alpha. 1$, where the right hand side makes use of the action of $\Lambda^*H_1(\Sigma\times S^1;\zee)\otimes \zee[U]$ on $HF^-_\Dot(\Sigma\times S^1,\s_k;\L(t))$.

Let $Z: S^3\to \Sigma\times S^1$ be a cobordism and $\eta\in H^2(Z;\arr)$ a class restricting to $\Sigma\times S^1$ as a positive multiple of the Poincar\'e dual of $pt\times S^1$. Let $\s$ be a \spinc structure on $Z$ restricting to the \spinc structure $\s_k$ on $\Sigma\times S^1$, where $|k| = g-1$. Then $\Psi_{Z,\eta,\s}$ takes values in the Novikov ring $\K(Z,\eta)$.

Finally, let $M$ be the closed manifold obtained by gluing $\Sigma\times D^2$ to $Z$, and filling in the other boundary component of $Z$ by a 4-ball. Extend $\eta$ across $\Sigma\times D^2$ and $B^4$ in the unique way to give a class $\eta\in H^2(M;\arr)$. For a \spinc structure $\s$ with $c_1(\s)|_{\Sigma\times D^2}$ Poincar\'e dual to $\pm(2g-2)[pt\times D^2]$, the value of the perturbed {\OS} invariant $\O_{M,\Sigma\times S^1,\eta,\s}$ on $\alpha_1\otimes \alpha_2 \in \A(Z)\otimes \A(T^2\times D^2)$ is given by
\[
\O_{M,\Sigma\times S^1,\eta,\s}(\alpha_1\otimes \alpha_2) = \langle \tau^{-1}\Psi_{Z,\eta,\s}(\alpha_1), \Psi_{\Sigma\times D^2, \eta,\s}(\alpha_2)\rangle =\left\{\begin{array}{ll} \rho(\Psi_{Z, \eta,\s}(\alpha_1)) & \mbox{if $\alpha_2 = 1$} \\ 0 & \mbox{otherwise,}\end{array}\right.
\]
where $\rho$ is the natural map $\K(Z,\eta)\to \L(t)$ induced by the projection $K(Z)\to K(M,\Sigma\times S^1)\cong \zee$.
\end{prop}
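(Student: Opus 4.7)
The plan is to mirror the proof of Proposition \ref{g1relinvtprop}, replacing the direct computation of $HF^+(T^3)$ with the inputs provided by Theorems \ref{groups1} and \ref{cobordisminducedmaps1}. The essential input is that the cobordism $W = \Sigma\times D^2\setminus B^4$, oriented as $\Sigma\times S^1\to S^3$, induces a map which in the relevant lowest degree is projection onto the bottom-height summand $\Lambda^0\otimes U^0\otimes\L(t)\cong\L(t)$ of the identification $HF^+_\Dot(\Sigma\times S^1,\s_k;\L(t))\cong X(g,d)\otimes\L(t)$ provided by Theorem \ref{groups1}.

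First I would observe that $\Psi_{\Sigma\times D^2,\eta,\s_k}(1) = F^-_{W',\eta,\s_k}(\Theta^-)$, where $W'$ denotes $W$ re-oriented as a cobordism $S^3\to \Sigma\times S^1$. By the perturbed version of Theorem \ref{gendualitythm}, for any $\xi\in HF^+_\Dot(\Sigma\times S^1,\s_k;\L(t))$,
\[
\langle\xi,\Psi_{\Sigma\times D^2,\eta,\s_k}(1)\rangle = \langle F^+_{W,\eta,\s_k}(\xi),\Theta^-\rangle.
\]
The right hand side picks out the coefficient of $F^+_{W,\eta,\s_k}(\xi)$ in the lowest-degree summand of $HF^+(S^3)\otimes\L(t)$, which by Theorem \ref{cobordisminducedmaps1} equals the projection of $\xi$ onto the bottom-height summand of $X(g,d)\otimes\L(t)$. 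Since the perturbation makes $HF^\infty_\Dot$ vanish (Corollary \ref{perturbcor}), the pairing on reduced homology is nondegenerate, and so $\Psi_{\Sigma\times D^2,\eta,\s_k}(1)$ is determined by the above identity as the unique element dual to the bottom-height summand of $HF^+_\Dot$. This dual element sits in a rank-one top-height summand of $HF^-_\Dot(\Sigma\times S^1,\s_k;\L(t))\cong\L(t)$, and under the natural identification it equals $1$ up to the usual $\pm t^n$ indeterminacy inherent in the relative invariant.

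For $\alpha\in\A(\Sigma\times D^2)$ of positive degree, the extension of cobordism maps by the $\Lambda^*H_1/\zee[U]$-action from Section \ref{H1actionsec} (in its perturbed form) implies that $\Psi_{\Sigma\times D^2,\eta,\s_k}(\alpha) = \alpha.\Psi_{\Sigma\times D^2,\eta,\s_k}(1) = \alpha.1$. When $|k|=g-1$, we have $d = g-1-|k|=0$, so $X(g,0)=\Lambda^0\cong\zee$ is a single summand concentrated at a single height. Running the surgery exact sequence argument used in proving Theorem \ref{groups1} with an arbitrary $\R_{\Sigma\times S^1,\eta}$-module $\M$ in place of $\L(t)$ yields $HF^-_\Dot(\Sigma\times S^1,\s_k;\M)\cong\M$; specializing $\M = \K(Z,\eta)$ establishes that $\Psi_{Z,\eta,\s}$ takes values in $\K(Z,\eta)$.

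Finally, the pairing formula for $\O_{M,\Sigma\times S^1,\eta,\s}$ follows precisely as in the $g=1$ case. The coefficient-module projection $\rho:\K(Z,\eta)\to\L(t)$ induces a commutative square analogous to \eqref{redcoeffdiag}, reducing the pairing to one between the rank-one top and bottom height summands of $HF^\mp_\Dot(\Sigma\times S^1,\s_k;\L(t))$. When $|k|=g-1$ this Floer homology is concentrated in a single height, so every positive-degree element of $\A(\Sigma\times D^2)$ acts trivially on $\Psi_{\Sigma\times D^2,\eta,\s_k}(1)$, forcing $\Psi_{\Sigma\times D^2,\eta,\s_k}(\alpha_2)=0$ unless $\alpha_2=1$; in the latter case the pairing evaluates to $\rho(\Psi_{Z,\eta,\s}(\alpha_1))$, up to $\pm t^n$. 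The main obstacle will be the careful bookkeeping of gradings and heights across the various coefficient changes and duality identifications, especially verifying that the natural $\L(t)$ identification of the top-height summand of $HF^-_\Dot$ is compatible with the pairing conventions; this should follow formally from the unperturbed case via flatness of the Novikov ring (Lemma \ref{flatlemma}).
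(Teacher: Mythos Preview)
Your proposal is correct and follows essentially the same approach as the paper: both arguments use duality (Theorem \ref{gendualitythm}) together with Theorem \ref{cobordisminducedmaps1} to identify the pairing $\langle\,\cdot\,,\Psi_{\Sigma\times D^2,\eta,\s_k}(1)\rangle$ with projection to the bottom-height summand, deduce that $\Psi_{\Sigma\times D^2,\eta,\s_k}(1)$ generates the top-height summand, and then invoke $H_1$-naturality for general $\alpha$.

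Two minor points where the paper's treatment differs slightly from yours. First, for the statement that $HF^-_\Dot(\Sigma\times S^1,\s_k;\K(Z,\eta))\cong\K(Z,\eta)$ when $|k|=g-1$, the paper does not re-run the surgery sequence with an arbitrary module $\M$; instead it appeals to the two lemmas immediately preceding the proposition, which come from the universally twisted computation (Theorem \ref{universalgroups1}) together with a universal-coefficient/formal argument as in Proposition \ref{redcoeffprop}. Your proposed route would work for $|k|=g-1$ since $X(g,0)$ has rank one, but the paper's route is what makes the parallel with the $g=1$ case most transparent. Second, the paper makes explicit that the pairing between the rank-one top and bottom summands is induced from the \emph{unperturbed} pairing (via the analogue of diagram \eqref{twistpairdiag}), so that the only unit ambiguity is $\pm t^n$ rather than an arbitrary unit of $\L(t)$; you note this at the end, and indeed that is where the control comes from.
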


\begin{proof} We have seen that the homomorphism $F^+_{\Sigma\times D^2,\s_k}: HF^+(\Sigma\times D^2, \s_k; L(t)) \to HF^+(S^3; L(t))$ can be identified with the projection of $X(g,d)\otimes L(t)$ onto the summand having minimal height. Combining this with the preceding lemmas, we have that for $x\in HF^+(\Sigma\times S^1,\s_k; \zee[K(Z)])$,
\[
 \langle x, F^-_{\Sigma\times D^2}(\Theta^-)\rangle = \langle F^+(x), \Theta^-\rangle =\left\{\begin{array}{ll} \rho(x) & \mbox{if $\rho(x)$ has minimal height} \\ 0 & \mbox{otherwise,}\end{array}\right.
\]
The analog of diagram \eqref{twistpairdiag} with $\Sigma\times S^1$ in place of $T^3$ implies that $\Psi_{\Sigma\times D^2,\eta,\s_k}(1) = F^-_{\Sigma\times D^2, \eta,\s_k}(\Theta^-)$ is a class pairing with $x$ to give $\rho(x)$ when the latter has minimal height, and $0$ otherwise. It is not hard to see that the pairing between $HF^+$ and $HF^-$ can be nontrivial only on elements of complementary height, and we have seen that the summands in minimal and maximal height in the Floer homology are $HF^+_{bot}(\Sigma\times S^1, \s_k; \L(t))\cong \L(t)$ and $HF^-_{top}(\Sigma\times S^1,\s_k;\L(t))\cong \L(t)$. Hence the $\L(t)$-valued pairing between these groups must be given by multiplication of Laurent series (up to a unit in $L(t)$, since the pairing is induced from the unperturbed situation). The above can therefore be interpreted as the statement that $\Psi_{\Sigma\times D^2, \eta,\s_k}(1) = 1$.

Since $\A(\Sigma\times S^1)\to \A(\Sigma\times D^2)$ is surjective, the statement $\Psi_{\Sigma\times D^2,\eta,\s_k}(\alpha) = \alpha. 1$ follows from the naturality of cobordism-induced homomorphisms under the action of $H_1$.

The remaining statements follow from the preceding lemma.
\end{proof}

The expression of the relative invariant $\Psi_{Z,\eta,\s}$ where $c_1(\s)|_{\Sigma\times S^1}= PD(2k[pt\times S^1])$ with $|k|< g-1$ is somewhat more complicated; in principle it may take as a value any element of the Floer homology $HF^-_\Dot(\Sigma\times S^1, \s_k; \K(Z,\eta))$, which in the case at hand is not a cyclic module. However, it is still possible to express the relative invariant for $Z$ (after applying $\rho$) in terms of the absolute invariants for $M$. To do so, we make use of the structure of the Floer homology of $\Sigma\times S^1$ deduced previously.

First, recall that the graded group $X(g,d)$ is equipped with a ``standard'' action of $\Lambda^*H_1(\Sigma)\otimes \zee[U]$. In fact, suppose $\B_{g,d} = \{\beta\}\subset X(g,d)$ is a basis for $X(g,d)$ as a free abelian group, with each $\beta$ a homogeneous element. Then it is easy to see that there is a uniquely determined collection of elements $\{\tbeta\}\subset \Lambda^*H_1(\Sigma)\otimes\zee[U]$ (lying in degrees $\leq 2d$) with the property that $\tbeta\cap \gamma = \delta_{\beta\gamma}\cdot 1$ for $\beta,\gamma\in \B_{g,d}$, where $\cap$ is the standard action, $\delta_{\beta\gamma}$ is the Kronecker delta, and $1$ denotes a fixed generator in lowest degree for $X(g,d)$.

We have seen in Theorem \ref{groups1} that there is an isomorphism $HF^-_\Dot(\Sigma\times S^1, \s_k; \L(t))\cong X(g,d)\otimes \L(t)$, and furthermore that the action of $\Lambda^*H_1(\Sigma\times S^1)\otimes\zee[U]$ agrees with the $\L(t)$-linear extension of the standard action to leading order in $t$ (and that the class $pt\times S^1$ acts trivially). Suppose first that $k\neq 0$, so that the variable $t$ carries a nonzero degree. The pairing $\langle \cdot, \cdot\rangle$ between $HF^+_\Dot$ and $HF^-_\Dot$ is nontrivial only on elements of complementary height, and is induced from the untwisted pairing since in our situation the twisting is trivial. Writing $\Xi$ for the generator in highest degree given by $\Psi_{\Sigma\times D^2, \eta,\s_k}(1)$, it follows for dimensional reasons that if the degree of $\tbeta$ is equal to the height of $x \in X(g,d)$, then
\[
\langle \tau^{-1}(\tbeta. x) , \Xi\rangle = \langle \tau^{-1}(\tbeta \cap x), \Xi\rangle
\]
(identifying $x$ with $x\otimes 1\in X(g,d)\otimes \L(t) = HF^-(\Sigma\times S^1,\s_k;\L(t))$).

On the other hand, if $k = 0$ then corrections to the $H_1$ action $\tbeta.x$ appear in the same degree as $\tbeta\cap x$. Since the action is standard to leading order in $t$, however, we can say that for basis elements $\beta,\gamma$ as previously the action satisfies $\tbeta. \gamma = \delta_{\beta\gamma}\, u_\beta(t)\cdot 1$, where $u_\beta\in \L(t)$ is  
monic (in the sense that its constant coefficient equals 1, c.f. Remark \ref{H1-action})  and hence a unit in $\L(t)$ (here we observe that if $\tbeta\cap \gamma = 0$ then also $\tbeta. \gamma = 0$). When $k = 0$, then, the above becomes
\[
\langle \tau^{-1}(\tbeta. x) , \Xi\rangle = u_\beta \langle \tau^{-1}(\tbeta\cap x),\Xi\rangle.
\] 

Now suppose $\xi$ is an element of a given height in $HF^-_\Dot(\Sigma\times S^1, \s_k;\L(t))$. Then fixing the basis $\B_{g,d} = \{\beta\}$ as previously, we can express $\xi$ in terms of $\{\beta\}$ by
\[
\xi = \left\{\begin{array}{ll} \ds\sum_\beta \langle \tau^{-1}(\tbeta. \xi), \Xi\rangle\cdot \beta & \mbox{if $k \neq 0$}\vspace{1ex}\\
\ds\sum_\beta \langle \tau^{-1}(\tbeta. \xi), \Xi\rangle\cdot u_\beta^{-1}\, \beta & \mbox{if $k = 0$},\end{array}\right.
\]
where the sum is over basis elements $\beta$ having the degree equal to the height of $\xi$.

Applying this idea to the case $\xi = \rho(\Psi_{Z,\eta,\s}(\alpha_1))$ leads to the following.

\begin{prop}\label{gengrelinvtprop} Let $M$ be a closed $4$-manifold containing an embedded surface $\Sigma$ of genus $g>1$ and trivial normal bundle, and $\eta\in H^2(M;\arr)$ a class with $\int_\Sigma \eta >0$. Write $Z = M\setminus (\Sigma\times D^2)$, and let $\s$ be a \spinc structure on $M $ restricting to $\s_k$ on $\Sigma\times S^1$. 

If $0<|k|<g-1$, then for an element $\alpha\in \A(Z)$, the reduced relative invariant $\rho(\Psi_{Z,\eta,\s}(\alpha))$ is given in terms of a basis $\{\beta\}$ for $HF^-_\Dot(\Sigma\times S^1,\s_k;\L(t))$ by
\[
\rho(\Psi_{Z,\eta,\s}(\alpha)) = \sum_\beta \O_{M,\Sigma\times S^1,\eta,\s}(\alpha\otimes \tbeta)\cdot \beta,
\]
where $\{\tbeta\}\subset \A(\Sigma\times D^2)$ are elements dual in the above sense to the basis $\{\beta\}$. If $k = 0$, then 
\[
\rho(\Psi_{Z,\eta,\s}(\alpha)) = \sum_\beta \O_{M,\Sigma\times S^1,\eta,\s}(\alpha\otimes \tbeta)\cdot u_\beta^{-1}\,\beta,
\]
where $u_\beta\in \L(t)$ are units depending only on the basis $\{\beta\}$.
\end{prop}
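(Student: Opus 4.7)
The plan is to extract the coefficients of $\rho(\Psi_{Z,\eta,\s}(\alpha))$ with respect to the basis $\{\beta\}$ of $HF^-_\Dot(\Sigma\times S^1,\s_k;\L(t)) \cong X(g,d)\otimes\L(t)$ by pairing against the relative invariant of $\Sigma\times D^2$ acted on by the dual elements $\{\tbeta\}$. More precisely, writing $\rho(\Psi_{Z,\eta,\s}(\alpha)) = \sum_\beta c_\beta\,\beta$ with $c_\beta\in\L(t)$, the goal is to identify each $c_\beta$ (up to a unit) with the perturbed invariant $\O_{M,\Sigma\times S^1,\eta,\s}(\alpha\otimes\tbeta)$.

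First I would invoke the definition of the perturbed invariant together with the previous proposition on $\Psi_{\Sigma\times D^2,\eta,\s_k}$: for any $\tbeta\in\A(\Sigma\times D^2)$,
\[
\O_{M,\Sigma\times S^1,\eta,\s}(\alpha\otimes\tbeta) = \langle \tau^{-1}\Psi_{Z,\eta,\s}(\alpha),\,\Psi_{\Sigma\times D^2,\eta,\s}(\tbeta)\rangle = \langle \tau^{-1}\Psi_{Z,\eta,\s}(\alpha),\,\tbeta.\Xi\rangle,
\]
where $\Xi$ is the top-height generator $\Psi_{\Sigma\times D^2,\eta,\s_k}(1)$. Using the skew-adjointness of the $H_1$-action under the pairing (Lemma \ref{skewaction}), together with the naturality of the action under the coefficient reduction $\rho$ and its commutation with $\tau^{-1}$ (Lemma \ref{lemma1}), this pairing is equal, up to an overall sign, to $\langle \tau^{-1}(\tbeta.\rho\Psi_{Z,\eta,\s}(\alpha)),\,\Xi\rangle$. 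Thus the task reduces to evaluating pairings of the form $\langle \tau^{-1}(\tbeta.\xi),\Xi\rangle$ for $\xi\in HF^-_\Dot(\Sigma\times S^1,\s_k;\L(t))$ expanded in the basis $\{\beta\}$.

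Next I would analyze the $H_1$-action on the basis as dictated by Theorem \ref{groups1}, separating the cases $k\neq 0$ and $k=0$. When $k\neq 0$, the variable $t$ carries nonzero degree, and the non-standard embedding differs from the standard one only through terms of strictly lower height; for dimensional reasons only the standard contribution reaches the bottom-height summand $H\{(0,-g)\}\otimes\L(t)$ that pairs nontrivially with $\Xi$, so $\tbeta.\gamma = \delta_{\beta\gamma}\cdot 1$ holds in the height that pairs with $\Xi$. The pairing in top versus bottom height is induced from the unperturbed Floer homology, hence is simply multiplication in $\L(t)$, and we read off $\langle \tau^{-1}(\tbeta.\xi),\Xi\rangle = c_\beta$. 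When $k=0$, the variable $t$ has degree zero, so corrections from higher powers of $t$ in the non-standard embedding contribute in the same height; Remark \ref{H1-action} tells us these corrections repackage to give $\tbeta.\gamma = \delta_{\beta\gamma}\,u_\beta(t)\cdot 1$ with $u_\beta\in\L(t)$ monic (and hence a unit), yielding $\langle \tau^{-1}(\tbeta.\xi),\Xi\rangle = u_\beta\, c_\beta$. Solving for $c_\beta$ in each case gives the two formulae in the statement.

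\textbf{The main obstacle} will be justifying the formulae $\tbeta.\gamma = \delta_{\beta\gamma}\cdot 1$ (for $k\neq 0$) and $\tbeta.\gamma = \delta_{\beta\gamma}\,u_\beta\cdot 1$ (for $k=0$) in the bottom-height summand. This amounts to tracking the non-standard embedding $x\mapsto x + \pi_{-|k|}\sum_{\ell\geq 1}(-tU^{|k|}J)^\ell x$ through the cap action and verifying that: (i) for $k\neq 0$, degree shifts by $-2|k|$ per power of $t$ push corrections out of the height relevant to pairing with $\Xi$, while (ii) for $k=0$ the corrections stay in that height but collapse, on each basis element separately, into a monic power series $u_\beta(t)$ depending only on $\B_{g,d}$. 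The rest of the argument is formal manipulation of pairings and cobordism-induced maps.
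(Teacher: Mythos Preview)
Your proposal is correct and follows essentially the same route as the paper: expand $\rho(\Psi_{Z,\eta,\s}(\alpha))$ in the basis $\{\beta\}$, use skew-adjointness of the $H_1$-action together with $\Psi_{\Sigma\times D^2,\eta,\s_k}(\tbeta)=\tbeta.\Xi$ and the commutative square \eqref{redcoeffdiag} to identify each coefficient with $\O_{M,\Sigma\times S^1,\eta,\s}(\alpha\otimes\tbeta)$, and handle the $k\neq 0$ versus $k=0$ dichotomy via the dimensional argument about correction terms in the non-standard $H_1$-action. The paper presents this in the reverse order (starting from the expansion formula and arriving at $\O_M$), but the content is the same.
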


In the expressions above, $\alpha\otimes \tbeta$ is shorthand for the image of that element under the natural map $\A(Z)\otimes \A(\Sigma\times D^2)\to \A(M)$.

\begin{proof} When $k\neq 0$, we expand $\rho(\Psi_{Z,\eta,\s}(\alpha))$ in the basis $\beta$ as indicated previously:
\begin{eqnarray*}
\rho(\Psi_{Z,\eta,\s}(\alpha)) &=& \sum_\beta \langle\tau^{-1}(\tbeta.\rho(\Psi_{Z,\eta,\s}(\alpha))), \,\Xi\rangle\cdot \beta \\
&=&  \sum_\beta(-1)^{\deg(\tbeta)}\langle\tau^{-1}(\rho(\Psi_{Z,\eta,\s}(\alpha))), \,\tbeta.\Xi\rangle\cdot \beta\\
&=&\sum_\beta(-1)^{\deg(\tbeta)}\langle\tau^{-1}(\rho(\Psi_{Z,\eta,\s}(\alpha))), \,\Psi_{\Sigma\times D^2,\eta,\s_k}(\tbeta)\rangle\cdot \beta \\
&=& \sum_{\beta} \O_{M,\Sigma\times S^1,\eta,\s}(\alpha\otimes \tbeta)\cdot \beta,
\end{eqnarray*}
up to an overall sign and translation by a power of $t$, where we sum over those $\beta$ whose degree is equal to the height of $\rho(\Psi_{Z,\eta,\s}(\alpha))$. The case $k = 0$ is identical except for the introduction of the elements $u_\beta^{-1}$.
\end{proof}

\subsection{Fiber sum formulas} \label{proofsec}

A minor technicality in deducing the formulae in the introduction is the presence of the orientation-reversing gluing map in the fiber sum construction. We will have occasion to refer to the map in Floer homology induced by this diffeomorphism, so we make a few basic observations. 

First, if $f: Y\to Y$ is an orientation-preserving diffeomorphism of a 3-manifold, we can construct the mapping cylinder $C_f = (Y\times [0,1]) \cup_f (Y\times \{1\})$ in the usual way, which we can view as a smooth cobordism $Y\to Y$ of oriented manifolds. The action of $f$ on Floer homology is by definition the homomorphism $F_{C_f}^\circ$ in Floer homology induced by $C_f$. We will normally write this action as $f_*$. 

It is easy to see from basic properties of the cobordism maps that  if $h\in H_1(Y)$ then for $x\in HF^\circ(Y)$ we have $f_*(h.x) = f_*(h).f_*(x)$. In twisted coefficients, there is an isomorphism $f_* = (f^{-1})^*: \zee[H^1(Y;\zee)]\to \zee[H^1(Y;\zee)]$, and for $\alpha\in \zee[H^1(Y;\zee)]$ we have $f_*(\alpha x) = f_*(\alpha)f_*(x)$. A similar statement holds in the perturbed case, if $Y$ is equipped with a class $\eta_1\in H^2(Y;\arr)$ and we take $\eta_2 = (f^{-1})^*\eta_1$.

In the case of a fiber sum, we are given two closed 4-manifolds $M_1$, $M_2$ with embedded surfaces $\Sigma_1$, $\Sigma_2$ of genus $g$ and square 0. Write $Z_i = M_i\setminus (\Sigma_i\times D^2)$, so that $\partial Z_i = \Sigma\times S^1$. Then we choose an orientation-preserving diffeomorphism between $\Sigma_1$ and $\Sigma_2$, extending it to $\Sigma_i\times S^1$ by conjugation in the $S^1$ factor. The result is an orientation-reversing diffeomorphism $f: \partial Z_2\to \partial Z_1$, and the fiber sum is defined to be $X = Z_1\cup_f Z_2$. To make the gluing $f$ more explicit, replace $Z_2$ by $W_2 = Z_2 \cup_{id} C_f$. Then $X = Z_1\cup_{id} W_2$, and the relative invariants of $W_2$ and $Z_2$ are related by $\Psi_{W_2} = f_* \Psi_{Z_2}$, according to the composition law. Thus both $\Psi_{Z_1}$ and $\Psi_{Z_2}$ naturally take values in $HF^-_\Dot(\Sigma\times S^1)$ (with appropriate coefficients), while $\Psi_{W_2}$ takes values in $HF^-_\Dot(-\Sigma\times S^1)$.

Note that in certain situations, the above observations are sufficient to determine the action of $f_*$. For example, if the genus of $\Sigma$ is 1, then the reduced part of $HF^-(\Sigma\times S^1;L(t))$ is isomorphic to $L(t)$, where $t$ is Poincar\'e dual to the torus $\Sigma$. Since the action of $f$ in cohomology reverses the sign of the latter class, linearity of the induced map in Floer homology forces $f_*: HF^-_{red}(\Sigma\times S^1; L(t)) \to HF^-_{red}(-\Sigma\times S^1;L(t))$ to be the conjugation map $L(t)\to \overline{L(t)}$, up to multiplication by $\pm t^n$. Hence, the same conclusion follows in perturbed Floer homology, using a class $\eta\in H^2(\Sigma\times S^1;\arr)$ fixed by $f^*$, e.g., the Poincar\'e dual to $[S^1]$. A similar conclusion holds when considering the action of the gluing map in higher genus, if we restrict attention to the highest (or lowest) nontrivial heights in the perturbed Floer homology.

The fiber sum formula in the genus 1 case is as follows.

\begin{theorem}\label{g1fibsumthm} Let $X = M_1\#_{T_1 = T_2} M_2$ be the fiber sum of two 4-manifolds $M_1$, $M_2$ along tori $T_1$, $T_2$ of square 0. Assume that there exist classes $\eta_i\in H^2(M_i;\arr)$, $i = 1,2$, such that the restrictions of $\eta_i$ to $T_i\times S^1\subset M_i$ correspond under the gluing diffeomorphism $f: T_2\times S^1\to T_1\times S^1$, and assume that $\int_{T_i}\eta_i > 0$. Let $\eta\in H^2(X;\arr)$ be a class whose restrictions to $Z_i = M_i\setminus(T_i\times D^2)$ agree with those of $\eta_i$, and choose \spinc structures $\s_i\in Spin^c(M_i)$, $\s\in Spin^c(X)$ whose restrictions correspond similarly. Then for any $\alpha\in \A(X)$, the image of $\alpha_1\otimes \alpha_2$ under the map $\A(Z_1)\otimes \A(Z_2)\to \A(X)$, we have
\[
\rho(\O_{X,T\times S^1,\eta,\s}(\alpha)) = (t^{1/2} - t^{-1/2})^2\,\O_{M_1,T_1\times S^1,\eta_1,\s_1}(\alpha_1)\cdot\O_{M_2,T_2\times S^1,\eta_2,\s_2}(\alpha_2)
\]
up to multiplication by $\pm t^n$. 
\end{theorem}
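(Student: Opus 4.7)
The proof will combine the general perturbed product formula (Theorem \ref{pertproductthm}) with the explicit determination of $\Psi_{T^2\times D^2,\eta}$ from Proposition \ref{g1relinvtprop}, together with an analysis of the orientation-reversing gluing diffeomorphism on the Floer homology of $T^3$.

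First I would apply Theorem \ref{pertproductthm} to the splitting $X = Z_1 \cup_f Z_2$ along $T\times S^1$, which is an allowable cut for $\eta$ since $\int_{T} \eta > 0$. Writing $W_2$ for $Z_2$ reglued by the mapping cylinder of $f$, the composition law lets us replace $\Psi_{W_2,\eta,\s}$ by $f_*\Psi_{Z_2,\eta,\s}$, so that
\[
\O_{X,T\times S^1,\eta,\s}(\alpha) = \langle \tau^{-1}\Psi_{Z_1,\eta,\s}(\alpha_1),\, f_*\Psi_{Z_2,\eta,\s}(\alpha_2)\rangle,
\]
an element of $\K(X,T\times S^1,\eta)$. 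Applying the surjection $\rho$ to $\L(t)$ induced by the projection $K(X,T\times S^1)\to \zee$ Poincar\'e dual to $[\Sigma]$, and using the commutative diagram \eqref{redcoeffdiag}, the right-hand side becomes a pairing in $\L(t)$ of the reduced invariants $\rho(\Psi_{Z_i,\eta,\s_i}(\alpha_i))$.

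Next I would identify the action of $f_*$ on $HF^-_\Dot(T^3,\s_0;\L(t))\cong \L(t)$. Since $f$ restricts on $T^2\times S^1$ to a diffeomorphism that is orientation-preserving on the $T^2$ factor and orientation-reversing on the $S^1$ factor, its induced map on $H^1(T^3)$ sends the class Poincar\'e dual to $[\Sigma]$ to its negative. On the one-dimensional reduced summand $HF^-_{-1/2}(T^3,\s_0;\L(t))\cong \L(t)$ the only $\L(t)$-antilinear endomorphism inducing $t\mapsto t^{-1}$ in cohomology is (up to a unit of $L(t)$) the conjugation $\L(t)\to \overline{\L(t)}$, $t\mapsto t^{-1}$. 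Hence $\rho(f_*\Psi_{Z_2,\eta,\s}(\alpha_2))(t) = \rho(\Psi_{Z_2,\eta_2,\s_2}(\alpha_2))(t^{-1})$ up to $\pm t^n$.

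Next, invoking Proposition \ref{g1relinvtprop}, I would use the relation $(t-1)\,\O_{M_i,T_i\times S^1,\eta_i,\s_i}(\alpha_i) = \rho(\Psi_{Z_i,\eta_i,\s_i}(\alpha_i))$ obtained from the fact that $\Psi_{T^2\times D^2,\eta}(1)=1/(t-1)$ and that the $\L(t)$-valued pairing on $HF^-_\Dot(T^3,\s_0;\L(t))\otimes\overline{HF^-_\Dot(-T^3,\s_0;\L(t))}$ is, after identifying both factors with $\L(t)$, multiplication of Laurent series (up to a unit in $L(t)$, by sesquilinearity and cyclicity of each rank-one factor). Substituting gives
\[
\rho(\O_{X,T\times S^1,\eta,\s}(\alpha)) = (t-1)(t^{-1}-1)\cdot \O_{M_1,T_1\times S^1,\eta_1,\s_1}(\alpha_1)\cdot \O_{M_2,T_2\times S^1,\eta_2,\s_2}(\alpha_2)(t^{-1}),
\]
up to sign and multiplication by $\pm t^n$. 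Since $(t-1)(t^{-1}-1) = -(t^{1/2}-t^{-1/2})^2$ up to an overall factor of $t^{-1}$, the leading constant is accounted for.

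Finally, to get rid of the $t\mapsto t^{-1}$ in the $M_2$-factor, I would appeal to the conjugation symmetry $\Phi_{M_2,\s_2} = \pm\Phi_{M_2,\bar\s_2}$ of {\OS} invariants when $b^+(M_2)\geq 2$ (which, combined with $c_1(\bar\s_2) = -c_1(\s_2)$, forces $\O_{M_2,T_2\times S^1,\eta_2,\s_2}(t^{-1}) = \pm t^n\, \O_{M_2,T_2\times S^1,\eta_2,\s_2}(t)$), and to the analogous symmetry in the perturbed setting (Theorem \ref{pertconjinvthm}) when $b^+(M_2)=1$, where the sign of the perturbation must be reversed but this is absorbed into the allowed ambiguity. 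The main obstacle I anticipate is verifying cleanly that the $\L(t)$-valued pairing between the two rank-one Floer homology modules is multiplication up to the expected unit; this requires tracing carefully through the Novikov completion of diagram \eqref{pertpairingdiag} and invoking the normalization provided by the closed case of Proposition \ref{g1relinvtprop}. Once that is in hand, all remaining ingredients are purely algebraic.
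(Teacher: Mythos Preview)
Your approach follows the paper's outline closely up to the point where you substitute the relative invariants, but the final step---invoking conjugation symmetry of the closed invariants to eliminate the $t\mapsto t^{-1}$ in the $M_2$ factor---is both unnecessary and incorrect. The pairing $\langle\cdot,\cdot\rangle$ is \emph{sesquilinear}, i.e.\ conjugate-linear in the second variable (Lemma \ref{pairinglemma} and the discussion following Theorem \ref{genproduct}). Once you know that $f_*\rho(\Psi_{Z_2,\eta_2,\s_2}(\alpha_2)) = \overline{\rho(\Psi_{Z_2,\eta_2,\s_2}(\alpha_2))}$, the anti-linearity of the pairing undoes that conjugation: writing $p(t)=\rho(\Psi_{Z_1})$ and $q(t)=\rho(\Psi_{Z_2})$, one has $\langle \tau^{-1}p,\overline{q}\rangle = p\cdot q$ directly (up to $\pm t^n$), not $p(t)\cdot q(t^{-1})$. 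This is exactly what the paper's one-line proof does, and it yields $(t-1)^2\,\O_{M_1}\cdot\O_{M_2}$ with no residual substitution.

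Your proposed repair via $\Phi_{M_2,\s_2}=\pm\Phi_{M_2,\bar\s_2}$ does not give what you claim: conjugation symmetry yields $\O_{M_2,\s_2}(t^{-1})=\pm t^m\,\O_{M_2,\bar\s_2}(t)$, not $\pm t^m\,\O_{M_2,\s_2}(t)$, and in general $\s_2$ and $\bar\s_2$ differ by a class that is not a multiple of $PD[T_2]$ (take, e.g., $M_2$ containing an exceptional sphere disjoint from $T_2$). In the $b^+(M_2)=1$ case the appeal to Theorem \ref{pertconjinvthm} is even less direct, since that result concerns Floer homology rather than the perturbed $4$-manifold invariant. The fix is simply to keep track of the sesquilinearity: then the conjugation from $f_*$ and the conjugation built into the pairing cancel, and no further symmetry argument is needed.
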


Note that the closed invariants $\O_{M_i,T_i\times S^1,\eta_i,\s_i}$ each take values in $\L(t)$, where $t$ is the appropriate generator of $K(M_i, T_i\times S^1)$, and the multiplication takes place in that Laurent series ring.

\begin{proof} By definition,
\[
\O_{X,T\times S^1,\eta,\s}(\alpha) = \langle \tau^{-1}(\Psi_{Z_1,\eta_1,\s_1}(\alpha_1)), \Psi_{W_2, \eta_2,\s_2}(\alpha_2)\rangle,
\]
where $W_2 = Z_2\cup C_f$ as in the remarks above. Applying $\rho$ to each side, we see
\[
\rho(\O_{X,T\times S^1,\eta,\s}(\alpha)) = \langle \tau^{-1}\rho(\Psi_{Z_1,\eta_1,\s_1}(\alpha_1)), f_* \rho(\Psi_{Z_2,\eta_2,\s_2}(\alpha_2))\rangle.
\]
Since $ f_* \rho(\Psi_{Z_2,\eta_2,\s_2}(\alpha_2)) = \overline{\rho(\Psi_{Z_2,\eta_2,\s_2}(\alpha_2))}$, the theorem follows quickly from this, Proposition \ref{g1relinvtprop}, and anti-linearity of $\langle\cdot,\cdot\rangle$. Observe that the pairing in Floer homology of $T\times S^1$ can only be multiplication, up to $\pm t^n$, since the modules are cyclic.
\end{proof}

The higher-genus case is similarly easy, after some preparatory remarks. Recall that given a (homogeneous) basis $\{\beta_i\}$ for $X(g,d)$, we obtain a ``dual'' collection $\{\tbeta_i\}$ of elements of $\A(\Sigma) = \Lambda^*H_1(\Sigma)\otimes \zee[U]$. This dual basis is defined by the condition that $\tbeta_i\cap \beta_j = \delta_{ij}$, where $\cap$ is the standard action of $\A(\Sigma)$ on $X(g,d)$, and it satisfies $\langle \tau^{-1}(\tbeta_j.\beta_i), \Xi\rangle = \delta_{ij} u_i$, where $\Xi$ is the usual topmost generator of $HF^-(-\Sigma\times S^1,\s_k;\L(t))$, $\tbeta_j. \beta_i$ denotes the action of $\A(\Sigma)$ on Floer homology, and $u_i$ is a unit in $\L(t)$ that equals 1 unless $k = 0$. Furthermore, the basis $\{\tbeta_i\}$ is unique if we specify that it is contained in the subgroup $\tX(g,d) = \bigoplus_{i=0}^d \Lambda^iH_1(\Sigma)\otimes \zee[U]/U^{d-i+1} \subset \A(\Sigma)$.

If $\{\tbeta_i\}$ is the ``Kronecker dual'' basis, we can find a ``Poincar\'e dual'' basis $\{\beta_i^\circ\}$ for $X(g,d)$, namely $\beta_i^\circ = \tbeta_i\cap (1\otimes U^{-d})$, where we think of $1\otimes U^{-d}$ as a topmost generator for the Floer homology $X(g,d)\otimes \L(t) = HF^-_\Dot(\Sigma\times S^1, \s_k; \L(t))$. We could also say that $\beta_i^\circ$ is the leading order part (in $t$) of $\tbeta_i . \Xi$, except that in our conventions, $\Xi$ is a generator for $HF^-_\Dot(-\Sigma\times S^1, \s_k;\L(t))$.

Associated to the basis $\{\beta_i^\circ\}$, of course, there is a dual $\{\tbeta_i^\circ\}$, generating a subset of $\A(\Sigma)$. This set satisfies $\tbeta^\circ_i\cap \beta_j^\circ = \tbeta_i^\circ\cap \tbeta_j\cap (1\otimes U^{-d}) = \delta_{ij}$.

With these conventions in mind, we have the following.

\begin{theorem}\label{gengfibsumthm} Let $X = M_1\#_{\Sigma_1= \Sigma_2} M_2$ be the fiber sum of two 4-manifolds $M_1$, $M_2$ along surfaces $\Sigma_1$, $\Sigma_2$ of genus $g$ and square 0. Let $\eta_1$, $\eta_2$, $\eta$ be 2-dimensional cohomology classes satisfying conditions analogous to those in the previous theorem, and choose \spinc structures $\s_1$, $\s_2$, and $\s$ restricting compatibly as before. If the Chern classes of each \spinc structure restrict to $\Sigma\times S^1$ as a class other than $2k\,PD[S^1]$ with $|k|\leq g-1$ then the {\OS} invariants of all manifolds involved vanish. Otherwise, writing $f$ for the gluing map $\Sigma_2\times S^1\to \Sigma_1\times S^1$, we have
\[
\rho(\O_{X,\Sigma\times S^1,\eta,\s}(\alpha)) = \sum_{\beta} \O_{M_1,\Sigma_1\times S^1,\eta_1,\s_1}(\alpha_1\otimes \tbeta)\cdot \O_{M_2,\Sigma_2\times S^1,\eta_2,\s_2}(\alpha_2\otimes f_*^{-1}(\tbeta^\circ))\cdot u_\beta
\]
up to multiplication by $\pm t^n$. Here $\{\beta\}$ is a basis for $HF^-_\Dot(\Sigma\times S^1,\s_k;\L(t))$ associated to a basis for $X(g,d)$, $d = g-1-|k|$, and $\{\tbeta\}$ and $\{\tbeta^\circ\}$ are the dual elements of $\A(\Sigma)$ described above. The elements $u_\beta\in \L(t)$ are units that are equal to 1 unless $k = 0$.
\end{theorem}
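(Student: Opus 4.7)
The plan is to unwind the definition of the perturbed {\OS} invariant and reduce to the relative-invariant expansion from Proposition \ref{gengrelinvtprop}. First, following the convention set up at the start of Section \ref{proofsec}, I would decompose $X = Z_1 \cup_{id} W_2$, where $W_2 = Z_2 \cup C_f$ is the composite of $Z_2$ with the mapping cylinder of the gluing diffeomorphism, so that $\Psi_{W_2,\eta_2,\s_2} = f_*\Psi_{Z_2,\eta_2,\s_2}$ by the composition law. Applying the definition of $\O_{X,\Sigma\times S^1,\eta,\s}$ together with the naturality of $\rho$ under change of coefficients (perturbed versions of Lemmas \ref{lemma1}--\ref{lemma2}), one obtains
\[
\rho(\O_{X,\Sigma\times S^1,\eta,\s}(\alpha)) = \langle \tau^{-1}\rho(\Psi_{Z_1,\eta_1,\s_1}(\alpha_1)),\, f_*\rho(\Psi_{Z_2,\eta_2,\s_2}(\alpha_2))\rangle
\]
up to sign and multiplication by $\pm t^n$. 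The vanishing claim for spin$^c$-structures with $|k|>g-1$ is immediate from the vanishing of the relevant Floer groups established by the adjunction inequality in Section \ref{partiallytwistedhf}.

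Next, I would fix a homogeneous $\zee$-basis $\B_k$ for $X(g,d)$ (with $d = g-1-|k|$), together with its Poincar\'e-dual basis $\{\beta^\circ\}$ defined by $\beta^\circ = \tbeta\cap(1\otimes U^{-d})$, and the corresponding $\A(\Sigma)$-Kronecker duals $\{\tbeta\}$ and $\{\tbeta^\circ\}$. Applying Proposition \ref{gengrelinvtprop} gives
\[
\rho(\Psi_{Z_i,\eta_i,\s_i}(\alpha_i)) = \sum_{\beta\in\B_k} \O_{M_i,\Sigma_i\times S^1,\eta_i,\s_i}(\alpha_i\otimes \tbeta)\cdot u_{i,\beta}^{-1}\cdot \beta,
\]
where the units $u_{i,\beta}^{-1}$ equal $1$ when $k\neq 0$. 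Substituting both expressions (for $i = 1$ and $i = 2$) into the pairing produces a double sum indexed by $\beta,\gamma\in\B_k$ whose Floer-theoretic content is concentrated in the evaluation of $\langle\tau^{-1}\beta,\, f_*\gamma\rangle \in \L(t)$.

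The main computation is then to identify this Floer-theoretic pairing with a Kronecker pairing on $X(g,d)$, up to units. Since $f$ reverses the orientation of the $S^1$ factor, the induced map $f_*$ carries the lowest-height summand of $HF^-_\Dot(\Sigma\times S^1,\s_k;\L(t))$ onto the topmost-height summand of $HF^-_\Dot(-\Sigma\times S^1,\s_k;\L(t))$, interchanging the roles played by $\tbeta^\circ.\,\Xi$ and $\Psi_{\Sigma\times D^2,\eta,\s_k}(\tbeta^\circ)$ in Proposition \ref{gengrelinvtprop}. Using sesquilinearity of $\langle\cdot,\cdot\rangle$ under the $\A(\Sigma)$-action and passing the action across via $\langle\tau^{-1}\beta_i, \tbeta_j^\circ.\,\Xi\rangle = \pm\langle\tau^{-1}(\tbeta_j^\circ.\beta_i),\Xi\rangle$, the evaluation reduces to the algebraic cap pairing $\tbeta_j^\circ\cap\beta_i$ on $X(g,d)$, modulo the correction terms that relate the actual $H_1$-action on $HF^-_\Dot$ to the standard action. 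By Theorem \ref{groups1} these corrections are of strictly higher order in $t$, hence invertible in $\L(t)$. This yields $\langle\tau^{-1}\beta_i,\, f_*\gamma_j\rangle = \delta_{ij}\cdot v_i$ for a unit $v_i\in\L(t)$, provided we set $\gamma_j = f_*^{-1}(\beta_j^\circ)$.

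Once the Kronecker identity is in hand, the double sum collapses, after relabeling $\gamma = f_*^{-1}(\beta^\circ)$ and invoking $f$-equivariance to write $\tgamma = f_*^{-1}(\tbeta^\circ)$, and the various unit factors ($u_{1,\beta}^{-1}$, $\overline{u_{2,\gamma}^{-1}}$, $v_\beta$, and conjugation factors from sesquilinearity) combine into the single unit $u_\beta$ in the statement, which reduces to $1$ when $k\neq 0$. The main obstacle is the third step: controlling precisely how the orientation-reversing diffeomorphism $f$ interacts with the non-standard $\A(\Sigma)$-action on $HF^-_\Dot(\Sigma\times S^1,\s_0;\L(t))$ in the delicate $k=0$ case, where (by Remark \ref{H1-action}) the correction terms to the cap action involve strictly positive powers of $t$ and one must verify that the resulting discrepancies are units in $\L(t)$ rather than merely nonzero Laurent series.
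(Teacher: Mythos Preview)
Your proposal is correct and follows essentially the same route as the paper: reduce to the pairing $\langle \tau^{-1}\rho(\Psi_{Z_1}(\alpha_1)), f_*\rho(\Psi_{Z_2}(\alpha_2))\rangle$, expand each factor via Proposition \ref{gengrelinvtprop}, and diagonalize the resulting double sum by choosing the basis on the $Z_2$ side to be $\gamma_j = f_*^{-1}(\beta_j^\circ)$, so that $\langle\tau^{-1}\beta_i, f_*\gamma_j\rangle = \langle\tau^{-1}\beta_i,\beta_j^\circ\rangle = \delta_{ij}\,u_{\beta_i}$ and $\tgamma_j = f_*^{-1}(\tbeta_j^\circ)$. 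The paper makes this basis choice at the outset rather than expanding both sides in $\B_k$ and then relabeling, which streamlines the bookkeeping; and it treats the $k=0$ correction terms simply by absorbing them into the units $u_\beta$ (as set up just before the theorem), rather than re-deriving that they are invertible from Theorem \ref{groups1}.
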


\begin{proof} As in the previous theorem,
\[
\rho(\O_{X,\Sigma\times S^1,\eta,\s}(\alpha)) = \langle \tau^{-1}\rho(\Psi_{Z_1,\eta_1, \s_1}(\alpha_1)), f_*\rho(\Psi_{Z_2,\eta_2,\s_2}(\alpha_2))\rangle.
\]
Applying Proposition \ref{gengrelinvtprop}, this is
\[
\langle \tau^{-1} \sum_i \O_{M_1,\Sigma_1\times S^1,\eta_1,\s_1}(\alpha_1\otimes \tbeta_i) \cdot \beta_i,\, f_*\sum_j\O_{M_2,\Sigma_2\times S^1,\eta_2,\s_2}(\alpha_2\otimes\tgamma_j)\cdot \gamma_j\rangle 
\]
for bases $\{\beta_i\}$ and $\{\gamma_j\}$ whose relationship will be determined momentarily. As before, $f_*$ is conjugate-linear in $\L(t)$ and the pairing is also conjugate-linear in the second variable. Hence the above is equal to
\[
\sum_{i,j} \O_{M_1,\Sigma_1\times S^1,\eta_1,\s_1}(\alpha_1\otimes \tbeta_i)\,\O_{M_2,\Sigma_2\times S^1,\eta_2,\s_2}(\alpha_2\otimes \tgamma_j) \langle\tau^{-1}(\beta_i),f_*(\gamma_j)\rangle.
\]
Choose the basis $\{\gamma_j\}$ by setting $\gamma_j = f_*^{-1}(\beta_j^\circ)$; then it is easy to see that $\langle \tau^{-1}(\beta_i), f_*(\gamma_j)\rangle = \delta_{ij}\,u_{\beta_i}$, and $\tgamma_j = f_*^{-1}(\tbeta_j^\circ)$. The result follows immediately.
\end{proof}

Suppose now that each of $M_1$, $M_2$, and $X$ have $b^+\geq 2$, so that Theorem \ref{pertproductthm} applies to identify the perturbed invariants $\O$ with the usual {\OS} invariants $\Phi$. Assume also that $\s\in Spin^c(X)$ restricts to a nonzero multiple of $PD[S^1]$, i.e., $k\neq 0$ in the theorem above. The coefficient change $\rho$ sums the coefficients of $\O_{X,\Sigma\times S^1}$ corresponding to \spinc structures differing by rim tori, so since $k \neq 0$, Theorem \ref{gengfibsumthm} translates to the equation
\[
\sum_n \Phi_{X,\s+nt}^{Rim}(\alpha)\, t^n = \sum_{\beta,n_1,n_2} \Phi_{M_1,\s+n_1t_1}(\alpha\otimes \tbeta)\,\Phi_{M_2,\s_2 + n_2t_2}(\alpha_2\otimes f^{-1}_*(\tbeta^\circ))\,t^{n_1+n_2},
\]
where $t_i$ is Poincar\'e dual to the class of $\Sigma_i$ in $M_i$, and $t$ is simultaneously the dual of $\Sigma$ in $X$ and the formal variable in $\L(t)$. The above holds after possibly a multiplication by a power of $t$; thus equating coefficients yields the formula
\[
\Phi_{X,\s}^{Rim}(\alpha) = \sum_{\scriptsize\begin{array}{cc} \beta \\ n_1 + n_2 = n_0\end{array}} \Phi_{M_1,\s+n_1t_1}(\alpha\otimes \tbeta)\,\Phi_{M_2,\s_2 + n_2t_2}(\alpha_2\otimes f^{-1}_*(\tbeta^\circ))
\]
for some fixed integer $n_0$. 

Now, if $\{\tbeta\}$ is a basis of homogeneous elements, it is not hard to see that $\{\tbeta^\circ\}$ are likewise homogeneous of complementary degree. Specifically, if $\deg(\tbeta) = m$ in $\tX(g,d)\subset \A(\Sigma)$ then $\deg(\tbeta^\circ) = 2d-m$ (as usual, $d = g-1-|k|$). Thus in the above formula, we have
\[
\deg(\alpha_1\otimes \tbeta) + \deg(\alpha_2\otimes f^{-1}_*(\tbeta^\circ)) = \deg(\alpha) + 2g-2-2|k|.
\]
On the other hand, if a \spinc 4-manifold $(N,\rr)$ has $\Phi_{N,\rr}(\xi) \neq 0$ for $\xi\in \A(N)$ then we must have $\deg(\xi) = d(\rr)$. Substituting this in the above and using $\sigma(X) = \sigma(M_1) + \sigma(M_2)$ and $e(X) = e(M_1) + e(M_2) + 4g-4$ gives
\begin{equation}\label{c1squarecond}
c_1^2(\s) = c_1^2(\s_1 + n_1t_1) + c_1^2(\s_2 + n_2t_2) + 8|k|.
\end{equation}

When $k = 0$, of course, changing $\s_i$ by multiples of $t_i$ does not affect the self-intersection so that \eqref{c1squarecond} holds in that case as well.

This observation motivates the following ``patching'' construction producing elements of $H^2(X;\zee)$ (modulo rim tori) from certain pairs of elements in $H^2(M_i;\zee)$. We find it easiest to describe this construction in homology rather than cohomology; the cohomological version is obtained by Poincar\'e duality. Suppose, then, that $x_1\in H_2(M_1)$ and $x_2\in H_2(M_2)$ are integral homology classes, represented by embedded surfaces also denoted $x_1$, $x_2$, and assume that $x_i . \Sigma_i = m$ for $i = 1,2$. Let $\rho: H_2(M_i)\to H_2(Z_i, \partial Z_i)$ denote the composition of the natural map $H_2(M_i)\to H_2(M_i, \Sigma_i\times D^2)$ followed by the excision isomorphism of the latter group with $H_2(Z_i,\partial Z_i)$ where $Z_i = M_i\setminus int ( \Sigma_i\times D^2)$. Consider the long exact sequence for $X = Z_1\cup_\partial Z_2$:
\[
\cdots\to H_2(\Sigma\times S^1)\to H_2(X)\to H_2(Z_1,\partial Z_1)\oplus H_2(Z_2,\partial Z_2) \to H_1(\Sigma\times S^1)\to \cdots
\]
The condition on $x_i . \Sigma_i$ and the fact that the $\rho(x_i)$ are restrictions of classes on the closed manifolds $M_i$ imply that there exists a lift $x\in H_2(X)$ of $(\rho(x_1), \rho(x_2))$, uniquely determined up to the image of $H_2(\Sigma\times S^1)$. 

Choose the surfaces $x_i$ to intersect $\Sigma_i\times D^2$ in a collection of normal disks; at the expense of increasing the genus of the $x_i$ we may assume that there are exactly $|m|$ such disks. Then removing $\Sigma_i\times D^2$ from each of $M_1$, $M_2$ and gluing we can obtain a smooth surface representing the lifted class $x$. It is clear that $x$ has $x. \Sigma = m$, and furthermore by using pushoffs of the $x_i$ that are disjoint from the normal disks in $\Sigma_i\times D^2$ we see that the self-intersection of $x$ satisfies $x^2 = x_1^2 + x_2^2$. 

Now let $x_1 * x_2 = x + 2\varepsilon\Sigma$, where $\varepsilon$ is the sign of $m$. Then the self-intersection of $x_1*x_2$ is 
\[
(x_1* x_2)^2 = x_1^2 + x_2^2 + 4|m|,
\]
and moreover the class $x_1*x_2$ is determined by this condition up to addition of elements of $H_2(\Sigma\times S^1)/[\Sigma]$, in other words, up to rim tori.

The multiplication in Theorem \ref{gengthm1} is the Poincar\'e dual of this patching construction; the proof of that theorem is immediate from Theorem \ref{gengfibsumthm} and the remarks leading to \eqref{c1squarecond}. Theorem \ref{g1thm} follows similarly from Theorem \ref{g1fibsumthm}.

\section{Manifolds of simple type}\label{simptypesec}

Corollary \ref{simptypecor} is an easy consequence of the fiber sum formula. Indeed, if $M_1$ and $M_2$ have simple type, then the only contributions to the right hand side of \eqref{gengformula} are those in which $\alpha_1$, $\alpha_2$, $\beta$ and $\beta^\circ$ have degree zero. Hence $\alpha = \alpha_1\otimes \alpha_2$ also has degree 0, showing that $\rho(\O_{X,\s}(\alpha)) = 0$ unless $\deg(\alpha) = 0$, which is the first statement of the corollary.  Furthermore, since $\beta$ and $\beta^\circ$ have complementary degree in $\tX(g,d)$, their degrees can both be 0 only if $|k| = g-1$, which gives \eqref{intermedvanish}. 

In the case of a 4-manifold containing a torus of square 0, we have the following analog of a result of Morgan, Mrowka and Szab\'o \cite{MMS} in Seiberg-Witten theory. Recall that a 4-manifold $X$ containing a surface $\Sigma$ is said to have $\A(\Sigma)$-simple type if all {\OS} invariants of $X$ vanish on elements of $\A(X)$ lying in the ideal generated by $U$ and $H_1(\Sigma)$.

\begin{prop}\label{torusSTprop} Suppose $X$ is a closed 4-manifold with $b^+(X) \geq 2$ containing a torus $T\subset X$ of self-intersection 0 representing a class of infinite order in $H_2(X)$. Then $X$ has $\A(T)$-simple type.
\end{prop}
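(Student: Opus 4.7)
\medskip

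\noindent\textbf{Proof proposal.} The plan is to exploit the product formula of Theorem \ref{pertproductthm} together with the very restrictive structure of the perturbed Floer homology of $T^3$ in the torsion \spinc structure. Specifically, $HF^-_\Dot(\pm T^3,\s_0;\M)$ is concentrated in a single degree for any $\R_{T^3,\eta}$-module $\M$ arising in this context, so both $U$ and the $H_1(T)$-action must act as zero on it for purely dimensional reasons.

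First, because $[T]$ has infinite order in $H_2(X;\zee)$, its Poincar\'e dual extends across $T\times D^2$ to a class $\eta\in H^2(X;\arr)$ whose restriction to $T\times D^2$ is a positive multiple of $PD[\mathit{pt}\times D^2]$; in particular $\eta|_{T^3}\neq 0$, so $T^3\subset X$ is an allowable cut for $\eta$. Let $Z=X\setminus(T\times D^2)$ and choose a \spinc structure $\s$ on $X$. If $\s|_{T^3}\neq \s_0$ then the relevant Floer groups vanish already and $\Phi_{X,\s+t}(\alpha)=0$ for all $t\in K(X,T^3)$; otherwise Theorem \ref{pertproductthm} gives, up to sign and translation by an element of $K(X,T^3)$,
\[
\sum_{t\in K(X,T^3)}\Phi_{X,\s+t}(\alpha)\,e^t \;=\; \langle\tau^{-1}\Psi_{Z,\eta,\s}(\alpha_1),\,\Psi_{T\times D^2,\eta,\s}(\alpha_2)\rangle
\]
for any decomposition $\alpha_1\otimes\alpha_2\mapsto\alpha$ under $\A(Z)\otimes \A(T\times D^2)\to \A(X)$, the map being surjective.

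Next, I would record that by Proposition \ref{g1relinvtprop} and the discussion surrounding it, $HF^-_\Dot(-T^3,\s_0;\K(T\times D^2,\eta))\cong \L(t)$ is concentrated in a single degree, and similarly $HF^-_\Dot(T^3,\s_0;\K(Z,\eta))$ is supported in a single degree (the latter follows from Proposition \ref{redcoeffprop} combined with the flatness statement of Lemma \ref{flatlemma}, since the spectral sequence argument for $\zee[K(Z)]$ coefficients survives Novikov completion). Consequently, multiplication by $U$ (of degree $-2$) and the action of any class in $H_1(T^3)$ (of degree $-1$) are identically zero on both sides of the pairing.

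Now suppose $\alpha\in\A(X)$ lies in the ideal generated by $U$ and the image of $H_1(T)$. Writing $\alpha=U\beta$ or $\alpha=h\cdot\beta$ with $h$ in the image of $H_1(T)\to H_1(X)$, I would observe that since $H_1(T)=H_1(T\times D^2)$ and $U\in\A(T\times D^2)$, we may select the decomposition so that the extra $U$ or $H_1(T)$ factor sits in the second tensorand: $\alpha=\sum\beta_{1,i}\otimes(U\beta_{2,i})$ or $\alpha=\sum\beta_{1,i}\otimes(h\cdot\beta_{2,i})$. By equivariance of the relative invariant, this gives
\[
\Psi_{T\times D^2,\eta,\s}(U\beta_{2,i})=U\cdot\Psi_{T\times D^2,\eta,\s}(\beta_{2,i})=0,
\]
and likewise $\Psi_{T\times D^2,\eta,\s}(h\cdot\beta_{2,i})=h\cdot\Psi_{T\times D^2,\eta,\s}(\beta_{2,i})=0$ by the vanishing of the $U$- and $H_1$-actions on the one-dimensional Floer homology. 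The pairing is therefore zero, so every coefficient $\Phi_{X,\s+t}(\alpha)$ vanishes; letting $\s$ range over $Spin^c(X)$ proves $\A(T)$-simple type.

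The only point requiring care (rather than difficulty) is the legality of the chosen tensor decomposition: one must use that the natural map $\A(Z)\otimes\A(T\times D^2)\to\A(X)$ is surjective and that the generator $h\in H_1(T)\subset H_1(X)$ is in the image of $H_1(T\times D^2)$, so the ``extra factor'' can always be pushed onto the $T\times D^2$ side. Once that is in hand, the argument is purely formal given the degree concentration of $HF^-_\Dot(\pm T^3,\s_0;\L(t))$.
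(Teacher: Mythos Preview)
Your proof is correct and follows essentially the same route as the paper: apply Theorem \ref{pertproductthm} to the cut along $\partial(T\times D^2)$, push the positive-degree factor from $\A(T)$ onto the $T\times D^2$ side, and use that $HF^-_\Dot(-T^3,\s_0;\L(t))$ is concentrated in a single degree so that $\Psi_{T\times D^2,\eta,\s_0}(\alpha_T)=\alpha_T.\Psi_{T\times D^2,\eta,\s_0}(1)=0$ whenever $\deg\alpha_T>0$. Your treatment of the tensor decomposition is slightly more explicit than the paper's, and your remark that the $Z$-side Floer homology is also concentrated in a single degree is true but unnecessary---the argument only needs the concentration on the $T\times D^2$ side.
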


\begin{proof} By Theorem \ref{pertproductthm} we can write
\[
\O_{X,T^3,\eta,\s}(\alpha) = \langle \tau^{-1}\Psi_{Z,\eta,\s}(\alpha_1),\Psi_{T^2\times D^2,\eta,\s}(\alpha_2)\rangle,
\]
for suitable perturbation $\eta$, and we may assume that $\s|_{T^2\times D^2} = \s_0$, the torsion \spinc structure. If $\alpha = \alpha'\cdot \alpha_T$ is in the ideal generated by $\A(T)$, where $\alpha'\in A(X)$ and $\alpha_T\in \A(T)$, then we can take $\alpha_1 = \alpha'$ and $\alpha_2 = \alpha_T$. But $\Psi_{T^2\times D^2, \eta,\s_0}(\alpha_T) = \alpha_T.\Psi_{T^2\times D^2,\eta,\s_0}(1)$, and the relative invariant $\Psi_{T^2\times D^2,\eta,\s_0}(1)$ lies in the only nontrivial degree of $HF^-(T^3,\s_0;\L(t))$. Hence if $\deg{\alpha_T}>0$ we have $\alpha_T.\Psi_{T^2\times D^2,\eta,\s_0}(1) = 0$ and the result follows.
\end{proof}

Note that the proof applies also to 4-manifolds with $b^+(X) = 1$, if we consider only the perturbed invariant $\O_{X,T^3,\eta,\s}$ relative to the decomposition of $X$ along the boundary of a tubular neighborhood of the torus.

It seems likely that the statements of Corollary \ref{STcor} are true without the sum over \spinc structures differing by rim tori (at least when $b^+(X)\geq 2$). At the moment we can prove that stronger statement only under the apparently stronger assumption of {\it relative simple type}, defined here.

\begin{definition} A 4-manifold $Z$ with boundary $Y\cong \Sigma \times S^1$ is said to have {\em relative $\A(\Sigma)$ simple type} if all nonvanishing relative invariants $\Psi_{Z,\s,\eta}(\alpha)$ lie in the submodule of $HF^-_\Dot(\Sigma\times S^1,\s; \K(Z,\eta))$ of minimal height. We say $Z$ has {\em relative simple type} if furthermore any nonvanishing relative invariant $\Psi_{Z,\eta,\s}(\alpha)$ has $\deg(\alpha) = 0$.

 If $M$ is a closed 4-manifold containing an essential surface $\Sigma$ of square 0, we say $M$ has {\em strong $\A(\Sigma)$ simple type} if $Z = M \setminus (\Sigma\times D^2)$ has relative $\A(\Sigma)$ simple type.
\end{definition}

It is not hard to see that if $M$ has (ordinary) $\A(\Sigma)$ simple type, then the relative invariants of $Z = M \setminus (\Sigma\times D^2)$ have the property that $\rho(\Psi_{Z,\eta,\s}(\alpha)) = 0$ unless $\Psi_{Z,\eta,\s}(\alpha)$ lies in the minimal-height summand. The condition of strong $\A(\Sigma)$ simple type indicates that the corresponding statement holds before projecting out rim tori. 

\begin{theorem} Suppose $X = M_1\#_\Sigma M_2$ is a closed 4-manifold with $b^+(X)\geq 2$ that is obtained by fiber sum along an essential surface $\Sigma$ of genus $g \geq 1$ and square 0.
\begin{enumerate}
\item Assume that $M_1$ has strong $\A(\Sigma)$ simple type: then $X$ has $\A(\Sigma)$-simple type.
\item Assume furthermore that the complement $Z_2 = M_2\setminus (\Sigma\times D^2)$ has $b^+(Z_2)\geq 1$, and also that the subgroup of $H^2(X)$ generated by the Poincar\'e duals of rim tori is infinite. Then
\[
\mbox{$\Phi_{X,\s} = 0$ unless $|\langle c_1(\s),\Sigma\rangle| = 2g-2$.}
\]
\end{enumerate}
\end{theorem}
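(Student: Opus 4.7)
The plan for (1) is to use Theorem \ref{pertproductthm} to express $\O_{X,\Sigma\times S^1,\eta,\s}(\alpha)$ as the perturbed pairing $\langle\tau^{-1}\Psi_{Z_1,\eta,\s}(\alpha_1),\,f_*\Psi_{Z_2,\eta,\s}(\alpha_2)\rangle$ and show it vanishes whenever $\alpha$ lies in the ideal of $\A(X)$ generated by $U$ and $H_1(\Sigma)$. Given such an $\alpha$, I would place the offending positive-degree factor $\alpha_\Sigma\in\A(\Sigma)$ on the $Z_2$ side, writing $\alpha_2=\alpha_2'\cdot\alpha_\Sigma$. Naturality of the cobordism-induced maps (the perturbed form of Theorem \ref{gencomplaw}) then gives $\Psi_{Z_2,\eta,\s}(\alpha_2'\cdot\alpha_\Sigma)=\alpha_\Sigma.\Psi_{Z_2,\eta,\s}(\alpha_2')$, while the skew-adjoint property of the pairing (perturbed analog of Lemma \ref{skewaction}) transfers this $\A(\Sigma)$-action onto $\tau^{-1}\Psi_{Z_1,\eta,\s}(\alpha_1)$ as an action by $f_*(\alpha_\Sigma)$, still of positive degree. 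The strong simple type hypothesis on $M_1$ places $\Psi_{Z_1,\eta,\s}(\alpha_1)$ in the minimal-height summand of $HF^-_\Dot(\Sigma\times S^1,\s_k;\K(Z_1,\eta))\cong X(g,d)\otimes\K(Z_1,\eta)$; since $\tau^{-1}$ preserves degree and positive-degree elements of $\A(\Sigma)$ strictly lower degree---sending the minimal generator $\Lambda^0\otimes U^0$ out of $X(g,d)$ altogether---the action yields zero. Hence the perturbed pairing vanishes, and Theorem \ref{pertproductthm} forces $\Phi_{X,\s+t}(\alpha)=0$ for every $t\in K(X,\Sigma\times S^1)$; varying $\s$ in its $K$-coset completes (1).

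For (2) I would combine this with the fiber sum formula. Proposition \ref{gengrelinvtprop} expresses $\rho(\Psi_{Z_1,\eta_1,\s_1}(\alpha_1))$ as $\sum_\beta\O_{M_1}(\alpha_1\otimes\widetilde\beta)\,u_\beta^{-1}\,\beta$; strong simple type forces every $\beta$ other than the minimal-height basis element $\beta_{\min}$ to have $\O_{M_1}(\alpha_1\otimes\widetilde\beta)=0$, so Theorem \ref{gengfibsumthm} collapses to
\[
\rho(\O_{X,\Sigma\times S^1,\eta,\s}(\alpha)) = u\cdot\O_{M_1}(\alpha_1)\cdot\O_{M_2}\bigl(\alpha_2\otimes f_*^{-1}(U^d)\bigr),
\]
where $d=g-1-|k|$ and $\widetilde{\beta_{\min}^\circ}=U^d$ from the duality conventions of section \ref{proofsec}. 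For $|k|<g-1$ we have $d\geq 1$, so $U^d$ lies in the $\A(\Sigma)$-ideal of $\A(M_2)$. Using $b^+(Z_2)\geq 1$---which ensures the unperturbed $\Psi_{Z_2,\s}$ is defined in reduced twisted Floer homology---together with a height argument analogous to (1) applied to the closed pairing computing $\O_{M_2}$, this factor is forced to vanish, yielding $\rho(\O_X)=0$. The remaining task is to promote this rim-averaged vanishing to vanishing of each individual $\Phi_{X,\s+t}$, exploiting that $\O_X$ is a Laurent polynomial in $\zee[K(X,\Sigma\times S^1)]$ by Theorem \ref{pertproductthm} (since $b^+(X)\geq 2$) and that the infinite rim torus subgroup provides room to separate \spinc structures sharing a rim-torus orbit.

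The main obstacle is this last step: a single relation $\rho(\O_X)=0$ is only one linear constraint on finitely many individual invariants $\Phi_{X,\s+t}$, and infinitely many rim tori alone do not yield enough independent equations. The delicate part of the argument is to construct a family of parallel cuts near $\Sigma\times S^1$ and compare the perturbed invariants via Lemma \ref{cutinvlemma}, producing a system of rim-averaging relations whose only simultaneous solution is zero. Carrying out this multi-cut bookkeeping while tracking the sign and $K$-translation indeterminacies, and combining with conjugation symmetry (Theorem \ref{pertconjinvthm}) and the adjunction inequality for rim tori to reduce the nonzero orbit, should suffice to complete (2).
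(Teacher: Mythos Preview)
Your argument for (1) is correct and essentially the same as the paper's: both move the positive-degree $\A(\Sigma)$ factor across the pairing and use that strong simple type pins $\Psi_{Z_1}$ to minimal height, where the action annihilates it.

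For (2), however, your approach diverges from the paper's and contains a genuine gap. The paper does not pass through $\rho(\O_X)=0$ at all. Instead, it uses the hypothesis $b^+(Z_2)\geq 1$ to find an admissible cut $N\subset Z_2$, writes $X = Z_1\cup_{\Sigma\times S^1} W\cup_N V_2$, and appeals to equation \eqref{transeqn} from the proof of Theorem \ref{productthm}:
\[
\Phi_{X,\s+\delta h}(\alpha) = \langle\tau^{-1}\circ\Pi_{V_1}\circ F^-_{W,\s}\bigl(\alpha_2\otimes e^{h}\cdot F^-_{Z_1,\s}(\alpha_1)\bigr),\,F^-_{V_2,\s}(\alpha_3)\rangle.
\]
The key structural input is Theorem \ref{universalgroups1}: when $|k|<g-1$, the minimal-height summand of the fully twisted Floer homology of $\Sigma\times S^1$ is $L(t)$, not $R_{\Sigma\times S^1}$. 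Hence after passing to $\K(Z_1,\eta)$-coefficients the minimal-height part is $\L(t)$, on which elements $e^h$ dual to rim tori act trivially (they map to $t^0=1$). Since strong simple type places $F^-_{Z_1,\s}(\alpha_1)$ in that minimal-height summand, the formula above is \emph{independent of the rim torus} $\delta h$. Thus $\Phi_{X,\s+r}=\Phi_{X,\s}$ for every $r$ in the rim torus subgroup; infinitude of that subgroup together with the finiteness of basic classes forces $\Phi_{X,\s}=0$.

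Your route instead tries to show $\rho(\O_X)=0$ and then ``unaverage.'' The first step already has a gap: you claim $\O_{M_2}(\alpha_2\otimes f_*^{-1}(U^d))=0$ via ``a height argument analogous to (1),'' but there is no simple type hypothesis on $M_2$, and $b^+(Z_2)\geq 1$ only places $\Psi_{Z_2}$ in reduced homology, which says nothing about its height. More seriously, your acknowledged obstacle---promoting a single rim-averaged relation to individual vanishing via multi-cut bookkeeping from Lemma \ref{cutinvlemma}---is not a real argument: parallel copies of $\Sigma\times S^1$ all yield the same $K(X,Y)$ and the same averaging, so you get no new constraints. The paper's insight is precisely to avoid averaging altogether by proving \emph{constancy} along the rim torus orbit directly.
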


\begin{proof} 
For (1), suppose $X$ does not have $\A(\Sigma)$ simple type: that is, there exists $\beta\in \A(\Sigma)$ of nonzero degree and an element $\alpha\in \A(X)$ such that $\Phi_{X,\s}(\alpha\otimes\beta)\neq 0$ for some \spinc structure $\s$. Since $b^+(X)\geq 2$, we can calculate $\Phi_{X,\s}$ using the perturbed invariant, relative to the obvious decomposition of $X = Z_1 \cup_{\Sigma\times S^1} Z_2$ coming from the fiber sum. That is, for an appropriate choice of $\eta$, the nonvanishing invariant $\Phi_{X,\s}$ appears as a coefficient of the expression
\[
\O_{X,\Sigma\times S^1,\eta,\s}(\alpha\otimes\beta) = \langle \tau^{-1}\Psi_{Z_1,\eta,\s}(\alpha_1\otimes \beta), \Psi_{Z_2,\eta,\s}(\alpha_2)\rangle.
\]
The above is also equal to 
\[
\langle \tau^{-1}\Psi_{Z_1,\eta,\s}(\alpha_1), \Psi_{Z_2,\eta,\s}(\alpha_2\otimes \beta)\rangle,
\]
hence both $\Psi_{Z_1,\eta,\s}(\alpha_1)$ and $\Psi_{Z_1,\eta,\s}(\alpha_1\otimes \beta) = \beta. \Psi_{Z_1,\eta,\s}(\alpha_1)$ are nonvanishing. These are two nonvanishing relative invariants of $Z_1$ having different heights, hence $Z_1$ does not have relative $\A(\Sigma)$ simple type. (As in the torus case, this proof applies when $b^+(X) = 1$ if we restrict attention to the perturbed invariants calculated from $(\Sigma\times S^1,\eta)$.)

To prove (2) we choose an admissible cut $N$ for $X$ contained in $Z_2$, which is possible since $b^+(Z_2)\geq 1$. As in the proof of Theorem \ref{productthm}, write $X = Z_1\cup_{\Sigma\times S^1} W\cup_N V_2$, where $W \cup V_2 = Z_2$. Suppose that $\alpha\in \A(X)$ is the image of $\alpha_1\otimes \alpha_2\otimes \alpha_3\in \A(Z_1)\otimes \A(W)\otimes \A(V_2)$. Then for any $r = \delta h \in K(X,\Sigma\times S^1)$, we have from \eqref{transeqn}
\[
\Phi_{X, \s+r}(\alpha) = \langle\tau^{-1}\circ\Pi_{V_1}\circ F^-_{W,\s}(\alpha_2\otimes e^h\cdot F^-_{Z_1, \s}(\alpha_1)), F^-_{V_2,\s}(\alpha_3)\rangle,
\]
Suppose now that $h$ is dual to a rim torus in $\Sigma\times S^1$. Since $Z_1$ has relative $\A(\Sigma)$ simple type, we may assume $F^-_{Z_1,\s}(\alpha_1)$ lies in the lowest degree of $HF^-_\Dot(\Sigma\times S^1, \s_k; \K(Z_1,\eta))$, where $\s_k = \s|_{\Sigma\times S^1}$. We can assume $PD(c_1(\s_k)) = 2k[pt\times S^1]$ for $|k|\leq g-1$; suppose $|k| < g-1$. Then we have seen that the minimal-height part of $HF^-_\Dot(\Sigma\times S^1, \s_k; \K(Z_1,\eta))$ is isomorphic to $\L(t)$, on which classes dual to rim tori act trivially. Hence the above expression is equal to $\Psi_{X,\s}(\alpha)$---that is, $\Psi_{X,\s + r}(\alpha) = \Psi_{X,\s}(\alpha)$ for any $r$ in the subgroup of $H^2(X)$ generated by the duals of rim tori. Since the latter is an infinite group, we infer from the fact that only finitely many \spinc structures have nonvanishing {\OS} invariant that $\Psi_{X,\s+r} = \Psi_{X,\s} = 0$.
\end{proof}

\newpage

\end{document}